\documentclass{article}

\newcommand{\papertitle}{Holonomic Poisson geometry of Hilbert schemes}

\usepackage{titling}

\setlength{\droptitle}{-4cm} 

\usepackage{amsfonts,amsmath,amssymb,amsthm}
\usepackage{mathscinet}
\usepackage{xargs}
\usepackage{mathrsfs}

\usepackage[font=small,labelfont=bf]{caption}

\usepackage{pict2e}
\usepackage{geometry}

\usepackage{xypic}
\usepackage{ytableau}


\usepackage{xcolor,hyperref}
\definecolor{darkred}{rgb}{.8,0,0}
\definecolor{tocolor}{rgb}{.1,.1,.1}
\definecolor{urlcolor}{rgb}{.2,.2,.6}
\definecolor{linkcolor}{rgb}{.1,.1,.5}
\definecolor{citecolor}{rgb}{.4,.2,.1}
\definecolor{gray}{rgb}{.8,.8,.8}

\hypersetup{
	backref=true,
	colorlinks=true,
	urlcolor=urlcolor,
	linkcolor=linkcolor,
	citecolor=citecolor,
	pdfauthor = {Mykola Matviichuk and Brent Pym and Travis Schedler},
	pdftitle = {\papertitle}
	}
	
\newcommand{\email}[1]{\href{mailto:#1}{#1}}

\usepackage{todonotes}


\usepackage{aliascnt}

\newcommand{\thdef}[2]{
	\newaliascnt{#1}{theorem}  
	\newtheorem{#1}[#1]{#2}
	\aliascntresetthe{#1}  
	\newtheorem*{#1*}{#2}
	\expandafter\newcommand\expandafter{\csname #1autorefname\endcsname}{#2}
}

\newtheorem{theorem}{Theorem}[section]
\newtheorem*{theorem*}{Theorem}
\thdef{conjecture}{Conjecture}
\thdef{lemma}{Lemma}
\thdef{corollary}{Corollary}
\thdef{proposition}{Proposition}
\theoremstyle{definition}
\thdef{definition}{Definition}

\theoremstyle{remark}
\thdef{remarkx}{Remark}
\thdef{examplex}{Example}

\newenvironment{example}
  {\pushQED{\qed}\examplex}
  {\popQED\endexamplex}

\newenvironment{remark}
  {\pushQED{\qed}\remarkx}
  {\popQED\endremarkx}
  
\newcounter{runexcount}

\newcommand{\defn}[1]{\textbf{\emph{#1}}}


\usepackage{tikz}
\usetikzlibrary{decorations.markings,angles}


\newcommand{\abrac}[1]{\left\langle#1\right\rangle}

\newcommand{\rbrac}[1]{\left(#1\right)}

\newcommand{\set}[2]{\left\{#1 \,\middle|\, #2 \right\}}

\newcommand{\qiso}{\xrightarrow{\sim}}

\newcommand{\CC}{\mathbb{C}}
\newcommand{\CCx}{\mathbb{C}^\times}
\newcommand{\PP}{\mathbb{P}}
\newcommand{\RR}{\mathbb{R}}
\newcommand{\ZZ}{\mathbb{Z}}

\newcommand{\fg}{\mathfrak{g}}
\newcommand{\gln}[2][]{\mathfrak{gl}_{#1}(#2)}
\newcommand{\sln}[1]{\mathfrak{sl}_{#1}}

\newcommand{\aff}[1]{\mathfrak{aff}_{#1}(\CC)}
\newcommand{\Aff}[1]{\mathsf{Aff}_{#1}(\CC)}

\newcommand{\Spec}[1]{\mathsf{Spec}(#1)}

\newcommand{\stab}[2]{\fg_{#1,#2}}
\newcommand{\Stab}[2]{\mathsf{G}_{#1,#2}}
\newcommand{\tormod}[2]{\tau_{#1,#2}}
\newcommand{\sympow}[2][n]{{#2}^{(#1)}}
\newcommand{\Hilb}[2][n]{{#2}^{[#1]}}
\newcommand{\GHilb}[3][n]{(#2,#3)^{[#1]}}
\newcommand{\Utri}[1][]{\U_{\!\SmallTriangularYoungDiagram}^{#1}}

\newcommand{\X}{\mathsf{X}}
\newcommand{\Xu}{\widetilde\X}

\newcommand{\E}{\mathsf{E}} 

\newcommand{\GL}[2][]{\mathsf{GL}_{#1}(#2)}
\newcommand{\Aut}[2][]{\mathsf{Aut}_{#1}\rbrac{#2}}
\newcommand{\G}{\mathsf{G}}

\newcommand{\U}{\mathsf{U}}
\newcommand{\V}{\mathsf{V}}

\newcommand{\D}{\mathsf{D}}

\newcommand{\Du}{\widetilde\D}
\newcommand{\Y}{\mathsf{Y}}
\newcommand{\Z}{\mathsf{Z}}

\newcommand{\bL}{\mathsf{L}}
\newcommand{\W}{\mathsf{W}}
\newcommand{\bS}{\mathsf{S}}
\newcommand{\Tot}[1]{\mathsf{Tot}(#1)}
%
\newcommand{\tb}[2][]{\mathsf{T}_{#1}#2}
\newcommand{\tc}[2][]{\mathbb{T}_{#1}#2} 
\newcommand{\ctc}[2][]{\mathbb{L}_{#1}#2} 
\newcommand{\ctb}[2][]{\mathsf{T}^*_{#1}#2}

\newcommand{\ps}{\sigma}
\newcommand{\hilbps}[1][n]{\ps^{[#1]}}
\newcommand{\hilbpstor}[1][n]{\ps^{[#1]}_{\Delta} }
\newcommand{\symps}[1][n]{\sympow{\ps}}


\newcommand{\tf}{\mathrm{tf}}
\newcommand{\vir}{{\mathrm{vir}}}
\newcommand{\reg}{{\mathrm{reg}}}
\newcommand{\sing}{{\mathrm{sing}}}
\DeclareMathOperator{\codim}{codim}
\DeclareMathOperator{\coker}{coker}
\DeclareMathOperator{\fibre}{fibre}
\DeclareMathOperator{\img}{image}
\DeclareMathOperator{\Sym}{Sym}
\newcommand{\id}{\mathrm{id}}

\newcommand{\derotimes}[1][]{%
  \mathbin{\mathop{\otimes}\displaylimits_{#1}^{L}}%
}
\newcommand{\tensor}[1][]{%
  \mathbin{\mathop{\otimes}\displaylimits_{#1}}%
}
\newcommand{\hcap}{\mathbin{\mathop{\cap}\displaylimits^{h}}}

\newcommand{\Perf}[1]{\mathsf{Perf}(#1)}
\newcommand{\Perfo}[1]{\mathsf{Perf}_0(#1)}

\newcommandx{\cTor}[3][1=\bullet,2=]{\mathcal{T}or^{#2}_{#1}(#3)}
\newcommandx{\Tor}[3][1=\bullet,2=]{\mathsf{Tor}^{#2}_{#1}(#3)}
\newcommand{\cHom}[2][]{\mathcal{H}om_{#1}(#2)}
\newcommand{\cREnd}[2][]{\mathcal{RE}nd_{#1}(#2)}

\newcommand{\RSect}[1]{\mathsf{R\Gamma}(#1)}
\newcommandx{\Ext}[3][1=,2=]{\mathsf{Ext}^{#1}_{#2}(#3)}
\newcommand{\Hom}[2][]{\mathsf{Hom}_{{#1}}(#2)}
\newcommand{\End}[2][]{\mathsf{End}_{#1}(#2)}

\newcommand{\cT}[1]{\mathcal{T}_{#1}} 

\newcommand{\der}[2][\bullet]{\mathscr{X}^{#1}_{#2}} 
\newcommand{\forms}[2][\bullet]{\Omega^{#1}_{#2}} 
\newcommand{\logforms}[3][\bullet]{\Omega^{#1}_{#2}(\log #3)} 
\newcommandx{\hilblogforms}[4][1=\bullet,2=n]{\logforms[#1]{\Hilb[#2]{#3}}{\Hilb[#2]{#4}}} 

\newcommand{\can}[1]{\mathcal{K}_#1} 
\newcommand{\acan}[1]{\mathcal{K}^\vee_#1} 

\newcommand{\cO}[1]{\mathcal{O}_{#1}} 
\newcommand{\cI}{\mathcal{I}} 
\newcommand{\m}{\mathfrak{m}} 

\newcommand{\sD}[2][]{\mathcal{D}^{#1}_{#2}} 
\newcommand{\charps}{\mathrm{Char}(\ps)}
\newcommand{\charhilbps}[1][]{\mathrm{Char}(\hilbps[#1])}
\newcommand{\cE}{\mathcal{E}}
\newcommand{\cF}{\mathcal{F}}
\newcommand{\koszul}[1]{\mathcal{B}_{#1}}

\newcommand{\coH}[2][\bullet]{\mathsf{H}^{#1}(#2)}
\newcommand{\Hlgy}[2][\bullet]{\mathsf{H}_{#1}(#2)}
\newcommand{\scoH}[2][\bullet]{\mathcal{H}^{#1}(#2)}
\newcommand{\Hps}[2][\bullet]{\mathsf{H}^{#1}_\ps(#2)} 
\newcommandx{\Hhilbps}[3][1=\bullet,2=n]{\mathsf{H}^{#1}_{\hilbps[#2]}(\Hilb{#3})} 

\newcommand{\dd}{\mathrm{d}}
\newcommand{\dlog}[1]{\frac{\mathrm{d}#1}{#1}}
\newcommand{\dps}{\dd_{\ps}}
\newcommand{\dhilbps}[1][n]{\dd_{\hilbps[#1]}}
\newcommand{\cvf}[1]{\partial_{#1}}

\newcommand{\fibprod}{\mathop{\times}}

\newcommand{\syz}{S}
\newcommand{\minsyz}{M(\syz)}

\newcommand{\TriangularYoungDiagram}{\begin{picture}(9,9) 
\put(0,8){\line(0,-1){8}} \put(0,0){\line(1,0){8}} 
\put(8,0){\line(-1,1){8}} \end{picture}}
\newcommand{\SmallTriangularYoungDiagram}{{\,\,\begin{picture}(6,6) 
\put(0,5){\line(0,-1){5}} \put(0,0){\line(1,0){5}} 
\put(5,0){\line(-1,1){5}} \end{picture}}}
\newcommand{\TYD}{\TriangularYoungDiagram}

\newcommand{\hc}[1]{\mathrm{hc}(#1)}
\newcommand{\tlambda}{\widetilde{\lambda}}
\newcommand{\tmu}{\widetilde{\mu}}
\newcommand{\orbdat}[3][]{\mathsf{OrbDat}_{#1}(#2,#3)}

\newcommand{\te}{\widetilde{e}}
\newcommand{\tJ}{\widetilde{J}}

\newcommand{\ES}{Ellingsrud--Str\o{}mme\ }

\begin{document}

\title{\papertitle}
\author{Mykola Matviichuk\thanks{Imperial College London, \email{m.matviichuk@imperial.ac.uk}} \and Brent Pym\thanks{McGill University, \email{brent.pym@mcgill.ca}} \and Travis Schedler\thanks{Imperial College London, \email{t.schedler@imperial.ac.uk}}}
\maketitle
\vspace{-0.5cm}
\abstract{
  We undertake a detailed study of the  geometry of Bottacin's Poisson structures on Hilbert schemes of points in Poisson surfaces, i.e.~smooth complex surfaces equipped with an effective anticanonical divisor.  We focus on three themes that, while logically independent, are linked by the interplay between (characteristic) symplectic leaves and deformation theory.  Firstly, we construct the symplectic groupoids of the Hilbert schemes and develop the classification of their symplectic leaves, using the methods of derived symplectic geometry.  Secondly, we establish local normal forms for the Poisson brackets, and combine them with a toric degeneration argument to verify that Hilbert schemes satisfy our recent conjecture characterizing holonomic Poisson manifolds in terms of the geometry of the modular vector field.  Finally, using constructible sheaf methods, we compute the space of first-order Poisson deformations when the anti-canonical divisor is reduced and has only quasi-homogeneous singularities.  (The latter is automatic if the surface is projective.)  Along the way, we find a tight connection between the Poisson geometry of the Hilbert schemes and the finite-dimensional Lie algebras of affine transformations, which is mediated by syzygies.  In particular, we find that the Hilbert scheme has a natural subvariety that serves as a global counterpart of the nilpotent cone, and we prove that the Lie algebras of affine transformations have holonomic dual spaces---the first such series of Lie algebras to be discovered.
   }
   
{
\setcounter{tocdepth}{2}
\renewcommand{\baselinestretch}{0.9}
\footnotesize
\tableofcontents 
}

\section{Introduction}
Let $\X$ be a Poisson surface, i.e.~a smooth connected complex variety or analytic space of dimension two, equipped with a holomorphic Poisson bivector (a section of the anticanonical line bundle). In \cite{Bottacin1998}, Bottacin constructed a natural Poisson structure on the Hilbert scheme $\Hilb{\X}$ parameterizing zero-dimensional subschemes of $\X$, generalizing the celebrated symplectic structure on the Hilbert schemes of K3 and abelian surfaces due to Beauville~\cite[\S6]{Beauville1983} and Mukai~\cite{Mukai1984}.  Concretely, this Poisson manifold is obtained by resolving the singularities of the symmetric power $\sympow{\X} = \X^n/\bS_n$, but it also has a natural interpretation as a moduli space.  Our goal in this paper is to develop some of the beautiful geometry of Bottacin's Poisson structures, focusing on the following three themes:
\begin{enumerate}
\item symplectic leaves and symplectic groupoids
\item local normal forms and holonomicity
\item deformation theory
\end{enumerate}
In so doing, we encounter connections with a range of topics, from combinatorial linear algebra and Lie theory, to D-modules and derived algebraic geometry.

As we shall see, the three themes listed above are closely intertwined on a conceptual level.  However the results we establish for each of them (summarized in \autoref{sec:intro-sympl} through \autoref{sec:intro-defthy} below, respectively) are based on different techniques and are essentially logically independent.  Thus, for instance, while we make use of the formalism of derived geometry and shifted symplectic structures for the first theme, and this informs our intuition throughout the paper, the results in the second theme are proven using classical techniques of Poisson geometry and toric degeneration, and those in the third theme use constructible sheaves.

\subsection{Symplectic leaves and the Hilbert groupoid}
\label{sec:intro-sympl}

Bottacin's description of $\Hilb{\X}$ as a moduli space gives rise to an explicit description of the Poisson bivector as a pairing on Ext groups, which when combined with a deformation-theoretic argument, leads to a modular description of its symplectic leaves.  Namely, the vanishing locus of the Poisson structure on $\X$ is a curve $\D \subset\X$, and two elements $\Z_1,\Z_2 \in \Hilb{\X}$ lie on the same symplectic leaf if and only if the restrictions of their ideal sheaves to $\D$ are isomorphic as $\cO{\D}$-modules; see, e.g.~\cite{Pym2022,Rains2016,Rains2019}, where such results are established in the more general context of moduli spaces of perfect complexes on noncommutative surfaces, although for Hilbert schemes we obtain this by different means below, as a direct consequence of \autoref{thm:groupoid}.

It is natural to ask for a geometric (rather than module-theoretic) interpretation of the leaves.  For instance, one can check that the map sending an element  $\Z \in \Hilb[n]{\X}$ to the intersection $\Z \cap \D$ is invariant under Hamiltonian flow, and hence if two elements $\Z_1,\Z_2$ lie on the same symplectic leaf, we must have $\Z_1 \cap \D = \Z_2 \cap \D$.  Over the open set of $\Hilb[n]{\X}$ corresponding to collections of $n$ distinct points of $\X$, this characterizes the symplectic leaves completely, but the situation in general is not so simple.  Indeed, even for a smooth curve $\D$, there exist pairs $\Z_1,\Z_2 \in \Hilb[n]{\X}$ that have the same scheme-theoretic intersection with $\D$, but lie on different symplectic leaves.  

The reason for this subtlety is that the intersection $\Z \cap \D$, if non-empty, is never transverse.  Hence, it is natural to consider the derived intersection $\Z \hcap \D$, which enhances the naive algebra of functions $\cO{\Z\cap \D} \cong\cO{\Z}\tensor[\cO{\X}] \cO{\D}$ to the dg algebra $\cO{\Z\hcap \D} :=\cO{\Z}\derotimes[\cO{\X}] \cO{\D}$, whose cohomology gives the Tor modules $\cTor[\bullet][\cO{\X}]{\cO{\Z},\cO{\D}}$.  An important feature is that the derived intersection $\Z\hcap \D$ can have nontrivial automorphisms that preserve the embedding $\Z \hcap \D \hookrightarrow \D$, so that the notion of the equality $\Z_1 \cap \D = \Z_2 \cap \D$ of ordinary scheme-theoretic intersections is enhanced to a whole space of equivalences $\Z_1 \hcap \D \sim \Z_2 \hcap \D$ of derived subschemes of $\D$, which we call \defn{$\D$-equivalences} of the pair $(\Z_1,\Z_2)$; these turn out to be the same as determinant-one $\cO{\D}$-module isomorphisms between the restrictions of their ideal sheaves to $\D$.   These equivalences can be composed, giving a groupoid $\GHilb[n]{\X}{\D}$ whose objects are elements of $\Hilb[n]{\X}$ and whose morphisms are homotopy classes of $\D$-equivalences.  We call this groupoid the \defn{Hilbert groupoid of the pair $(\X,\D)$}, and develop its structure in Sections \ref{sec:groupoids} and \ref{sec:pois} below.  We summarize these results as follows:

\begin{theorem}\label{thm:groupoid}
Let $\X$ be a smooth complex surface, and let $\D\subset \X$ be any curve (which may be singular or even non-reduced).  Then $\GHilb[n]{\X}{\D}$  naturally has the structure of a smooth groupoid (i.e.~a complex Lie groupoid), which is birationally equivalent to the product $\Hilb[n]{\X}\times\Hilb[n]{\X}$.  If, in addition, the curve $\D$ is anticanonical, then a choice of Poisson structure on $\X$ whose vanishing locus is $\D$ endows $\GHilb[n]{\X}{\D}$ with a natural symplectic structure, making it into a symplectic groupoid that integrates Bottacin's Poisson structure on $\Hilb[n]{\X}$.  In particular, the symplectic leaves are the connected components of the $\D$-equivalence classes, and these are locally closed subvarieties of $\Hilb{\X}$.
\end{theorem}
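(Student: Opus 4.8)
The plan is to realize $\GHilb[n]{\X}{\D}$ as a homotopy fibre product of derived moduli stacks and to read off all of its properties from derived symplectic geometry. Since $\X$ is smooth, for any closed subscheme $\Z\subset\X$ the ideal sheaf $\cI_\Z\subset\cO{\X}$ is a perfect complex with a canonical trivialization of its determinant, and I would first check (by a local Tor computation) that the derived restriction $\cI_\Z\derotimes[\cO{\X}]\cO{\D}$ is concentrated in degree zero, hence is an honest coherent $\cO{\D}$-module $\cI_\Z|_\D$ carrying an induced determinant trivialization. The assignment $\Z\mapsto\cI_\Z|_\D$ then defines a morphism $\rho\colon\Hilb[n]{\X}\to\mathfrak M$ to the derived moduli stack $\mathfrak M$ of perfect complexes on $\D$ with trivialized determinant, and by construction $\GHilb[n]{\X}{\D}$ is the classical truncation of the homotopy fibre product $\Hilb[n]{\X}\times_{\mathfrak M}\Hilb[n]{\X}$, with source and target the two projections --- the passage to \emph{homotopy classes} of $\D$-equivalences being exactly the fact that $\mathfrak M$ is a $1$-Artin stack. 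Composition and inversion of equivalences make this a groupoid, so the first clause is then formal. For smoothness: $\mathfrak M$ is a smooth Artin stack of tangent amplitude $[-1,0]$ (no $\mathsf{Ext}^2$ on a curve), and I would prove that $\rho$ is a smooth morphism by a tangent-complex computation, whose crux is the surjectivity of $\dd\rho$ onto the traceless part of $\Ext[1][\D]{\cI_\Z|_\D,\cI_\Z|_\D}$ --- which should follow since $\dim\D=1$; then $\GHilb[n]{\X}{\D}=\Hilb[n]{\X}\times_{\mathfrak M}\Hilb[n]{\X}$ is a smooth scheme with $s,t$ submersions, i.e.\ a complex Lie groupoid. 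Finally, over the dense open subset of $\Hilb[n]{\X}\times\Hilb[n]{\X}$ where both subschemes avoid $\D$ we have $\cI_\Z|_\D=\cO{\D}$, whose only determinant-one automorphism is the identity, so $(s,t)$ restricts there to an isomorphism, giving the birational equivalence with $\Hilb[n]{\X}\times\Hilb[n]{\X}$.

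For the symplectic statements I would assume $\D$ anticanonical. Being a Cartier divisor in a smooth surface, $\D$ is a (possibly singular, possibly non-reduced) local complete intersection curve, and adjunction gives $\omega_\D\cong(\can{\X}\tensor\cO{\X}(\D))|_\D\cong\cO{\D}$; thus $\D$ is a Gorenstein Calabi--Yau curve, its perfect complexes carry a $1$-Calabi--Yau structure, and so $\mathfrak M$ is $1$-shifted symplectic (Pantev--To\"{e}n--Vaqui\'{e}--Vezzosi for smooth $\D$, Brav--Dyckerhoff in general). Dually, the identity $\can{\X}\cong\cO{\X}(-\D)$ is precisely the assertion that the pair $(\X,\D)$ equips the restriction of perfect complexes from $\X$ to $\D$ with a relative Calabi--Yau structure (with $\D$ as the Calabi--Yau boundary), the defining trivialization being supplied by the chosen Poisson bivector $\ps$ --- equivalently by the exact sequence $0\to\can{\X}\xrightarrow{\ps}\cO{\X}\to\cO{\D}\to0$ it determines. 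By the theory of relative Calabi--Yau structures, this induces a Lagrangian structure on the resulting morphism of trivialized-determinant moduli stacks $\mathcal M_{\Perf{\X}}\to\mathfrak M$, which restricts, along the open immersion $\Hilb[n]{\X}\hookrightarrow\mathcal M_{\Perf{\X}}$, to a Lagrangian structure on $\rho$. Granting this, the Lagrangian intersection theorem endows $\Hilb[n]{\X}\times_{\mathfrak M}\Hilb[n]{\X}$ with a $0$-shifted symplectic structure, which --- given the smoothness already established --- is an ordinary symplectic form $\omega$; its multiplicativity follows from functoriality of the construction applied to the groupoid structure maps, which are themselves assembled from derived fibre products of the single Lagrangian $\rho$. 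Thus $(\GHilb[n]{\X}{\D},\omega)$ is a symplectic groupoid over $\Hilb[n]{\X}$, and hence integrates a canonical Poisson structure there; to identify this with Bottacin's it suffices to compare the two on the dense locus where all points avoid $\D$, where $\X$ is symplectic, $\Hilb[n]{\X}$ carries the Beauville--Mukai symplectic form, $\GHilb[n]{\X}{\D}$ is the pair groupoid, and both Poisson structures reduce to the symplectic one.

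The symplectic leaves then come for free: for a symplectic groupoid the symplectic leaves of the base are the connected components of the groupoid orbits, and the orbit of $\Z$ is precisely its $\D$-equivalence class $\set{\Z'}{\cI_{\Z'}|_\D\cong\cI_\Z|_\D\text{ compatibly with determinants}}$; orbits of a finite-type complex Lie groupoid are locally closed subvarieties (the image $t(s^{-1}(\Z))$ is constructible, and smooth since $\mathrm{Aut}(\Z)$ acts freely on $s^{-1}(\Z)$), which is the last assertion.

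I expect the main obstacle to be the construction of the relative Calabi--Yau, hence Lagrangian, structure on $\rho$: in particular, checking that the relevant formalism applies when $\D$ is singular or non-reduced, and that the resulting structure genuinely encodes the anticanonical hypothesis. A secondary technical point is the tangent-complex computation proving $\rho$ smooth (equivalently, the surjectivity of $\dd\rho$) for an arbitrary curve $\D$.
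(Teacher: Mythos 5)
Your proposal is correct and follows essentially the same route as the paper: there too, $\GHilb[n]{\X}{\D}$ is realized as the homotopy fibre product of $\Hilb[n]{\X}$ with itself over $\Perfo{\D}$, smoothness is proved via the relative tangent complex, the symplectic form comes from the relative orientation/Calabi--Yau structure on the restriction map $\Perfo{\X}\to\Perfo{\D}$ determined by $\ps$ together with the Lagrangian intersection theorem of Pantev--To\"en--Vaqui\'e--Vezzosi, and the statement about leaves is the standard symplectic-groupoid dictionary. The two points you defer are exactly where the paper does its real work, so be aware of what they require: representability by an honest scheme is obtained by an explicit torsor argument (the compatible maps $i^*\cI_{\Z_1}\to i^*\cI_{\Z_2}$ form a torsor under a coherent sheaf on $\Hilb[n]{\X}\times\Hilb[n]{\X}$, and the isomorphisms form an open subscheme of its total space), while the smoothness of the restriction map does not follow from $\dim\D=1$ alone but from the identification $\cREnd[\X]{\cI_\Z}_0(-\D)[1]\cong\cHom{\cI_\Z,\cO{\Z}}\otimes\cO{\X}(-\D)$ with a skyscraper sheaf on the zero-dimensional scheme $\Z$, whose derived global sections are concentrated in degree zero.
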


We remark that although the output of the construction is an ordinary smooth variety, the proof we present takes full advantage of the derived geometry toolbox, particularly foundational results on moduli stacks of sheaves and shifted symplectic structures from~\cite{Brav2021,Calaque2015a,Pantev2013,Toen2007,Schurg2015}.  Such an approach naturally (and rather easily) constructs $\GHilb{\X}{\D}$ as a derived stack; our main new addition is a representability statement, which shows that this derived stack is in fact a smooth variety.  Combining this with normal forms for sheaves on smooth and nodal curves, we obtain a classification of symplectic leaves in the case $\D$ has only nodal singularities.  For smooth curves,
this problem  was also treated in \cite[Section 11.2]{Rains2016}; we expand on the results in \emph{op.~cit.}, emphasizing connections with the combinatorics of Young diagrams and monomial ideals.

\subsection{Local models and holonomicity}
\label{sec:intro-local}
In \autoref{sec:local} we turn to our second theme, namely the construction of explicit local models for the Poisson manifold $\Hilb{\X}$ and its symplectic groupoid.

The guiding principle here is that the derived intersection $\Z\hcap \D$ can be described in purely classical terms using the  Hilbert--Burch resolution of the structure sheaf $\cO{\Z}$, which expresses the ideal defining $\Z$ locally as the maximal minors of a $(k+1)\times k$ matrix with entries in $\cO{\X}$ (the ``syzygy matrix''). As explained by Ellingsrud--Str\o{}mme~\cite{Ellingsrud1988} one can construct natural coordinates on $\Hilb[n]{\X}$ by choosing suitable normal forms for the syzygy matrices.  We use these coordinates to establish a tight connection between the local structure of the Poisson manifolds $\Hilb[n]{\X}$ and the Lie algebras $\aff{k} \cong \gln[k]{\CC}\ltimes \CC^k$ of affine transformations of $\CC^k$, for $k \ge 1$.  Indeed, by passing to open subsets and taking products, one can reduce the problem of understanding the local structure of the Hilbert schemes to the case of open balls around the elements $\Z = p(k) \in \Hilb[n]{\X}$ that correspond to the $k$-th order neighbourhoods of points $p \in \X$, where $k \ge 1$, for which we establish the following description.
\begin{theorem}\label{thm:local}
A choice of log Darboux coordinates on $\X$ at $p$ induces an analytic Poisson isomorphism from a neighbourhood of $p(k)$ in $\Hilb[k(k+1)/2]{\X}$ to a neighbourhood of the origin in $\aff{k}^\vee$, where the latter is equipped with the following Poisson structure:
\begin{itemize}
\item a constant symplectic structure with explicit Darboux coordinates, if $p \in \X\setminus \D$;
\item the Lie--Poisson structure, if $p$ is a smooth point of $\D$; or
\item a Lie-compatible quadratic Poisson structure, if $p$ is a node of $\D$.
\end{itemize}
\end{theorem}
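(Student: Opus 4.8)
The plan is to compute everything in the explicit affine chart on $\Hilb[n]{\X}$ around $p(k)$ coming from the \ES syzygy-matrix normal forms \cite{Ellingsrud1988}, to identify that chart with $\aff{k}^\vee$ by reading off the shape of the syzygy matrix, and then to transport Bottacin's bivector $\hilbps[n]$ to $\aff{k}^\vee$ by a computation on a dense open subset. I would first fix log Darboux coordinates $(x,y)$ on a polydisc $\V\subset\X$ at $p$, so that $\ps=\phi\,\cvf{x}\wedge\cvf{y}$ with $\phi\in\{1,\,x,\,xy\}$ in the three cases and $\D\cap\V=\cbrac{\phi=0}$. The subscheme $p(k)$ is then the monomial ideal $(x,y)^{k}\subset\cO{\X}$, of colength $1+2+\cdots+k=k(k+1)/2=n$, whose staircase is the partition $(k,k-1,\ldots,1)$ and whose Hilbert--Burch syzygy matrix is the $(k+1)\times k$ bidiagonal matrix $M$ with $y$ on the diagonal and $-x$ on the subdiagonal. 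The associated \ES chart $\Utri\subset\Hilb[n]{\X}$ is an affine space containing $p(k)$, and for this particular monomial ideal I expect the normal form to be especially clean: the universal ideal over $\Utri$ should have syzygy matrix $M+C$ with $C$ ranging freely over $\mathrm{Mat}_{(k+1)\times k}(\CC)$ (the degree-one part $M$ staying rigid and $C$ uniquely determined by the ideal), yielding an isomorphism $\Utri\cong\mathrm{Mat}_{(k+1)\times k}(\CC)$ under which $p(k)\mapsto 0$. Splitting $C$ into its top $k\times k$ block and its bottom row identifies $\mathrm{Mat}_{(k+1)\times k}(\CC)$ with $\gln[k]{\CC}\oplus(\CC^{k})^{*}\cong\aff{k}^{\vee}$ (via the trace form on $\gln[k]{\CC}$): the ``square part'' of the $(k+1)\times k$ matrix gives the $\gln[k]{\CC}$-factor, and the extra row the $\CC^{k}$-factor --- this is the source of the affine Lie algebra. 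For $k=1$ this is simply $\Hilb[1]{\X}=\X$ with the chosen coordinates, so the real content is $k\ge 2$.

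The main step is to transport $\hilbps[n]$ to these coordinates. Here I would use a density argument: the locus $\Utri[\circ]\subset\Utri$ parameterizing $n$ distinct points of $\V$ is dense and open, over it the Hilbert--Chow morphism to $\sympow{\X}=\V^{n}/\bS_{n}$ is an isomorphism, and under it $\hilbps[n]$ becomes the descent of $\sum_{r=1}^{n}\phi(x_{r},y_{r})\,\cvf{x_{r}}\wedge\cvf{y_{r}}$; since $\Hilb[n]{\X}$ is smooth and $\Utri[\circ]$ is dense, it is enough to compute the brackets of the coordinate functions there and to extend the resulting polynomial formulas to all of $\Utri$. Concretely, on $\Utri[\circ]$ one writes the entries of $M+C$ --- equivalently the matrix entries of multiplication by $x$ and by $y$ on $\cO{\Z}$ in the standard monomial basis --- as symmetric functions of the $(x_{r},y_{r})$, and computes using $\{F,G\}=\sum_{r}\phi(x_{r},y_{r})\rbrac{\cvf{x_{r}}F\cdot\cvf{y_{r}}G-\cvf{y_{r}}F\cdot\cvf{x_{r}}G}$. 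Tracking weights under the diagonal $\CCx$-action on $(x,y)$ --- under which each coordinate on $\Utri$ has weight $1$ and $\phi$ has weight $0$, $1$, or $2$ --- shows at once that each pairwise bracket of coordinates has weight $0$, $1$, or $2$, hence is constant, linear, or quadratic in $C$; it then remains to recognize these as, respectively, a constant symplectic form in explicit Darboux coordinates, the Lie--Poisson bracket of $\aff{k}$ (matching the $\gln[k]{\CC}$-block with the coadjoint bracket on $k\times k$ matrices and the bottom row with the standard module), and a quadratic Poisson structure compatible with that Lie--Poisson bracket. The case $k=1$, $p\in\D$, where $\{x,y\}=x$ or $xy$, already gives a consistency check for the identification. (Alternatively the same brackets can be read off from Bottacin's description of $\hilbps[n]$ as an Ext-pairing, restricted to $\Utri$; the density route looks shorter.) Since every identification above is algebraic on $\Utri$, the resulting Poisson isomorphism is in fact algebraic, hence in particular analytic on a neighbourhood of $p(k)$.

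The principal difficulty will be this middle computation. To carry out the bracket one must express the \ES coordinates concretely enough in terms of symmetric functions of the $(x_{r},y_{r})$, which requires real control over the combinatorics of the triangular staircase and of the $(k+1)\times k$ Hilbert--Burch matrix, and one must track signs and normalizations carefully enough to land on the Lie--Poisson bracket of $\aff{k}$ \emph{exactly} --- not merely on some linear Poisson structure of the right dimension --- and so that in the nodal case the quadratic term is manifestly Lie-compatible rather than just quadratic. A smaller preliminary point is to verify that the ``$M+C$'' normal form, and its $\CCx$-equivariance, really hold for this monomial ideal; granting this, the density argument reduces the theorem to a substantial but essentially mechanical symmetric-function calculation whose three cases are distinguished only by the single function $\phi$.
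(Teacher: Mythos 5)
Your skeleton matches the paper's: the same \ES chart $\Utri$ around $p(k)$ with syzygy matrix $S_E=E+(\text{fixed bidiagonal part})$, the same identification of $\CC^{(k+1)\times k}$ with $\aff{k}^\vee$ (top $k\times k$ block $=\gln[k]{\CC}^\vee$, extra row $=(\CC^k)^\vee$), and the same homogeneity bookkeeping (coordinates of weight one, $\ps$ of weight $-2$, $-1$ or $0$, hence brackets constant, linear or quadratic). Where you diverge is the mechanism for actually computing the brackets. The paper runs the density argument \emph{only} for the constant case, and even there it never touches symmetric functions of the points: it writes $\omega^{[n]}=\sum_r dy_r\wedge dx_r={\rm Tr}(dM_y\wedge dM_x)$ in the basis of indicator functions and invokes \autoref{lm:tr(dAdB)} (invariance of ${\rm Tr}(dA\wedge dB)$ under simultaneous conjugation of commuting matrices) to recompute the same trace in the monomial basis, where $M_x,M_y$ are read off from Haiman's relations. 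For the linear and quadratic cases it does not return to the reduced locus at all: it introduces the recursion operators $J_x,J_y$ (multiplication by $x,y$ on $\tb[\Z]{\Utri}\cong\Hom[\U]{\cI_\Z,\cO{\Z}}$, computed in \autoref{subsec:recursion}) and uses Bottacin's formula to get $(\hilbps)^\sharp=f(J_x,J_y)(\hilbps_0)^\sharp$, so the smooth and nodal cases follow from the Darboux case by applying $J_y$ and $J_xJ_y$.

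The gap in your proposal is concentrated precisely at the step you defer: "one writes the entries \dots as symmetric functions of the $(x_r,y_r)$ and computes." On the reduced locus the monomial-basis entries of $M_x,M_y$ (equivalently the Haiman and \ES coordinates) are rational symmetric functions whose denominators come from inverting the evaluation matrix $(x_r^iy_r^j)$; differentiating them, contracting with $\sum_r\phi(x_r,y_r)\,\cvf{x_r}\wedge\cvf{y_r}$ for $\phi=y$ or $xy$, checking that all poles cancel, and recognizing the result as \emph{exactly} the $\aff{k}$ structure constants (and, in the nodal case, a bracket compatible with them) is not an essentially mechanical calculation---it is the content of the theorem, and you have no organizing device to make it close up: the conjugation-invariance of ${\rm Tr}(dA\wedge dB)$ has no evident analogue once $\phi$ is nonconstant, which is exactly why the paper switches to the recursion-operator factorization there. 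So the chart, the identification with $\aff{k}^\vee$, and the weight argument are all correct and agree with the paper, but the two ideas that make the bracket computation tractable (the trace lemma for the constant case, and $(\hilbps)^\sharp=f(J_x,J_y)(\hilbps_0)^\sharp$ for the rest) are missing from your plan, and without them or a substitute the "middle computation" cannot be considered done.
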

As a consequence,  when $\D$ is smooth, the symplectic groupoid $\aff{k}^\vee \rtimes \Aff{k}$ of $\aff{k}^\vee$ locally integrates the Poisson structure; in fact, we show that this a local model for the groupoid $\GHilb[n]{\X}{\D}$ constructed in \autoref{thm:groupoid}, when $n=\tfrac{1}{2}k(k+1)$.  
Another interesting aspect of this case is that
 we can identify a natural subvariety in $\Hilb[k(k+1)/2]{\X}$ that globalizes the embedding  $\gln[k]{\CC}^\vee \subseteq \aff{k}^\vee$, namely the locus $\W$ of schemes whose intersection with $\D$ has length $k$.  The resulting map $\W \to \Sym^k \D$ locally encodes the universal Poisson deformation of the nilpotent cone, giving a link to Springer theory---in particular, the fibre over $k \cdot p$ is a global version of the nilpotent cone (see \autoref{r:groth-spr}). 

Of the three cases in \autoref{thm:local}, the third one is the most subtle. The quadratic Poisson structure in question seems to be new, and has the remarkable property that it admits a canonical isotrivial toric degeneration.  The construction of this degeneration involves a careful analysis  of the combinatorics of the weights of a natural torus action on $\aff{k}$, which we recast visually as a sort of ``game of dominoes''; see \autoref{sec:toric-degen}.

 As an application of our local models, we show that the Hilbert schemes give new examples of Poisson manifolds satisfying a natural and subtle nondegeneracy condition, called holonomicity.  (This was the problem that motivated the broader investigation we present in this paper.)  The notion of holonomicity was introduced by the second- and third-named authors in~\cite{Pym2018}, motivated by deformation theory; it is roughly equivalent to the statement that for each point in the Poisson manifold, the space of derived Poisson deformations of its germ is finite-dimensional.  This notion was further developed and applied in our joint work \cite{Matviichuk2020}, where we emphasized the importance of a certain collection of symplectic leaves, which we call ``characteristic'': these are the symplectic leaves that are preserved by the flow of the modular vector field.  In particular, while the original definition of holonomicity involves $\sD{}$-modules, we conjectured that it is equivalent to a simple geometric criterion:
\begin{conjecture}[\cite{Matviichuk2020}]\label{conj:holonomicity}
A Poisson manifold is holonomic if and only if its germ at any point has only finitely many characteristic symplectic leaves.
\end{conjecture}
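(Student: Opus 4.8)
The plan is to reduce \autoref{conj:holonomicity} to a precise dictionary between the irreducible components of the characteristic variety $\mathrm{Char}(\mathcal{M}_\ps)\subseteq\ctb{X}$ of the associated Poisson $\sD{X}$-module $\mathcal{M}_\ps$ (in the sense of \cite{Pym2018}) and the characteristic symplectic leaves near a point. Both the definition of holonomicity and the statement of the conjecture are germwise, so I would fix a point $x\in X$ (with $X=\Hilb{\X}$ in our application) and work in its analytic germ throughout. Holonomicity means precisely that $\mathcal{M}_\ps$ is holonomic, i.e.\ that $\mathrm{Char}(\mathcal{M}_\ps)$ is Lagrangian — equivalently, by Bernstein's inequality and involutivity of characteristic varieties, of dimension exactly $\dim X$ — for the canonical symplectic form on $\ctb{X}$. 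The organising principle is that $\mathcal{M}_\ps$ is built out of $\ps$ together with its modular vector field, so that $\mathrm{Char}(\mathcal{M}_\ps)$ is stable under the Hamiltonian lift of the modular vector field to $\ctb{X}$; this should force each of its irreducible components to project to a modular-invariant locus in $X$ and, via a stratification argument (using, in the algebraic settings of interest such as $\Hilb{\X}$, that the symplectic leaves are locally closed subvarieties, \autoref{thm:groupoid}), to the closure of a single characteristic leaf. The technical heart of the matter is thus the following dictionary: writing $N^*_L\subseteq\ctb{X}$ for the conormal variety of a leaf $L$, the closures $\overline{N^*_L}$, for $L$ a characteristic leaf meeting a neighbourhood of $x$, are exactly the irreducible components of $\mathrm{Char}(\mathcal{M}_\ps)$ (with the open dense symplectic leaf contributing the zero section).

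Granting this dictionary, both implications follow. If $\ps$ is holonomic then $\mathrm{Char}(\mathcal{M}_\ps)$, being the characteristic variety of a coherent $\sD{X}$-module, has only finitely many irreducible components, so there are only finitely many characteristic leaves near $x$. Conversely, if there are only finitely many characteristic leaves $L_1,\dots,L_r$ near $x$, then $\mathrm{Char}(\mathcal{M}_\ps)=\bigcup_i\overline{N^*_{L_i}}$ is a finite union of irreducible Lagrangians — the conormal variety of a smooth locally closed subvariety is Lagrangian, and so is its closure — hence Lagrangian, and $\ps$ is holonomic. Establishing the dictionary itself amounts to two symbol computations: an \emph{upper bound}, showing that a nonzero covector in $\mathrm{Char}(\mathcal{M}_\ps)$ lying over a point of a leaf $L$ must annihilate $L$, and that $L$ is then necessarily modular-invariant (this uses that, transverse to $L$, the bracket is controlled to leading order by its transverse Poisson structure together with the modular data); and a \emph{lower bound}, producing near a generic point of each characteristic leaf $L$ a nonzero section of $\mathcal{M}_\ps$ whose symbol is supported on $\overline{N^*_L}$, so that no characteristic leaf is invisible to $\mathcal{M}_\ps$.

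I expect the decisive obstacle — and the reason the present paper establishes \autoref{conj:holonomicity} only for Hilbert schemes rather than in general — to be the absence of a normal form for an arbitrary Poisson germ, which prevents the two symbol computations from being carried out uniformly: already when the degeneracy divisor $\D$ has worse than normal-crossing singularities one loses the control on the transverse structures and on a good filtration of $\mathcal{M}_\ps$ needed to pin down $\mathrm{Char}(\mathcal{M}_\ps)$. This is exactly where \autoref{thm:local} is used: after passing to open subsets and products it identifies the germ of $\Hilb{\X}$ with one of the three explicit models on $\aff{k}^\vee$ (constant symplectic, linear Lie--Poisson, or the new Lie-compatible quadratic structure at a node), reducing the $\mathcal{D}$-module analysis to finitely many model Poisson structures; and for the delicate quadratic node model one reduces further, via its canonical isotrivial toric degeneration (\autoref{sec:toric-degen}), to a toric and combinatorial computation of the characteristic variety, in which holonomicity becomes visible in the weight combinatorics — the ``game of dominoes'' — of the torus action on $\aff{k}$. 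A natural intermediate target towards the general conjecture would be to prove it first for log-symplectic structures (normal-crossing $\D$), where the transverse structures are linear Lie--Poisson and both symbol computations are tractable, and then to show that holonomicity is stable under isotrivial degeneration; a stability statement of that kind, in sufficient generality, would reduce the general case to the normal-crossing one.
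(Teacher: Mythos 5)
The statement you are addressing is a \emph{conjecture}, not a theorem of this paper: the paper offers no proof of it, and explicitly leaves it open in general (even for Hilbert schemes one case remains undecided, namely $n=5$ with $A_2$ singularities of $\D$). Your proposal likewise does not constitute a proof. The direction ``holonomic $\Rightarrow$ finitely many characteristic leaves per germ'' is already a theorem of \cite{Pym2018}; the substance of the conjecture is the converse, and your entire argument for it rests on the unestablished ``dictionary'' identifying the irreducible components of $\charps$ with the closures $\overline{N^*_L}$ of conormal varieties of characteristic leaves. The containment of those conormals in $\charps$ (your ``lower bound'') is essentially what makes the known direction work; the reverse containment (your ``upper bound'') is not a routine symbol computation but is, for all practical purposes, equivalent to the open converse itself, and you offer no mechanism for it beyond an appeal to normal forms that you acknowledge do not exist in general. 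You also elide genuine obstructions: for an arbitrary holomorphic Poisson germ the symplectic leaves need not be locally closed analytic subvarieties, so the cycles $\overline{N^*_L}$ need not be well-defined analytic Lagrangians, and the passage from ``each component projects to a modular-invariant locus'' to ``each component is the conormal closure of a single characteristic leaf'' requires a stratification that is not available outside special settings.

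It is also worth recording that the route the paper actually takes for Hilbert schemes is not the dictionary route. \autoref{thm:nodal-holonomic} is proved by (i) classifying the characteristic leaves directly in \autoref{sec:char-leaves} to relate their finiteness to nodality of $\D$; (ii) quoting \cite{Pym2018} for holonomic $\Rightarrow$ finite; and (iii) for the converse in the nodal case, using the local model of \autoref{thm:local} to reduce to an explicit quadratic Poisson structure on $\aff{k}^\vee$, degenerating it isotrivially to a toric log symplectic structure (\autoref{thm:toric-degen}), and then applying \autoref{lm:holonomicity_openness} together with the combinatorial criterion of \cite{Matviichuk2020} (\autoref{lm:cyclically_monotone_holonomic}). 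Note the direction of the degeneration step: what is used is that holonomicity of the \emph{special} (toric) fibre implies holonomicity of the nearby fibres, by semicontinuity of the dimension of the projectivized characteristic variety; ``holonomicity is stable under isotrivial degeneration,'' as you phrase it, is the opposite implication and would not reduce the general case to the normal-crossing one. Your closing paragraph correctly identifies the ingredients of the paper's argument, but the dictionary framing with which you open is neither what the paper does nor something you have proved.
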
 

 In the present work, we show that the symplectic leaf through a point $\Z \in \Hilb[n]{\X}$ is characteristic if and only if the corresponding derived intersection $\Z\hcap \D$ is preserved by the flow of the modular vector field on $\D$. Once again, this can be turned into a concrete local computation in examples without relying on derived techniques.  Combining such concrete calculations with the results in \cite{Matviichuk2020,Pym2018}, the aforementioned toric degenerations, and some subtle combinatorial linear algebra, we establish our conjecture in (almost) complete generality for  Hilbert schemes: 
\begin{theorem}\label{thm:holonomic}
For a Poisson surface $\X$ whose vanishing locus is the anticanonical divisor $\D\subset \X$, the following statements are equivalent:
\begin{enumerate}
\item For every $n\ge0$, the induced Poisson structure on the Hilbert scheme $\Hilb[n]{\X}$ is holonomic.
\item  For every $n\ge0$, the germ of  $\Hilb[n]{\X}$ at any point has only finitely many characteristic symplectic leaves.
\item The only singularities of $\D$ are nodes.
\end{enumerate}
\end{theorem}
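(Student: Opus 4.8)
The plan is to establish the cyclic chain $(3)\Rightarrow(1)\Rightarrow(2)\Rightarrow(3)$. The implication $(1)\Rightarrow(2)$ is the ``easy'' half of \autoref{conj:holonomicity} and holds for every Poisson manifold: holonomicity forces the characteristic variety of the associated $\sD{}$-module to have dimension $\dim\Hilb[n]{\X}$, and since this variety contains the conormal variety of each characteristic leaf, a positive-dimensional family of characteristic leaves through a point would raise the dimension (see \cite{Pym2018,Matviichuk2020}). Hence the content of the theorem is in the other two implications.

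For $(3)\Rightarrow(1)$, assume the only singularities of $\D$ are nodes. Holonomicity is a local condition, stable under open immersions and under products---the characteristic variety of a product being the product of characteristic varieties \cite{Pym2018}---and symplectic manifolds are holonomic. By the reduction indicated in \autoref{sec:intro-local} and carried out in \autoref{sec:local} (using the Ellingsrud--Str\o{}mme syzygy charts and \autoref{thm:local} to present the germ of $\Hilb[n]{\X}$ at an arbitrary point, up to a symplectic factor and an open immersion, as a product of germs of the three model types), it suffices to prove that the Lie--Poisson structure and the Lie-compatible quadratic structure on $\aff{k}^\vee$ from \autoref{thm:local} are holonomic. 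For the Lie--Poisson structure I would classify the coadjoint orbits of $\aff{k}$ preserved by the flow of the modular vector field---which one computes to be translation in the direction of the trace character $\mathrm{tr}\in\gln[k]{\CC}^\vee\subset\aff{k}^\vee$---and check that, uniformly in $k$, only finitely many such orbits meet the germ at the origin; together with the validity of \autoref{conj:holonomicity} for quasi-homogeneous Poisson structures established in \cite{Matviichuk2020} (a linear Poisson structure being quasi-homogeneous for the Euler field), this yields holonomicity of $\aff{k}^\vee$---the series of Lie algebras advertised in the introduction. For the quadratic node model I would instead use the canonical isotrivial toric degeneration of \autoref{sec:toric-degen}: the dimension of the characteristic variety is upper semicontinuous along the family and always at least $\dim\aff{k}^\vee$, so holonomicity is inherited from the monomial central fibre, whose holonomicity is the finite combinatorial statement---the ``game of dominoes''---that it has only finitely many torus-invariant (hence characteristic) symplectic leaves.

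For $(2)\Rightarrow(3)$ I argue by contraposition, so suppose $\D$ is not nodal. If $\D$ is non-reduced, then $\X=\Hilb[1]{\X}$ already fails $(2)$: writing the bivector locally as $\pi=f\,\partial_x\wedge\partial_y$ with $f$ divisible by the square of some component's equation, the modular vector field is divisible by that equation and hence vanishes along the corresponding reduced curve, so every point of that curve is a $0$-dimensional characteristic leaf. If $\D$ is reduced but has a non-nodal singularity at a point $p$, I would exhibit an $\Z\in\Hilb[n]{\X}$---a fat point $p(k)$ of suitable length, or a punctual subscheme adapted to the branch geometry of $\D$ at $p$---whose germ carries a positive-dimensional family of characteristic leaves: in the Ellingsrud--Str\o{}mme coordinates near $\Z$ one writes the Poisson bivector and modular vector field explicitly, and the failure of the ``nodal'' normal form of \autoref{thm:local} forces moduli among the modular-invariant leaves, which by \autoref{thm:groupoid} are precisely moduli of $\cO{\D}$-module structures on restricted ideal sheaves, i.e.\ of $\D$-equivalence classes.

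The step I expect to be the main obstacle is the holonomicity of the local models in $(3)\Rightarrow(1)$, and within it the combinatorial linear algebra: for the quadratic node model, pinning down the torus action on $\aff{k}^\vee$ whose Rees degeneration is \emph{isotrivial} and verifying via the ``game of dominoes'' that its monomial limit has finitely many characteristic leaves for all $k$; and for the Lie--Poisson model, the uniform-in-$k$ classification of the modular-invariant coadjoint orbits of $\aff{k}$. A secondary difficulty is making the reduction-to-local-models step airtight---the delicate point being punctual subschemes not of the form $p(k)$---and, on the $(2)\Rightarrow(3)$ side, handling every non-nodal singularity type (including cases like tacnodes, where a generic transverse curve meets $\D$ with the same multiplicity as at a node) by a uniform argument; this is presumably the source of the ``(almost)'' in the statement.
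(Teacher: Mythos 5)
Your architecture $(3)\Rightarrow(1)\Rightarrow(2)\Rightarrow(3)$ matches the paper's, and your treatment of $(1)\Rightarrow(2)$, of the non-reduced case of $(2)\Rightarrow(3)$, and of the nodal quadratic model (toric degeneration plus semicontinuity of the projectivized characteristic variety, which is exactly \autoref{lm:holonomicity_openness}) is on target. The genuine gap is in your handling of the Lie--Poisson model $\aff{k}^\vee$ at smooth points of $\D$: you propose to show it has finitely many modular-invariant coadjoint orbits near the origin and then conclude holonomicity from ``the validity of \autoref{conj:holonomicity} for quasi-homogeneous Poisson structures established in \cite{Matviichuk2020}.'' No such result is established there: \cite{Matviichuk2020} proves the conjecture only for log symplectic structures degenerating along a \emph{normal crossings} divisor (besides the surface case of \cite{Pym2018}), and the degeneracy divisor of $\aff{k}^\vee$ --- the Pfaffian hypersurface --- is not normal crossings for $k\ge 2$; were the conjecture known for all quasi-homogeneous structures, the nodal model would be equally immediate and the toric degeneration would be unnecessary. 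The paper closes this differently: it never treats the linear model separately. The semicontinuity argument, applied to the isotrivial family $g\cdot\hilbps$ degenerating to $\hilbpstor$, controls the dimension of the characteristic variety \emph{globally} on the chart $\CC^{(k+1)\times k}$ (the characteristic varieties of quadratic structures are cones for the combined base and fibre dilations, so one can projectivize and use properness). Since the fat point $p(j)$ at a \emph{smooth} point $p$ of $\{xy=0\}$ lies in that same chart and its germ is the Lie--Poisson germ of $\aff{j}^\vee$ by \autoref{thm:local}, holonomicity of $\aff{k}^\vee$ comes out as a corollary of the nodal case rather than serving as an input to it.

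Two smaller corrections. The ``game of dominoes'' is the proof of \autoref{thm:toric-degen} itself --- showing the non-toric part of $\hilbps$ decomposes into rectangular weights and admissible domino pairs, all of positive weight for a suitable one-parameter subtorus --- not the holonomicity of the central fibre. The latter is a separate step: even with the normal-crossings case of the conjecture in hand, the paper verifies holonomicity of $\hilbpstor$ via condition 4 of \cite[Theorem 1.5]{Matviichuk2020} and the cyclic monotonicity of the biresidue matrix, which reduces to the non-obvious linear-algebra statement \autoref{lm:cyclically_monotone_holonomic} (the constant vector is not in the row span of an odd-size cyclically monotone $0/1$ matrix), proved by an interval-exchange induction; your sketch correctly locates the difficulty here but does not identify the criterion. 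Finally, for $(2)\Rightarrow(3)$ with $\D$ reduced the paper uses not fat points but explicit one-parameter families of ideals annihilated by $\hilbps$ and preserved by the modular field, at $n=2$ (lines through a triple point), $n=3$ (the ideals $(y+ax^2,x^3)$ for $A_k$, $k\ge3$) and $n=6$ (the ideals $(y^2+ax^3,x^2y,xy^2)$ for $A_2$); and the ``(almost)'' in the introduction refers not to this implication but to the one remaining open case of \autoref{conj:holonomicity}, namely $\Hilb[5]{\X}$ when $\D$ has only cusps.
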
 
Note that the order of the quantifiers in statements 1 and 2 is important: for $n< 6$ there exist some exceptions for which $\Hilb[n]{\X}$ has finitely many characteristic leaves, but $\D$ has non-nodal singularities.  However, we can treat most of these cases directly, leaving only one case left to determine whether \autoref{conj:holonomicity} holds for all Hilbert schemes, namely the case when $n = 5$ and $\D$ has only  $A_2$-singularities (cusps).

On the other hand, combining \autoref{thm:holonomic} with \autoref{thm:local}, we obtain the first (and so far only known)  infinite series of holonomic Lie algebras:

\begin{corollary}
The Lie--Poisson structure on $\aff{k}^\vee$ is holonomic for any $k\ge1$, and in particular its Poisson cohomology is finite-dimensional.
\end{corollary}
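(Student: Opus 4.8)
The plan is to reduce the corollary to \autoref{thm:holonomic} and \autoref{thm:local}, using only three soft facts about holonomicity: it is a local analytic condition, it is inherited by open Poisson submanifolds, and it is insensitive to rescaling the Poisson bivector by a nonzero constant. First I would fix a Poisson surface $\X$ whose vanishing locus $\D\subset\X$ is smooth and nonempty; concretely one may take $\X=\CC^2$ with $\ps = x\,\cvf{x}\wedge\cvf{y}$, so that $\D=\{x=0\}$, or any projective surface carrying a smooth anticanonical curve. Since $\D$ is smooth, its only singularities are (vacuously) nodes, so the implication $(3)\Rightarrow(1)$ of \autoref{thm:holonomic} shows that $\Hilb[n]{\X}$ is holonomic for every $n\ge 0$.

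Next, fix $k\ge 1$, put $n=\tfrac12 k(k+1)$, and pick any point $p\in\D$, which is then automatically a smooth point of $\D$. By \autoref{thm:local}, a choice of log Darboux coordinates at $p$ gives an analytic Poisson isomorphism from an open neighbourhood $U$ of $p(k)$ in $\Hilb[n]{\X}$ onto an open neighbourhood $V$ of the origin in $\aff{k}^\vee$ carrying its Lie--Poisson structure. Because holonomicity is a local condition, the open subset $U$ of the holonomic manifold $\Hilb[n]{\X}$ is holonomic, hence so is $V$; in other words the Lie--Poisson structure on $\aff{k}^\vee$ is holonomic at every point of the neighbourhood $V$ of the origin.

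To pass from $V$ to all of $\aff{k}^\vee$ I would use the Euler scaling. For $t\in\CCx$, multiplication by $t$ defines an automorphism $\phi_t$ of the vector space $\aff{k}^\vee$, and since the Lie--Poisson bivector $\ps$ is linear one checks directly that $\{f\circ\phi_t,\, g\circ\phi_t\}_{\ps}=t\,\{f,g\}_{\ps}\circ\phi_t$; that is, $\phi_t$ is a Poisson isomorphism from $(\aff{k}^\vee,\ps)$ onto $(\aff{k}^\vee,t\ps)$. Rescaling a Poisson bivector by a nonzero constant leaves the symplectic leaves, the characteristic leaves, and the associated $\mathcal{D}$-module unchanged, hence preserves holonomicity; therefore the germ of $(\aff{k}^\vee,\ps)$ at a point $\xi$ is holonomic if and only if its germ at $\phi_t(\xi)=t\xi$ is. Given an arbitrary $\xi\in\aff{k}^\vee$, choose $t$ with $t\xi\in V$; the germ at $t\xi$ is holonomic by the previous step, so the germ at $\xi$ is holonomic as well. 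Since $\xi$ was arbitrary, $(\aff{k}^\vee,\ps)$ is holonomic.

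Finally, $\aff{k}^\vee\cong\CC^{k^2+k}$ is a smooth affine variety, so the relation between holonomicity and deformation theory established in \cite{Pym2018,Matviichuk2020} — which realizes Poisson cohomology via a complex of $\mathcal{D}$-modules that is holonomic precisely when the Poisson structure is — forces the Poisson cohomology of $\aff{k}^\vee$ to be finite-dimensional, giving the last assertion. I expect the only genuinely delicate point is the globalization: one must invoke the full strength of \autoref{thm:holonomic} (holonomicity of $\Hilb[n]{\X}$ \emph{at every point}, not merely at $p(k)$) to know the local model of \autoref{thm:local} is holonomic as an open submanifold, and then feed it through the scaling identity $\{f\circ\phi_t,g\circ\phi_t\}_\ps=t\{f,g\}_\ps\circ\phi_t$ to sweep out every point of $\aff{k}^\vee$; all of the substantive Poisson geometry is already packaged inside the two cited theorems.
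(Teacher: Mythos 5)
Your argument is correct and is essentially the paper's own: the corollary is obtained by combining \autoref{thm:holonomic} (applied to a surface with smooth, hence vacuously nodal, anticanonical divisor) with the local model of \autoref{thm:local}, using that holonomicity is a local property invariant under rescaling the bivector by a nonzero constant. The only cosmetic difference is your use of the Euler scaling to globalize from a neighbourhood of the origin; the paper instead notes (see \autoref{cor:nilcone} and the remark preceding it) that for $\X=\CC^2$ the triangular chart is all of $\CC^{(k+1)\times k}\cong\aff{k}^\vee$, so the Poisson isomorphism is already global and the scaling step is not needed.
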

It would be interesting to know if there are other series of Lie algebras with holonomic duals.  Note such Lie algebras must have an open coadjoint orbit, i.e.~they are Frobenius.  However, many of the Frobenius Lie algebras known to us are not holonomic.

\subsection{Deformation theory}
\label{sec:intro-defthy}
We close the paper in \autoref{sec:def-thy} by discussing the  Poisson deformations of $\Hilb[n]{\X}$.  Such deformations are governed by the Poisson cohomology $\Hhilbps{\X}$, which in turn is controlled by the geometry of the open symplectic leaf and the codimension-two characteristic leaves.  We compute the Poisson cohomology in low degrees under the additional assumption that  $\D$ has only quasi-homogeneous singularities (which includes all cases where
$\X$ is projective and $\D$ is reduced). This extends earlier work of Ran~\cite{Ran2016}, who treated the case where $\D$ is smooth. In particular, we determine space of first-order deformations (the second Poisson cohomology):
\begin{theorem}
  Assume that $\X$ is connected, and that the anticanonical divisor $\D$ is reduced and has only quasi-homogeneous singularities.  Then the second Poisson cohomology of $\Hilb{\X}$, is given, for all $n \ge 2$, by
\begin{align*}
\Hhilbps[2]{\X} &\cong  \coH[2]{\Hilb{(\X\setminus \D)};\CC} \oplus \coH[0]{\acan{\X}|_{\D_\sing}} \\
&\cong \Hps[2]{\X} \oplus \wedge^2 \coH[1]{\X\setminus \D;\CC} \oplus  \CC\cdot [\E]
\end{align*}
where $\acan{\X}$ is the anticanonical bundle and $[\E]$ is the Poisson Chern class of the exceptional divisor of the Hilbert--Chow morphism.
\end{theorem}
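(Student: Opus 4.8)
The plan is to compute the second Poisson cohomology as the degree-two hypercohomology of the Lichnerowicz--Poisson complex $\rbrac{\mathscr{X}^{\bullet}_{\Hilb[n]{\X}},\dhilbps}$ of polyvector fields on $\Hilb[n]{\X}$, exploiting that---by the local models of \autoref{thm:local} and the reduction to products and open subsets carried out in \autoref{sec:local}---this complex of sheaves is constructible with respect to a finite stratification of $\Hilb[n]{\X}$ refining the decomposition into symplectic leaves. Write $U:=\X\setminus\D$ and let $j\colon\Hilb[n]{U}\hookrightarrow\Hilb[n]{\X}$ be the open inclusion of the locus of subschemes whose support avoids $\D$. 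Since $U$ is symplectic, $\Hilb[n]{U}$ is the open dense symplectic leaf; there the Lichnerowicz complex is quasi-isomorphic, via contraction with the symplectic form, to the holomorphic de Rham complex, hence to the constant sheaf $\underline{\CC}$, so $j^{*}$ of it has hypercohomology $\coH{\Hilb[n]{U};\CC}$, which by the classical description (G\"ottsche) of the cohomology of Hilbert schemes of points on a surface equals $\coH[2]{U;\CC}\oplus\wedge^{2}\coH[1]{U;\CC}\oplus\CC\cdot[\E]$ in degree two for $n\ge 2$, with $[\E]$ the class of the Hilbert--Chow exceptional divisor. Writing $Z$ for the complementary closed locus of subschemes meeting $\D$ and $\mathbb{H}^{\bullet}_{Z}$ for the hypercohomology of $R\Gamma_{Z}$ of the Lichnerowicz complex, the localization triangle of the decomposition $(\Hilb[n]{U},Z)$ produces a long exact sequence relating $\Hhilbps{\X}$, $\coH{\Hilb[n]{U};\CC}$ and $\mathbb{H}^{\bullet}_{Z}$.

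The first step is to localize this error term. Where $\D$ is smooth and $\Z\cap\D$ is reduced, the first two cases of \autoref{thm:local} present $\Hilb[n]{\X}$ Poisson-analytically as a product of a log-symplectic surface with smooth polar divisor and a symplectic polydisc, whose Lichnerowicz complex is quasi-isomorphic to $Rj'_{*}\underline{\CC}$ (with $j'$ the inclusion of the complement of the polar divisor), so $R\Gamma_{Z}$ of it vanishes there. Hence it is supported on the closed subset $Z'\subset Z$ of complex codimension at least two consisting of subschemes that are non-reduced somewhere on $\D$, or meet $\D$ in at least two points, or meet $\D_\sing$. This forces $\mathbb{H}^{k}_{Z'}=0$ for $k\le 1$, so the long exact sequence collapses to a short exact sequence
\[
0\to\coker\rbrac{\coH[1]{\Hilb[n]{U};\CC}\to\mathbb{H}^{2}_{Z'}}\to\Hhilbps[2]{\X}\to\ker\rbrac{\coH[2]{\Hilb[n]{U};\CC}\to\mathbb{H}^{3}_{Z'}}\to 0 .
\]
By constructibility and a further stratification of $Z'$, these terms localize---via products with symplectic factors and open inclusions, as in \autoref{sec:local}---to neighbourhoods of subschemes supported at the singular points of $\D$, governed by the Poisson structures of \autoref{thm:local} on $\aff{k}^{\vee}$.

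The second step is this local Poisson-cohomology computation, where the quasi-homogeneity hypothesis is decisive. At each singular point $p$ of $\D$ the germ $(\X,\D,p)$ carries a contracting $\CCx$-action which, by the functoriality of the Hilbert-scheme and Bottacin constructions, lifts to a $\CCx$-action on the relevant open set of $\Hilb[n]{\X}$---still with all weights positive---for which $\hilbps$ is homogeneous; together with the finiteness properties of the models of \autoref{thm:local} (holonomic at the nodal and smooth-$\D$ models by \autoref{thm:holonomic}), this renders the relevant local Poisson cohomology finite-dimensional and computable by an explicit graded complex in the coordinates of \autoref{thm:local}, built from Ellingsrud--Str\o{}mme's syzygy-matrix normal forms \cite{Ellingsrud1988}. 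Carrying this out---using the quadratic and Lie--Poisson normal forms of \autoref{thm:local} at nodes and smooth points, and a direct graded analysis at the remaining quasi-homogeneous singularities---one finds, as anticipated by the characteristic-leaf criterion recalled in the introduction \cite{Matviichuk2020}, that the contributions to $\mathbb{H}^{2}_{Z'}$ from strata lying over smooth points of $\D$ are absorbed into the image of $\coH[1]{\Hilb[n]{U};\CC}$, while those lying over $\D_\sing$ assemble to $\coH[0]{\acan{\X}|_{\D_\sing}}$---in parallel with the surface case, where the local contribution at $p$ is the Tjurina algebra, of dimension the Milnor number by quasi-homogeneity. One also checks that the connecting map $\coH[2]{\Hilb[n]{U};\CC}\to\mathbb{H}^{3}_{Z'}$ vanishes, because every class in $\coH[2]{\Hilb[n]{U};\CC}$ lifts to $\Hhilbps[2]{\X}$: the $\Hps[2]{\X}$-summand by Bottacin functoriality, the $\wedge^{2}\coH[1]{U;\CC}$-summand by exterior products of degree-one Poisson cohomology classes, and $\CC\cdot[\E]$ by the Poisson Chern class of $\E$. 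Combining these gives $\Hhilbps[2]{\X}\cong\coH[2]{\Hilb[n]{U};\CC}\oplus\coH[0]{\acan{\X}|_{\D_\sing}}$; substituting the value of $\coH[2]{\Hilb[n]{U};\CC}$ and the surface-level identity $\Hps[2]{\X}\cong\coH[2]{U;\CC}\oplus\coH[0]{\acan{\X}|_{\D_\sing}}$ (the $n=1$ instance of the same argument) gives the second stated form.

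The main obstacle will be the local Poisson-cohomology calculation at the non-nodal quasi-homogeneous singularities, where the toric model of \autoref{thm:local} is unavailable, and the bookkeeping it entails: one must control the relevant graded local Lichnerowicz complex in the Ellingsrud--Str\o{}mme coordinates directly and rule out---uniformly in $n$---spurious degree-two classes that arise at higher-codimension strata of $Z'$ only for small $n$. Showing that the answer is genuinely independent of $n$ for all $n\ge 2$, i.e.\ that the strata created by increasing $n$ contribute to Poisson cohomology only in degrees $\ge 3$, is a further delicate point, as is fixing the scheme structure on $\D_\sing$ compatibly with the graded computation.
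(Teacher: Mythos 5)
Your step 1 (localizing the Lichnerowicz complex against $Rj_*\CC_{\Hilb{\U}}$ and reducing to local cohomology along a codimension-two subset) is sound in outline and is essentially how the paper proceeds. The genuine gap is in your step 2, and it concerns a structural point rather than a computation you simply haven't finished. You propose to evaluate the local contributions at the singular points of $\D$ by an explicit graded analysis of the Poisson structure in Ellingsrud--Str\o{}mme coordinates, and you flag the non-nodal quasi-homogeneous singularities as the main obstacle. But the codimension-two stratum of $Z'$ lying over $\D_\sing$ consists of subschemes of the form $\{p\}\sqcup\Z'$ with $p\in\D_\sing$ and $\Z'\subset\X\setminus\D$ (\autoref{cor:codim2_char_leaves}), and its transverse Poisson germ is just the germ of the surface $(\X,\ps)$ at $p$; so the local computation there is literally the $n=1$ case (\autoref{thm:surface-Hps}), and no new analysis of the $\aff{k}^\vee$-type models is needed. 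All the strata where those harder models genuinely enter have codimension $\ge 3$ (\autoref{lem:codim-3-stratum}: the only codimension-two strata are $\bS_1$, two reduced points on $\D_\reg$, where the normal-crossings computation of \cite{Matviichuk2020} shows the adjunction map is already a quasi-isomorphism, and $\bS_2$), hence contribute only to $\mathbb{H}^{\ge 3}_{Z'}$ and are invisible in degree two. Without this codimension count your argument cannot close: you cannot ``rule out spurious degree-two classes at higher-codimension strata'' by a graded computation you admit you cannot perform, and your claim that the contributions over smooth points of $\D$ are ``absorbed into the image of $\coH[1]{\Hilb{\U};\CC}$'' is not tenable --- if the connecting map $\coH[1]{\Hilb{\U};\CC}\to\mathbb{H}^2_{Z'}$ were nonzero it would force $\Hhilbps[1]{\X}\subsetneq\coH[1]{\Hilb{\U};\CC}$, contradicting the degree-one statement; the correct assertion is that those strata contribute nothing to $\mathbb{H}^2_{Z'}$ at all.

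The remaining step, surjectivity in degree two (vanishing of $\coH[2]{\Hilb{\U};\CC}\to\mathbb{H}^3_{Z'}$), is where your route genuinely diverges from the paper's, and your sketch is reasonable but unproven. You propose to lift the three summands of $\coH[2]{\Hilb{\U};\CC}$ separately: $\Hps[2]{\X}$ via Bottacin functoriality, $\wedge^2\coH[1]{\U;\CC}$ via cup products of degree-one Poisson cohomology classes, and $\CC\cdot[\E]$ via the image of $c_1(\E)$ under $\wedge^2\ps^\sharp$. The paper instead produces lifts uniformly from the logarithmic de Rham complex $\hilblogforms{\X}{\D}$, proving $\coH[2]{\hilblogforms{\X}{\D}}\twoheadrightarrow\coH[2]{\Hilb{\U};\CC}$ by identifying $r_*\hilblogforms{\X}{\D}$ with the $\bS_n$-invariants of log forms on $\X^n$ (using rational/klt singularities of $\sympow{\X}$ and the logarithmic comparison theorem for quasi-homogeneous divisors), reserving Bottacin functoriality for the $\coH[0]{\acan{\X}|_{\D_\sing}}$ summand. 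Your lifting scheme could likely be made to work and is arguably more elementary, but as written it omits the compatibility checks (that the cup product $\wedge^2\coH[1]{\Hilb{\U}}\to\coH[2]{\Hilb{\U}}$ is injective onto the relevant summand, and that the Bottacin map and Chern-class map commute with restriction to the open leaf), and it presupposes $\Hhilbps[1]{\X}\cong\coH[1]{\Hilb{\U};\CC}$, which in your setup is itself only available once the degree-one connecting map is shown to vanish.
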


This decomposition of the deformation space has the following interpretation.  The summand $\Hps[2]{\X}$ corresponds to deformations of $\Hilb{\X}$ induced by applying Bottacin's construction to a deformation of $\X$ itself.  Meanwhile the summand $\wedge^2\coH[1]{\X\setminus \D;\CC}$ gives deformations in which the symplectic form on the open symplectic leaf $\Hilb{(\X\setminus \D)}$  is deformed by the pullback of a closed two-form along the Albanese map $\Hilb{(\X\setminus \D)} \to \coH[1]{\X\setminus \D;\CC}^\vee / \Hlgy[1]{\X\setminus \D;\ZZ}$.  Finally, the deformations in the direction $\CC\cdot[\E]$ should correspond to Hilbert schemes of \emph{noncommutative} surfaces given by deformation quantization of $(\X,\ps)$ as in \cite{Rains2016,Rains2019}.  In the case $\X=\PP^2$ and $\D$, this interpretation was established by Hitchin~\cite{Hitchin2012}, who proved that the resulting deformations are exactly the Hilbert schemes of Sklyanin algebras introduced by Nevins--Stafford~\cite{Nevins2007}.

\subsubsection*{Acknowledgements}

We thank Marco Gualtieri, Jacques Hurtubise, Anton Izosimov, Eric Rains and Pavel Safronov for helpful conversations. BP is supported by a Discovery Grant from the Natural Sciences and Engineering Research Council of Canada; a Research Support for New Academics grant from the Fonds de recherche du Qu\'ebec (Nature et technologies); and a Faculty Startup Grant from McGill University. MM is supported by Chapman Fellowship. MM thanks Jacques Hurtubise, Niky Kamran and CIRGET for partially funding his postdoctoral position at McGill, during which most of his contribution to this paper took place. TS thanks the Max Planck Institute for Mathematics in Bonn and the Hausdorff Institute of Mathematics in Bonn for excellent working conditions during preliminary stages of work on this project.

\section{Hilbert groupoids of curves in surfaces}
\label{sec:groupoids}

\subsection{Hilbert schemes of surfaces}
\label{sec:Hilb}
Throughout this paper, by a \defn{$\CC$-scheme} we mean a scheme of finite type over $\CC$ or a complex analytic space.  By a \defn{smooth (complex) surface}, we mean a smooth $\CC$-scheme of dimension two.

Let $\X$ be a smooth complex surface and suppose that $n \in \ZZ_{>0}$ is a positive integer.  We denote by $\Hilb[n]{\X}$ the Hilbert scheme (or Douady space) parameterizing zero-dimensional subschemes of $\X$ of total length $n$.  There are many equivalent presentations of this space, each of which will be useful in what follows:
\begin{itemize}
\item As a functor of points: if $\bS$ is a $\CC$-scheme, the set of maps from $\bS$ to $\Hilb[n]{\X}$ is given by
\[
\Hilb[n]{\X}(\bS) := \{\textrm{subschemes of }\bS \times \X\textrm{ that are finite and flat of length }n\textrm{ over }\bS\}
\]
It is a theorem of Fogarty~\cite{Fogarty1968} that  that this functor is representable by a smooth scheme.
\item As a resolution of singularities: let $\sympow{\X}$ be the $n$-th symmetric power of $\X$, i.e.~the quotient of $\X^n$ by the permutations of the factors; it is a singular variety when $n \ge 2$.  There is a natural map $\Hilb[n]{\X}\to \sympow{\X}$, called the Hilbert--Chow morphism, which sends a length-$n$ subscheme $\Z\subset\X$ to its support (the collection of points in $\Z$, counted with multiplicity).  This map is a resolution of singularities that is crepant~\cite[proof of Proposition 5]{Beauville1983} and strictly semi-small~\cite[Theorem 2.12]{Iarrobino1977}.
\item As a moduli space of sheaves: a point $\Z \in \Hilb[n]{\X}$ is equivalent to the data of its ideal sheaf $\cI_\Z \subset \cO{\X}$.  We may view $\cI_\Z$ as a perfect complex of coherent sheaves on $\X$.  This complex is canonically isomorphic to $\cO{\X}$ away from $\Z$, and this induces a canonical trivialization of the determinant $\det{\cI_\Z}$ away from $\Z$, which extends to all of $\X$  since $\X$ is smooth and $\Z$ has codimension two.  In this way, we identify $\Hilb[n]{\X}$ with an open substack of the derived stack $\Perfo{\X}$ of perfect complexes with trivialized determinant~\cite{Schurg2015}.  Note that the tangent complex of $\Hilb[n]{\X}$ at $\Z$ is then given by
\[
\tc[\Z]\Hilb[n]{\X} \cong \tc[\cI_\Z]\Perfo{\X} \cong \RSect{\cREnd{\cI_\Z}_0}[1] 
\]
where $\cREnd{\cI_\Z}_0$ is the homotopy fibre of the Illusie trace map $\cREnd{\cI_\Z} \to \cO{\X}$.  The cohomology of this complex is concentrated in degree zero, giving the tangent space of $\Hilb[n]{\X}$ at $\Z$ as
\begin{align}
\tb[\Z]{\Hilb{\X}} \cong \coH[1]{\RSect{\cREnd{\cI_\Z}_0}} =: \Ext[1][\X]{\cI_\Z,\cI_\Z}_0 \cong \Hom[\X]{\cI_\Z,\cO{\Z}} \label{eq:Hilb-tangent}
\end{align}
\end{itemize}

\subsection{Derived intersection of subschemes with curves}

If $\X$ is a smooth surface, a \defn{curve in $\X$} is an effective divisor $\D\subset \X$; unless otherwise specified, we allow the possibility that $\D$ is singular, or even non-reduced.

Suppose that $ i : \D \hookrightarrow \X$ is a curve.  If $\Z \subset \X$ is a length-$n$ subscheme, then the scheme-theoretic intersection $\Z\cap \D \subset \D$ is a closed subscheme of $\D$ of length at most $n$, with structure sheaf $\cO{\Z \cap \D} \cong \cO{\Z}\tensor_{\cO{\X}} \cO{\D}$.
The \defn{derived intersection} $\Z\hcap \D$, is the derived subscheme of $\D$ obtained as the spectrum of the sheaf of commutative dg $\cO{\D}$-algebras $\cO{\Z\hcap \D} := \cO{\Z}\derotimes[\cO{\X}]\cO{\D}$, where $\derotimes$ denotes the derived tensor product.  In other words, we consider the dg algebra $Li^*\cO{\Z}$, where
\[
Li^* : \Perf{\X} \to \Perf{\D} \qquad Li^* \cE := i^{-1}\cE \derotimes[i^{-1}\cO{\X}]\cO{\D}
\]
is the derived restriction of perfect complexes. Resolving $\cO{\D}$ by the Koszul complex
\[
\cO{\D} \cong \rbrac{ \cO{\X}(-\D) \to \cO{\X} }
\]
we have concretely that
\begin{align}
\cO{\Z\hcap \D} \cong (\cO{\X}(-\D)|_\Z \to \cO{\Z}). \label{eq:koszul}
\end{align}
with $\cO{\Z}$ placed in cohomological degree zero. 

\begin{remark}
The number of points of intersection, counted with multiplicity, is equal to
\begin{align}
\# (\Z \cap \D) := \dim_\CC \coH[0]{\cO{\Z\cap \D}}  \label{eq:num-points-def}
\end{align}
by definition. 
For the derived intersection,
it is natural to consider
the virtual count of points, defined by the Euler characteristic of $\cO{\Z \hcap \D}$, rather than its zeroth cohomology.  Using the Koszul resolution \eqref{eq:koszul}, we find
\[
\#_{\vir} (\Z \hcap \D) := \chi(\coH{\cO{\Z\hcap D}}) = \dim_\CC \coH[0]{\cO{\Z}} - \dim_\CC\coH[0]{\cO{\X}(-\D)|_\Z} = 0
\]
for all $\Z$, since $\Z$ is zero-dimensional and $\cO{\X}(-\D)|_\Z$ is an invertible $\cO{\Z}$-module, in accordance with the general principle that derived intersection numbers are deformation-invariant.
\end{remark}

We now determine when two elements $\Z_1,\Z_2 \in \Hilb[n]{\X}$ have equivalent derived intersection with $\D$.  Note that since the ideal sheaf $\cI_\Z$ is torsion-free, we have $\cTor[j]{\cI_\Z,\cO{\D}} = 0$ for $j > 0$, so the natural map $Li^*\cI_\Z \to i^*\cI_\Z$ comparing the derived and ordinary pullbacks is a quasi-isomorphism.  Hence we obtain an exact triangle
\begin{align}
\xymatrix{
i^*\cI_\Z \ar[r] & \cO{\D} \ar[r] & Li^*\cO{\Z}.
} \label{eq:derint-triangle}
\end{align}
of perfect complexes on $\D$, corresponding to the exact sequence
\[
\xymatrix{
0\ar[r] & \cTor[1][\cO{\X}]{\cO{\Z},\cO{\D}} \ar[r] & i^*\cI_\Z \ar[r] & \cO{\D} \ar[r] & \cO{\Z\cap \D} \ar[r] & 0
}
\]
of coherent sheaves.  Note that $\cTor[1]{\cO{\Z},\cO{\D}}$ is a  torsion module on $D$. Since $\cO{\D}$ is torsion-free (by definition), the image
\[
i^*\cI_\Z^\tf := \img(i^*\cI_\Z \to \cO{\D}) \cong i^*\cI_\Z/\cTor[1][\cO{\X}]{\cO{\Z},\cO{\D}}
\]
is the maximal torsion-free quotient of $i^*\cI_\Z$.  We then have the following.

\begin{lemma}\label{lem:D-equiv-equivs}
For any $(\Z_1,\Z_2)\in\Hilb[n]{\X}$, the following spaces are canonically homotopy equivalent:
\begin{enumerate}
\item The space of equivalences $\Z_1\hcap \D \qiso \Z_2\hcap \D$ of derived subschemes of $\D$.
\item The space of quasi-isomorphisms $Li^*\cO{\Z_1} \qiso Li^*\cO{\Z_2}$ of commutative differential graded $\cO{\D}$-algebras
\item The space of quasi-isomorphisms $Li^*\cO{\Z_1} \qiso Li^* \cO{\Z_2}$ of perfect complexes of $\cO{\D}$-modules, commuting up to coherent homotopy with the natural maps $Li^*\cO{\Z_1} \leftarrow \cO{\D} \rightarrow Li^*\cO{\Z_2}$
\item\label{item:D-equiv-det} The space of equivalences $i^*\cI_{\Z_1} \qiso i^*\cI_{\Z_2}$ in $\Perfo{\D}$.
\item\label{item:D-equiv-ideal} The discrete set of isomorphisms $i^*\cI_{\Z_1}  \qiso i^*\cI_{\Z_2}$ commuting with the natural maps $i^*\cI_{\Z_1} \rightarrow \cO{\D} \leftarrow  i^*\cI_{\Z_2}$.
\item\label{item:D-equiv-tor} The discrete set of isomorphisms $i^*\cI_{\Z_1}  \qiso i^*\cI_{\Z_2}$ that induce the identity map on the maximal torsion-free quotients $i^*\cI_{\Z_j}^\tf \subset \cO{\D}$.
\end{enumerate}
In particular, all of these spaces are equivalent to their discrete sets of connected components.
\end{lemma}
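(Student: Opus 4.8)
The strategy is to set up a chain of equivalences running (2) ⟷ (3) ⟷ (4) ⟷ (1), with (5) and (6) as the discrete-model endpoints, and to deduce the final "all spaces are discrete" claim from the explicit computation that the hom-complex between the relevant objects has cohomology concentrated in degree zero. Let me think through each link.

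First, the equivalence of (1), (2), and (3): By definition $\cO{\Z\hcap\D} = Li^*\cO{\Z}$ as a sheaf of cdgas on $\D$, so an equivalence of derived subschemes $\Z_1\hcap\D \qiso \Z_2\hcap\D$ over $\D$ is literally a quasi-isomorphism of cdgas commuting with the structure maps from $\cO{\D}$; this gives (1) ⟷ (2). To get (2) ⟷ (3), I would invoke the general principle that, since the underlying complexes are perfect and concentrated in the right range, the forgetful map from the space of cdga equivalences to the space of "perfect-complex equivalences commuting up to coherent homotopy with the maps from $\cO{\D}$" is an equivalence — this is because $Li^*\cO{\Z_j}$ is generated as an algebra by its structure map from $\cO{\D}$ together with $\coH{0}$, so any module-level equivalence compatible with the unit automatically respects the multiplication; alternatively one notes that $\coH{0}(Li^*\cO{\Z_j}) = \cO{\Z_j\cap\D}$ is a quotient of $\cO{\D}$, which rigidifies the algebra structure.

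Next, the heart of the argument: comparing (3), (4), (5), (6). The exact triangle \eqref{eq:derint-triangle}, namely $i^*\cI_\Z \to \cO{\D} \to Li^*\cO{\Z}$, together with its rotation, shows that the data of $Li^*\cO{\Z}$ with its map from $\cO{\D}$ is interchangeable with the data of $i^*\cI_\Z$ with its map to $\cO{\D}$ — taking cofibres/fibres of the map to/from $\cO{\D}$ is an equivalence of the relevant over/under-categories. This gives (3) ⟷ (4) once one checks that the trivialized determinant is carried along correctly: the determinant of $i^*\cI_\Z$ is trivialized because $\cI_\Z$ has trivialized determinant on $\X$ (as recalled in \autoref{sec:Hilb}) and $Li^*$ is monoidal, and the triangle shows $\det(Li^*\cO{\Z})$ is correspondingly trivial, so working in $\Perfo{\D}$ is the natural home. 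For (4) ⟷ (5) and (5) ⟷ (6), the key computation is that $\Hom_{\Perf{\D}}(i^*\cI_{\Z_1}, i^*\cI_{\Z_2})$, and more precisely the derived hom $\RHom_{\cO{\D}}(i^*\cI_{\Z_1}, i^*\cI_{\Z_2})$, has cohomology only in degree zero. This follows because $i^*\cI_{\Z_j}$ is a module on the (possibly singular, possibly non-reduced) curve $\D$ that is an extension of a torsion-free rank-one sheaf $i^*\cI_{\Z_j}^\tf \subset \cO{\D}$ by a torsion module supported at finitely many points; a local analysis — using that $\cO{\D}$ is a one-dimensional Cohen–Macaulay ring and that the torsion-free part embeds in $\cO{\D}$ with zero-dimensional cokernel — shows there are no higher Ext's between two such modules. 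Given this vanishing, the mapping space in $\Perf{\D}$ is discrete, hence so is the subspace of equivalences; restricting to maps compatible with the structure maps to $\cO{\D}$ cuts the degree-zero homs down to the discrete set in (5), and the further "identity on torsion-free quotient" condition identifies (5) with (6) using the short exact sequence defining $i^*\cI_\Z^\tf$ (a compatible isomorphism automatically descends to the torsion-free quotients, and the remaining freedom is killed by fixing it to be the identity).

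The main obstacle I anticipate is the degree-zero concentration of $\RHom_{\cO{\D}}(i^*\cI_{\Z_1}, i^*\cI_{\Z_2})$ when $\D$ is badly singular or non-reduced: over a smooth curve this is immediate from global dimension one, but in general one must argue locally at each point of $\D$, controlling $\mathcal{E}xt^{\ge 1}_{\cO{\D}}$ between two finitely-generated $\cO{\D}$-modules each of which sits in an extension $0 \to T \to M \to F \to 0$ with $T$ of finite length and $F \subset \cO{\D}$ of finite colength. One reduces to: (i) $\mathcal{E}xt^{\ge 1}(F_1, F_2) = 0$, which holds because $F_1$ is a fractional-ideal-like submodule of $\cO{\D}$ and $\cO{\D}$ is CM of dimension one, so $F_1$ is maximal Cohen–Macaulay and has depth-based vanishing against $F_2 \subset \cO{\D}$; (ii) $\mathcal{E}xt^{\ge 1}$ involving the torsion pieces, which vanish for local-cohomology/support reasons once one knows $\cO{\D}$ has no embedded points is \emph{not} available (non-reduced $\D$ can have embedded points), so one instead argues directly with the Koszul presentation \eqref{eq:koszul} that $Li^*\cO{\Z}$ is a two-term complex of locally free $\cO{\Z}$-modules, whence $\RHom_{\cO{\D}}(Li^*\cO{\Z_1}, Li^*\cO{\Z_2})$ can be computed by an explicit finite complex and its higher cohomology seen to vanish by a direct rank count. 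I would present the clean statement in the smooth/nodal case, which is all that is needed downstream for \autoref{thm:groupoid} and \autoref{thm:local}, and handle the general-$\D$ case with the Koszul-complex computation just sketched.
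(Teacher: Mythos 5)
There is a genuine gap in the step you single out as the heart of the argument. The ``key computation'' you propose for $(4)\leftrightarrow(5)$ --- that $\Ext[j][\cO{\D}]{i^*\cI_{\Z_1},i^*\cI_{\Z_2}}$ vanishes for $j\ge 1$ --- is false even for smooth $\D$: if $p$ lies in both $\Z_1\cap\D$ and $\Z_2\cap\D$, then each $i^*\cI_{\Z_\ell,p}$ contains a torsion summand $\cO{\D,p}/\m_{\D,p}^{\mu}$, and $\Ext[1][\cO{\D,p}]{\cO{\D,p}/\m_{\D,p}^{a},\cO{\D,p}/\m_{\D,p}^{b}}\cong \cO{\D,p}/\m_{\D,p}^{\min(a,b)}\neq 0$. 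Fortunately, this vanishing is also not what discreteness requires: for two objects of $\Perf{\D}$ concentrated in degree zero, the higher homotopy groups of the mapping space are the \emph{negative} Ext groups, $\pi_j\cong \Ext[-j][\cO{\D}]{-,-}$, which vanish automatically for sheaves; the positive Ext groups you are labouring to kill simply do not enter. Your local analysis at singular points of $\D$, and the proposed retreat to the smooth/nodal case, are therefore chasing a non-issue --- and the retreat would in any case fail to prove the lemma (and hence \autoref{thm:groupoid}) in the stated generality, which allows arbitrary, even non-reduced, $\D$.

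With that removed, the actual content of $(4)\leftrightarrow(5)$ is the identification of ``equivalence in $\Perfo{\D}$'', i.e.\ isomorphism with trivial determinant, with ``isomorphism commuting with the maps to $\cO{\D}$''. The paper's proof gets this from the observation that the determinant trivialization of $\cI_\Z$ is induced by the inclusion $\cI_\Z\hookrightarrow\cO{\X}$, so an arbitrary $\phi: i^*\cI_{\Z_1}\to i^*\cI_{\Z_2}$ intertwines the maps to $\cO{\D}$ exactly up to the factor $\det\phi$; thus $\det\phi=1$ if and only if $\phi$ commutes with them on the nose. Your steps $(1)\leftrightarrow(2)\leftrightarrow(3)\leftrightarrow(4)$ and $(5)\leftrightarrow(6)$ are essentially the paper's, except that the paper makes $(2)\leftrightarrow(3)$ precise by mapping a cofibrant dg-algebra resolution $\cE^\bullet$ of $\cO{\Z_1}$ with $\cE^0=\cO{\X}$ into the two-term Koszul model of $Li^*\cO{\Z_2}$; the degree count there is also where the paper first establishes that the whole space is discrete, which is then transported through the exact triangle to the remaining items.
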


\begin{proof}
The spaces in 1 and 2 are equivalent by definition of the derived intersection. 

 For the equivalence of 2 and 3, choose an arbitrary cofibrant resolution $\cE^\bullet \to \cO{\Z_1}$ of $\cO{\Z_1}$ as a sheaf of $\cO{\X}$-dg algebras concentrated in degrees $\le 0$, with degree zero term $\cE^0 = \cO{\X}$. The dg $\cO{\D}$-algebra $i^* \cE \cong \cE^\bullet \tensor[\cO{\X}] \cO{\D}$ is then also cofibrant.  Meanwhile, we may resolve $\cO{\D}$ by the Koszul complex $\koszul{\D} := (\cO{\X}(-\D) \to \cO{\X})$.   The space of equivalences $Li^*\cO{\Z_1}\cong Li^*\cO{\Z_2}$ of dg $\cO{\D}$-algebras is then equivalent to the space of dg $\cO{\D}$-algebra maps $i^*\cE^\bullet \to i^*(\cO{\Z_2} \otimes \koszul{\D})$.  By degree considerations, such a dg algebra map is equivalent to a map of complexes whose degree zero part is the natural projection $\cO{\D} \to \cO{\Z_1}$, as desired.  Note further that the full space of equivalences is modelled by the simplicial set of dg algebra maps $i^*\cE \to  i^*(\cO{\Z_2} \otimes \koszul{\D}) \otimes \forms{\Delta}$ where $\forms{\Delta}$ is the simplicial dg algebra of polynomial differential forms on simplices.  Since the latter is concentrated in non-negative degrees, the same degree considerations show that the higher simplices of spaces 2 and 3 are also the same, as desired.

The equivalence of 3 and 4 follows from the exact triangle \eqref{eq:derint-triangle}, using the fact that the trivialization of the determinant of $\cI_{\Z}$  is defined by the inclusion $\cI_{\Z} \hookrightarrow \cO{\X}$. 

 Finally, 5 and 6 are clearly equivalent by definition of $i^*\cI_\Z^\tf$, and they are equivalent to 4 because for an arbitrary morphism $\phi : \cI_{\Z_1} \to \cI_{\Z_2}$ the diagram
\[
\xymatrix{
\cI_{\Z_1} \ar[r]^-{\phi}\ar[d] \ar[r] & \cI_{\Z_2} \ar[d] \\
\cO{\D} \ar[r]^-{\det \phi} \ar[r] & \cO{\D}
}
\]
is commutative.
\end{proof}

\begin{remark}
The proposition holds much more generally, with essentially the same proof.  Namely, statements 1, 2, 3, 5 and 6 are equivalent for any pair of subschemes $\Z_1,\Z_2$ in a scheme $\X$ with an effective Cartier divisor $\D$.  For the equivalence with 4, we need to speak about trivialization of the determinant being determined away from codimension two subsets, and for this we need to assume in addition that $\X$ is $S_2$ (e.g. smooth or more generally normal) and that the subschemes $\Z_1$, $\Z_2 \subset \X$ have codimension at least two.
\end{remark}

\begin{definition}
We refer to any of the equivalent discrete sets in \autoref{lem:D-equiv-equivs} as the \defn{set of $\D$-equivalences from $\Z_1$ to $\Z_2$}. We say that $\Z_1$ and $\Z_2$ are \defn{$\D$-equivalent} if there exists a $\D$-equivalence from $\Z_1$ to $\Z_2$.
\end{definition}

Evidently $\D$-equivalence defines an equivalence relation on $\Hilb{\X}$.  If we are just interested in the equivalence classes, an a priori weaker notion will suffice:
\begin{lemma}\label{lem:D-equiv-module}
Two elements $\Z_1,\Z_2 \in \Hilb{\X}$ are $\D$-equivalent if and only if the $\cO{\D}$-modules $i^*\cI_{\Z_1}$ and $i^*\cI_{\Z_2}$ are isomorphic.
\end{lemma}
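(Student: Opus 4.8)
The plan is to bootstrap from \autoref{lem:D-equiv-equivs}, which already supplies several interchangeable models for the set of $\D$-equivalences; the only thing beyond that lemma here is to show that the extra data those models carry --- compatibility with the natural maps to $\cO{\D}$, or with the trivializations of determinants --- can always be installed, starting from a bare $\cO{\D}$-module isomorphism. One direction is free: a $\D$-equivalence is in particular an isomorphism of $\cO{\D}$-modules $i^*\cI_{\Z_1}\cong i^*\cI_{\Z_2}$, by the description in item~\ref{item:D-equiv-ideal} of \autoref{lem:D-equiv-equivs}. So the content is the converse.

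For the converse I would resist the naive route through item~\ref{item:D-equiv-tor}, where one starts from an abstract isomorphism $\phi\colon i^*\cI_{\Z_1}\to i^*\cI_{\Z_2}$, passes to the induced isomorphism of maximal torsion-free quotients, and tries to correct it to the identity. The main obstacle there is that an abstract module isomorphism need not respect the embeddings $i^*\cI_{\Z_j}^{\tf}\hookrightarrow\cO{\D}$, so one would first have to establish the nonobvious fact that $\Z_1\cap\D=\Z_2\cap\D$ as subschemes --- which would seem to require a separate analysis of the torsion part $\cTor[1][\cO{\X}]{\cO{\Z_j},\cO{\D}}$ and of the torsion-free quotients, with a case split on the singularities of $\D$. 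Instead I would use the determinant. Since $\cI_{\Z_j}\subset\cO{\X}$ is torsion-free, the derived restriction agrees with the ordinary one, so $i^*\cI_{\Z_j}$ is a perfect complex on $\D$ of rank one whose determinant is canonically trivialized by restricting the trivialization of $\det\cI_{\Z_j}$. Given an arbitrary isomorphism $\phi\colon i^*\cI_{\Z_1}\to i^*\cI_{\Z_2}$ of $\cO{\D}$-modules, applying $\det$ produces an automorphism of $\cO{\D}$, i.e.~multiplication by a unit $u\in\cO{\D}^\times$. Rescaling $\phi$ by $u^{-1}$ (multiplication by $u^{-1}$ on the target) yields $u^{-1}\phi$, whose determinant is $u^{-1}\cdot u=\id$, using that scaling a rank-one complex acts on its determinant by the first power of the scalar. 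Hence $u^{-1}\phi$ is an equivalence in $\Perfo{\D}$, so the set in item~\ref{item:D-equiv-det} of \autoref{lem:D-equiv-equivs} is nonempty, and therefore so is the set of $\D$-equivalences.

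The one computation that needs a little care is precisely the claim that scaling by a unit $c$ multiplies the determinant by $c$ rather than by a higher power: this follows from the $K$-theory identity $[i^*\cI_{\Z_j}]=[\cO{\D}]-[Li^*\cO{\Z_j}]$ together with the fact that $\cO{\Z_j}$ is supported on the finite set $\Z_j$, so $Li^*\cO{\Z_j}$ vanishes at the generic point of each component of $\D$ --- equivalently, $\#_{\vir}(\Z_j\hcap\D)=0$ --- leaving rank one. No hypotheses on $\D$ (reducedness, irreducibility, smoothness) enter at any step; $\D$ need only be an effective Cartier divisor. As a byproduct, running the $\D$-equivalence produced above back through item~\ref{item:D-equiv-ideal} of \autoref{lem:D-equiv-equivs} recovers the (a priori nonobvious) fact that $\D$-equivalent subschemes have literally equal scheme-theoretic intersections $\Z_1\cap\D=\Z_2\cap\D$.
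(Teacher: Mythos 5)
Your proof is correct and follows essentially the same route as the paper: reduce to item~\ref{item:D-equiv-det} of \autoref{lem:D-equiv-equivs}, take the determinant of an arbitrary module isomorphism to get a unit $u\in\coH[0]{\cO{\D}^\times}$, and rescale by $u^{-1}$, using that $i^*\cI_{\Z}$ has rank one so the determinant scales linearly. The extra justification you give for the rank-one claim (via the $K$-theory identity and the vanishing virtual intersection number) is a harmless elaboration of the paper's one-line remark that ideal sheaves have rank one.
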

\begin{proof}
By part \ref{item:D-equiv-det} of \autoref{lem:D-equiv-equivs}, it suffices to show that $i^*\cI_{\Z_1}$ and $i^*\cI_{\Z_2}$ are isomorphic as $\cO{\D}$-modules if and only if they are are isomorphic by a map with trivial determinant.  But if $\phi : i^*\cI_{\Z_1} \to i^*\cI_{\Z_2}$ is any isomorphism, then its determinant gives an isomorphism 
\[
\cO{\D} \cong \det{\cI_{\Z_1}} \qiso \det{\cI_{\Z_2}} \cong \cO{\D},
\]
or equivalently an invertible element $g := \det\phi \in \coH[0]{\cO{\D}^\times}$.  Since the rank of an ideal sheaf is equal to one, the determinant is a linear functional on endomorphisms and we have 
\[
\det(g^{-1} \phi) = g^{-1}\det(\phi) = \det(\phi)^{-1}\det(\phi) = 1.
\]
Therefore $g^{-1}\phi : \cI_{\Z_1} \to \cI_{\Z_2}$ is an isomorphism with trivial determinant, as desired.
\end{proof}

\subsection{The Hilbert groupoid}
Evidently $\D$-equivalences between points of $\Hilb[n]{\X}$ can be composed, so that they form the arrows of a groupoid whose objects are the points of $\Hilb[n]{\X}$.  We will now show that this defines a smooth groupoid scheme (or in the analytic context, a Lie groupoid).

More precisely, let
\[
\GHilb[n]{\X}{\D} := \Hilb[n]{\X} \fibprod_{\Perfo{\D}} \Hilb[n]{\X} \subset \Perfo{\X} \fibprod_{\Perfo{\D}} \Perfo{\X}
\]
be the derived stack defined as the homotopy fibre product of $\Hilb[n]{\X}$ with itself along the map sending a subscheme $\Z$ to $i^*\cI_\Z$, where $\cI_\Z$ is the ideal sheaf of $\Z$ equipped with the canonical trivialization of $\det \cI_\Z$.  By definition of the homotopy fibre product, points of $\GHilb[n]{\X}{\D}$ consist of triples $(\Z_1,\Z_2,\lambda)$, where $\Z_1,\Z_2$ are points of $\Hilb[n]{\X}$, and $\lambda : \Z_1\hcap \D \qiso \Z_2 \hcap \D$ is a $\D$-equivalence.  Composing the $\D$-equivalences  gives $\GHilb[n]{\X}{\D}$ the structure of a groupoid in the category of derived stacks.  

\begin{definition}
The groupoid $\GHilb[n]{\X}{\D}$ over $\Hilb[n]{\X}$ is the \defn{$n$-th Hilbert groupoid of $(\X,\D)$}.
\end{definition}

In fact this object is a classical smooth variety:
\begin{theorem}\label{prop:representable}
The derived stack $\GHilb[n]{\X}{\D}$ is a smooth classical scheme. The projections $s,t:\GHilb[n]{\X}{\D} \to \Hilb[n]{\X}$ are smooth morphisms, so that $\GHilb[n]{\X}{\D}$ defines a smooth groupoid scheme over $\Hilb[n]{\X}$ whose orbits are the $\D$-equivalences classes.  Moreover, the induced map $(s,t) :\GHilb[n]{\X}{\D} \to \Hilb[n]{\X} \times \Hilb[n]{\X}$  restricts to an isomorphism over the Zariski open set $\Hilb[n]{(\X\setminus \D)} \times \Hilb[n]{(\X\setminus \D)}$, and is therefore birational.
\end{theorem}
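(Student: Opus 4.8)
The plan is to prove representability by exhibiting $\GHilb[n]{\X}{\D}$ as an open substack of a moduli stack that is already known to be a smooth variety, and then to deduce the statements about the structure maps and birationality by a tangent-complex computation together with an explicit identification over $\X\setminus\D$. First I would observe that the homotopy fibre product defining $\GHilb[n]{\X}{\D}$ sits inside $\Perfo{\X}\fibprod_{\Perfo{\D}}\Perfo{\X}$, and that the derived stacks $\Perfo{\X}$ and $\Perfo{\D}$, together with the restriction map $Li^*$, are all locally geometric and locally of finite presentation by the results of \cite{Toen2007,Schurg2015}; hence $\GHilb[n]{\X}{\D}$ is a derived Artin stack. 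To cut it down to a scheme, the key input is \autoref{lem:D-equiv-equivs}: it identifies the space of $\D$-equivalences $\lambda$ with a \emph{discrete set}, namely the isomorphisms $i^*\cI_{\Z_1}\qiso i^*\cI_{\Z_2}$ inducing the identity on maximal torsion-free quotients. Discreteness of the fibres of $(s,t)$ means the relative inertia is trivial, so $\GHilb[n]{\X}{\D}$ has no stacky structure; combined with the fact that $\Hilb[n]{\X}$ is an honest smooth scheme, this shows $\GHilb[n]{\X}{\D}$ is (represented by) an algebraic space.

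Next I would compute the tangent complex at a point $(\Z_1,\Z_2,\lambda)$ to establish smoothness and to show it is classical (i.e.\ concentrated in degree zero). Since homotopy fibre products of derived stacks have tangent complexes given by the corresponding homotopy fibre of tangent complexes, one gets an exact triangle
\[
\tc[(\Z_1,\Z_2,\lambda)]\GHilb[n]{\X}{\D} \to \tc[\Z_1]\Hilb[n]{\X}\oplus\tc[\Z_2]\Hilb[n]{\X} \to \tc[i^*\cI_{\Z_1}]\Perfo{\D},
\]
where the last term is $\RSect{\cREnd{i^*\cI_{\Z_1}}_0}[1]$ on the curve $\D$. Using \eqref{eq:Hilb-tangent} and the fact that $\cREnd{-}_0$ on a curve has cohomology in only two degrees, a diagram chase—using that $\cI_{\Z_j}$ is torsion-free so $Li^* = i^*$ and the trace/determinant maps behave well—should show that the connecting maps force the cohomology of $\tc\GHilb[n]{\X}{\D}$ into degree zero, and that its dimension is constant (equal to $2n$, matching $\Hilb[n]{\X}\times\Hilb[n]{\X}$). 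A clean way to organize this is to note that $s\colon\GHilb[n]{\X}{\D}\to\Hilb[n]{\X}$ has fibre over $\Z_1$ equal to the space of pairs $(\Z_2,\lambda)$, which by \autoref{lem:D-equiv-equivs}\eqref{item:D-equiv-det} is the fibre of $\Perfo{\D}$ through $i^*\cI_{\Z_1}$—a derived stack whose smoothness and expected dimension can be read off from $\Ext$-groups on the curve $\D$. Establishing that this relative tangent complex is a vector bundle in degree zero gives that $s$ (and by symmetry $t$) is smooth, and in particular that $\GHilb[n]{\X}{\D}$ is a smooth scheme, not merely a smooth algebraic space.

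Finally, for the birationality statement I would argue directly over the open locus. If $\Z_1,\Z_2$ are supported in $\X\setminus\D$, then $i^*\cI_{\Z_j}\cong\cO{\D}$ canonically (the ideal sheaf is the structure sheaf near $\D$), so there is a unique $\D$-equivalence $\lambda$—namely the canonical one—and $(s,t)$ is a bijection on points over $\Hilb[n]{(\X\setminus\D)}\times\Hilb[n]{(\X\setminus\D)}$; checking it is an isomorphism of schemes there is immediate since the fibre of $\Perfo{\D}$ through $\cO{\D}$ is a point (with no automorphisms, by the determinant-one condition). As $\Hilb[n]{(\X\setminus\D)}\times\Hilb[n]{(\X\setminus\D)}$ is a dense Zariski open subset of $\Hilb[n]{\X}\times\Hilb[n]{\X}$ and $\GHilb[n]{\X}{\D}$ is irreducible of the right dimension (being smooth and containing this open set densely—which itself follows from the smoothness just proved plus connectedness of $\Hilb[n]{\X}$), the map $(s,t)$ is birational. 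I expect the main obstacle to be the tangent-complex bookkeeping in the second paragraph: one must carefully track the trivialized-determinant ($\cREnd{-}_0$) versions of all the Ext-complexes, confirm that the long exact sequence coming from the fibre-product triangle has the connecting homomorphisms one expects (so that no negative or positive $\Ext$ on $\D$ survives into the tangent complex), and verify the dimension count—this is where the hypotheses that $\X$ is a smooth surface and $\D$ is a (possibly singular) curve are really used, via the vanishing of $\Ext^{\ge 2}$ on $\D$ and the torsion-freeness of $\cI_\Z$.
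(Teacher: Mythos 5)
Your overall architecture---the tangent complex of the homotopy fibre product for smoothness, and the canonical trivialization of $i^*\cI_\Z$ away from $\D$ for birationality---is the same as the paper's, but two steps do not close as written. The more serious one is representability. You deduce from local geometricity of $\Perfo{-}$ and triviality of the inertia that $\GHilb[n]{\X}{\D}$ is an algebraic space, and then assert that smoothness upgrades it to a scheme; that implication is false in general (smooth algebraic spaces need not be schemes), so schemeness is left unproved. The paper sidesteps the stack machinery here: using the description of $\D$-equivalences as honest sheaf isomorphisms compatible with the maps to $\cO{\D}$ (\autoref{lem:D-equiv-equivs}), the morphisms $\cI_1\otimes\cO{\Du}\to\cI_2\otimes\cO{\Du}$ over $\Hilb[n]{\X}\times\Hilb[n]{\X}$ compatible with the maps to $\cO{\Du}$ form a torsor under a coherent sheaf $\cE$, hence are represented by the relatively affine scheme $\Tot{\cE}$, and $\GHilb[n]{\X}{\D}$ is the open subscheme of invertible morphisms. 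Your discreteness observation is the right input, but it has to be promoted to a statement about the functor of points, not just about stabilizers of closed points.

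The second issue is that the smoothness argument is only gestured at. From the fibre sequence $\tc{s}\to t^*\tc{}\Hilb[n]{\X}\to(Li^*)^*\tc{}\Perfo{\D}$ one gets for free that $\tc{s}$ has no cohomology in negative degrees (because $\Hilb[n]{\X}$ is a smooth classical scheme); the entire content is the vanishing of $\coH[1]{\tc{s}}$, i.e.\ the surjectivity of $\Ext[1][\X]{\cI,\cI}_0\to\Ext[1][\D]{Li^*\cI,Li^*\cI}_0$, which your ``diagram chase \dots should show'' does not address. The paper's mechanism is to tensor $\cREnd[\X]{\cI}_0$ with the Koszul sequence $0\to\cO{\X}(-\D)\to\cO{\X}\to\cO{\D}\to0$, obtaining $\tc{s}\cong\RSect{\cREnd[\X]{\cI}_0(-\D)}[1]$, and then to invoke Bottacin's isomorphism $\cREnd[\X]{\cI}_0[1]\cong\cHom[\X]{\cI,\cO{\Z_2}}$, so that the twist is a skyscraper on the zero-dimensional scheme $\Z_2$ and $\tc{s}\cong\Hom[\X]{\cI,\cO{\Z_2}(-\D)}$ is concentrated in degree zero. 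Some such identification is indispensable. Your birationality paragraph is essentially correct and matches the paper, though the appeal to irreducibility is unnecessary: the uniqueness of the $\D$-equivalence over $\Hilb[n]{(\X\setminus\D)}\times\Hilb[n]{(\X\setminus\D)}$ already gives an isomorphism over a dense open subset of the target.
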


\begin{proof}
To prove representability, we will use the interpretation of $\D$-equivalences from part \ref{item:D-equiv-ideal} of \autoref{lem:D-equiv-equivs}: the fibre of $\GHilb[n]{\X}{\D}$ over a point $(\Z_1,\Z_2) \in \Hilb[n]{\X}\times\Hilb[n]{\X}$ is the set of isomorphisms $\cI_{\Z_1} \otimes \cO{\D} \to \cI_{\Z_2} \otimes \cO{\D}$ commuting with the natural maps to $\cO{\D}$.  Recall that there is a universal ideal sheaf $\cI$ on $\Hilb[n]{\X}\times \X$ whose restriction to any slice $\{\Z\} \times \X \cong \X$ is the ideal defining $\Z$.  Let $\Xu := \Hilb[n]{\X}\times \Hilb[n]{\X} \times \X$ and let $\cI_1,\cI_2 \subset \cO{\Xu}$ be the pullbacks of $\cI$ along the two projections to $\Hilb[n]{\X}\times\X$.  Let $\Du := \Hilb[n]{\X}\times\Hilb[n]{\X}\times \D \subset \Xu$.   Finally, let $p : \Xu \to \Hilb[n]{\X}\times\Hilb[n]{\X}$ be the projection and consider the sheaf $\cE := p_*\cHom{\cI_1,\cI_2}$.  Composition with the map $\cI_2 \to \cO{\Xu}\to\cO{\Du}$ gives a natural map $p_*\cHom{\cI_1 \otimes \cO{\Du},\cI_2\otimes\cO{\Du}} \to p_*\cHom{\cI_1,\cO{\Du}}$ of quasi-coherent sheaves on $\Hilb[n]{\X}\times\Hilb[n]{\X}$ and the morphisms $\cI_1 \otimes \cO{\Du} \to \cI_2\otimes\cO{\Du}$ compatible with the maps to $\cO{\Du}$ form a torsor for the quasi-coherent sheaf
\[
\cE := \ker p_*\cHom{\cI_1 \otimes \cO{\Du},\cI_2\otimes\cO{\Du}} \to p_*\cHom{\cI_1,\cO{\Du}}
\]
It is straightforward to check that the sheaf $\cE$ is coherent, so that its total space $\Tot{\cE}$ is a scheme that is relatively affine over $\Hilb[n]{\X}\times \Hilb[n]{\X}$, and hence so is the torsor in question.  The invertible morphisms then form an open subscheme of the torsor, as desired.

To establish the smoothness of $s,t$, it is now enough to check that their relative tangent complexes $\tc{s},\tc{t}$ at any closed point of $\GHilb[n]{\X}{\D}$ are concentrated in degree zero; note that this statement is independent from the details of the explicit construction of the scheme above, and is indeed proved most easily from the abstract definition as a derived stack.  We will prove the smoothness for $s$; the argument for $t$ is identical.

 We have the following commutative diagram of tangent complexes:
\[
\xymatrix{
\tc{s} \ar[r] \ar[d] & \tc \GHilb[n]{\X}{\D} \ar[d]\ar[r] & s^*\tc\Hilb[n]{\X} \ar@{=}[d]\\
t^*\tc\Hilb[n]{\X}\ar[r] \ar[d]& s^*\tc \Hilb[n]{\X}\oplus t^*\tc \Hilb[n]{\X} \ar[r]\ar[d] & s^*\Hilb[n]{\X} \ar[d] \\
(Li^*)\tc{\Perfo{\D}}\ar@{=}[r]& (Li^*)^*\tc\Perfo{\D} \ar[r] & 0
}
\]
where $\tc{s}$ denotes the relative tangent complex of the map $s : \GHilb[n]{\X}{\D} \to \Hilb[n]{\X}$.  Since homotopy fibre products of stacks give homotopy fibre sequences of tangent complexes, all three  rows and the right two columns are fibre sequences, from which we deduce that 
\begin{align}
\tc{s} &\cong \fibre\rbrac{ t^* \tc{\Hilb[n]{\X}} \to (Li^*)^*\tc\Perfo{\D} }  \label{eq:tc-source}
\end{align}
Suppose that $(\Z_1,\Z_2,\lambda) \in \GHilb[n]{\X}{\D}$ is a closed point.  Let $\cI = \cI_{\Z_2}$ be the ideal defining $\Z_2$.  Identifying the tangent complexes of $\Hilb[n]{\X}\subset \Perfo{\X}$ and $\Perfo{\D}$ in \eqref{eq:tc-source} with the  complexes of traceless derived endomorphisms of $\cI$ and $Li^*\cI$ respectively, we have
\begin{align*}
\tc[(\Z_1,\Z_2,\lambda)]s &\cong \fibre\rbrac{ \RSect{\cREnd[\X]{\cI}_0} \to \RSect{\cREnd[\D]{Li^*\cI}}_0}[1] \\
&\cong \RSect{\cREnd[\X]{\cI}_0(-\D)}[1]
\end{align*}
where the second equivalence is induced by tensoring the complex $\cREnd[\X]{\cI}_0$ with the exact sequence for the quotient $\cO{\D}\cong\cO{\X}/\cO{\X}(-\D)$.  As explained in~\cite[Corollary 2.6]{Bottacin1998}, we have
\[
\cREnd[\X]{\cI}_0[1] \cong \cHom[\X]{\cI,\cO{\Z_2}}
\]
so that
\[
\cREnd[\X]{\cI}_0(-\D)[1] \cong \cHom[\X]{\cI,\cO{\Z_2}} \otimes_{\cO{\X}} \cO{\X}(-\D)
\]
is a skyscraper sheaf concentrated on the zero-dimensional scheme $\Z_2$.  Therefore
\[
\tc[(\Z_1,\Z_2,\lambda)]s \cong \RSect{\cHom{\cI,\cO{\Z_2}} \otimes_{\cO{\X}} \cO{\X}(-\D)} \cong \Hom[\X]{\cI,\cO{\Z_2}(-\D)}
\]
is concentrated in degree zero, as desired.
\end{proof}

\begin{example}\label{ex:blowup-construction}
The construction of the Hilbert groupoid is already interesting in the case when $\D$ is smooth and $n=1$, where it recovers a known construction of Lie groupoids associated with curves in surfaces~\cite{Debord2001,Gualtieri2014a}, or more precisely a groupoid integrating the Lie algebroid $\cT{\X}(-\D)$ of vector fields vanishing on $\D$, via blowing up.  Namely, a length-one subscheme is simply a point of $\X$ with the reduced scheme structure, so that $\Hilb[1]{\X}=\X$, and hence by \autoref{prop:representable}, we have a natural birational map
\[
(s,t) : \GHilb[1]{\X}{\D} \to \X\times\X.
\]
A pair of points in $\Hilb[1]{\X}$ are $\D$-equivalent if and only one of two things occurs: either they both lie in $\X\setminus \D$, or they are the same point of $\D$.  It follows that the isotropy groups of points $p \in \D$ are two-dimensional, and so the preimage of the diagonal embedding of $\D$ under the map $(s,t)$ is a hypersurface in $\GHilb[1]{\X}{\D}$. Hence $(s,t)$ factors through the blowup of $\X\times \X$ along the diagonal copy of $\D$.  One can check (e.g.~using \autoref{thm:triang_chart_Hilb_y_affk} below)  that this identifies $\GHilb[1]{\X}{\D}$ with the complement of the strict transforms of $\D \times \X$ and $\X \times \D$ in the blowup.  It would be interesting to give a similarly explicit description of the birational map $\GHilb{\X}{\D}\to\X\times\X$ when $n >1$ or $\D$ is singular.
\end{example}

\subsection{Stabilizer groups}
\label{sec:stabilizers}

\begin{definition}\label{def:stab-Lie}
We denote by $\Stab{\Z}{\D}$ the stabilizer group algebra of $\Z \in \Hilb{\X}$ in the groupoid $\GHilb[n]{\X}{\D}$, i.e.~the group of homotopy classes of infinitesimal self $\D$-equivalences of $\Z$, and by $\stab{\Z}{\D}$ its Lie algebra.
\end{definition}
By \autoref{lem:D-equiv-equivs},  $\Stab{\Z}{\D}$ can be identified with the group of automorphisms of $i^*\cI_\Z$ that act as the identity on the maximal torsion-free quotient $i^*\cI_\Z^\tf$.  Correspondingly, $\stab{\Z}{\D}$ is identified with the endomorphisms of $i^*\cI_\Z$ that act by zero on $i^*\cI_\Z^\tf$.  Since the maximal torsion subsheaf  
\[
\tormod{\Z}{\D} := \cTor[1][\cO{\X}]{\cO{\Z},\cO{\D}} = \scoH[-1]{\cO{\Z\hcap\D}}
\]
of $i^*\cI_\Z$ is preserved by any endomorphism, we immediately deduce the following:

\begin{lemma}\label{lem:stab-Lie-exact}
There is a canonical exact sequence of groups
\[
\xymatrix{
0 \ar[r] & \Hom[\D]{i^*\cI_\Z^\tf, \tormod{\Z}{\D}} \ar[r] & \Stab{\Z}{\D} \ar[r] & \Aut[\D]{\tormod{\Z}{\D}} 
}
\]
where the group law on $\Hom[\D]{i^*\cI_\Z^\tf, \tormod{\Z}{\D}}$ is addition.  Correspondingly we have an exact sequence of Lie algebras
\[
\xymatrix{
0 \ar[r] & \Hom[\D]{i^*\cI_\Z^\tf, \tormod{\Z}{\D}} \ar[r] & \stab{\Z}{\D} \ar[r] & \End[\D]{\tormod{\Z}{\D}} 
}
\]
where $\Hom[\D]{i^*\cI_\Z^\tf, \tormod{\Z}{\D}}$ is abelian.
\end{lemma}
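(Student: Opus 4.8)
The plan is to deduce both exact sequences from an elementary computation with $\cO{\D}$-linear maps, after unwinding the descriptions of $\Stab{\Z}{\D}$ and $\stab{\Z}{\D}$ recalled just before the statement. Write $M := i^*\cI_{\Z}$, let $T := \tormod{\Z}{\D}$ be its maximal torsion submodule with inclusion $\iota\colon T\hookrightarrow M$, and let $Q := i^*\cI_{\Z}^{\tf}$ be the torsion-free quotient with projection $\pi\colon M\twoheadrightarrow Q$, so that $\ker\pi = T$ and $\pi\iota = 0$. By \autoref{lem:D-equiv-equivs}, $\Stab{\Z}{\D}$ is identified, as a group, with the $\cO{\D}$-linear automorphisms $\phi$ of $M$ such that $\pi\phi=\pi$, and $\stab{\Z}{\D}$ with the Lie algebra of $\cO{\D}$-linear endomorphisms $\delta$ of $M$ such that $\pi\delta=0$. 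The first step, already noted in the text preceding the statement, is that $T$ is carried into itself by every $\cO{\D}$-linear endomorphism $f$ of $M$, since if $at=0$ for a non-zerodivisor $a\in\cO{\D}$ then $af(t)=f(at)=0$. Hence restriction to $T$ makes sense on all of $\Stab{\Z}{\D}$ and $\stab{\Z}{\D}$; it provides the maps on the right of the two sequences, landing in $\Aut[\D]{T}$ rather than merely $\End[\D]{T}$ in the group case because $\phi^{-1}$ also preserves $T$.

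The second and main step is to pin down the kernels and the group (resp. Lie) structure they carry. For $\phi$ in the kernel of $\Stab{\Z}{\D}\to\Aut[\D]{T}$, I would set $\eta := \phi - \id$ and observe that $\eta$ kills $T$ (because $\phi|_T=\id$) and satisfies $\pi\eta=0$ (because $\pi\phi=\pi$), so $\eta$ factors uniquely as $\eta=\iota\psi\pi$ for a well-defined $\psi\in\Hom[\D]{Q,T}$. Conversely, for any $\psi\in\Hom[\D]{Q,T}$ one checks directly that $\id+\iota\psi\pi$ satisfies $\pi(\id+\iota\psi\pi)=\pi$, restricts to the identity on $T$, and is automatically invertible with inverse $\id-\iota\psi\pi$; the key point throughout is the nilpotence $(\iota\psi\pi)^2=\iota\psi(\pi\iota)\psi\pi=0$ coming from $\pi\iota=0$. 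That same nilpotence gives $(\id+\iota\psi_1\pi)(\id+\iota\psi_2\pi)=\id+\iota(\psi_1+\psi_2)\pi$, so $\psi\mapsto\id+\iota\psi\pi$ is an isomorphism from the additive group $\Hom[\D]{Q,T}$ onto the kernel of $\Stab{\Z}{\D}\to\Aut[\D]{T}$, injective because $\pi$ is an epimorphism and $\iota$ a monomorphism; this is exactness of the group sequence at $\Hom[\D]{Q,T}$ and at $\Stab{\Z}{\D}$. The Lie algebra sequence follows by the identical computation at the linearized level (or by differentiating): the kernel of $\stab{\Z}{\D}\to\End[\D]{T}$ consists precisely of the endomorphisms $\iota\psi\pi$ with $\psi\in\Hom[\D]{Q,T}$, and $[\iota\psi_1\pi,\iota\psi_2\pi]=0$, again because $\pi\iota=0$, so this kernel is an abelian ideal, as claimed.

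I do not expect a substantive obstacle: once \autoref{lem:D-equiv-equivs} is in hand the argument is routine linear algebra over $\cO{\D}$. The two points that warrant care, and that I would state explicitly, are that $T$ is \emph{characteristic} (preserved by all endomorphisms of $M$), which is what makes the restriction maps well defined, and the nilpotence $(\iota\psi\pi)^2=0$, which at once makes $\id+\iota\psi\pi$ invertible for free, forces the induced subgroup law to be additive, and makes the corresponding Lie subalgebra abelian. I would also record that neither sequence is right exact---a general automorphism (resp. endomorphism) of $T$ need not be the restriction of one of $M$ inducing the identity (resp. zero) on $Q$---which is why the displayed sequences do not terminate in $0$; computing the actual image of the restriction map is a separate matter, taken up later in the smooth and nodal examples.
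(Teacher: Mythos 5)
Your proposal is correct and follows essentially the same route as the paper: the paper's proof consists of exactly the two observations you isolate (the identification of $\Stab{\Z}{\D}$ and $\stab{\Z}{\D}$ via \autoref{lem:D-equiv-equivs} as (auto/endo)morphisms acting trivially on $i^*\cI_\Z^\tf$, plus the fact that the torsion subsheaf is preserved by every endomorphism), from which it declares the sequences immediate. Your write-up merely makes explicit the routine kernel computation $\eta=\iota\psi\pi$ and the nilpotence $(\iota\psi\pi)^2=0$ that the paper leaves to the reader.
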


\subsection{Syzygy matrices and orbit data}

We now turn to the classification of the orbits of $\GHilb{\X}{\D}$, i.e.~the enumeration of the $\D$-equivalence classes.  In light of \eqref{lem:D-equiv-module}, this is equivalent to the classification of $\cO{\D}$-modules $\cF$ such that $\cF \cong i^*\cI_\Z$ for some $\Z \in \Hilb{\X}$.  

In order to characterize such modules, note that since $\X$ is smooth of dimension two the stalk of $\cI_\Z$ at any point $p \in \X$ is an $\cO{\X,p}$-module of projective dimension at most two; indeed, if $p \notin \Z$ then $\cI_{\Z,p} = \cO{\X,p}$, while if $p \in \Z$, the classical Hilbert--Burch theorem states that the minimal resolution has the form
\begin{align}
\xymatrix{
0 \ar[r] & \cO{\X,p}^{\oplus k} \ar[r]^-{\syz} & \cO{\X}^{\oplus (k+1)} \ar[r]^-{\minsyz} & \cI_{\Z,p} \ar[r] & 0 
}\label{eq:min-res-X}
\end{align}
where $\syz$ is a $(k+1)\times k$-matrix of elements of the maximal ideal $\m_{\X,p} < \cO{\X,p}$, called the \defn{syzygy matrix}, and $\minsyz$ is the row vector formed from the maximal minors of $\syz$ with alternating signs, i.e.
\[
\minsyz = \begin{pmatrix} \det(\syz_0) & -\det(\syz_1) & \cdots & (-1)^k\det(\syz_k) \end{pmatrix}
\]
where $\syz_j$ is the $k\times k$ submatrix obtained by removing the $(j+1)$st row from $\syz$.  The matrix $\syz$ is determined by $\Z$ and $p$ up to multiplication on the left and right by elements in $\GL[k+1]{\cO{\X,p}}$ and $\GL[k]{\cO{\X,p}}$, respectively.

Applying the functor $Li^*$ to \eqref{eq:min-res-X}, we obtain a minimal resolution
\begin{align}
\xymatrix{
0 \ar[r] & \cO{\D,p}^{\oplus k} \ar[r]^-{\syz|_\D} & \cO{\D,p}^{\oplus (k+1)} \ar[r]^-{M(\syz|_\D)} & i^*\cI_{\Z,p} \ar[r] & 0 
}\label{eq:min-res-D}
\end{align}
for any $p \in \D$, so that the $\D$-equivalence class of $\Z$ near $p$ is controlled by the equivalence class of the matrix $\syz|_\D$.  For instance, $\syz|_\D$ can be used to construct an explicit model for $\cO{\Z\hcap \D}$ as a dg $\cO{\D}$-algebra.  Namely, by extending \eqref{eq:min-res-X} one step to the right we obtain a minimal resolution of $\cO{\Z,p}$, which carries a canonical dg algebra structure due to Herzog~\cite{Herzog1973} (see also \cite[Section 6]{Beck2014}), whose structure constants are determined by $\syz$.  Tensoring the resolution with $\cO{\D,p}$, we obtain an explicit model for $\cO{\Z\hcap \D,p}$ as an $\cO{\D,p}$-algebra.

 This motivates the following definition. Note that in this definition, there is a unique local orbit datum of size $k=0$, corresponding to the case in which the point $p \in \D$ does not lie in $\Z$ and $S|_\D$ is the zero map $0 \to \cO{\D,p}$, which we think of as a matrix of size $1 \times 0$.

\begin{definition}
A \defn{local orbit datum supported at $p \in \D$ of size $k \ge 0 $} is an element of the double quotient
\begin{align}
\GL[k+1]{\cO{\D,p}} \backslash \m_{\D,p}^{(k+1)\times k} / \GL[k]{\cO{\D,p}}, \label{eq:orb-datum}
\end{align}
which can be realized as the restriction to $\D$ of the syzygy matrix of a zero-dimensional subscheme $\Z \subset \X$.  We denote the set of local orbit data of size $k$ by $\orbdat[k]{\D}{p}$, and set
\[
\orbdat{\D}{p} := \bigsqcup_{k \ge 0} \orbdat[k]{\D}{p} 
\]
A \defn{(global) orbit datum} is an assignment $\phi : p \mapsto \phi(p)$ of a local orbit datum $\phi(p) \in \orbdat[k]{\D}{p}$ to every point $p \in \D$ such that there are only finitely many $p$ with $\phi(p)$ of positive size.
\end{definition}
Note that every global orbit datum can be realized by a zero-dimensional subscheme, simply by taking the disjoint union of the zero-dimensional subschemes at each $p$ where $\phi(p)$ has positive size.

If $\Z \in \Hilb{\X}$, we denote by
\[
[\Z]_p \in \orbdat{\D}{p}
\]
the image of the syzygy matrix of $\Z$, which is well-defined.  If $\phi$ is a global orbit datum, we define a subset
\[
\Hilb{\X}_{\phi} := \set{ \Z \in \Hilb{\X} }{ [\Z]_p= \phi(p) \textrm{ for all }p \in \D} 
\]
Then clearly $\Hilb{\X}_{\phi}$, if non-empty, is a $\D$-equivalence class, i.e.~an orbit of $\GHilb{\X}{\D}$, and every $\D$-equivalence class arises in this way.

\subsection{Classification of orbits}
\label{sec:orbits}
\subsubsection{Smooth points}

Let $p \in \D$ be a smooth point of $\D$ and suppose that $\Z \in \Hilb{\X}$.  Then the ring $\cO{\D,p}$ of analytic germs at $p$ is a principal ideal domain, isomorphic to the ring $\CC\{x\}$ of convergent power series in an coordinate $x$ centred at $p$. Then since  $i^*\cI_{\Z,p}$ is a finitely generated module whose free part has rank one, it must have the form
\begin{align}
i^*\cI_{\Z,p} &\cong \cO{\D,p} \oplus \bigoplus_{j=1}^k \cO{\D,p}/\m_{\D,p}^{\mu_j} \\
& \cong \CC\{x\} \oplus \bigoplus_{j=1}^k \CC[x]/x^{\mu_j}\label{eq:smooth-point-modules}
\end{align}
for a unique collection $\mu = (\mu_1 \ge \mu_2 \ge \cdots )$ of positive integers.  Correspondingly, the syzygy matrix can be put in the Smith normal form
\begin{align}
\syz|_\D \sim S_\mu(x) := \begin{pmatrix}
x^{\mu_1}& 0& 0 & \dots & 0 \\
0 & x^{\mu_2} & 0 & \dots & 0  \\
0 & 0 & x^{\mu_3} & \dots & 0  \\
\vdots & \vdots & \vdots & \ddots & \vdots \\
0 & 0 & 0 & \dots & x^{\mu_{k}} \\
0 & 0 & 0 & \dots & 0
\end{pmatrix}. \label{eq:smith-form}
\end{align}
Recall the following definition.
\begin{definition}
A \defn{Young diagram} (of \defn{length $k$}) is a decreasing sequence $\mu = (\mu_1\ge \cdots \ge \mu_k)$ of positive integers.  Its \defn{size} is its sum:
\[
|\mu| = \mu_1 + \cdots + \mu_k.
\]
By convention, we set $\mu_j = 0$ for $j > k$.
\end{definition}

We depict a Young diagram $\lambda$ by placing $\lambda_j$ boxes in the $j$-th row, starting the numbering of rows from $1$, e.g.
\[
(4,2,2,1) \ \ \leftrightarrow\ \  \ytableausetup{centertableaux,smalltableaux}
\begin{ytableau}
~ \\
&  \\
& \\
& & & 
\end{ytableau}
\]
is a Young diagram of length four and size nine.

Rephrasing the above, we have the following characterization of the local orbit data at $p$:
\begin{lemma}\label{lem:smooth-orbit-data}
If $p \in \D$ is a smooth point, the map sending a Young diagram $\mu$ to the equivalence class of the corresponding Smith normal form matrix gives a canonical bijection
\[
\orbdat[k]{\D}{p} \cong \{\textrm{Young diagrams of length k}\}.
\]
In particular, if $\D$ is smooth, then the global orbit data are in bijection with functions from $\D$ to the set of Young diagrams of arbitrary size.
\end{lemma}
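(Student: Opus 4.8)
The plan is to reduce \autoref{lem:smooth-orbit-data} to a single explicit construction. Everything except realizability is already contained in the discussion preceding the lemma: since $p$ is a smooth point, $\cO{\D,p}$ is a principal ideal domain; by the exactness of \eqref{eq:min-res-D} the matrix $\syz|_\D$ is an injective $(k+1)\times k$ matrix with entries in $\m_{\D,p}$; and the structure theorem for modules over a PID, together with its uniqueness clause, shows that the $\GL[k+1]{\cO{\D,p}}\times\GL[k]{\cO{\D,p}}$-equivalence class of such a matrix determines, and is determined by, a Young diagram $\mu$ of length $k$ via the Smith normal form \eqref{eq:smith-form}. (The case $k=0$ is the trivial datum with $p\notin\Z$, matching the empty diagram.) Thus the only thing left to prove is that the assignment $\mu\mapsto S_\mu(x)$ actually lands in $\orbdat[k]{\D}{p}$, i.e.\ that every $S_\mu(x)$ is the restriction to $\D$ of the syzygy matrix of an honest zero-dimensional subscheme of the \emph{surface} $\X$; injectivity of $\mu\mapsto[S_\mu(x)]$ is then immediate from the uniqueness clause (or from \autoref{lem:D-equiv-module} together with the uniqueness of the decomposition \eqref{eq:smooth-point-modules}).

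For the realizability step I would use smoothness of $p$ to fix analytic coordinates $(x,y)$ on $\X$ near $p$ with $\D=\cbrac{y=0}$. Given a Young diagram $\mu=(\mu_1\ge\cdots\ge\mu_k)$ of length $k\ge1$, set $a_i:=\mu_1+\cdots+\mu_i$ (so $a_0=0$) and take the monomial ideal
\[
I_\mu := \rbrac{\, y^k,\ x^{a_1}y^{k-1},\ x^{a_2}y^{k-2},\ \ldots,\ x^{a_{k-1}}y,\ x^{a_k}\,}\subset\cO{\X,p},
\]
whose $k+1$ generators lie on a strictly decreasing staircase; $I_\mu$ cuts out a fat point $\Z$ supported at $p$, and extending $I_\mu$ to the ideal sheaf of $\Z$ gives an element $\Z\in\Hilb{\X}$. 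The minimal free resolution of a monomial ideal in two variables is the classical staircase resolution, whose only syzygies are the $k$ ``consecutive'' ones relating adjacent generators; being minimal, this resolution must coincide with \eqref{eq:min-res-X} for $\Z$, so its matrix of consecutive syzygies is the Hilbert--Burch matrix $\syz$. Concretely the $i$-th syzygy is $x^{a_i-a_{i-1}}\,e_{i-1}-y\,e_i=x^{\mu_i}\,e_{i-1}-y\,e_i$, so the $i$-th column of $\syz$ has $x^{\mu_i}$ and $-y$ in two adjacent rows and zeros elsewhere; since $\mu_i\ge1$ all entries lie in $\m_{\X,p}$, as required. Restricting to $\D=\cbrac{y=0}$ kills every $-y$ entry, and the resulting matrix is exactly the Smith normal form matrix $S_\mu(x)$ of \eqref{eq:smith-form}. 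Hence the class of $\syz|_\D$ in $\orbdat[k]{\D}{p}$ is that of $S_\mu(x)$, which proves realizability and therefore the bijection $\orbdat[k]{\D}{p}\cong\cbrac{\textrm{Young diagrams of length }k}$.

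The ``in particular'' statement follows by applying this bijection pointwise: when $\D$ is smooth every point is a smooth point, so $\orbdat{\D}{p}=\bigsqcup_{k\ge0}\orbdat[k]{\D}{p}$ is identified with the set of all Young diagrams (the size-$0$ datum corresponding to the empty diagram), and the finiteness condition in the definition of a global orbit datum matches the condition that only finitely many points receive a nonempty diagram. The main obstacle is the realizability step: the whole point of the definition of $\orbdat[k]{\D}{p}$ is that it contains only classes arising from subschemes of the surface, so a soft invocation of Smith normal form is not sufficient and one genuinely needs the explicit monomial model $I_\mu$ — though the ensuing computation of $\syz$ and its restriction to $\D$ is entirely routine.
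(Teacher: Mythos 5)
Your proof is correct, and it is more complete than what the paper actually writes for this lemma: the paper derives the classification purely by "rephrasing" the Smith-normal-form discussion (your points (b) and (c)), and you are right that this leaves the well-definedness of the map — i.e.\ that every $S_\mu(x)$ from \eqref{eq:smith-form} is genuinely realized by a subscheme of the surface — unaddressed at that point. The paper supplies realizability only afterwards, via the coordinate-independent subscheme $\Z_p^\mu$ built from the horizontally convex diagram $\hc{\mu}$, and verifies $[\Z_p^\mu]=\mu$ by computing $\cTor[1]{\cO{\Z},\cO{\D}}$ from the Koszul resolution and reading off the module structure from the combinatorics of the boxes of $\hc{\mu}$. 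Your route is different and more direct for this purpose: you take the staircase monomial ideal with $x$-exponents given by the left partial sums $a_i=\mu_1+\cdots+\mu_i$, observe that its minimal (staircase) resolution has the bidiagonal syzygy matrix with columns $x^{\mu_i}e_{i-1}-y\,e_i$, and note that setting $y=0$ yields $S_\mu(x)$ on the nose. This is a perfectly valid realization — the orbit datum does not constrain the length of $\Z$, and reordering the diagonal entries is absorbed by the $\GL$-equivalence — though note your $I_\mu$ is generally a different (and longer) subscheme than the paper's $\Z_p^\mu$, whose minimality, coordinate-independence, and containment in every realization (\autoref{prop:contains-hor-convex}) are what the paper needs downstream. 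As a minor point, your injectivity argument should make explicit that equivalence over $\GL[k+1]{\cO{\D,p}}\times\GL[k]{\cO{\D,p}}$ preserves the isomorphism class of the cokernel, so uniqueness in the structure theorem for the discrete valuation ring $\cO{\D,p}\cong\CC\{x\}$ indeed separates distinct $\mu$; this is immediate and consistent with what you wrote.
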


We now describe the basic characteristics of the orbit datum corresponding to a Young diagram $\mu$.  In order to do so, we introduce the following auxiliary notion:

\begin{definition}\label{def:horizontally_convex_yd}
We say that a Young diagram $\lambda=(\lambda_1\ge \lambda_2\ge...)$ is \defn{horizontally convex}  if $\lambda_i-\lambda_{i+1} \ge \lambda_{i+1}-\lambda_{i+2}$, for every $i>0$.
\end{definition}
For instance, the following Young diagrams  are horizontally convex:
$$
\ytableausetup{centertableaux,smalltableaux}
\begin{ytableau}
\\
&\\
&&\\
\end{ytableau}
~~~
\begin{ytableau}
\\
& &\\
& & & & \\
\end{ytableau}
~~~
\begin{ytableau}
~&\\
 & & & & \\
& & & & & & & & \\
& & & & & & & & && & & &\\
\end{ytableau}
$$
whereas the following are \emph{not} horizontally convex:
$$
\begin{ytableau}
\\
\\
\end{ytableau}
~~~
\begin{ytableau}
~&\\
&\\
\end{ytableau}
~~~
\begin{ytableau}
~&\\
& &\\
\end{ytableau}
~~~
\begin{ytableau}
\\
~&\\
& \\
\end{ytableau}
~~~
\begin{ytableau}
~&\\
& & & & \\
& & & & & & \\
\end{ytableau}
$$

For a Young diagram $\mu$, let $\hc{\mu}$ be the unique Young diagram $\lambda$ such that $\lambda_j -\lambda_{j+1} = \mu_j$ for all $j > 0$, or  equivalently $\lambda_j = \sum_{l =1}^j \mu_l$.  Then $\hc{\mu}$ is horizontally convex, and evidently every horizontally convex diagram can be obtained in this way. For example,
\begin{align}
\mu ~~=~~~~
\begin{ytableau}*(white!50!blue) \\
 *(white!50!blue)& *(white!50!blue)\\
 *(white!50!blue)&  *(white!50!blue)& *(white!50!blue)&*(white!50!blue)  \\
 *(white!50!blue)& *(white!50!blue) &*(white!50!blue) & *(white!50!blue)&*(white!50!blue)\\
\end{ytableau}
\ \ \ \ \longmapsto\ \ \ \ 
 \hc{\mu} ~~=~~~~
\begin{ytableau}
*(white!50!blue)\\
 & *(white!50!blue)& *(white!50!blue)\\
& & &*(white!50!blue)&*(white!50!blue)&*(white!50!blue)&*(white!50!blue) \\
& & & & & & &*(white!50!blue) &*(white!50!blue)&*(white!50!blue)&*(white!50!blue)&*(white!50!blue) \\
\end{ytableau} \label{eq:hc}
\end{align}
Note that the number of boxes of $\hc{\mu}$ is given by
\begin{align}
|\hc{\mu}| = \sum_{j \ge 1} j \mu_j, \label{eq:size-of-hc}
\end{align}
which will be useful below.

Given a Young diagram $\mu$ and a smooth point $p \in \D$, we define a zero-dimensional subscheme
\[
\Z_p^\mu \subset \X
\]
supported at $p$ as follows.  First, choose coordinates $(x,y)$ centred at $p$, such that $\D$ is given locally by the equation $y=0$, and let $\lambda = \hc{\mu}$.  Then set
\begin{equation}\label{eq:D-convex_subscheme}
\Z_p^\mu := \textrm{the vanishing locus of the monomials } x^jy^l, \qquad j \ge \lambda_{l+1}
\end{equation}
Using the horizontal convexity of $\lambda$, it is straightforward to verify that this definition is independent of the choice of coordinates $(x,y)$. 

Evidently the algebra $\cO{\Z_p^\mu} \cong \CC[x,y]/(x^jy^{l})_{j \ge \lambda_{l+1}}$ has a basis given by the monomials $x^jy^l$ where $j < \lambda_{l+1}$ for $l=0,\ldots,k-1$.
The following is therefore immediate:
\begin{lemma}\label{lem:length-young-diagram-monomoial} 
The algebra $\cO{\Z_p^\mu}$ has a basis indexed by the boxes of the Young diagram $\hc{\mu}$.  In particular, the length of the scheme $\Z_p^\mu$ is given by
\[
\# \Z_p^\mu = |\hc{\mu}|.
\]
\end{lemma}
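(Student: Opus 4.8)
The statement is a bookkeeping exercise with the monomial ideal defining $\Z_p^\mu$, and the plan is as follows. Fix the coordinates $(x,y)$ used in \eqref{eq:D-convex_subscheme}, and write $\lambda = \hc{\mu}$, which has length $k$ and is a Young diagram, so that $l\mapsto \lambda_{l+1}$ is weakly decreasing. First I would recall the elementary fact that for a monomial ideal $I\subset\CC[x,y]$ the classes of the monomials \emph{not} lying in $I$ form a $\CC$-basis of $\CC[x,y]/I$; this is immediate since $I$ is the $\CC$-span of the monomials it contains. Thus $\cO{\Z_p^\mu}\cong\CC[x,y]/I$ with $I=(x^jy^l : j\ge \lambda_{l+1})$ has a monomial basis indexed by the pairs $(a,b)$ with $x^ay^b\notin I$.

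Next I would pin down this set. A monomial $x^ay^b$ lies in $I$ exactly when it is divisible by some generator $x^jy^l$ with $j\ge\lambda_{l+1}$, i.e.\ when there is an $l\le b$ with $a\ge\lambda_{l+1}$; since $\lambda_{l+1}$ is weakly decreasing in $l$, the weakest such condition is $a\ge\lambda_{b+1}$. Hence $x^ay^b\notin I$ if and only if $a<\lambda_{b+1}$, which in particular forces $\lambda_{b+1}\ge 1$, i.e.\ $0\le b\le k-1$. So the monomial basis of $\cO{\Z_p^\mu}$ is $\{x^ay^b : 0\le b\le k-1,\ 0\le a\le \lambda_{b+1}-1\}$, recovering the description stated just before the lemma.

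Finally I would exhibit the bijection with the boxes of $\hc{\mu}=\lambda$: send $x^ay^b$ to the box in row $b+1$ and column $a+1$. Row $b+1$ of $\lambda$ has exactly $\lambda_{b+1}$ boxes, so as $a$ runs over $0,\dots,\lambda_{b+1}-1$ the column index $a+1$ runs over $1,\dots,\lambda_{b+1}$, and as $b$ runs over $0,\dots,k-1$ we exhaust all nonempty rows; this is a bijection. Counting boxes row by row then gives $\#\Z_p^\mu=\dim_\CC\cO{\Z_p^\mu}=\sum_{b=0}^{k-1}\lambda_{b+1}=|\lambda|=|\hc{\mu}|$. There is no genuine obstacle here; the only point requiring a moment's care is the divisibility criterion in the second step, where one uses monotonicity of $\lambda$ (horizontal convexity of $\hc{\mu}$ is not needed for this lemma, having already been used for the coordinate-independence of $\Z_p^\mu$, which we may therefore take as given).
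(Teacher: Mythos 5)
Your proof is correct and is essentially the argument the paper treats as immediate: the quotient by the monomial ideal has the monomials $x^ay^b$ with $a<\lambda_{b+1}$ as a basis, and these are in bijection with the boxes of $\hc{\mu}$, giving $\#\Z_p^\mu=|\hc{\mu}|$. You simply spell out the divisibility criterion and the counting that the paper leaves to the reader.
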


\begin{remark}\label{rem:conv-monomial-limit}
$\Z_p^\mu$ can be defined in a coordinate-free fashion as follows.  Let $\mu^T$ be the transpose of $\mu$, defined by $\mu_j^T = \#\set{l > 0}{\mu_l \ge j}$, and let $k^T = \mu_1$ be its length.  For a collection $p_1,\ldots,p_{k^T}$ of pairwise distinct points in $\D$, consider the subscheme given by the disjoint union of $\mu^T_j$-th order neighbourhoods of $p_j$ for $1 \le j \le k^T$.  Taking the limit as the points $p_1,\ldots,p_{k^T} \to p$ we obtain the subscheme $\Z^\mu_p$.

For example, consider the Young diagrams
\[
\mu :=~ \begin{ytableau}
~& \\
& \\
&&&& \\
&&&&&
\end{ytableau} \qquad\qquad
\lambda := \hc{\mu}=~
\begin{ytableau}
~&\\
 & & & \\
& & & & & & & & \\
& & & & & & & & && & & & &\\
\end{ytableau}
\qquad\qquad
\mu^T =~ \begin{ytableau}
~ \\
& \\
& \\
& \\
&&&\\
&&&
\end{ytableau}
\]
Then $\Z_p^\mu$ is obtained as the limit of the  schemes defined by the intersection of the ideals $\m_{\X,p_1}^4 , \m_{\X,p_2}^4 , \m_{\X,p_3}^2 , \m_{\X,p_4}^2 , \m_{\X,p_5}^2 , \m_{\X,p_6}$, when all points $p_i\in \D$ tend to $p$ while remaining distinct. Pictorially, this corresponds to the fact that the Young diagram $\lambda$ can be obtained taking the Young diagrams
$$
\begin{ytableau}
~  \\
& \\
& & \\
& & & \\
\end{ytableau}
~~~
\begin{ytableau}
~  \\
& \\
& & \\
& & & \\
\end{ytableau}
~~~
\begin{ytableau}
\none\\
\none\\
~  \\
& \\
\end{ytableau}
~~~
\begin{ytableau}
\none\\
\none\\
~  \\
& \\
\end{ytableau}
~~~
\begin{ytableau}
\none\\
\none\\
~  \\
& \\
\end{ytableau}
~~~
\begin{ytableau}
\none\\
\none\\
\none\\
~  
\end{ytableau}
$$
and piling their boxes onto one another horizontally.
\end{remark}

The importance of the subscheme $\Z_p^\mu$ is that it gives a canonical representative of the corresponding orbit datum:
\begin{proposition}
Let $\mu$ be a Young diagram.  Then $[\Z_p^\mu] = \mu \in \orbdat{\D}{p}$.
\end{proposition}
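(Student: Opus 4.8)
The plan is to compute the Hilbert--Burch syzygy matrix of $\Z_p^\mu$ explicitly, restrict it to $\D$, and recognize the result as the Smith normal form attached to $\mu$. First I would fix analytic coordinates $(x,y)$ centred at $p$ with $\D = \{y = 0\}$ and set $\lambda := \hc{\mu}$, so that by \eqref{eq:D-convex_subscheme} the ideal $\cI := \cI_{\Z_p^\mu,p}$ is generated by the $k+1$ monomials $g_l := x^{\lambda_{l+1}}y^l$ for $l = 0,\dots,k$ (using $\lambda_{k+1} = 0$, so that $g_k = y^k$). Since $\mu_j > 0$ for $j \le k$, the partition $\lambda$ is \emph{strictly} decreasing, hence no $g_l$ divides another and $\{g_0,\dots,g_k\}$ is a minimal generating set of $\cI$. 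By \autoref{lem:smooth-orbit-data} it then suffices to show that the syzygy matrix of $\cI$, restricted to $\D$, lies in the $\GL[k+1]{\cO{\D,p}} \times \GL[k]{\cO{\D,p}}$-orbit of the Smith form $S_\mu(x)$ of \eqref{eq:smith-form}.

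Next I would write down the candidate syzygy matrix. The identity $\lambda_{l+1} - \lambda_{l+2} = \mu_{l+1}$ provides, for each $l = 0,\dots,k-1$, the ``staircase'' syzygy $y\cdot g_l = x^{\mu_{l+1}} g_{l+1}$; arranging these as the columns of a $(k+1)\times k$ matrix $\syz$ yields the bidiagonal matrix with $y$ down the main diagonal and $-x^{\mu_1},\dots,-x^{\mu_k}$ down the subdiagonal, all entries lying in $\m_{\X,p}$. A short determinant computation then checks that the signed maximal minors of $\syz$ recover $g_0,\dots,g_k$ up to sign: deleting the $l$-th row of $\syz$ splits it into two bidiagonal blocks, of determinants $\pm y^l$ and $\pm x^{\mu_{l+1}+\cdots+\mu_k} = \pm x^{\lambda_{l+1}}$, whose product is $\pm g_l$. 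Consequently the ideal of maximal minors of $\syz$ is exactly $\cI$, and since the entries of $\syz$ lie in $\m_{\X,p}$ and $\cI$ has finite colength, the Hilbert--Burch theorem (in the form already recalled around \eqref{eq:min-res-X}) identifies $0 \to \cO{\X,p}^{\oplus k} \xrightarrow{\syz} \cO{\X,p}^{\oplus(k+1)} \to \cI \to 0$ as a minimal free resolution, so that $\syz$ represents the syzygy class of $\Z_p^\mu$; by \eqref{eq:min-res-D} the restriction $\syz|_\D$ then represents $[\Z_p^\mu] \in \orbdat[k]{\D}{p}$.

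Finally, setting $y = 0$ annihilates the main diagonal of $\syz$ and leaves a matrix whose only nonzero entries are $-x^{\mu_1},\dots,-x^{\mu_k}$ arranged as a shifted diagonal beneath a zero row; a cyclic row permutation (an element of $\GL[k+1]{\cO{\D,p}}$) together with a rescaling of the columns by $-1$ (an element of $\GL[k]{\cO{\D,p}}$) transforms it into $S_\mu(x)$. Hence $[\Z_p^\mu] = [S_\mu(x)] = \mu$ under the bijection of \autoref{lem:smooth-orbit-data}. (One can also bypass the converse direction of Hilbert--Burch: tensoring the resolution \eqref{eq:min-res-X} with $\cO{\D,p}$, which is exact because $\cTor[1][\cO{\X}]{\cI,\cO{\D}} = 0$, and reading off the cokernel of $\syz|_\D$ directly gives $i^*\cI \cong \CC\{x\} \oplus \bigoplus_{j=1}^k \CC[x]/x^{\mu_j}$, which by \eqref{eq:smooth-point-modules} is precisely the module labelled by $\mu$.) I do not anticipate a genuine obstacle here; the only point requiring any care is the claim that the $k$ staircase syzygies span the full syzygy module, and this is forced because a nonprincipal ideal of finite colength in the two-dimensional regular local ring $\cO{\X,p}$ has projective dimension exactly two, so its minimal resolution has precisely $k$ first syzygies.
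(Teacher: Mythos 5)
Your proof is correct, but it takes a genuinely different route from the one in the paper. You compute the Hilbert--Burch presentation explicitly: you exhibit the bidiagonal ``staircase'' matrix $\syz$ built from the relations $y\cdot g_l = x^{\mu_{l+1}}g_{l+1}$, verify via the signed maximal minors and the converse direction of Hilbert--Burch (depth two suffices) that it is the minimal syzygy matrix of $\cI_{\Z_p^\mu}$, and then observe that setting $y=0$ leaves a shifted diagonal that a cyclic row permutation and a sign change carry to the Smith form $S_\mu(x)$ of \eqref{eq:smith-form}. The paper instead never writes down the syzygy matrix: it uses the reduction of \autoref{lem:D-equiv-equivs}/\eqref{eq:smooth-point-modules} to the torsion submodule, computes $\cTor[1]{\cO{\Z},\cO{\D}} \cong \ker(y\colon \cO{\Z}\to\cO{\Z})$ from the Koszul resolution of $\cO{\D}$, and reads off the $\CC\{x\}$-module structure combinatorially from the monomial basis of $\cO{\Z}$ indexed by the boxes of $\hc{\mu}$ (tops of columns give the torsion generators, multiplication by $x$ translates rightwards). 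Your approach is closer to the letter of the definition of an orbit datum as a double coset of matrices and produces the presentation matrix as a byproduct; the paper's is shorter because it piggybacks on the module classification already established, and its box-counting picture is the one reused in the surrounding combinatorial arguments. Both of your justifications that the staircase syzygies exhaust the syzygy module (the Buchsbaum--Eisenbud/Hilbert--Burch exactness criterion, or the rank count from projective dimension) are sound, and your parenthetical alternative of tensoring \eqref{eq:min-res-X} with $\cO{\D,p}$ and invoking $\cTor[1][\cO{\X}]{\cI_\Z,\cO{\D}}=0$ is exactly the mechanism behind \eqref{eq:min-res-D}, so no gap remains.
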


\begin{proof}
  Let $\Z = \Z_p^\mu$.  In light of the above, $i^*\cI_{\Z,p}$ is determined up to isomorphism by its torsion subsheaf, which is isomorphic as an $\cO{\D}$-module to
  $\cTor[1]{\cO{\Z},\cO{\D}}$.   In local coordinates $(x,y)$ such that $\D$ is the vanishing locus of $y$, we can use the Koszul resolution of $\cO{\D}$ to compute
$$
\cTor[1]{\cO{\Z},\cO{\D}} \cong \ker \Big(
\cO{\Z} \xrightarrow{~~y \cdot~~} \cO{\Z}
\Big)
$$
Using the basis of $\cO{\Z}$ indexed by the boxes of $\lambda := \hc{\mu}$ as above, the operator $y$ corresponds to a translation upwards by one step.  Therefore $\cTor[1]{\cO{\Z},\cO{\D}}$ has a $\CC$-basis indexed by the boxes of $\lambda$ that sit at the top of the columns, and these are in bijection with the boxes of $\mu$, as illustrated in \eqref{eq:hc}.  Moreover $x$ restricts to a coordinate on $\D$, identifying $\cO{\D,p} \cong \CC\{x\}$.  The action of $x$ on the basis corresponds to translation rightwards by one step in the Young diagram, and in this way we deduce that $\cTor[1]{\cO{\X},\cO{\D}}$ is isomorphic to the direct sum of the modules $\CC\{x\}/(x^{\mu_j})$ for $j =1,\ldots, k$, as desired.
\end{proof}

\begin{proposition}\label{prop:contains-hor-convex}
  If $\Z \in \Hilb{\X}$ is any element whose orbit datum at $p$ is the Young diagram $\mu$, then $\Z_p^\mu$ is a subscheme of $\Z$.  Moreover the orbit of $\Z$ contains as a Zariski open dense subset the locus of elements of the form $\Z' \sqcup \Z_p^{\mu}$ such that $p \notin \Z'$.
\end{proposition}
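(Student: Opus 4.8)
The plan is to exploit the local-to-global nature of orbit data together with the module-theoretic characterization of $\D$-equivalence from \autoref{lem:D-equiv-module}. First I would reduce to a purely local statement at $p$: since $[\Z]_p = \mu$ means precisely that $i^*\cI_{\Z,p} \cong i^*\cI_{\Z_p^\mu,p}$ as $\cO{\D,p}$-modules (by \eqref{eq:smooth-point-modules} and the preceding Proposition, which identifies the orbit datum with the isomorphism type of the torsion submodule), it suffices to show that the monomial subscheme $\Z_p^\mu$ embeds in $\Z$ near $p$, i.e.~that $\cI_{\Z,p} \subseteq \cI_{\Z_p^\mu,p}$ inside $\cO{\X,p}$. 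The key observation is that $\cI_{\Z_p^\mu,p}$ is, essentially by \eqref{eq:D-convex_subscheme}, the ideal generated by the monomials $x^j y^l$ with $j \ge \lambda_{l+1}$, where $\lambda = \hc{\mu}$; so I need to show that every such monomial lies in $\cI_{\Z,p}$. Working in the coordinates $(x,y)$ with $\D = \{y=0\}$, I would argue by induction on $l$: the hypothesis on the torsion module $\tormod{\Z}{\D} = \cTor[1]{\cO{\Z},\cO{\D}} = \ker(y\cdot : \cO{\Z}\to\cO{\Z})$ forces $x^{\lambda_1} = x^{\mu_1}$ to annihilate the socle-type generator in the $y=0$ row, hence $x^{\lambda_1} \in \cI_{\Z,p} + (y)$, and then a lifting/Nakayama argument (using that $\cO{\X,p}(-\D)|_\Z$ is invertible over $\cO{\Z}$, so multiplication by $y$ is controlled) upgrades this to $x^{\lambda_1}\in\cI_{\Z,p}$ and propagates the bound to higher rows, yielding $x^j y^l \in \cI_{\Z,p}$ for all $j \ge \lambda_{l+1}$.

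For the second assertion, I would first note that $\Z'\sqcup\Z_p^\mu$ with $p\notin\Z'$ genuinely lies in the orbit of $\Z$: its orbit datum at $p$ is $\mu$ by the previous Proposition, and at every other point of $\D$ it agrees with that of a generic such configuration; more precisely, since all these $\Z'\sqcup\Z_p^\mu$ are $\D$-equivalent to one another and (by the first part, applied at $p$, together with the fact that away from $p$ both $\Z$ and $\Z'\sqcup\Z_p^\mu$ can be taken to avoid $\D$) $\D$-equivalent to $\Z$ itself after possibly moving $\Z$ within its orbit. Then I would show this locus is Zariski open and dense in the orbit $\Hilb{\X}_\phi$. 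Openness: the condition ``$\Z = \Z'\sqcup\Z_p^\mu$ with $p\notin\Z'$'' is equivalent to the length of $\Z$ near $p$ being exactly $|\hc{\mu}|$ (by \autoref{lem:length-young-diagram-monomoial}) and $\Z$ being reduced-at-worst-transverse elsewhere along $\D$ — more to the point, it says $\Z\cap\D$ is concentrated at $p$ and the germ of $\Z$ at $p$ is the specific monomial scheme — which is an open condition on the orbit since degenerating can only increase local length/complexity. Density: any $\Z$ in the orbit can be deformed, within its $\D$-equivalence class (using \autoref{prop:representable}, which says $\D$-equivalences form a smooth groupoid birational to $\Hilb{\X}\times\Hilb{\X}$), to split off the part of $\Z$ transverse to or disjoint from $\D$ as a separate piece $\Z'$ with $p\notin\Z'$, while the part meeting $\D$ at $p$ rigidifies to $\Z_p^\mu$; concretely one uses the $\CCx$-action or a one-parameter family moving the ``free'' points of $\Z$ off $\D$ and away from $p$.

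I expect the main obstacle to be the first part — proving $\Z_p^\mu \subseteq \Z$ — and specifically the passage from the congruence $x^{\lambda_{l+1}} y^l \in \cI_{\Z,p} + (y^{l+1})$, which follows readily from the description of $\tormod{\Z}{\D}$ as $\ker(y\cdot)$ on $\cO{\Z}$, to the honest containment $x^{\lambda_{l+1}} y^l \in \cI_{\Z,p}$. The subtlety is that knowing the isomorphism type of $i^*\cI_{\Z,p}$ as an abstract $\cO{\D}$-module does not a priori pin down the submodule $\cI_{\Z,p}\subseteq\cO{\X,p}$; one must use that the resolution \eqref{eq:min-res-D} is obtained from \eqref{eq:min-res-X} by $Li^*$, so the Smith normal form of $\syz|_\D$ reading off $\mu$ constrains $\syz$ itself modulo $(y)$, and then the Hilbert--Burch description of $\cI_{\Z,p}$ as the maximal minors of $\syz$ lets one conclude. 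A clean alternative, which I would pursue if the direct argument gets unwieldy, is to invoke \autoref{rem:conv-monomial-limit}: realize $\Z_p^\mu$ as a flat limit of disjoint unions of order-neighbourhoods of distinct points of $\D$, show the containment for those (where it is transparent), and pass to the limit using semicontinuity of the ideal.
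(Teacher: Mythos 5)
On the containment $\Z_p^\mu\subseteq\Z$, your main line of argument converges on the paper's proof: one lifts the Smith normal form of $\syz|_\D$ to write the syzygy matrix of $\Z$ as $S_\mu(x)+yM$, and then expands the maximal minors by multilinearity, checking that every term is divisible by the appropriate monomial $x^{\hc{\mu}_{l-1}}y^{k-l}$, so that $\cI_\Z$ lies inside the monomial ideal of $\Z_p^\mu$. You are right that the abstract isomorphism type of the torsion module is insufficient and that the argument must pass through the syzygy matrix; the divisibility computation is the only missing detail there. However, your proposed fallback via \autoref{rem:conv-monomial-limit} is not viable: the containment concerns a \emph{fixed} $\Z$, and the schemes approximating $\Z_p^\mu$ (disjoint unions of infinitesimal neighbourhoods of distinct points) are not subschemes of $\Z$, so there is no family of containments to which semicontinuity could be applied.

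The genuine gap is in the density claim. Your openness argument (upper semicontinuity of the local length at $p$) is essentially the paper's. But ``moving the free points of $\Z$ off $\D$ and away from $p$'' begs the question in the only nontrivial case, namely a $\Z$ in the orbit that is entirely supported at $p$ with $\#\Z>|\hc{\mu}|$: such a $\Z$ has no free points to move, and the scaling $\CCx$-action does not carry it off the locus of schemes supported at $p$ (the limits of its flow lines are monomial ideals, again supported at $p$). The paper supplies the missing mechanism: reduce to $\X=\CC^2$, where by \autoref{cor:sympl_leaves_general} the orbit is a union of symplectic leaves; note by \autoref{lem:length-young-diagram-monomoial} and \autoref{lem:smooth-stablizer} that the orbit through such a $\Z$ has positive dimension, so one may assume $\cI_\Z$ is not monomial and hence that the vector field induced by $E=\sum_i y_i\partial_{y_i}$ is nonzero at $\Z$; then the Hamiltonian flow of a local function $g$ with $E(g)=1$ stays in the orbit, preserves $\Z\cap\D$, and strictly changes the sum of the $x$-coordinates of the support, so it pushes $\Z$ to a point of the orbit of the form $\Z'\sqcup\Z''$ with $\varnothing\neq\Z'$ disjoint from $p$; one concludes by downward induction on length. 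Some concrete degeneration or flow argument of this kind is indispensable here, and your sketch does not contain one.
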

\begin{proof}
Choose local coordinates $(x,y)$  centred at $p$ for which $\D = \{y=0\}$.  By assumption, the syzygy matrix for $\Z$ is equivalent to one of the form
\[
S_\Z = S_\mu(x) + y M = \sum_{j=1}^k x^{\mu_{j}} E_j + y M
\]
where $E_j$ is whose $(j,j)$-entry is 1 and whose other entries are zero, $M \in \cO{\X,p}^{(k+1)\times k}$ and $S_\mu(x)$ is the Smith normal form matrix~\eqref{eq:smith-form}.  Moreover $\Z$ is given locally by the vanishing of the minors of $S_\Z$.  Using the skew-multilinearity of the minors of a matrix, we deduce that each minor is a sum of terms the form $x^{\mu_{j_1}} \cdots x^{\mu_{j_l}} y^{k-l} \det(M_{j_1,\ldots,j_k})$ for suitable matrices $M_{j_1,\ldots,j_k} \in \cO{\X,p}^{k\times k}$, where the indices $j_l,\ldots,j_l$ are pairwise distinct. Since $\mu_k \le \mu_{k-1}\le \ldots$, each such term is divisible by $x^{\mu_{k}+\cdots+\mu_{k-l+1}}y^{k-l} = x^{\hc{\mu}_{l-1}}y^{k-l}$ and hence $\cI_\Z$ is contained in the ideal defining $\Z_p^\mu$, or equivalently $\Z_p^\mu \subset \Z$, as desired.

For the second statement, first note that the Zariski open
property follows by upper semicontinuity of the multiplicity of the point $p$; this upper semicontinuity follows from the continuity of the Hilbert--Chow morphism, and the fact that multiplicity is clearly upper semicontinuous on the base.  So we only need to prove that the given locus is dense.
The problem localizes around the points of $\Z$, so we may assume without loss of generality that $\X = \CC^2$; in this case $\D$ is anticanonical so by \autoref{cor:sympl_leaves_general} below, the orbits are disjoint unions of symplectic leaves of one of Bottacin's Poisson structures on $\Hilb{\X}$.  Furthermore, we may assume that  $\Z$ is supported at a point $p\in \D$, with $\Z \neq \Z_p^\mu$. Hence $\# \Z > \# \Z_{p}^\mu = |\hc{\mu}|$, so by \autoref{lem:length-young-diagram-monomoial} above and \autoref{lem:smooth-stablizer} below, the  orbit of  $\Z$ has positive dimension.  Since the monomial ideals give isolated point in $\Hilb{\X}$, we may assume in addition that the ideal defining $\Z$ is not generated by monomials.  Then the vector field on $\Hilb{\X}$ induced by the Euler vector field $E=\sum_{i=1}^n y_i\partial_{y_i}$ generating the standard $\mathbb{C}^*$ action in coordinates is non-zero at $\Z$ (e.g. see \cite[Proposition 7.4]{Nakajima1999}). Pick a locally defined function $g\in\mathcal{O}_{\Hilb{\X}}$ such that $E(g) = 1$. Then we have $\{g,h\}=1$, here $h$ is the locally defined function on $\Hilb{\X}$ sending $\Z$ to the sum of the $x$-coordinates of its support. Therefore, the Hamiltonian vector field of $g$ pushes $\Z$ away from the locus where $\sum_{i=1}^n x_i=0$. In particular, it pushes $\Z$ away from the set of elements of $(\mathbb{C}^2)^{[n]}$ supported at $p$.  Since the flow cannot change the intersection with $\D$, we deduce that $\Z$ is equivalent to a scheme of the form $\Z' \sqcup \Z''$ where $\Z' \neq \varnothing$ is disjoint from $p$ and $\Z'' \supset \Z_p^\mu$.  The result follows by downward induction on the length of $\Z$.
\end{proof}
\begin{remark}
  Note in the above proof, we cite \autoref{cor:sympl_leaves_general} below. There is no circular logic because the above proposition is not used after the present subsection. Moreover \autoref{cor:sympl_leaves_general} (and all of  \autoref{ss:symp-groupoid-leaves}) do not require anything from the present \autoref{sec:orbits}.
\end{remark}

Note that in light of \eqref{eq:smooth-point-modules} we have a (non-canonical) splitting $i^*\cI_{\Z,p} \cong i^*\cI_{\Z,p}^\tf \oplus \tormod{\Z}{\D}$, where
\[
i^*\cI_{\Z,p}^\tf \cong \cO{\D,p} \cong \CC\{x\} \qquad \tormod{\Z}{\D,p} \cong \bigoplus_{j=1}^k \cO{\D,p}/\m_{\D,p}^{\mu_j} \cong \bigoplus_{j=1}^k \CC[x]/(x^{\mu_j})
\]
are the torsion-free and torsion parts, respectively.  It follows immediately that the natural map $\Stab{\Z}{\D} \to \Aut[\D]{\tormod{\Z}{\D}}$ is a split surjection, with kernel $\Hom{i^*\cI_\Z^\tf,\tormod{\Z}{\D}} \cong \coH[0]{\tormod{\Z}{\D}}$, giving the following description of the stabilizer group:

\begin{lemma}\label{lem:smooth-stablizer}
Suppose $\Z \cap \D$ is supported in the smooth locus of $\D$. Then the stabilizer group of $\Z$ in $\GHilb{\X}{\D}$ is given by 
\[
\Stab{\Z}{\D} \cong \Aut[\D]{\tormod{\Z}{\D}} \ltimes \coH[0]{\tormod{\Z}{\D}}
\] 
where $\coH[0]{\tormod{\Z}{\D}}$ is viewed as an abelian group under addition.  In particular, the orbit of $\Z$ has codimension
\[
\codim(\GHilb{\X}{\D} \cdot \Z) = 2\sum_{p \in \Z} |\hc{\mu(p)}|
\]
where $\mu(p)$ is the Young diagram representing the orbit datum $[\Z]_p$ for all $p \in \D$.
\end{lemma}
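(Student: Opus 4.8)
The plan is to read the first assertion off the material already assembled before the statement, and to obtain the codimension formula by feeding the tangent complex computed in the proof of \autoref{prop:representable} into a dimension count for the orbit, with the size of $\stab{\Z}{\D}$ supplied by the first part.

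For the structure of $\Stab{\Z}{\D}$, I would recall that when $\Z\cap\D$ lies in the smooth locus of $\D$, the isomorphism \eqref{eq:smooth-point-modules} exhibits a splitting $i^*\cI_{\Z,p}\cong i^*\cI^{\tf}_{\Z,p}\oplus\tormod{\Z}{\D}$ in which $i^*\cI^{\tf}_{\Z,p}$ is free of rank one. Extending an $\cO{\D}$-module automorphism of the torsion summand by the identity on the torsion-free summand produces a section of the surjection of \autoref{lem:stab-Lie-exact}, so that exact sequence splits and $\Stab{\Z}{\D}\cong\Aut[\D]{\tormod{\Z}{\D}}\ltimes\Hom[\D]{i^*\cI_\Z^{\tf},\tormod{\Z}{\D}}$ with $\Aut[\D]{\tormod{\Z}{\D}}$ acting by post-composition; a choice of local trivialization of $i^*\cI_\Z^{\tf}$ near the (smooth) support of $\tormod{\Z}{\D}$ then identifies the normal factor equivariantly with $\coH[0]{\tormod{\Z}{\D}}$, giving the stated semidirect product. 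This paragraph is essentially a recap and involves no real work.

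For the codimension, let $\mathcal{O}$ denote the $\D$-equivalence class of $\Z$, which by \autoref{thm:groupoid} is a smooth locally closed subvariety of $\Hilb[n]{\X}$, homogeneous under $\GHilb[n]{\X}{\D}$ and hence of pure dimension. Working at the identity point $(\Z,\Z,\id)$, I would use that $s$ is smooth with relative tangent space $\tc[(\Z,\Z,\id)]{s}\cong\Hom[\X]{\cI_\Z,\cO{\Z}(-\D)}$, established in the proof of \autoref{prop:representable}; inside the tangent space of $\GHilb[n]{\X}{\D}$ at the identity this is $\ker ds$, and restricting the anchor $dt$ to it gives a map onto $\tb[\Z]{\mathcal{O}}$ whose kernel is $\ker ds\cap\ker dt=\stab{\Z}{\D}$. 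Rank–nullity then gives
\[
\dim\mathcal{O}=\dim\tb[\Z]{\mathcal{O}}=\dim_\CC\Hom[\X]{\cI_\Z,\cO{\Z}(-\D)}-\dim\stab{\Z}{\D}.
\]
Since $\cO{\X}(-\D)$ is invertible and $\cO{\Z}$ is Artinian, $\cO{\Z}(-\D)\cong\cO{\Z}$ as $\cO{\X}$-modules, so the first term equals $\dim_\CC\Hom[\X]{\cI_\Z,\cO{\Z}}=2n$ by \eqref{eq:Hilb-tangent} and smoothness of $\Hilb[n]{\X}$; hence $\codim\mathcal{O}=\dim\stab{\Z}{\D}$. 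It remains to evaluate the latter using the first part: decomposing over the points $p\in\Z\cap\D$, one has $\dim\coH[0]{\tormod{\Z}{\D}}=\sum_p|\mu(p)|$, while $\dim\Aut[\D]{\tormod{\Z}{\D}}=\dim\End[\D]{\tormod{\Z}{\D}}=\sum_p\sum_{i,j}\min(\mu_i(p),\mu_j(p))$; using $\sum_{i,j}\min(\mu_i,\mu_j)=\sum_m(2m-1)\mu_m$ for $\mu_1\ge\mu_2\ge\cdots$ together with \eqref{eq:size-of-hc}, this equals $\sum_p\bigl(2|\hc{\mu(p)}|-|\mu(p)|\bigr)$, and adding the two contributions yields $\dim\stab{\Z}{\D}=2\sum_p|\hc{\mu(p)}|$.

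I expect the only genuinely delicate point to be the dimension bookkeeping for $\mathcal{O}$: checking that the restricted anchor surjects onto $\tb[\Z]{\mathcal{O}}$, that $\mathcal{O}$ is smooth of the expected dimension (so $\dim\tb[\Z]{\mathcal{O}}=\dim\mathcal{O}$), and that the identification $\cO{\Z}(-\D)\cong\cO{\Z}$ really collapses the count to the clean identity $\codim\mathcal{O}=\dim\stab{\Z}{\D}$ — an identity which in fact holds for an arbitrary curve $\D$, the smoothness hypothesis on $\D$ near $\Z\cap\D$ entering only through the final evaluation of $\dim\stab{\Z}{\D}$. Everything else — the semidirect-product description and the computation of $\dim\End[\D]{\tormod{\Z}{\D}}$ — is routine linear algebra once \autoref{lem:stab-Lie-exact}, \eqref{eq:smooth-point-modules}, \eqref{eq:size-of-hc} and the relative tangent complex from \autoref{prop:representable} are in hand.
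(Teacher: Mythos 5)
Your proposal is correct and follows essentially the same route as the paper: the semidirect-product description comes from the splitting $i^*\cI_{\Z,p}\cong i^*\cI_{\Z,p}^{\tf}\oplus\tormod{\Z}{\D}$ at smooth points, and the codimension is computed as $\dim\stab{\Z}{\D}=\sum_j\mu_j+\sum_{j,l}\min\{\mu_j,\mu_l\}=2|\hc{\mu}|$ exactly as in the paper. The only difference is that you spell out the identity $\codim(\GHilb{\X}{\D}\cdot\Z)=\dim\stab{\Z}{\D}$ via the relative tangent complex $\Hom[\X]{\cI_\Z,\cO{\Z}(-\D)}\cong\Hom[\X]{\cI_\Z,\cO{\Z}}$ of dimension $2n$, a step the paper treats as immediate ("or equivalently"); this is a worthwhile clarification but not a different argument.
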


\begin{proof}
It remains to establish the formula for the codimension of the orbit, or equivalently the dimension of the Lie algebra $\stab{\Z}{\D}$ of $\Stab{\Z}{\D}$.  But the latter is the sum of the dimensions of the vector spaces
\[
\coH[0]{\tormod{\Z}{\D}} \cong \bigoplus_{j=1}^k \CC[x]/(x^{\mu_j}) \qquad \textrm{and} \qquad 
\End[\D]{\tormod{\Z}{\D}} \cong \bigoplus_{j,l=1}^k \Hom[{\CC[x]}]{\CC[x]/(x^{\mu_j}),\CC[x]/(x^{\mu_l})}.
\]
Note that $\dim \CC[x]/(x^{\mu_j}) = \mu_j$, while $\dim  \Hom[{\CC[x]}]{\CC[x]/(x^{\mu_j}),\CC[x]/(x^{\mu_l})} = \min\{\mu_j,\mu_l\}$.  Therefore 
$$
\dim \Stab{\G}{\Z} = \sum_{j \ge1} \mu_j + \sum_{j,l\ge 1} \min\{\mu_j, \mu_l\} =  \sum_{j\ge 1} \mu_j + \sum_{j\ge 1} (2j-1) \mu_j = 2 \sum_{j\ge 1} j \mu_j =  2 |\hc{\mu}|.
$$
by \eqref{eq:size-of-hc}. 
\end{proof}

Combined with \autoref{lem:length-young-diagram-monomoial}, we deduce the following:
\begin{corollary}
  The zero-dimensional orbits of $\GHilb{\X}{\D}$ are exactly the elements of the form $\Z = \bigsqcup_{p\in\D}  \Z_p^{\mu(p)}$, such that $\sum_{p \in \D}|\hc{\mu(p)}| = n$.
\end{corollary}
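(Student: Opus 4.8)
The plan is to translate ``zero-dimensional orbit'' into a codimension condition, then use the subscheme containment of \autoref{prop:contains-hor-convex} to identify $\Z$. Since $\GHilb[n]{\X}{\D}$ is a smooth groupoid scheme over $\Hilb[n]{\X}$ (\autoref{prop:representable}), its orbits are locally closed subvarieties, so an orbit is zero-dimensional exactly when its codimension equals $\dim\Hilb[n]{\X}=2n$. Provided $\Z\cap\D$ is supported at smooth points of $\D$, \autoref{lem:smooth-stablizer} evaluates this codimension as $2\sum_{p}|\hc{\mu(p)}|$, where $\mu(p)$ is the Young diagram recording the local orbit datum $[\Z]_p$; so the entire question reduces to comparing $\sum_p|\hc{\mu(p)}|$ with $n$.

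The ``if'' direction is then immediate: if $\Z=\bigsqcup_p\Z_p^{\mu(p)}$ and $\sum_p|\hc{\mu(p)}|=n$, then $\Z\cap\D$ is supported at the smooth points $p$ carrying the data, and \autoref{lem:smooth-stablizer} gives $\codim\big(\GHilb[n]{\X}{\D}\cdot\Z\big)=2n$, so the orbit is a single point. For the converse I would take $\Z$ with a zero-dimensional orbit and, in the smooth-support regime, set $\mu(p)=[\Z]_p$. By \autoref{prop:contains-hor-convex} each $\Z_p^{\mu(p)}$ is a subscheme of $\Z$, of length $|\hc{\mu(p)}|$ by \autoref{lem:length-young-diagram-monomoial}; as these subschemes have pairwise disjoint supports, their union is a closed subscheme of $\Z$ of length $\sum_p|\hc{\mu(p)}|\le n$, with equality if and only if $\Z=\bigsqcup_p\Z_p^{\mu(p)}$ (a closed subscheme of a zero-dimensional scheme of equal length is the whole scheme; this in particular leaves no room for points of $\Z$ off $\D$). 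But \autoref{lem:smooth-stablizer} combined with zero-dimensionality forces $2\sum_p|\hc{\mu(p)}|=2n$, so equality holds and $\Z$ has the claimed shape.

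The point needing the most care is the reduction to $\Z\cap\D$ avoiding the singular locus of $\D$: the quoted lemmas do not apply there, and schemes meeting $\D_{\sing}$ genuinely can have zero-dimensional orbits (already at $n=1$, the reduced point at a node is its own orbit), so a fully general statement must either take $\D$ smooth or extend the definition of $\Z_p^{\mu}$ and the notion of orbit datum to singular points. When $\D$ is smooth this difficulty disappears and the argument above is complete; in general it is absorbed into the separate treatment of stabilizers at singular points, with nodes handled later in the paper and the cuspidal case being the combinatorially heaviest. Everything else is routine bookkeeping on top of \autoref{lem:smooth-stablizer}, \autoref{prop:contains-hor-convex} and \autoref{lem:length-young-diagram-monomoial}.
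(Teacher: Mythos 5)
Your argument is correct and is essentially the paper's own (left implicit there as a direct consequence of \autoref{lem:smooth-stablizer} and \autoref{lem:length-young-diagram-monomoial}): a zero-dimensional orbit is one of codimension $2n$, the stabilizer computation converts this to $\sum_p|\hc{\mu(p)}|=n$, and the containment $\bigsqcup_p\Z_p^{\mu(p)}\subseteq\Z$ from \autoref{prop:contains-hor-convex} together with the length count forces equality of schemes. Your remark about restricting to the smooth locus of $\D$ is apt but is already built into the context of the subsection and the hypothesis of \autoref{lem:smooth-stablizer}.
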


Combining these results, we have obtained the following description of the orbits in the case where $\D$ is smooth:
\begin{theorem}\label{thm:sympl_leaves_Hilb_y}
Suppose that $\D \subset \X$ is smooth.  Then the orbits of $\GHilb{\X}{\D}$ are in bijection with functions $\mu$ from $\D$ to the set of all Young diagrams, such that
\[
\sum_{p \in \D} |\hc{\mu(p)}| \le n.
\]
Moreover, for any such $\mu$, the set $\set{\Z' \sqcup \bigsqcup_{p \in \Z} \Z_p^{\mu(p)}}{\Z' \subset \X\setminus \D} \cong \Hilb[n-\sum_p|\hc{\mu(p}|]{(\X\setminus \D)}$ is open and dense in the corresponding orbit $\Hilb{\X}_\mu$.  In particular, if $\X$ is connected, then all orbits are connected.
\end{theorem}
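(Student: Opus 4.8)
The plan is to combine the orbit-datum formalism developed above with \autoref{prop:contains-hor-convex}. Recall that by \autoref{prop:representable} the orbits of $\GHilb{\X}{\D}$ are exactly the $\D$-equivalence classes; that, as explained just before this subsection, each such class equals $\Hilb{\X}_\phi$ for a unique global orbit datum $\phi$ (membership in $\Hilb{\X}_\phi$ being equivalent to $[\Z]_p=\phi(p)$ for all $p$); and that, $\D$ being smooth, \autoref{lem:smooth-orbit-data} identifies the global orbit data with the finitely supported functions $\mu\colon\D\to\{\text{Young diagrams}\}$. Thus the orbits are in bijection with those $\mu$ for which $\Hilb{\X}_\mu$ is non-empty, and the crux of the proof is to show that this non-emptiness is equivalent to $\sum_{p\in\D}|\hc{\mu(p)}|\le n$.

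For the ``only if'' direction, take any $\Z\in\Hilb{\X}_\mu$. By \autoref{prop:contains-hor-convex}, $\Z_p^{\mu(p)}\subseteq\Z$ for every point $p$ of the (finite) support of $\mu$; since these subschemes have distinct one-point supports they are pairwise disjoint, so $\bigsqcup_p\Z_p^{\mu(p)}\subseteq\Z$, and comparing lengths by means of \autoref{lem:length-young-diagram-monomoial} gives $n=\#\Z\ge\sum_p\#\Z_p^{\mu(p)}=\sum_p|\hc{\mu(p)}|$. Conversely, put $m:=\sum_p|\hc{\mu(p)}|$ and assume $m\le n$. As $\X\setminus\D$ is a non-empty open subset of $\X$, it has infinitely many points, so we may pick a reduced length-$(n-m)$ subscheme $\Z'\subset\X\setminus\D$; then $\Z:=\Z'\sqcup\bigsqcup_p\Z_p^{\mu(p)}$ has length $n$ and meets $\D$ only along $\mathrm{supp}(\mu)$. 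Since the syzygy matrix is computed locally, the orbit datum of $\Z$ at a point $p\in\mathrm{supp}(\mu)$ equals $[\Z_p^{\mu(p)}]=\mu(p)$ by the proposition above, and is trivial at every other point of $\D$; hence $[\Z]_p=\mu(p)$ for all $p$, so $\Z\in\Hilb{\X}_\mu$ and $\Hilb{\X}_\mu\ne\varnothing$.

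It remains to establish the ``moreover'' clause; write $L_\mu\subseteq\Hilb{\X}_\mu$ for the locus appearing there. The morphism $\Z'\mapsto\Z'\sqcup\bigsqcup_p\Z_p^{\mu(p)}$ is an isomorphism of $\Hilb[n-m]{(\X\setminus\D)}$ onto $L_\mu$, with inverse sending a scheme to its restriction to the complement of $\mathrm{supp}(\mu)$ --- which, for a member of $\Hilb{\X}_\mu$, coincides with its restriction to $\X\setminus\D$; thus $L_\mu\cong\Hilb[n-m]{(\X\setminus\D)}$. Writing $\mathrm{supp}(\mu)=\{p_1,\dots,p_r\}$ and applying the second assertion of \autoref{prop:contains-hor-convex} at each $p_i$, we get for every $i$ a Zariski-dense open subset $U_i$ of the orbit $\Hilb{\X}_\mu$, namely the set of $\Z$ whose component at $p_i$ equals $\Z_{p_i}^{\mu(p_i)}$. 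A finite intersection of dense open subsets of a topological space is again dense and open, and $\bigcap_{i=1}^r U_i=L_\mu$: indeed, for $\Z\in\bigcap_i U_i$ the component of $\Z$ lying away from every $p_i$ is supported off $\mathrm{supp}(\mu)$, hence even off all of $\D$, because $[\Z]_q$ is trivial for $q\in\D\setminus\mathrm{supp}(\mu)$. So $L_\mu$ is open and dense in $\Hilb{\X}_\mu$. Finally, if $\X$ is connected then so is $\X\setminus\D$ (the complement of a proper closed subset of an irreducible space), hence so is its Hilbert scheme $\Hilb[n-m]{(\X\setminus\D)}$, hence so is $L_\mu$; and $\Hilb{\X}_\mu$, being the closure of the connected set $L_\mu$, is then connected as well.

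The step I expect to be the main obstacle is showing that $L_\mu$ is open and dense in the orbit: while \autoref{prop:contains-hor-convex} supplies the essential input one point at a time, one has to check that its conclusion may be imposed simultaneously at all the finitely many points of $\mathrm{supp}(\mu)$ --- equivalently, that the corresponding dense open subsets intersect in a dense open subset --- and that the leftover component of a generic member of the orbit is automatically disjoint from the whole of $\D$, not merely from $\mathrm{supp}(\mu)$. Once the non-emptiness criterion is established, the bijection and the connectedness statement are essentially formal.
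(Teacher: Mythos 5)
Your proposal is correct and matches the paper's intent: the paper presents this theorem as an immediate combination of \autoref{lem:smooth-orbit-data}, \autoref{lem:length-young-diagram-monomoial}, and \autoref{prop:contains-hor-convex} (together with the identification of orbits as the sets $\Hilb{\X}_\phi$), and your write-up simply makes that combination explicit, including the correct observations that non-emptiness of $\Hilb{\X}_\mu$ is equivalent to the length inequality and that the dense open loci supplied by \autoref{prop:contains-hor-convex} at the finitely many support points intersect in exactly $L_\mu$.
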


\begin{remark}
In addition, one can show that the orbit $\Hilb{\X}_\mu$ lies in the closure of the orbit $\Hilb{\X}_{\tmu}$ if and only if $\tmu \ge \mu$ in the dominance order on Young diagrams.  However the proof uses more refined information about the local structure of $\Hilb{\X}$, so we postpone it; see \autoref{cor:Hilb_y_sympl_leaves_order}.
\end{remark}

\subsubsection{Nodes}

We now briefly describe the classification of the orbit data at a nodal singularity $p \in \D$.

Choose coordinates $(x,y)$ centred at $p$, so that $\D = \{xy=0\}$, giving an isomorphism
\[
\cO{\D,p} \cong \CC\{x,y\}/(xy).
\]
The classification of finitely generated modules over this ring is known: the torsion modules were classified in \cite{GelPon68} (see also \cite{BGS09}), and the classification of all finitely generated modules over $\mathbb{C}[x,y]/(xy)$ was done in \cite{Levy1,Levy2}, based on earlier works \cite{NazRoi69,NRSB72}. In particular, every finitely generated module decomposes as a direct sum of indecomposable ones, and this decomposition can be determined algorithmically~\cite{LaubStur96}.  Furthermore, the minimal resolutions of the indecomposables are presented in \cite[Appendix A]{Burban2004}.  Of these, only two families have projective dimension at most one, namely  those listed as (1) and (3) on p.~224--225 of \emph{op.~cit.}:
\begin{itemize}
\item  A ``continuous series'' of torsion modules, expressed as the cokernels of the square matrices
\[
M(\textbf{d},k,u) := \begin{pmatrix}
x^{\nu_1} I_k & 0 & 0 &\cdots & y^{\mu_l} J_k(u) \\
y^{\mu_1}I_k & x^{\nu_2}I_k &0 & \cdots& 0 \\
0 & y^{\mu_2}I_k & x^{\nu_3}I_k &\cdots & 0 \\
\vdots & \vdots & \vdots &\ddots & \vdots \\
0 & 0 &\cdots & y^{\mu_{l-1}}I_k & x^{\nu_l} I_k
\end{pmatrix}
\]
of size $k\cdot l$, where $\textbf{d} = (\nu_1,\mu_1),(\nu_2,\mu_2),\ldots,(\nu_l,\mu_l)$ is a non-periodic sequence of pairs of positive integers, $I_k$ denotes the identity matrix of size $k$ and $J_k(u)$ is a maximal Jordan block of size $k$ with eigenvalue $u \neq 0$.  We include here the degenerate case $l=1$, in which case $M(\mathbf{d},k,u) = x^{\nu_1} I_k-y^{\mu_1} J_k(u)$.
\item A ``discrete series'' of rank-one modules, expressed as the cokernels of the $(k+1)\times k$ matrices
\[
\mathscr{D}(\mathbf{d}) := \begin{pmatrix}
x^{\nu_1} & 0 & 0 & \cdots & 0 \\
y^{\mu_1} & x^{\nu_2} & 0 & \cdots & 0 \\
0 & y^{\mu_2} & x^{\nu_3} & \cdots & 0 \\
0 & 0 & y^{\mu_3} & \ddots & \vdots \\
\vdots & \vdots&\vdots & \ddots & x^{\nu_k} \\
0 & 0 & 0 & \cdots & y^{\mu_k}
\end{pmatrix}
\]
where $\mathbf{d} = (\mu_1,\nu_1),\cdots, (\mu_k,\nu_k)$ is a sequence of pairs of positive integers. We include here the degenerate case $k=0$, which corresponds to the trivial module.
\end{itemize}

Since for any $\Z \in \Hilb{\X}$, the module $i^*\cI_{\Z,p}$ has rank one and projective dimension at most one, we have the following classification of the orbit data:
\begin{proposition}\label{prop:orbit-data-node}
If $p$ is a node, then each local orbit datum is presented as the direct sum of a single matrix of the form $\mathscr{D}(\mathbf{d})$ and at most finitely many matrices of the form $M(\mathbf{d},k,u)$. This presentation is unique up to permutation of the summands.
\end{proposition}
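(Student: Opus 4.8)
The plan is to recast the statement in module-theoretic terms over the analytic node ring $\cO{\D,p}\cong\CC\{x,y\}/(xy)$ and then read off the answer from the structure theory recalled above. The starting point is that the restricted syzygy matrix $\syz|_\D$ appearing in \eqref{eq:min-res-D} is a \emph{minimal} presentation matrix of the $\cO{\D,p}$-module $i^*\cI_{\Z,p}$: its entries lie in $\m_{\D,p}$ precisely because those of $\syz$ lie in $\m_{\X,p}$. Since a minimal presentation of a finitely generated module over a local ring is unique up to the action of $\GL[k+1]{\cO{\D,p}}\times\GL[k]{\cO{\D,p}}$, the local orbit datum $[\Z]_p$ is nothing but the isomorphism class of the module $i^*\cI_{\Z,p}$. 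So the task reduces to (a) determining the indecomposable direct summands of such modules, and (b) checking that the matrices $M(\mathbf{d},k,u)$ and $\mathscr{D}(\mathbf{d})$ furnish minimal presentations of those summands.

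For (a), two properties of $i^*\cI_{\Z,p}$ do the work. First, \eqref{eq:min-res-D} shows it has projective dimension at most one; and if $N$ is a direct summand of $i^*\cI_{\Z,p}$, then $\Ext[i][\cO{\D,p}]{N,-}$ is a direct summand of $\Ext[i][\cO{\D,p}]{i^*\cI_{\Z,p},-}$, hence vanishes for $i\ge 2$, so every indecomposable summand again has projective dimension at most one. Second, away from the finite set $\Z$ we have $i^*\cI_\Z\cong\cO{\D}$, so localizing at the generic point of either branch of the node shows $i^*\cI_{\Z,p}$ is free of rank one along each branch. Now apply the Krull--Schmidt theorem --- valid since $\cO{\D,p}$ is a Henselian local ring, or after passing to its completion $\CC[[x,y]]/(xy)$ --- to write $i^*\cI_{\Z,p}$ as a finite direct sum of indecomposables, unique up to permutation. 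By the classification recalled above \cite{Levy1,Levy2,Burban2004}, the indecomposable $\cO{\D,p}$-modules of projective dimension at most one are exactly the torsion modules $\coker M(\mathbf{d},k,u)$, which have rank zero on both branches, and the modules $\coker \mathscr{D}(\mathbf{d})$, which have rank one on both branches --- the degenerate case $k=0$ of the latter being the free module $\cO{\D,p}$ itself. Comparing ranks along the two branches now forces exactly one summand to be of type $\mathscr{D}$, with the remaining ones --- finitely many, by Noetherianity --- of type $M$.

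For (b), one checks directly that every entry of $M(\mathbf{d},k,u)$ and of $\mathscr{D}(\mathbf{d})$ lies in $\m_{\D,p}$ (using $\nu_i,\mu_i\ge1$ and $u\ne0$), so these are minimal presentations of the corresponding indecomposables. Hence the block-diagonal matrix assembled from a single $\mathscr{D}$-block and the $M$-blocks prescribed by the Krull--Schmidt decomposition is itself a minimal presentation of $i^*\cI_{\Z,p}$, and therefore represents the orbit datum $[\Z]_p$ up to equivalence of presentation matrices. Uniqueness up to permutation of the blocks then follows from the uniqueness in Krull--Schmidt, together with the fact --- part of the cited classification --- that $\coker M(\mathbf{d},k,u)$ and $\coker \mathscr{D}(\mathbf{d})$ for distinct parameter values are pairwise non-isomorphic (under the usual conventions on $\mathbf{d}$, such as non-periodicity and cyclic equivalence).

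So the proposition is, in the end, a bookkeeping consequence of the ingredients already assembled just before it. The one point requiring care is the interface with the literature: one must make sure the Levy--Burban classification is quoted over the appropriate ground ring, and that the two families singled out there genuinely exhaust the indecomposables of projective dimension $\le 1$ --- equivalently, that every other indecomposable over the node has infinite projective dimension, which holds because the node is a one-dimensional (Gorenstein) hypersurface. The remaining work is the rank computation on the two branches and the translation between the module decomposition and the block-diagonal matrix presentation, both of which are routine given uniqueness of minimal resolutions.
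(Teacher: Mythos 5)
Your proof is correct and follows essentially the same route the paper intends: the paper gives no separate proof, deriving the proposition directly from the preceding one-line observation that $i^*\cI_{\Z,p}$ has rank one and projective dimension at most one together with the cited Levy--Burban classification. Your write-up simply makes explicit the steps the paper leaves implicit (minimality of the restricted syzygy presentation, Krull--Schmidt over the Henselian local ring, and the rank count on the two branches forcing exactly one discrete-series summand), all of which check out.
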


\begin{remark}
By choosing suitable lifts of the direct sums of the matrices $\mathscr{D}(\textbf{d})$ and $M(\mathbf{d},k,u)$ to matrices valued in $\cO{\X,p}$, one can show that all direct sums as in \autoref{prop:orbit-data-node} are realized as orbit data for sufficiently large $n$.  It would be interesting to know the minimal $n$ for which a given direct sum can occur.
\end{remark}
\begin{remark}
It is straightforward to check that the indecomposable rank one modules in the discrete series above are determined up to isomorphism by their torsion submodules, which have infinite projective dimension and form the discrete series described in item (2) on p.~225 of~\cite{Burban2004}. Hence, if the singularities of $\D$ are at worst nodal, the orbit type of an element $\Z \in \Hilb{\X}$ is completely determined by the isomorphism class of the torsion submodule $\tormod{\Z}{\D}$ of $\cI_\Z$. 
\end{remark}

\section{Symplectic and Poisson structures}
\label{sec:pois}

\subsection{Bottacin's Poisson structure}
We now consider the particular case in which the divisor $\D\subset \X$ is anticanonical, and we fix a section
\[
\ps \in \coH[0]{\acan{\X}}
\]
vanishing on  $\D$.  Thus $\ps$ is a Poisson structure on $\X$ having the open set $\X\setminus \D$ as a two-dimensional symplectic leaf, and the points of $\D$ as zero-dimensional symplectic leaves.   In \cite{Bottacin1998}, Bottacin shows that for every $n \ge 0$, the induced Poisson structure $\symps$ on the symmetric power $\sympow{\X}$ lifts along the Hilbert--Chow morphism to a Poisson structure on the Hilbert scheme $\Hilb[n]{\X}$, and that the corresponding bivector
\[
\hilbps \in \coH[0]{\wedge^2 \cT{\Hilb[n]{\X}}}
\]
 has the following description: using the various identifications \eqref{eq:Hilb-tangent} of the tangent space at $\Z \in \Hilb[n]{\X}$, and the corresponding Serre dual descriptions of the cotangent space, the anchor map $(\hilbps)^\sharp : \ctb[\Z]{\Hilb[n]{\X}} \to \tb[\Z]{\Hilb[n]{\X}}$ is determined by the following commutative diagram:
\[
\xymatrix{
\ctb[\Z]{\Hilb[n]{\X}} \ar[r]^-{\sim} \ar[d]^-{(\hilbps)^\sharp} & \Ext[1][\X]{\cI_\Z,\cI_\Z\otimes\can{\X}}_0 \ar[d]^-{\ps} \ar[r]^-{\sim}& \Hom[\X]{\cI_{\Z},\cO{\Z}\otimes\can{\X}} \ar[d] ^-{\sigma} \\
\tb[\Z]{\Hilb[n]{\X}}  & \Ext[1][\X]{\cI_\Z,\cI_\Z}_0 \ar[l]^-{\sim} &  \Hom[\X]{\cI_{\Z},\cO{\Z}} \ar[l]^-{\sim}
}
\]

\subsection{Symplectic groupoid and symplectic leaves}\label{ss:symp-groupoid-leaves}

We now sketch an alternative approach to the construction of Bottacin's Poisson structure $\hilbps$, based on the formalism of shifted symplectic structures from~\cite{Pantev2013}, and the now standard correspondence between Poisson structures, symplectic groupoids and shifted Lagrangian morphisms. 
  This conceptual approach has the advantage of immediately yielding a description of the symplectic groupoid and symplectic leaves of $\hilbps$, and is indeed how we arrived at the definition of the groupoid $\GHilb{\X}{\D}$.  However, \emph{a posteriori} one can construct the symplectic structure on the groupoid without invoking the theory of shifted symplectic structures; see \autoref{rem:classical-construction} below.

   We will assume here that $\X$ is algebraic, since the cited references make this assumption.  However, the constructions are effectively local in $\X$, so that the result holds also in the non-algebraic setting with essentially the same arguments.

First, we observe that if $\D \subset \X$ is an anticanonical divisor, the map $Li^* : \Perf{\X}\to\Perf{\D}$ of derived stacks carries a one-shifted Lagrangian structure in a neighbourhood of any perfect complex on $\X$ with proper support.  If $\X$ itself is proper, this can be obtained by viewing the anticanonical section $\ps$ as a relative orientation on the inclusion $\D \to \X$, in the sense of \cite{Calaque2015a}, and combining the main result of \emph{op.~cit.} with the 2-shifted symplectic structure on the stack $\Perf{-}$ of perfect complexes from \cite{Pantev2013}.  More generally, one can invoke the results of~\cite{Brav2021}.  We may then pull this Lagrangian structure back along the square
\[
\xymatrix{
\Perfo{\X}\ar[r] \ar[d]& \Perf{\X}\ar[d] \\
\Perfo{\D}\ar[r] & \Perf{\D},
}
\]
thus obtaining a canonical isotropic structure on the map $\Perfo{\X}\to \Perfo{\D}$ over the locus of complexes with proper support.
\begin{lemma}
The induced isotropic structure on $\Perfo{\X}\to\Perfo{\D}$ is non-degenerate, i.e.~a Lagrangian structure.
\end{lemma}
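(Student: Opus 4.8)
The plan is to verify non-degeneracy pointwise on tangent complexes. By definition, an isotropic structure on a morphism whose target is $k$-shifted symplectic is non-degenerate precisely when the morphism it induces from the relative tangent complex $\tc{Li^*}$ to the $(k-1)$-shifted cotangent complex of the source $\Perfo{\X}$ is a quasi-isomorphism. Here the target $\Perfo{\D}$ is $1$-shifted symplectic: since $\D$ is anticanonical, adjunction gives $\can{\D}\cong\cO{\D}$, so $\D$ is a $1$-dimensional Calabi--Yau. Hence $k=1$, and I must show that over the locus of complexes with proper support (which contains $\Hilb[n]{\X}$) the canonical map $\tc{Li^*}\to\ctc{\Perfo{\X}}$ is an equivalence; as all the stacks involved are locally of finite presentation, it suffices to check this at closed points $\cI_\Z$, $\Z\in\Hilb[n]{\X}$.

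The first step is to identify both complexes. The relative tangent complex of $Li^*$ at $\cI_\Z$ is essentially computed already in the proof of \autoref{prop:representable} (the source map of $\GHilb[n]{\X}{\D}=\Hilb[n]{\X}\fibprod_{\Perfo{\D}}\Hilb[n]{\X}$ being a base change of $Li^*$): tensoring the Koszul sequence $0\to\cO{\X}(-\D)\to\cO{\X}\to\cO{\D}\to0$ with the perfect complex $\cREnd[\X]{\cI_\Z}_0$ yields
\[
\tc[\cI_\Z]{Li^*}\;\cong\;\RSect{\cREnd[\X]{\cI_\Z}_0(-\D)}[1]\;\cong\;\Hom[\X]{\cI_\Z,\cO{\Z}(-\D)},
\]
concentrated in degree zero. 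On the other hand, Bottacin's Serre-dual presentation of the cotangent space of $\Hilb[n]{\X}$, together with $\can{\X}\cong\cO{\X}(-\D)$, gives
\[
\ctc[\cI_\Z]{\Perfo{\X}}\;\cong\;\ctb[\Z]{\Hilb[n]{\X}}\;\cong\;\Ext[1][\X]{\cI_\Z,\cI_\Z\otimes\can{\X}}_0\;\cong\;\Hom[\X]{\cI_\Z,\cO{\Z}(-\D)}.
\]
Intrinsically, both are the same instance of Grothendieck--Serre duality on the smooth surface $\X$: the complex $\cREnd[\X]{\cI_\Z}_0$ is supported on the zero-dimensional (hence proper) scheme $\Z$ and is self-dual under the trace pairing, so $\RSect{\cREnd[\X]{\cI_\Z}_0}[1]^\vee\cong\RSect{\cREnd[\X]{\cI_\Z}_0\otimes\can{\X}}[1]$, which coincides with $\tc{Li^*}$ because $\can{\X}=\cO{\X}(-\D)$.

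The crux is then to check that the morphism $\tc{Li^*}\to\ctc{\Perfo{\X}}$ \emph{produced by the isotropic structure} is exactly this Serre-duality equivalence. For this I would unwind the construction: the isotropic structure is pulled back from the Lagrangian structure on $Li^*\colon\Perf{\X}\to\Perf{\D}$ attached to the relative orientation $\ps$ of the pair $(\X,\D)$, and by the very definition of that structure in \cite{Calaque2015a} (or the relative Calabi--Yau formalism of \cite{Brav2021} in the non-proper setting) the map it induces on tangent complexes is the $\ps$-twisted Serre pairing --- that is, the identification above with the traceless condition dropped. Thus $Li^*\colon\Perf{\X}\to\Perf{\D}$ is already Lagrangian; and since passing from $\Perf{}$ to $\Perfo{}$ only restricts the trace pairing to the orthogonal summand $\cREnd[\X]{\cI_\Z}_0\subset\cREnd[\X]{\cI_\Z}$ of traceless endomorphisms, on which it is still non-degenerate, non-degeneracy descends to the isotropic structure on $\Perfo{\X}\to\Perfo{\D}$.

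The step I expect to require the most care is this last one: confirming that pulling back along the (non-Cartesian) square $\Perfo{-}\to\Perf{-}$ preserves non-degeneracy, which ultimately rests on the compatibility of the trace splitting $\cREnd[\X]{\cI_\Z}\simeq\cREnd[\X]{\cI_\Z}_0\oplus\cO{\X}$ --- and of the corresponding splitting of $\tc{\Perf{\X}}$ over $\tc{\mathrm{Pic}(\X)}$ --- with Serre duality on $\X$. A secondary point is properness: since $\X$ need not itself be proper one should pass to a smooth compactification or work with cohomology supported near $\Z$, but as $\cREnd[\X]{\cI_\Z}_0$ always has proper support this is routine. No input beyond Serre duality and the cited shifted-symplectic machinery should be needed.
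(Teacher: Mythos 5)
Your proposal is correct and follows essentially the same route as the paper's (much terser) proof: reduce to non-degeneracy of the induced pairing on tangent complexes, identify that pairing via Serre duality and the Illusie trace, and use the orthogonality of the splitting $\cREnd[\X]{\cI_\Z}\cong\cREnd[\X]{\cI_\Z}_0\oplus\cO{\X}$ to descend from $\Perf{}$ to $\Perfo{}$. The only difference is that you spell out the pointwise identifications $\tc[\cI_\Z]{Li^*}\cong\Hom[\X]{\cI_\Z,\cO{\Z}(-\D)}\cong\ctb[\Z]{\Hilb[n]{\X}}$ explicitly, which the paper leaves implicit.
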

\begin{proof}
It suffices to check that the induced pairing on tangent complexes is nondegenerate, but this follows immediately from the definition of the pairing in terms of Serre duality and the Illusie trace, since the decomposition $\cREnd[\X]{\cI_Z,\cI_Z} \cong \cREnd[\X]{\cI_\Z,\cI_\Z}_0 \oplus \cO{\X}$ is orthogonal with respect to the trace pairing.
\end{proof}

Restricting to the open substack $\Hilb[n]{\X}\subset \Perfo{\X}$, we deduce that the map $\Hilb[n]{\X} \to \Perfo{\D}$ is one-shifted Lagrangian, so that our groupoid
\[
\GHilb[n]{\X}{\D} \cong \Hilb[n]{\X} \fibprod_{\Perfo{\D}} \Hilb[n]{\X}
\]
carries a zero-shifted symplectic structure by the Lagrangian intersection theorem~\cite[Theorem 2.9]{Pantev2013}.  But by \autoref{prop:representable}, the derived stack $\GHilb[n]{\X}{\D}$ is a smooth classical scheme, so by \cite[p.~297-298]{Pantev2013} this 0-shifted symplectic structure is an ordinary symplectic structure in the classical sense.  Moreover, by definition, this symplectic structure on $\GHilb[n]{\X}{\D}$ is the difference of the pullbacks of the isotropic structure on $\Hilb[n]{\X}$ along the two maps $s,t : \GHilb[n]{\X}{\D} \to \Hilb[n]{\X}$.  It is therefore automatically multiplicative, so that $\GHilb[n]{\X}{\D}$ has the structure of a symplectic groupoid. As for any symplectic groupoid, the embedding of the identity arrows $\Hilb{\X}\hookrightarrow \GHilb{\X}{\D}$ is Lagrangian, giving an identification $\ctc{\Hilb{\X}}|_\Z \cong \tc s|_{(\Z,\Z,\id)}$. The induced Poisson bivector on $\Hilb{\X}$ is the composition of this isomorphism with the differential $\tc s |_{(\Z,\Z,\id)} \to\tc{\Hilb{\X}}|_\Z$ of the target map.  Considering the definition of the symplectic structure in terms of Serre duality and using the identifications of the (relative) tangent spaces from the proof of \autoref{prop:representable}, we deduce that this Poisson structure is exactly the one defined by Bottacin.

In summary, we have the following:
\begin{theorem}\label{thm:symplectic-groupoid}
Let $(\X,\ps)$ be a Poisson surface with degeneracy curve $\D = \ps^{-1}(0)\subset \X$.  Then $\GHilb[n]{\X}{\D}$ carries a canonical symplectic structure, making it a symplectic groupoid over $\Hilb[n]{\X}$ that integrates Bottacin's Poisson structure $\hilbps$ on $\Hilb[n]{\X}$.
\end{theorem}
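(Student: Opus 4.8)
The statement essentially records the construction carried out in the discussion above, so the plan is to verify that each assertion has been secured and to isolate the one point that requires genuine work, namely the identification of the induced bracket with $\hilbps$. First I would recall the chain of reductions already assembled: since $\D\subset\X$ is anticanonical, the section $\ps\in\coH[0]{\acan{\X}}$ vanishing on $\D$ is precisely a relative orientation of the closed embedding $i:\D\hookrightarrow\X$ in the sense of \cite{Calaque2015a}; combined with the $2$-shifted symplectic structure on the stack $\Perf{-}$ of perfect complexes from \cite{Pantev2013} --- or, dropping properness, with \cite{Brav2021} --- this makes $Li^*:\Perf{\X}\to\Perf{\D}$ a $1$-shifted Lagrangian morphism over the locus of complexes with proper support. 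Pulling back along the square relating $\Perfo{-}$ to $\Perf{-}$, and invoking the lemma above that the resulting isotropic structure on $\Perfo{\X}\to\Perfo{\D}$ is nondegenerate, I conclude that the restriction $\Hilb[n]{\X}\to\Perfo{\D}$ to the open substack of ideal sheaves is $1$-shifted Lagrangian.

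Then the Lagrangian intersection theorem \cite[Theorem 2.9]{Pantev2013} endows the homotopy fibre product $\GHilb[n]{\X}{\D}=\Hilb[n]{\X}\fibprod_{\Perfo{\D}}\Hilb[n]{\X}$ with a $0$-shifted symplectic structure. By \autoref{prop:representable} this derived stack is a smooth classical scheme, so by \cite[p.~297--298]{Pantev2013} the $0$-shifted form is an ordinary algebraic symplectic form. Since, by construction, this form equals the difference of the pullbacks of the isotropic structure on $\Hilb[n]{\X}$ along $s$ and $t$, it is automatically multiplicative with respect to the composition of $\D$-equivalences, so $\GHilb[n]{\X}{\D}$ is a symplectic groupoid over $\Hilb[n]{\X}$. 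As for any symplectic groupoid, the unit embedding $\Hilb[n]{\X}\hookrightarrow\GHilb[n]{\X}{\D}$ is Lagrangian, giving an isomorphism $\ctc{\Hilb[n]{\X}}|_\Z\cong\tc s|_{(\Z,\Z,\id)}$; composing with the differential $\tc s|_{(\Z,\Z,\id)}\to\tc{\Hilb[n]{\X}}|_\Z$ of the target map produces the anchor of a Poisson bivector on $\Hilb[n]{\X}$.

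The only step with real content is to check that this Poisson structure coincides with $\hilbps$. For this I would trace the identifications of (relative) tangent complexes from the proof of \autoref{prop:representable} --- in particular $\tc s|_{(\Z,\Z,\id)}\cong\Hom[\X]{\cI_\Z,\cO{\Z}(-\D)}$ together with its Serre dual --- and unwind the definition of the shifted pairing on $\Perf{-}$ as Serre duality composed with the Illusie trace; the orientation datum contributed by $\ps$ is exactly what converts this composite into the vertical maps in Bottacin's commutative diagram following \eqref{eq:Hilb-tangent}. I expect this comparison of pairings --- matching the placement of $\ps$, the Serre-duality isomorphisms, and the signs on the nose --- to be the main obstacle. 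Finally, since all the constructions above are local on $\X$, the algebraicity hypothesis of the cited references can be removed, so the result holds in the analytic category as well; alternatively one can bypass the shifted-symplectic machinery and build the symplectic form on $\GHilb[n]{\X}{\D}$ directly, as indicated in \autoref{rem:classical-construction}.
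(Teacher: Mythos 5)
Your proposal follows the paper's own proof essentially verbatim: the same chain of reductions (relative orientation from $\ps$, the $2$-shifted symplectic structure on $\Perf{-}$, pullback to $\Perfo{-}$, the nondegeneracy lemma, the Lagrangian intersection theorem, representability, multiplicativity from the difference of pullbacks, and identification with Bottacin's bivector via Serre duality and the Illusie trace using the tangent-complex identifications from \autoref{prop:representable}). The argument is correct and no further comparison is needed.
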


\begin{remark}\label{rem:classical-construction}
Note that the symplectic structure on $\GHilb{\X}{\D}$ constructed above is the unique two-form for which the map $(s,t) : \GHilb{\X}{\D} \to \Hilb{\X}\times\Hilb{\X}$ is a symplectomorphism over the open dense set $\Hilb{(\X\setminus \D)} \times \Hilb{(\X\setminus \D)}$, where the latter is equipped with the symplectic form $\Omega := pr_1^*\omega-pr_2^*\omega$ obtained by pulling back the symplectic form $\omega$ on $\Hilb{\X\setminus \D}$ along the two projections $pr_1,pr_2$.  Thus, to prove that $\GHilb{\X}{\D}$ is symplectic, one simply needs to check that the pullback of $\Omega$ to $\GHilb{\X}{\D}$ has no poles, and similarly that the pullback of $\Omega^n$ has no zeroes.

These properties can be verified directly in at least two ways, giving a construction of the symplectic form that does not rely on the theory of shifted symplectic structures.  The first way is to explicitly write down the induced nondegenerate pairing on the tangent space at any point of $\GHilb{\X}{\D}$ using Grothendieck duality for coherent sheaves, and see directly that it agrees with the pullback of $\Omega$.   The second way begins with the observation that the desired properties are local and can be checked in codimension one.  Thus, using Weinstein's splitting theorem, one can reduce the problem to the case in which $n=1$, and the reduced curve underlying $\D$ is smooth.  In this case, the desired properties can be obtained by direct calculation using the explicit description of $\GHilb[1]{\X}{\D} \to \X\times \X$ in terms of blowups from \autoref{ex:blowup-construction}, in the spirit of \cite[Section 2.4]{Gualtieri2014a}, which treats the reduced case.
\end{remark}

Since the connected components of the orbits of any symplectic groupoid are exactly the symplectic leaves of the corresponding Poisson structure, and moreover the orbits of any smooth algebraic groupoid are locally closed algebraic subvarieties, \autoref{thm:symplectic-groupoid} has the following immediate consequences.
\begin{corollary}\label{cor:sympl_leaves_general}
The symplectic leaves of $(\Hilb{\X},\hilbps)$ are the connected components of the $\D$-equivalence classes. In particular, they are locally closed in the Zariski topology when $(\X,\ps)$ is algebraic, and if $\D$ is smooth they are in bijection with Young-diagram-valued functions on $\D$ as in \autoref{thm:sympl_leaves_Hilb_y}.
\end{corollary}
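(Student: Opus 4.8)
The corollary is essentially formal once \autoref{thm:symplectic-groupoid} is available: the plan is to combine it with \autoref{prop:representable} and two standard facts about groupoids, and then, in the smooth case, with the orbit classification of \autoref{thm:sympl_leaves_Hilb_y}.

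First I would recall why the symplectic leaves of a Poisson manifold $(P,\pi)$ integrated by a symplectic groupoid $\G\rightrightarrows P$ are exactly the connected components of the $\G$-orbits. For $x\in P$, the restriction $t\colon s^{-1}(x)\to P$ of the target map has image the orbit $O_x$ through $x$, and since $s$ is a submersion the source-fibre $s^{-1}(x)$ is smooth; using the Lagrangian embedding of the unit section, which (as recalled just before \autoref{thm:symplectic-groupoid}) gives the identification $\ctc{P}|_x\cong \tc s|_{1_x}$ under which $t$ differentiates to the anchor, one sees that the image of $d\bigl(t|_{s^{-1}(x)}\bigr)$ at the unit arrow $1_x$ is $\pi^\sharp(\ctb[x]{P})$, and by translating along arrows of $\G$ the same holds at every point of $s^{-1}(x)$. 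Hence $t|_{s^{-1}(x)}$ is a submersion onto an integral submanifold of the characteristic distribution, so $O_x$ is a union of symplectic leaves and its connected components are leaves; conversely every leaf arises this way. Applying this to $\G=\GHilb[n]{\X}{\D}$, whose orbits are the $\D$-equivalence classes by \autoref{prop:representable}, gives the first assertion.

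For local closedness in the algebraic case, I would invoke the general fact that the orbits of a smooth algebraic groupoid are locally closed subvarieties. Concretely, $O_\Z$ is the image of $s^{-1}(\Z)$ under $t$, hence constructible by Chevalley's theorem, so it contains a dense open subset of its closure $\overline{O_\Z}$; transitivity of the groupoid action, together with the smoothness of $s$ and $t$ (so that the local ``translations'' are local isomorphisms of $\Hilb[n]{\X}$ near the orbit), propagates this to every point of $O_\Z$, showing $O_\Z$ is open in $\overline{O_\Z}$ and therefore locally closed. Combined with the previous step, the symplectic leaves of $(\Hilb[n]{\X},\hilbps)$ are locally closed in the Zariski topology. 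Finally, when $\D$ is smooth, \autoref{thm:sympl_leaves_Hilb_y} identifies the orbits of $\GHilb[n]{\X}{\D}$ with the Young-diagram-valued functions $\mu$ on $\D$ satisfying $\sum_{p\in\D}|\hc{\mu(p)}|\le n$, and shows that each such orbit is connected when $\X$ is connected; since the leaves are the connected components of the orbits, the leaves coincide with the orbits and the stated bijection follows.

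I do not anticipate a substantial obstacle here, as all the real content is already contained in \autoref{thm:symplectic-groupoid} and \autoref{prop:representable}. The only point requiring care is the upgrade from ``constructible orbit'' to ``locally closed orbit'', i.e.\ the transitivity/translation argument, which must be carried out for algebraic groupoids rather than simply quoting the smooth-submanifold theory available for Lie groupoids; and, in the smooth-$\D$ case, keeping track of the connectedness hypothesis on $\X$ under which \autoref{thm:sympl_leaves_Hilb_y} guarantees that the orbits are already connected.
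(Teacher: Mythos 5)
Your proposal is correct and follows exactly the paper's route: the paper deduces the corollary immediately from \autoref{thm:symplectic-groupoid} by citing the two standard facts you prove in detail (connected components of symplectic-groupoid orbits are the symplectic leaves, and orbits of smooth algebraic groupoids are Zariski locally closed), together with the orbit classification of \autoref{thm:sympl_leaves_Hilb_y} for smooth $\D$. The only difference is that you spell out the proofs of these standard facts, which the paper treats as known.
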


Note that the stabilizer groups in $\GHilb[n]{\X}{\D}$ are exactly the self $\D$-equivalences, and meanwhile, as for any symplectic groupoid, the Lie algebras of these groups are exactly the conormal Lie algebras of the Poisson structure (i.e.~the linearization of the Poisson structure in the direction transverse to the symplectic leaves) so we have the following:
\begin{corollary}
If $\Z \in \Hilb[n]{\X}$ is any point, then its conormal Lie algebra
\[
\ker\rbrac{ (\hilbps)^\sharp : \ctb[\Z]{\Hilb[n]{\X}} \to \tb[\Z]{\Hilb[n]{\X}}}
\]
is canonically identified with the Lie algebra $\stab{\Z}{\D}$ of infinitesimal self-$\D$-equivalences defined in \autoref{def:stab-Lie} and described in \autoref{lem:stab-Lie-exact}.
\end{corollary}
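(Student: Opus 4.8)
The plan is to deduce the statement formally from \autoref{thm:symplectic-groupoid} together with the classical dictionary between Poisson manifolds and symplectic groupoids: for any symplectic groupoid $\G \rightrightarrows M$ integrating a Poisson manifold $(M,\pi)$, the Lie algebroid of $\G$ is canonically isomorphic to the cotangent Lie algebroid $\ctb{M}$ of $(M,\pi)$, and in particular the isotropy Lie algebra of $\G$ at a point $x \in M$ is canonically the conormal Lie algebra $\ker\rbrac{\pi^\sharp \colon \ctb[x]{M} \to \tb[x]{M}}$, equipped with its transverse linearization bracket. Applying this with $\G = \GHilb[n]{\X}{\D}$, $M = \Hilb[n]{\X}$ and $\pi = \hilbps$, and recalling that by \autoref{def:stab-Lie} the isotropy group of $\Z$ in $\GHilb[n]{\X}{\D}$ is $\Stab{\Z}{\D}$, whose Lie algebra is $\stab{\Z}{\D}$, the corollary follows at once; the compatibility with the exact sequence of \autoref{lem:stab-Lie-exact} is then automatic, since that description of $\stab{\Z}{\D}$ was extracted from the very same data. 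No derived subtleties intervene because, by \autoref{prop:representable}, $\GHilb[n]{\X}{\D}$ is an ordinary smooth scheme.

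To make the argument self-contained I would first recall the underlying linear algebra. As observed in \autoref{ss:symp-groupoid-leaves}, the identity section $\Hilb[n]{\X} \hookrightarrow \GHilb[n]{\X}{\D}$ is Lagrangian, which yields a canonical isomorphism $\ctb[\Z]{\Hilb[n]{\X}} \cong \tc s|_{(\Z,\Z,\id)}$ intertwining the anchor $(\hilbps)^\sharp$ with the differential of the target map $t$. Consequently a covector lies in $\ker(\hilbps)^\sharp$ precisely when the corresponding vector of $\tc s|_{(\Z,\Z,\id)} = \ker(\dd s)$ is also annihilated by $\dd t$, i.e.\ when it is tangent at the identity arrow to the fibre product $s^{-1}(\Z) \cap t^{-1}(\Z) = \Stab{\Z}{\D}$. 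This already produces the required isomorphism of vector spaces $\ker(\hilbps)^\sharp \cong \stab{\Z}{\D}$. (One could alternatively match the two sides by hand through Bottacin's diagram in \autoref{sec:pois} and \autoref{lem:stab-Lie-exact}, but the groupoid route is cleaner and manifestly canonical.)

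The one point that needs genuine care --- and the main obstacle --- is upgrading this to an isomorphism of \emph{Lie} algebras. On the conormal side the bracket is the transverse linearization: $[\xi_\Z,\eta_\Z] := \rbrac{\mathcal{L}_{(\hilbps)^\sharp\widetilde\xi}\,\widetilde\eta}_\Z$ for closed local one-forms $\widetilde\xi,\widetilde\eta$ vanishing at $\Z$ with prescribed values, a bracket well known to be independent of the chosen extensions; on the other side one has the bracket of the isotropy group of the Lie groupoid $\GHilb[n]{\X}{\D}$. That these agree under the identification above is exactly the infinitesimal shadow of the multiplicativity of the symplectic form along the identity section --- equivalently, the statement that the Lie algebroid of $\GHilb[n]{\X}{\D}$ is isomorphic, as a Lie algebroid, to the cotangent Lie algebroid of $(\Hilb[n]{\X},\hilbps)$. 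This is a standard and entirely model-independent fact in the theory of symplectic groupoids; I would either cite it or reproduce the short direct verification (differentiate the groupoid multiplication along the identity section and compare the resulting cocycle with the derivative of $(\hilbps)^\sharp$). Since none of it uses any feature particular to Hilbert schemes, it applies here verbatim, completing the proof.
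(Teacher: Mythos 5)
Your proposal is correct and follows exactly the paper's route: the paper deduces the corollary in one sentence from \autoref{thm:symplectic-groupoid} by invoking the standard fact that for any symplectic groupoid the isotropy Lie algebras coincide with the conormal Lie algebras of the induced Poisson structure. Your additional care about upgrading the vector-space identification to a Lie algebra isomorphism via multiplicativity of the symplectic form is a sound elaboration of the same argument, not a different one.
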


\subsection{Characteristic leaves}
\label{sec:char-leaves}
Of particular interest to us are the symplectic leaves that are ``characteristic'' in the sense of \cite{Matviichuk2020}; let us recall the definition.  First, recall from~\cite{Brylinski1999,Polishchuk1997,Weinstein1997} the notion of the modular vector field of a Poisson structure $\ps$ on a manifold $\W$ with respect to a volume form $\mu$: it is the derivation of $\cO{\W}$ sending a function $f$ to the $\mu$-divergence of the Hamiltonian vector field of $f$.  The modular vector field is an infinitesimal symmetry of the Poisson structure, and is independent of $\mu$ up to the addition of a Hamiltonian vector field, so that its projection to the normal space of any symplectic leaf is unambiguously defined.  We then have the following.
\begin{definition}[{\cite{Matviichuk2020}}]
A symplectic leaf of a Poisson manifold is \defn{characteristic} if it is preserved by the flow of the modular vector field, i.e.~the projection of the modular vector field to the normal bundle of the leaf is identically zero.
\end{definition}

Now suppose that $(\X,\ps)$ is a Poisson surface, and let $\D\subset \X$ be the anticanonical divisor on which $\ps$ vanishes.  Then the restriction of the modular vector field to $\D$ is independent of the choice of volume form, giving a canonically defined global vector field
\[
\zeta \in \coH[0]{\cT{\D}}.
\]
The characteristic leaves in the Hilbert scheme then have the following description.
\begin{proposition}\label{prop:char-leaves}
The symplectic leaf of $\hilbps$ through a point $\Z \in \Hilb[n]{\X}$ is characteristic if and only if the derived subscheme $\Z \hcap \D \subset \D$ is preserved (up to equivalence) by the flow of $\zeta$. 
\end{proposition}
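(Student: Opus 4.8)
The plan is to translate both sides of the claimed equivalence into statements about the symplectic groupoid $\GHilb[n]{\X}{\D}$ constructed in \autoref{thm:symplectic-groupoid}, and then compare the modular vector field on $\Hilb[n]{\X}$ with the vector field $\zeta$ on $\D$ through the source and target maps. The starting point is the general principle (from \cite{Matviichuk2020}) that for a symplectic groupoid $\G \rightrightarrows M$ with source/target $s,t$, the modular vector field of the base Poisson manifold $M$ lifts canonically to $\G$: choosing a volume form $\nu$ on $\Hilb[n]{\X}$, the Hamiltonian flows of functions are generated on $\GHilb[n]{\X}{\D}$ by $s^*$- and $t^*$-pullbacks, and the $\nu$-divergence of the base Hamiltonian field is computed by comparing the two pullback volume forms $s^*\nu$ and $t^*\nu$ against the Liouville form of the symplectic structure $\Omega$ on $\GHilb[n]{\X}{\D}$. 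Concretely, the modular vector field $X_\nu$ of $\hilbps$ projects, along $t$, to a field whose class in the normal bundle of the leaf through $\Z$ is encoded by the restriction of $s^*\nu / t^*\nu$ (a function on $\GHilb[n]{\X}{\D}$) near the identity bisection, i.e.\ by its first-order behaviour transverse to $\Hilb[n]{\X} \hookrightarrow \GHilb[n]{\X}{\D}$.

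The first step is therefore to identify this transverse first-order datum with something living on $\D$. Using the identification of the relative tangent complex $\tc[(\Z,\Z,\id)]s \cong \Hom[\X]{\cI_\Z,\cO{\Z}(-\D)}$ from the proof of \autoref{prop:representable}, together with the description of the stabilizer Lie algebra $\stab{\Z}{\D}$ via endomorphisms of $i^*\cI_\Z$ that vanish on $i^*\cI_\Z^\tf$ (\autoref{lem:stab-Lie-exact}), I would show that the ``modular covector'' at $\Z$ — the element of the conormal Lie algebra $\stab{\Z}{\D}^\vee$ obstructing the leaf from being characteristic — is the image of $\zeta$ under a natural map induced by restricting to $\D$. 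The key mechanism is that the comparison of volume forms $s^*\nu$ vs.\ $t^*\nu$ on $\GHilb[n]{\X}{\D}$ only sees the map $\Hilb[n]{\X}\to\Perfo{\D}$, $\Z\mapsto i^*\cI_\Z$, since $\GHilb[n]{\X}{\D}$ is the homotopy fibre product over $\Perfo{\D}$; hence the modular class is pulled back from a corresponding ``modular'' datum on $\Perfo{\D}$, which by the Calabi--Yau/orientation structure on $\D$ coming from $\ps$ is governed precisely by the canonical vector field $\zeta$ (the restriction to $\D$ of the modular field of $\ps$, which is $\mu$-independent because $\D$ is the zero locus).

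The second step is then to make the equivalence precise: the leaf through $\Z$ is characteristic $\iff$ this modular covector in $\stab{\Z}{\D}^\vee$ vanishes $\iff$ the derived subscheme $\Z\hcap\D$ is infinitesimally fixed by the flow of $\zeta$ up to the homotopical automorphisms classified in \autoref{lem:D-equiv-equivs}. For this I would use that $\zeta$ generates a flow on $\D$, hence an induced flow on $\Perfo{\D}$ (pushforward of perfect complexes along the flow of $\zeta$), and that the derived intersection defines an equivariant map $\Hilb[n]{\X}\to\Perfo{\D}$ once we track how $\ps$ intertwines the modular symmetry of $(\X,\ps)$ with $\zeta$ on $\D$; the orbit of $\Z$ under $\GHilb[n]{\X}{\D}$ is exactly the preimage of the $\Perfo{\D}$-point $[i^*\cI_\Z]$, so the leaf is preserved by the modular flow iff $[i^*\cI_\Z]$ — equivalently, by \autoref{lem:D-equiv-equivs}, the derived subscheme $\Z\hcap\D$ — is preserved by the flow of $\zeta$.

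The main obstacle I anticipate is the first step: carefully matching the modular vector field of $\hilbps$ (an intrinsically ``divergence of Hamiltonian flow'' object, defined via a volume form on $\Hilb[n]{\X}$) with the flow of $\zeta$ on $\D$, through the derived/shifted-symplectic formalism. One must be careful that the relevant volume form on $\Hilb[n]{\X}$ restricts compatibly along $\Hilb[n]{\X}\to\Perfo{\D}$, and that the orientation data $\ps$ enters correctly so that the induced symmetry on $\Perfo{\D}$ is generated by $\zeta$ and not by some twist of it. A cleaner route, which I would pursue in parallel and probably adopt for the write-up, is to bypass the global volume form entirely: work infinitesimally and leaf-by-leaf, using that the projection of the modular field to the conormal Lie algebra $\stab{\Z}{\D}$ is a cocycle-type invariant (the ``modular character'' of the conormal Lie algebra, as in \cite{Matviichuk2020}), and compute it directly from the exact sequence of \autoref{lem:stab-Lie-exact} and the local syzygy-matrix description — reducing, via Weinstein splitting and locality, to the smooth-point and nodal cases where $\stab{\Z}{\D}$ and the $\zeta$-action are completely explicit.
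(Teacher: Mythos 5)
There is a genuine gap here: neither of your two routes actually establishes the key fact, namely that the modular flow of $\hilbps$ on $\Hilb[n]{\X}$ is the flow induced by the modular vector field of $(\X,\ps)$ acting on subschemes. Your first route (comparing $s^*\nu$ and $t^*\nu$ on the groupoid and arguing that the modular datum is pulled back from $\Perfo{\D}$, where it is ``governed by $\zeta$'') is essentially the derived-geometric heuristic that the paper records only as a remark \emph{after} the proposition: making it precise requires identifying the first Chern class of $\Perfo{\D}$ and the induced vector field via Grothendieck--Riemann--Roch, which you yourself flag as the main obstacle but never resolve. Your fallback route fails for a different reason: reducing via Weinstein splitting to ``the smooth-point and nodal cases where $\stab{\Z}{\D}$ and the $\zeta$-action are completely explicit'' cannot prove the proposition in the generality in which it is stated and used --- the statement must hold for arbitrary singularities of $\D$ (it is applied immediately afterwards to $A_2$ singularities and triple points, precisely in order to exhibit \emph{infinitely many} characteristic leaves), whereas the explicit orbit classification is only available at smooth and nodal points.

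The paper's actual argument is much more elementary and bypasses the groupoid entirely. Working locally so that $\X$ carries a volume form $\mu$, with modular vector field $\tilde\zeta$ satisfying $\tilde\zeta|_\D = \zeta$, one lifts $\ps$, $\mu$ and $\tilde\zeta$ symmetrically to $\X^n$, descends to $\sympow{\X}$, and then lifts to $\Hilb[n]{\X}$: the volume form lifts because the Hilbert--Chow morphism is crepant, and the modular vector field of $\hilbps$ with respect to this lifted volume form agrees with the descended vector field over the locus where Hilbert--Chow is an isomorphism, hence is its (necessarily unique) lift. Consequently the modular flow on $\Hilb[n]{\X}$ is given by transporting subschemes along the flow of $\tilde\zeta$ on $\X$, and the equivalence follows at once from the description of the leaves as connected components of $\D$-equivalence classes together with $\tilde\zeta|_\D=\zeta$. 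If you want to salvage your write-up, this crepancy-plus-uniqueness-of-lifts step is the ingredient you need to supply in place of the unproven claim about the modular datum on $\Perfo{\D}$.
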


\begin{proof}
The problem is local in $\X$ around $\Z$, so we may assume without loss of generality that that $\X$ admits a global volume form $\mu$ and let $\tilde \zeta$ be the corresponding modular vector field, so that $\tilde \zeta |_\D = \zeta$ by definition of $\zeta$.  Consider the product $\X^n$ and let $p_i : \X^n \to \X$ be the $i$-th projection.  For a tensor $\xi$ on $\X$ let $\xi^n = p_1^*\xi + \cdots + p_n^*\xi$ denote its symmetric lift to $\X^n$.  Then $\ps^n$, $\mu^n$ and $\tilde \zeta^n$ are, respectively, a Poisson structure, a volume form, and the corresponding modular vector field on $\X^n$.  These are invariant under permutation, and hence descend to the symmetric power $\sympow{\X}$, giving a Poisson structure  $\symps$, a trivialization $\sympow{\mu}$ of the canonical sheaf, and a vector field $\sympow{\tilde \zeta}$, which agrees with the modular vector field over the smooth locus of $\sympow{\X}$.  We claim that these structures lift to corresponding structures on the Hilbert scheme under the Hilbert--Chow morphism $\Hilb[n]{\X} \to \sympow{\X}$.  Indeed, $\sympow{\ps}$ lifts to Bottacin's structure $\hilbps$ by definition, and $\sympow{\mu}$ lifts to a volume form $\mu^{[n]}$ since the Hilbert--Chow morphism is crepant.  We may then form the modular vector field $\sympow{\tilde \zeta}$ of $\hilbps$ with respect to $\Hilb{\mu}$, which must agree with $\sympow{\tilde \zeta}$ over the smooth locus of $\sympow{\X}$, and hence it must be a lift of $\sympow{\tilde \zeta}$ (which is necessarily unique).   It follows that the modular flow on $\Hilb[n]{\X}$ is given by pulling back subschemes  $\Z\subset \X$ along the flow of $\tilde \zeta $ on $\X$.  The result now follows immediately, since $\tilde \zeta|_{\D} = \zeta$.
\end{proof}

\begin{remark}
Let us sketch an interpretation of this result in terms of derived symplectic geometry.  We first remark, following a discussion with P.~Safronov, that the modular vector field of a Poisson manifold can be thought of as the ``Hamiltonian vector field of the first Chern class of the leaf space'', in the following sense.  If $\Y$ is a one-shifted symplectic stack, then the symplectic form induces an isomorphism $\tc[\Y]\cong \ctc[\Y][1]$.  Under the induced isomorphism $\coH[1]{\ctc[\Y]}\cong \coH[0]{\ctc[\Y][1]}\cong\coH[0]{\tc[\Y]}$, the first Chern class $c_1(\Y)\in\coH[1]{\ctc[\Y]}$ corresponds to a canonical vector field $\zeta \in \coH[0]{\tc[\Y]}$ (its ``Hamiltonian vector field'').  If $p : \W \to \Y$ is a Lagrangian morphism, then $\W$ inherits a 0-shifted Poisson structure whose symplectic leaves are the fibres of $p$, and the vector field $\zeta$ agrees up to a constant factor with the projection of the modular vector field to the leaf space. In particular, the characteristic symplectic leaves in $\W$ correspond to fixed points of $\zeta$ in $\Y$. 

 In the case at hand, we have $\Y = \Perfo{\D}$, and $\tc[\Y]$ is the complex of traceless derived endomorphisms of the universal sheaf.  One can then compute the first Chern class of $\Y$ using the Grothendieck--Riemann--Roch theorem, and show that the corresponding vector field $\zeta$ generates the one-parameter group of automorphisms of $\Y$ given by pulling back complexes along the modular vector field on the curve $\D$ as above, recovering \autoref{prop:char-leaves}.
\end{remark}

\begin{corollary}\label{cor:singular-characteristic}
Let $\X$ be a Poisson surface with anticanonical divisor $\D$.  If $\Z \in \Hilb{\X}$ lies on a characteristic symplectic leaf, then $\Z \cap \D$ is contained in the singular locus of $\D$.
\end{corollary}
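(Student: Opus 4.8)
The plan is to argue by contradiction using \autoref{prop:char-leaves}. Suppose the symplectic leaf through $\Z$ is characteristic. If $\Z\cap\D=\varnothing$ there is nothing to prove, so assume otherwise and suppose, for contradiction, that some point $p\in\Z\cap\D$ lies in the smooth locus of $\D$; in particular $\D$ is reduced near $p$ and its reduced curve is a complex manifold there.

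The key local input is that the restriction $\zeta\in\coH[0]{\cT{\D}}$ of the modular vector field to $\D$ is nonvanishing at $p$. I would verify this by a short divergence computation: in a neighbourhood of $p$, trivialize $\acan{\X}$ by a nonvanishing bivector $\theta$ with dual volume form $\mu$, and write $\ps=f\theta$ for a local equation $f\in\cO{\X}$ of $\D$. Using the Leibniz rule $\mathrm{div}_\mu(gX)=X(g)+g\,\mathrm{div}_\mu(X)$ together with the fact that $\theta$-Hamiltonian vector fields are $\mu$-divergence-free (we are in dimension two), one finds that the modular vector field of $\ps$ with respect to $\mu$ equals $\pm\,\theta^\sharp(\dd f)$. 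Since $\theta^\sharp\colon\forms[1]{\X}\to\cT{\X}$ is an isomorphism, this vanishes at a point exactly where $\dd f$ does, i.e.\ precisely along the singular locus of $\D=\{f=0\}$; hence $\zeta$ is nonvanishing on the smooth locus of $\D$, and in particular $\zeta_p\ne0$. (Alternatively this can be read off the log Darboux model $\ps=y\,\cvf{x}\wedge\cvf{y}$ of \autoref{thm:local}, whose modular vector field is $\cvf{x}$, so that $\zeta=\cvf{x}|_{\{y=0\}}$.) Consequently $\zeta$ has a local flow $\psi_t$ near $p$, and the flow line of $\zeta$ through $p$ is a nonconstant holomorphic curve in $\D$.

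Now apply \autoref{prop:char-leaves}: characteristicity of the leaf through $\Z$ means the derived subscheme $\Z\hcap\D\subset\D$ is preserved, up to equivalence, by the flow of $\zeta$. Any equivalence of derived subschemes of $\D$ is in particular an equivalence over $\D$, so it preserves the cohomology sheaves together with the morphism to $\D$; passing to maximal torsion-free quotients as in \autoref{lem:D-equiv-equivs}, such an equivalence identifies the scheme-theoretic intersections with $\D$, and a fortiori their (finite) supports. Thus the finite subset of $\D$ underlying $\Z\cap\D$ would have to be invariant under $\psi_t$ for all small $t$. But $p$ belongs to this set while the $\zeta$-orbit of $p$ is not contained in any finite set, a contradiction. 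Hence $\Z\cap\D\subseteq\D_\sing$.

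The only step requiring genuine care is the local computation identifying $\zeta$ with (a scalar multiple of) the residue $\theta^\sharp(\dd f)$ of $\ps$ along $\D$, which pins down the vanishing locus of $\zeta$ as $\D_\sing$; one should also keep in mind the degenerate case where $\D$ is non-reduced near a point of $\Z\cap\D$, in which that point automatically lies in $\D_\sing$ and there is nothing to prove. Everything else is formal given \autoref{prop:char-leaves} and \autoref{lem:D-equiv-equivs}.
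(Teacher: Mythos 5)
Your proof is correct and follows essentially the same route as the paper's: the paper's own argument is simply that the modular vector field is nonzero at every smooth point of $\D$, so the support of $\Z\cap\D$ cannot be fixed by its flow, contradicting \autoref{prop:char-leaves}. Your additional details (the local computation identifying $\zeta$ with $\pm\theta^\sharp(\dd f)$, and the passage through \autoref{lem:D-equiv-equivs} to see that a $\D$-equivalence forces equality of the underlying scheme-theoretic intersections) are accurate justifications of the two facts the paper leaves implicit.
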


\begin{proof}
The modular vector field is nonzero at any smooth point of $\D$.  Hence if the support of $\Z \cap \D$ contains a smooth point, it will not be fixed by the flow of the modular vector field.
\end{proof}

\begin{lemma}\label{cor:codim2_char_leaves}
If $\X$ is connected, the codimension-two characteristic leaves are exactly the subvarieties of the form $\set{\Z \in \Hilb{\X}}{\Z\cap \D = \{p\}\textrm{ as schemes}}$ where $p$ is a singular point of $\D$.
\end{lemma}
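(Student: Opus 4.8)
The plan is to establish the two inclusions separately.

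For the first, I would show that for each singular point $p$ of $\D$ the set $\bL_p := \set{\Z \in \Hilb{\X}}{\Z\cap\D = \{p\}\textrm{ as schemes}}$ is a codimension-two characteristic symplectic leaf. The crucial local input is that, $p$ being singular, the local equation $f$ of $\D$ at $p$ lies in $\m_{\X,p}^2$, so that in any Artinian quotient $\cO{\Z,p}$ the image of $f$ lies in $\m_{\Z,p}^2$; since the condition $\Z\cap\D=\{p\}$ says locally that $f$ generates $\m_{\Z,p}$, this forces $\m_{\Z,p}\subseteq\m_{\Z,p}^2$, whence $\cO{\Z,p}=\CC$ by Nakayama. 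Therefore every $\Z\in\bL_p$ has the form $\{p\}\sqcup\Z'$ with $\Z'\subset\X\setminus\D$, and $\Z'\mapsto\{p\}\sqcup\Z'$ identifies $\bL_p$ with a locally closed subvariety isomorphic to $\Hilb[n-1]{(\X\setminus\D)}$, which is irreducible of codimension two (using that $\X$, hence $\X\setminus\D$, is connected). For such $\Z$ the restricted ideal $i^*\cI_\Z$ does not depend on $\Z'$ --- it equals $\cO{\D}$ off $p$ and has stalk $\m_{\X,p}\tensor[\cO{\X,p}]\cO{\D,p}$ at $p$ --- so the points of $\bL_p$ are pairwise $\D$-equivalent by \autoref{lem:D-equiv-module}; conversely, since the torsion subsheaf of this module is the length-one skyscraper at $p$ and $\#_{\vir}(\Z\hcap\D)=0$, any $\Z$ with $i^*\cI_\Z$ isomorphic to it has $\#(\Z\cap\D)=\dim\tormod{\Z}{\D}=1$ supported at $p$, so lies in $\bL_p$. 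Thus $\bL_p$ is a single connected $\D$-equivalence class, i.e.\ a symplectic leaf by \autoref{cor:sympl_leaves_general}. It is characteristic because, by the proof of \autoref{prop:char-leaves}, the modular flow on $\Hilb{\X}$ is the one induced by the flow of a modular vector field $\tilde\zeta$ of $(\X,\ps)$: this $\tilde\zeta$ is tangent to $\D$ and vanishes at $p$ (as $\ps$ has a singular zero there), so its flow fixes $p$ and preserves $\X\setminus\D$, hence carries $\bL_p$ into itself.

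For the reverse inclusion, let $L$ be a codimension-two characteristic leaf and $\Z\in L$. By \autoref{cor:singular-characteristic} the intersection $\Z\cap\D$ is supported on the finite set $\D_{\sing}$, and I would compute the codimension locally there. By \autoref{lem:stab-Lie-exact}, $\stab{\Z}{\D}$ is the space of endomorphisms of $i^*\cI_\Z$ landing in its torsion subsheaf $\tormod{\Z}{\D}$, i.e.\ $\Hom[\cO{\D}]{i^*\cI_\Z,\tormod{\Z}{\D}}$; since $\tormod{\Z}{\D}$ is supported on $\Z\cap\D$, this is $\bigoplus_{p\in\Z\cap\D}\Hom[\cO{\D,p}]{i^*\cI_{\Z,p},\tormod{\Z}{\D,p}}$. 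Writing $k+1$ for the number of minimal generators of $i^*\cI_{\Z,p}$ --- so $k\ge1$ whenever $p\in\Z$, by the Hilbert--Burch presentation \eqref{eq:min-res-D} --- each such summand contains $\Hom[\cO{\D,p}]{i^*\cI_{\Z,p},\CC}\cong\CC^{k+1}$ (push forward along the inclusion of a line in the socle of $\tormod{\Z}{\D,p}$), and so has dimension at least $k+1\ge2$. Hence $\codim L$ is at least twice the number of points of $\Z\cap\D$; since $\codim L=2$ (and $\Z\cap\D\neq\varnothing$, as $\Z$ is not on the open leaf), there is exactly one such point $p$, its summand has dimension exactly $2$, and $k=1$ (otherwise the summand would have dimension $\ge 3$). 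For $k=1$, \eqref{eq:min-res-D} presents $i^*\cI_{\Z,p}$ by a single syzygy column $(a,b)$ with $a,b\in\m_{\D,p}$, so $\Hom[\cO{\D,p}]{i^*\cI_{\Z,p},\tormod{\Z}{\D,p}}$ is the kernel of $\tormod{\Z}{\D,p}^{\oplus 2}\xrightarrow{(a,b)}\tormod{\Z}{\D,p}$; its image lies in $\m_{\D,p}\cdot\tormod{\Z}{\D,p}$, which has codimension at least one in $\tormod{\Z}{\D,p}$ by Nakayama, so the kernel has dimension at least $\dim\tormod{\Z}{\D,p}+1$. As this dimension equals $2$, we get $\dim\tormod{\Z}{\D,p}=1$, hence $\#(\Z\cap\D)=1$ and $\Z\cap\D=\{p\}$ as schemes. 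Therefore $\Z\in\bL_p$, and $L=\bL_p$ by the first part.

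The step I expect to be the main obstacle is the last one: bounding from below the local contribution $\dim\Hom[\cO{\D,p}]{i^*\cI_{\Z,p},\tormod{\Z}{\D,p}}$ to the codimension at a singular point of $\D$, and in particular excluding a codimension-two leaf concentrated at a single singular point but meeting $\D$ in a scheme of length at least two --- it is the Nakayama estimate on $\m_{\D,p}\cdot\tormod{\Z}{\D,p}$ that closes this gap. The remaining ingredients are already in place: the description of $\stab{\Z}{\D}$ as infinitesimal self-$\D$-equivalences, \autoref{prop:char-leaves}, \autoref{cor:singular-characteristic}, \autoref{cor:sympl_leaves_general}, and routine commutative algebra.
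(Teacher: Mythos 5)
Your proof is correct and follows essentially the same route as the paper's: reduce via \autoref{cor:singular-characteristic} to the case $\Z\cap\D=\{p\}$ with $p$ singular, use the fact that the local equation of $\D$ lies in $\m_{\X,p}^2$ to see $\Z$ is reduced at $p$, identify the leaf with an embedded copy of $\Hilb[n-1]{(\X\setminus\D)}$, and check it is characteristic. The only place you go beyond the paper is in justifying the assertion that $\#(\Z\cap\D)>1$ forces codimension greater than two: the paper states this without proof, whereas your estimate $\codim L=\dim\stab{\Z}{\D}=\dim\Hom[\D]{i^*\cI_\Z,\tormod{\Z}{\D}}$ combined with the minimal-generator count and the Nakayama bound on $\m_{\D,p}\cdot\tormod{\Z}{\D,p}$ supplies a complete argument for that step.
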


\begin{proof}
If $\Z$ lies on a characteristic leaf of codimension two, then by \autoref{cor:singular-characteristic}, $\Z\cap \D$ must be contained in the singular locus of $\D$.  If  $\#(\Z\cap \D) > 1$, then the symplectic leaf through $\Z$ has codimension greater than two, so can assume that $\Z \cap \D =\{p\}$ for some singular point $p \in \D$.  Since $p \in \D$ is a singular point, it follows that $\Z$ itself is reduced at $p$, and hence $\Z =\{p\}\sqcup \Z'$ where $\Z' \subset \X\setminus \D$.  Hence the leaf is exactly an embedded copy of the symplectic variety $\Hilb[n-1]{(\X\setminus \D)}$. The transverse germ is isomorphic to the germ of $\X$ at $p$, and hence any such subvariety is a characteristic leaf.
\end{proof}

\begin{proposition}
If $\D$ has only nodal singularities, then each germ of $\Hilb{\X}$ has only finitely many characteristic symplectic leaves.
\end{proposition}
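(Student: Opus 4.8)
The plan is to localise the problem and then combine the pointwise description of characteristic leaves (\autoref{prop:char-leaves}, \autoref{cor:singular-characteristic}) with the classification of orbit data at a node (\autoref{prop:orbit-data-node}). Fix $\Z_0\in\Hilb[n]{\X}$. The germ of $\Hilb{\X}$ at $\Z_0$ decomposes, compatibly with $\hilbps$ and hence with the modular vector field, as a product of germs of Hilbert schemes over disjoint neighbourhoods of the points of $\operatorname{supp}\Z_0$; since characteristic leaves of a product are products of characteristic leaves, I would reduce to the following claim: for every node $p\in\D$ and every $m\ge0$, only finitely many local orbit data at $p$ of length $\le m$ correspond to a derived intersection $\Z\hcap\D$ that is preserved, up to equivalence, by the flow of $\zeta$. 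This suffices because, by \autoref{prop:char-leaves} and \autoref{cor:singular-characteristic}, a characteristic leaf meeting a small neighbourhood of $\Z_0$ must have orbit datum supported on the finitely many nodes near $\operatorname{supp}\Z_0$, with length $\le n$ at each, and the condition is genuinely pointwise, since $\zeta$ vanishes at each node and $\Z\hcap\D$ splits as a direct sum over the points of $\Z\cap\D$.

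To analyse the flow at a node, choose coordinates $(x,y)$ with $\D=\{xy=0\}$ and write the Poisson bivector as $f\,\cvf{x}\wedge\cvf{y}$ with $f=xy\cdot u$ for a unit $u$. A direct computation then shows that $\zeta$ restricts to $\pm u(x,0)\,x\,\cvf{x}$ on $\{y=0\}$ and to $\mp u(0,y)\,y\,\cvf{y}$ on $\{x=0\}$, so that $\zeta$ is tangent to $\D$, vanishes at $p$, and has eigenvalues on the two branches that are nonzero and negatives of one another. Linearising $\zeta$ on each branch separately and adjusting coordinates, the time-$t$ flow of $\zeta$ acts on the germ of $\D$ at $p$ by $(x,y)\mapsto(e^{\alpha t}x,e^{-\alpha t}y)$ for some $\alpha\ne0$; equivalently it factors through the standard action $\psi_\mu\colon(x,y)\mapsto(\mu x,\mu^{-1}y)$ of $\CCx$, with $\mu=e^{\alpha t}$ sweeping out all of $\CCx$.

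By \autoref{prop:char-leaves} and \autoref{lem:D-equiv-module}, the leaf through $\Z$ is then characteristic if and only if $\psi_\mu^*\rbrac{i^*\cI_\Z}\cong i^*\cI_\Z$ as $\cO{\D}$-modules for every $\mu\in\CCx$. By \autoref{prop:orbit-data-node} we may write $i^*\cI_\Z\cong\mathscr{D}(\mathbf{d})\oplus\bigoplus_i M(\mathbf{d}_i,k_i,u_i)$. Pulling back along $\psi_\mu$ rescales the entries of the presenting matrices by powers of $\mu$; in the rectangular ``string'' matrix $\mathscr{D}(\mathbf{d})$ these rescalings can be absorbed by rescaling the bases of the free modules, so that $\psi_\mu^*\mathscr{D}(\mathbf{d})\cong\mathscr{D}(\mathbf{d})$, whereas in the square ``band'' matrix $M(\mathbf{d},k,u)$ they cannot be absorbed around the cycle and one computes $\psi_\mu^*M(\mathbf{d},k,u)\cong M(\mathbf{d},k,\mu^{e(\mathbf{d})}u)$, where $e(\mathbf{d})=-\rbrac{\sum_j\nu_j+\sum_j\mu_j}$. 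Since Krull--Schmidt holds for finite-length $\cO{\D,p}$-modules, characteristicity forces the multiset of continuous-series summands of $i^*\cI_\Z$ to be $\psi_\mu^*$-invariant; but $e(\mathbf{d})\ne0$, so $\mu\mapsto\mu^{e(\mathbf{d})}u$ is non-constant and cannot stay in the finite set of parameters that occur. Hence a characteristic $\Z$ has $i^*\cI_\Z\cong\mathscr{D}(\mathbf{d})$ for some $\mathbf{d}$, and as the rank-one discrete-series modules of bounded length form a finite set, the claim---and hence the proposition---follows.

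The step I expect to be the real content is the inequality $e(\mathbf{d})\ne0$: it uses both that the two weights of $\zeta$ at a node are nonzero and negatives of one another (so no balancing of the $x$- and $y$-rescalings is possible) and that all the exponents $\nu_j,\mu_j$ in the Burban normal form are strictly positive, which forces $\sum_j\nu_j+\sum_j\mu_j>0$. The remainder is bookkeeping: making the product decomposition of germs and its compatibility with $\hilbps$ precise, carrying out the matrix conjugation that yields the formula for $e(\mathbf{d})$, and checking that $\zeta$ can be linearised on both branches of $\D$ simultaneously without disturbing the equation $xy=0$ of $\D$.
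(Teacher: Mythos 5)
Your proposal is correct and follows essentially the same route as the paper: reduce to the Arnol'd normal form $\ps = xy\,\cvf{x}\wedge\cvf{y}$ at a node, observe that the modular flow is the $\CCx$-action $(x,y)\mapsto(\mu x,\mu^{-1}y)$, and invoke the classification of \autoref{prop:orbit-data-node} to see that only the discrete-series modules $\mathscr{D}(\mathbf{d})$ are fixed (your computation of the nonzero exponent $e(\mathbf{d})$ supplies the detail the paper dismisses as ``straightforward to verify''). The one step you leave out is the passage from finitely many characteristic $\D$-equivalence classes near $\Z_0$ to finiteness at the level of the analytic germ, which the paper settles by noting that the leaves are Zariski locally closed algebraic subvarieties, so each contributes only finitely many connected components to any germ.
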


\begin{proof}
  The problem is local and invariant under rescaling the Poisson structure by a constant, so by the log Darboux theorem it is enough to treat the case in which $\X=\mathbb{C}^2$ and  $\ps = xy~\partial_x\wedge \partial_y$ (e.g. see \cite{Arnold1987}), in which case the modular vector field is given by $\zeta = y\cvf{y}-x\cvf{x}$ and its flow integrates to the $\CC^*$-action on $\X$ given by $t \cdot(x,y) = (tx,t^{-1}y)$.  It is straightforward to verify that this action preserves the equivalence class of the matrices $\mathscr{D}(\textbf{d})$, and  changes the equivalence classes of the matrices $M(\mathbf{d},k,u)$ by a suitable rescaling of $u$.  Hence according to \autoref{prop:orbit-data-node}, the orbit data for $\Hilb{\X}$ that are fixed by the modular flow are exactly those given by a single matrix from the series $\mathscr{D}(\textbf{d})$, of which only finitely many can occur in $\Hilb{\X}$.  Hence there are only finitely many characteristic leaves globally in $\Hilb{\X}$.  But the leaves are algebraic subvarieties (locally closed in the Zariski topology), so the analytic germ at every point can  have only finitely many connected components (bounded by the number of irreducible components in the analytic or formal germ of the leaf closure), as desired.
\end{proof}

For $n=1$, the characteristic leaves are given by the open leaf $\X\setminus \D$ and the singular points of $\D$, and hence are locally finite as long as $\D$ is reduced.  However if $\D$ has non-nodal singularities and $n$ is sufficiently large, one finds the existence of infinitely many characteristic symplectic leaves in $\Hilb{\X}$. Recall that a point $p \in \D$ is a \defn{double point} if the multiplicity of $\D$ at $p$ is exactly two, i.e.~the defining equation vanishes to order exactly two.), in which case it is an $A_k$ singularity for some $k \ge 1$, meaning that there exists coordinates such that $\D$ is given locally by the equation
$$
y^2-x^{k+1}=0.
$$
We have the following:
\begin{proposition}
Let $\X$ be a Poisson surface with anticanonical divisor $\D$.  Then the following statements hold:
\begin{enumerate}
\item $\Hilb[2]{\X}$ has finitely many characteristic symplectic leaves if and only if the only singularities of $\D$ are double points.
\item If $\D$ has a non-nodal singularity, then $\Hilb[n]{\X}$ has infinitely many characteristic leaves for every $n \ge n(\D)$ where
\[
n(\D) := \begin{cases}
2 & \textrm{if }\D \textrm{ has a triple point} \\
3 & \textrm{if }\D \textrm{ has a singularity of type }A_k\textrm{ for some  }k \ge 3 \\
6 & \textrm{if }\D \textrm{ has an }A_2\textrm{ singularity}.
\end{cases}
\]
\end{enumerate}
\end{proposition}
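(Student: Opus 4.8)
Everything is local around a singular point $p$ of $\D$, so I would first reduce to the case of a single isolated singularity (the reduced case; a non-reduced component violates the hypotheses and is in any case handled by the same devices). By \autoref{prop:char-leaves} a leaf is characteristic iff $\Z\hcap\D$ is fixed up to equivalence by the flow $\phi_t$ of the modular vector field, and by \autoref{cor:singular-characteristic} this forces $\Z\cap\D$ to be supported on the zero locus of $\zeta$ on $\D$; writing $\ps=f\,\partial_x\wedge\partial_y$ locally, $\zeta$ is the restriction of $\tilde\zeta=f_x\partial_y-f_y\partial_x$, which vanishes on $\D$ exactly at its singular points. So one works point by point, and by \autoref{lem:D-equiv-module} the question becomes: for which $\Z$ — and in a finite or infinite family? — does $\phi_t$ fix the isomorphism class of the $\cO{\D,p}$-module $i^*\cI_\Z$. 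Since $\tilde\zeta(p)=0$, the decisive invariant is the linear part $d\tilde\zeta|_p$: it vanishes when $\mathrm{mult}_p\D\ge 3$, it is a nonzero nilpotent when $p$ is an $A_k$-singularity with $k\ge 2$, and it is semisimple at a node.

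\textbf{Part 1 and the triple-point case of Part 2.} The punctual length-two subschemes at $p$ form the $\PP^1$ of tangent directions, and (since $f\in\m_p^2$) all of them lie on $\D$; $\phi_t$ acts on this $\PP^1$ through $d\phi_t|_p$. If $\mathrm{mult}_p\D\ge 3$ then $d\phi_t|_p=\mathrm{id}$, so $\phi_t$ fixes every such $\Z$, while distinct directions give distinct ideals $\bar\cI=\img(\cI_\Z\to\cO{\D})$, hence distinct torsion submodules $\tormod{\Z}{\D}\cong\cO{\D}/\bar\cI$ and distinct modules $i^*\cI_\Z$: this exhibits a $\PP^1$ of characteristic leaves in $\Hilb[2]{\X}$ (and, after appending points of $\X\setminus\D$, in $\Hilb[n]{\X}$ for all $n\ge 2$), giving the ``$\Rightarrow$'' of Part 1 and the $n(\D)=2$ case. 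Conversely, if every singularity of $\D$ is a double point, then the nonzero $d\phi_t|_p$ fixes only finitely many points of $\PP^1$, a fixed orbit-class forces the direction to be fixed, and together with the finitely many leaves through $\{p\}\sqcup\Z'$ (with $\Z'\subset\X\setminus\D$) and the open leaf, $\Hilb[2]{\X}$ has only finitely many characteristic leaves.

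\textbf{Part 2 for $A_k$, $k\ge 3$.} In coordinates with $\D=\{y^2=x^{k+1}\}$ I would use the family $\Z_c:=V(y-cx^2,\,x^3)$. Because $k+1\ge 4$, the equation $y^2-x^{k+1}$ lies in this ideal, so $\Z_c\subset\D$ has length three and is recorded by the colength-three ideal $\bar\cI_c=(y-cx^2,x^3)\cO{\D}$. A direct computation using $\zeta=(-(k+1)x^k\partial_y-2y\partial_x)|_\D$ and $x^k\in(x^3)$ shows $\zeta(\bar\cI_c)\subseteq\bar\cI_c$, so each $\Z_c$ is fixed by the modular flow; and $\bar\cI_c\ne\bar\cI_{c'}$ for $c\ne c'$, so the cyclic modules $\cO{\D}/\bar\cI_c$, hence the leaves, are pairwise distinct. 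Adding points of $\X\setminus\D$ extends this to all $n\ge 3$. The same computation fails for $A_2$ — there $f_x=-3x^2\notin(x^3)$ — which is exactly why the threshold rises.

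\textbf{Part 2 for $A_2$: the main obstacle.} For the cusp $\D=\{y^2=x^3\}$, a short argument through the normalization $\cO{\D,p}=\CC\{t^2,t^3\}\subset\CC\{t\}$ shows that the only $\zeta$-invariant punctual subschemes of $\D$ are the finitely many ``monomial'' ones, so the required family must instead consist of length-six subschemes $\Z_c$ punctual at $p$ with $\Z_c\cap\D$ of length $<6$, chosen so that $\phi_t$ moves $\Z_c$ but fixes the isomorphism class of $i^*\cI_{\Z_c}$ while these classes genuinely vary with $c$. The plan is: begin from the triangular scheme $p(3)=V(\m_p^3)$, which has length six ($1+2+3$) and is modular-invariant (as $\m_p^3$ is $\tilde\zeta$-stable); compute the $d\phi_t$-invariant subspace of $\tb[p(3)]{\Hilb[6]{\X}}$ and identify within it a one-parameter deformation along which the $\D$-equivalence class changes; and verify leaf-invariance by analysing rank-one $\cO{\D,p}$-modules via the normalization, using that $\phi_t$ restricts on $\CC\{t\}$ to the parabolic flow $t\mapsto t/(1+st)$, which is trivial on finite-length data at $t=0$, so that the obstruction to invariance is concentrated in the conductor $\m_\D$. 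Controlling this conductor-level bookkeeping — where the threshold $n=6$ emerges and is sharp — is the delicate step; note also that Theorem \ref{thm:local} does not cover a cusp, so this analysis is done by hand. For $n>6$ one again appends points of $\X\setminus\D$.
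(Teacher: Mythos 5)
Your handling of Part 1, the triple-point case, and the $A_k$ case for $k\ge 3$ follows the same route as the paper: the linearization of the modular vector field acting on the $\PP^1$ of tangent directions at $p$ (zero matrix iff $\operatorname{mult}_p\D\ge 3$, nonzero traceless otherwise), and the one-parameter family $(y\pm c x^2,\,x^3)$ of length-three subschemes contained in $\D$ for $A_k$, $k\ge 3$. One small slip: $\tormod{\Z}{\D}$ is $\cTor[1]{\cO{\Z},\cO{\D}}$, the torsion \emph{sub}module of $i^*\cI_\Z$, not $\cO{\D}/\bar{\cI}$ (the latter is the cokernel $\cO{\Z\cap\D}$). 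Your distinctness conclusions survive anyway, both because $\D$-equivalence forces equality of the scheme-theoretic intersections $\Z\cap\D$, and, more simply, because all the subschemes in these families satisfy $f\in\cI_\Z$, so $\hilbps$ vanishes there and the leaves are singletons --- which is how the paper argues.

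The genuine gap is the $A_2$ case, which is exactly the case responsible for the threshold $n(\D)=6$ and the hardest part of the statement: you outline a plan (deform $p(3)$ inside the flow-invariant tangent directions, track the module class through the normalization $\CC\{t^2,t^3\}\subset\CC\{t\}$) but explicitly defer the ``conductor-level bookkeeping'' that would make it a proof, and your plan is structurally harder than necessary because it asks for a family that the flow \emph{moves} within its (a priori positive-dimensional) leaf, which requires control of the whole leaf rather than of a single point. The paper instead writes down the explicit family $\mathcal{J}_a=(y^2+ax^3,\;x^2y,\;xy^2)$ of colength-six ideals, uses the two syzygies $y\cdot(x^2y)-x\cdot(xy^2)=0$ and $-xy\cdot(y^2+ax^3)+ax^2\cdot(x^2y)+y\cdot(xy^2)=0$ to show that every $\varphi\in\Hom{\mathcal{J}_a,\cO{\X}/\mathcal{J}_a}$ has image in $(x,y)\,\cO{\X}/\mathcal{J}_a$ and is therefore killed by multiplication by $f=y^2-x^3$ (so $\hilbps[6]$ vanishes at $\mathcal{J}_a$ and the leaf is the singleton $\{\mathcal{J}_a\}$), and checks directly that $\zeta=-2y\partial_x-3x^2\partial_y$ preserves $\mathcal{J}_a$. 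Until you exhibit such a family, or complete the normalization argument you sketch, Part 2 for $A_2$ singularities is not established.
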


\begin{proof}
Suppose $\Z \in \Hilb[2]{\X}$.  If $\Z$ is reduced, then $\Z=\{p_1,p_2\}$ is a pair of distinct points and $\Hilb[2]{\X}$ is locally isomorphic to a neighbourhood of $(p_1,p_2)$ in the product $\X \times \X$, and hence there are only finitely many characteristic leaves in a neighbourhood of $\Z$.

If $\Z$ is not reduced, then it is the image of an embedding $\Spec{\CC[\epsilon]/\epsilon^2}\to \X$ and is therefore classified by a pair $(p,\bL)$ where $p \in \X$ and $\bL < \tb[p]{\X}$ is a line.  By \autoref{cor:singular-characteristic} we need only consider the case in which $p$ is a singular point of $\D$, in which case $\Z$ is contained entirely in $\D$, so that $\ps$ vanishes at $\Z$.  Let us choose local coordinates $x,y$ on $\X$ centred at $p$ so that the bivector on $\X$ has the form $\ps=f(x,y)~\cvf{x}\wedge\cvf{y}$.  The modular vector field is then given in terms of the partial derivatives $f_x,f_y$ by the formula
\[
\zeta = f_x \cvf{y} - f_y \cvf{x}
\]
and hence the linearization of $\zeta$ at $p$ is given by the matrix
\[
\begin{pmatrix}
-f_{xy}(0) & -f_{yy}(0) \\
f_{xx}(0) & f_{xy}(0)
\end{pmatrix}
\]
If $f$ vanishes to order three or more, then this matrix is zero.  Hence every line $\bL < \tb[p]{\X}$ is preserved by the flow of $\zeta$, and we obtain infinitely many characteristic leaves in $\Hilb[2]{\X}$, and hence also in $\Hilb[n]{\X}$ for $n > 2$.  On the other hand, if $f$ vanishes to order two, this matrix is nonzero.  Since it is traceless, it preserves at most two lines in $\tb[p]{\X}$ and so the set of characteristic leaves with support at $p$ is finite.

Now suppose that $\D$ has an $A_k$-singularity  at $p \in \X$.  Then we may choose coordinates $(x,y)$ centred at $p$ such that $f(x,y) = h(x,y) (y^2-x^{k+1})$ and $h(0,0)\not=0$. Then the modular vector field of $\pi$ is given by
$$
\zeta = h(x,y) \Big(- 2y \partial_x -(k+1)x^k \partial_y \Big) + (y^2-x^{k+1}) \Big(h_x(x,y)\partial_y-h_y(x,y)\partial_x\Big).
$$
Suppose first that $k\ge 3$, and consider the ideal $\cI_a:=(y+ax^2, x^3)$ where, $a\in\mathbb{C}$. Then each $\cI_a$ defines a point in $\Hilb[3]{\X}$ and contains the ideal $\cI_\D = (y^2-x^{k+1})$ defining $\D$.  Therefore, multiplication by $f$ annihilates ${\rm Hom}(\mathcal{I}_a,\mathcal{O}_\X/\mathcal{I}_a)$, and so $\Hilb[3]{\ps}$ vanishes at $\mathcal{I}_a$. Moreover, the modular vector field $\zeta$ sends each generator of $\mathcal{I}_a$ to another element of $\mathcal{I}_a$. Therefore, the vector field $\Hilb[3]{\zeta}$ on $\Hilb[3]{\X}$ vanishes at $\mathcal{I}_a$, so that the singleton $\{\cI_a\}$ is a characteristic leaf for all $a \in \CC$.

Finally, assume $k=2$. Then we can assume $h(x,y)=1$ in the expression for $\ps$ above by a theorem of Arnol'd~\cite{Arnold1987}.  For $a \in \CC$, consider the  ideal $\mathcal{J}_a = (y^2 + ax^3,x^2y, xy^2)$, which defines an element of $\Hilb[6]{\X}$. We claim that multiplication by $f=y^2-x^3$ annihilates each element $\varphi \in {\rm Hom}(\mathcal{J}_a, \mathcal{O}_\X/\mathcal{J}_a)$, for each $a\in\mathbb{C}$. Denoting the generators $g_1=y^2 + ax^3$, $g_2=x^2y$, $g_3=xy^2$, we have
\begin{align*}
y\varphi(g_2) - x \varphi(g_3) &= 0 &
 -xy\varphi(g_1) + ax^2\varphi(g_2) + y \varphi(g_3) &= 0.
\end{align*}
It is easy to deduce from these equations that the image of any such $\varphi$ lies inside $(x,y) \mathcal{O}_\X/\mathcal{J}_a$, which is clearly annihilated by multiplication by $f$. Therefore, the Poisson tensor $\Hilb[6]{\ps}$ vanishes at $\mathcal{J}_a$. Moreover, the vector field $\zeta=-2y\partial_x -3x^2 \partial_y$ sends each generator of $\mathcal{J}_a$ to another element of $\mathcal{J}_a$. Therefore, $\Hilb[6]{\zeta}$ vanishes at $\mathcal{J}_a$, so that the singleton $\{\mathcal{J}_a\}$ is a characteristic symplectic leaf for each $a \in \CC$.
\end{proof}

\section{Local models and holonomicity}
\label{sec:local}

\subsection{Coordinates on the Hilbert scheme}

There are two well-known methods for constructing coordinate charts on the Hilbert schemes, indexed by Young diagrams: the Haiman coordinates~\cite{Hai98}, and the \ES coordinates~\cite{Ellingsrud1988}.  In this section we use these coordinate systems to describe the local behaviour of Bottacin's Poisson structures.  Of particular relevance to us are the charts associated to the ``triangular'' Young diagrams
\[
\ytableausetup{centertableaux,smalltableaux}
\begin{ytableau}
\\
\end{ytableau}
\qquad
\begin{ytableau}
\\
&\\
\end{ytableau}
\qquad
\begin{ytableau}
\\
&\\
&&\\
\end{ytableau}
\qquad
\begin{ytableau}
\\
&\\
&&\\
&&&\\
\end{ytableau}
\qquad \cdots
\]
which are defined as follows.

Choose a point $p \in \X$ and an integer $k > 0$, and let $p(k)$ be the $k$-th order neighbourhood of $p$, i.e.~the vanishing locus of the ideal $\m_p^{k+1} < \cO{\X}$ where $\m_p$ is the maximal ideal corresponding to $p$.  In local coordinates $(x,y)$ on an open set $\U$ centred at $p$, the scheme $p(k)$ is expressed as the vanishing locus of the monomials $x^jy^l$ where $j+l > k$, so that $\cO{p(k),p}$ has a basis given by the monomials $x^jy^l$ with $j+l\le k$.  The latter are in bijection with the boxes of the triangular Young  diagram
\[
\TYD := (k > k-1 > k-2 > \cdots)
\]
so that
\[
p(k) \in \Hilb[n]{\X} 
\]
where
\[
n := |\,\TYD| = \tfrac{1}{2}k(k+1)
\]
is the size of $\TYD$.  The point $p(k) \in \Hilb{\X}$ then has an open neighbourhood given by 
\[
\Utri := \set{\Z \in \Hilb{\X}}{\textrm{the monomials }x^jy^l, j+l<k\textrm{ form a basis for }\coH[0]{\cO{\Z\cap \U}}}
\]
and the two coordinate systems on these charts are defined as follows.
\begin{itemize}
\item \textbf{\ES coordinates:} If $E \in \CC^{(k+1)\times k}$ is a $(k+1)\times k$ matrix, let
\begin{align}
S_E(x,y) :=  E + \begin{pmatrix}
-x & 0 & 0 & \cdots & 0 \\
y & -x & 0 & \cdots & 0 \\
0 & y & -x & \cdots & 0 \\
0 & 0 & y & \ddots & \vdots \\
\vdots & \vdots&\vdots & \ddots & -x \\
0 & 0 & 0 & \cdots & y
\end{pmatrix}  = E-x \begin{pmatrix}
I_k \\
0
\end{pmatrix} + y \begin{pmatrix}
0 \\ I_k
\end{pmatrix} \label{eq:ES-syzygy}
\end{align}
and define a subscheme $\Z(E) \subset \X$ by the formula
\[
\Z(E) := \textrm{vanishing locus of the maximal minors of }S_E(x,y) 
\]
Then the map $E \mapsto \Z(E)$ identifies a neighbourhood of the origin in $\CC^{(k+1)\times k}$ with $\Utri$.  Note that $S_E(x,y)$ is a Hilbert--Burch syzygy matrix for $\Z(E)$.  The functions
\[
E^j_i : \Utri \to \CC, ~~~~ 0\le i \le k-1, ~0 \le j \le k
\]
sending $\Z(E)$ to the $(j,i)$-entry of the corresponding matrix $E$ are the \ES coordinates on $\Utri$. Note that here we start numbering of rows and columns from $0$.
\item \textbf{Haiman coordinates:} For every $\Z \in \Utri$, the ideal $\cI_\Z$ is generated by functions of the form
\begin{align}
f_{\Z,j} = x^j y^{k-j} - \sum_{a+b\le k-1} c_{ab}^j(\Z) x^a y^b \qquad 0 \le j \le k. \label{eq:Haiman-relation}
\end{align}
where $c_{ab}^j$ are certain regular functions on $\Utri$, called Haiman's functions.  Haiman's coordinates are then given by the collection
\[
C_i^j := c_{i,k-1-i}^j
\]
for $0\le i \le k-1$ and  $0\le j \le k$.
\end{itemize}
These coordinate systems are related by the linear transformations
\begin{align*}
E_i^j &= C_{j}^{i+1} - C_{j-1}^{i} 
\\
C_i^j &= \begin{cases}
E^i_{j-1} + E_{j-2}^{i-1} + \cdots + E_{j-i-1}^0 & j\ge i+1 \\
 - E_{j}^{i+1} -  E_{j+1}^{i+2} - \cdots - E_{j+k-i-1}^{k} & j\le i,
 \end{cases}
\end{align*}
where we use the convention that $C_{-1}^j=C_{k}^j = 0$ for any $j$.   One way to explicitly verify the second transformation is by expanding the maximal minors of $S_E(x,y)$,
considering only the linear term in $E$ (equivalently, total degree $k-1$ in $x,y$). 

\begin{remark}\label{rk:coordinates_come_in_halves}
Note that each set of coordinates $E_i^j$ and $C_i^j$ splits into two halves: $j\le i$ and $j\ge i+1$. In the formula above, the $E$-coordinates in one half are expressed in terms of the $C$-coordinates in the other half, and vice versa.
\end{remark}

\begin{remark}
In the particular case where $\X = \CC^2$ and $(x,y)$ are the standard coordinates, these charts are defined on the whole space of $(k+1)\times k$-matrices, giving open embeddings $\CC^{(k+1)\times k} \hookrightarrow \Hilb{(\CC^2)}$ for all $k > 0$.
\end{remark}

An important feature of these coordinate charts is that their construction is $\CCx$-equivariant, in the following sense.
 Define an action of $\CCx$ on the germ of $\X$ at $p$ by dilation of the coordinates we chose:
\[
u \cdot(x,y) = (ux,uy) \qquad u \in \CCx.
\]
By functoriality, we have an induced action on the germ of $\Hilb{\X}$ at $p(k)$.
\begin{lemma}\label{lem:homogeneity-of-coords}
The \ES and Haiman coordinates have weight one with respect to the induced $\CCx$-action, i.e.~
\[
u \cdot E_i^j = uE_i^j \qquad u \cdot C_i^j = uC_i^j.
\]
for all indices $i,j$ and all $u \in \CCx$.
\end{lemma}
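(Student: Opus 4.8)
The plan is to reduce both parts of the lemma to an elementary manipulation of the normal-form presentations underlying the two coordinate systems, once the conventions for the $\CCx$-action are made explicit.

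First I would pin down the action. Writing the $\CCx$-action on the germ of $\X$ at $p$ as a family of maps $\phi_u$ on points, the prescription $u\cdot x = ux$, $u\cdot y = uy$ on coordinate \emph{functions} forces $\phi_u(a,b) = (u^{-1}a,u^{-1}b)$; since the monomial scheme $p(k)$ is visibly scaling-invariant, $\phi_u$ preserves the germ at $p(k)$, and the induced action sends $\Z \mapsto \phi_u(\Z)$, so that a function $F$ on $\Hilb{\X}$ transforms by $(u\cdot F)(\Z) = F(\phi_{u^{-1}}(\Z))$. Concretely, $\cI_{\phi_{u^{-1}}(\Z)}$ is obtained from $\cI_\Z$ by applying the substitution $x\mapsto u^{-1}x$, $y\mapsto u^{-1}y$ to any set of generators. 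Because the chart $\Utri$ is defined by the condition that the monomials $x^jy^l$ with $j+l<k$ descend to a basis of $\coH[0]{\cO{\Z\cap\U}}$ --- a condition obviously unchanged when the monomials are rescaled --- the germ of $\Utri$ at $p(k)$ is $\CCx$-stable, and it only remains to see how this substitution acts on each normal form.

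For the \ES coordinates I would apply the substitution to the syzygy matrix $S_E(x,y)$ of \eqref{eq:ES-syzygy}: it leaves the constant term $E$ untouched and scales the linear term by $u^{-1}$, so $S_E(u^{-1}x,u^{-1}y) = u^{-1}S_{uE}(x,y)$. Applying the substitution to the Hilbert--Burch resolution of $\cI_{\Z(E)}$ shows that $S_E(u^{-1}x,u^{-1}y)$ presents $\cI_{\phi_{u^{-1}}(\Z(E))}$; since it differs from $S_{uE}(x,y)$ only by the overall nonzero scalar $u^{-1}$, the two matrices present the same ideal, whence $\phi_{u^{-1}}(\Z(E)) = \Z(uE)$. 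Consequently $(u\cdot E_i^j)(\Z(E)) = E_i^j(\Z(uE)) = u\,E_i^j(\Z(E))$, i.e.\ $E_i^j$ has weight one.

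For the Haiman coordinates the quickest route is to invoke the ($\ZZ$-linear, constant-coefficient) change-of-coordinates formulas expressing the $C_i^j$ in terms of the $E_i^j$, so that weight one is inherited from the previous paragraph. Alternatively one argues directly: the same substitution sends a generator $f_{\Z,j} = x^jy^{k-j} - \sum_{a+b\le k-1}c_{ab}^j(\Z)\,x^ay^b$ to $u^{-k}$ times a relation in normal form with $c_{ab}^j$ replaced by $u^{k-a-b}c_{ab}^j$; since the Haiman coordinate $C_i^j = c^j_{i,k-1-i}$ has $a+b = k-1$, it transforms with weight one. All of the computations are routine; the only points that require attention are getting the direction of the $\CCx$-action right and observing that rescaling by an overall unit is harmless, so I anticipate no genuine obstacle.
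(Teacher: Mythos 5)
Your proof is correct and follows essentially the same route as the paper's: the key step in both is the identity $S_E(u^{\pm 1}x,u^{\pm 1}y)=u^{\pm 1}S_{u^{\mp 1}E}(x,y)$ together with the observation that an overall unit scalar does not change the ideal of maximal minors. The only (harmless) differences are that you spell out the point/function conventions for the action and additionally verify the Haiman case directly from the normal form \eqref{eq:Haiman-relation}, whereas the paper deduces it from the constant-coefficient linear relation between the two coordinate systems.
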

\begin{proof}
Since the two coordinate systems are related by a linear transformation, it suffices to prove the statement for one of them; we do so for the \ES coordinates.  For this, note that 
\[
S_E(ux,uy) = E - u x \begin{pmatrix}
I_k \\ 0
\end{pmatrix} + uy \begin{pmatrix}
0 \\ I_k
\end{pmatrix} = u\rbrac{u^{-1}E -  x \begin{pmatrix}
I_k \\ 0
\end{pmatrix} + y \begin{pmatrix}
0 \\ I_k
\end{pmatrix}} = uS_{u^{-1}E}(x,y)
\]
Hence the ideals generated by the minors of the matrices $S_E(ux,uy)$ and $S_{u^{-1}E}(x,y)$ are the same, which implies that the \ES coordinates scale linearly, as desired.
\end{proof}

\subsection{Expression for the Poisson structure}
\label{sec:triangular-chart-Poisson}
Our interest in the triangular charts stems, in part, from the fact that they exhibit all possible local behaviours of Bottacin's Poisson structures. Indeed recall from \cite{Matviichuk2020} that two germs $\W_1$ and $\W_2$ of Poisson manifolds are \defn{stably equivalent} if there exist symplectic germs $\bS_1$ and $\bS_2$ such that $\W_1 \times \bS_1 \cong \W_2 \times \bS_2$.  Then we have the following:
\begin{lemma}\label{lm:Hilb_equiv_to_triang_chart}
Let $(\X,\ps)$ be a Poisson surface and suppose $\Z \in \Hilb[m]{\X}$ for some $m \ge 0$.  Then the germ of $(\Hilb{\X},\hilbps)$ at $\Z$ is stably equivalent to a product of germs of points in the triangular charts of Hilbert schemes $\Hilb[k(k+1)/2]{\X}$, $k \ge 0$
\end{lemma}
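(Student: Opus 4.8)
The plan is to reduce, in stages, to the triangular charts by combining the local product structure of Hilbert schemes with Weinstein splitting and the $\CCx$-equivariance of the \ES coordinates.

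\emph{Step 1: localisation to a single support point.} Write $\Z = \bigsqcup_i \Z_i$ with the $\Z_i$ supported at pairwise distinct points $p_i \in \X$, and choose pairwise disjoint opens $p_i \in \U_i \subseteq \X$. The Hilbert--Chow morphism identifies a neighbourhood of $\Z$ in $\Hilb{\X}$ with $\prod_i$ (neighbourhood of $\Z_i$ in $\Hilb{\U_i}$), and since $\hilbps$ lifts the symmetric Poisson structure $\symps$ on $\sympow{\X}$ --- which respects this product because $\ps$ pairs functions only within a single factor of $\X^n$ --- the germ of $(\Hilb{\X},\hilbps)$ at $\Z$ is the product of the germs at the $\Z_i$. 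So we may assume $\Z$ is supported at a single point $p$. If $p \notin \D$ then $\ps$ is nondegenerate near $p$, so $\Z$ lies on the open symplectic leaf $\Hilb{(\X\setminus\D)}$ and the germ is symplectic; a symplectic germ is stably equivalent to a point, i.e.\ to the empty product (the $k=0$ triangular chart). This leaves the case $p \in \D$.

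\emph{Step 2: reduction to a $\CCx$-fixed model.} I would first pass to the transverse Poisson structure: by Weinstein's splitting theorem the germ at $\Z$ is stably equivalent to its transverse germ, and the latter is locally constant along the symplectic leaf through $\Z$. Using the classification of leaves from \autoref{cor:sympl_leaves_general}, together with \autoref{prop:contains-hor-convex} (and, at singular points of $\D$, the normal forms behind \autoref{prop:orbit-data-node} and its analogues), I would move $\Z$ within its leaf to a canonical representative of the form $\Z' \sqcup \Z_0$, where $\Z' \subseteq \X\setminus\D$ is reduced and $\Z_0$ is supported at $p$ and cut out by a \emph{homogeneous} ideal (in the smooth case $\Z_0 = \Z_p^\mu$); the factor contributed by $\Z'$ is symplectic and is discarded. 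Now $\Z_0$ is fixed by the dilation $u\cdot(x,y)=(ux,uy)$ (or by the appropriate weighted action at a singular point of $\D$), so the $\CCx$-action on the germ at $\Z_0$ is analytically linearisable, with weights controlled by \autoref{lem:homogeneity-of-coords} and its extension to the \ES chart adapted to $\Z_0$; since $\hilbps$ has a definite $\CCx$-weight, in these coordinates it becomes polynomial of bounded degree (linear at smooth points of $\D$, quadratic at nodes).

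\emph{Step 3: matching with the triangular charts.} It remains to identify the germ at $\Z_0$ with a product of triangular-chart germs. The mechanism I would use is the degeneration of \autoref{rem:conv-monomial-limit}: $\Z_0$ is a limit of schemes $\bigsqcup_j q_j(\nu_j)$ over distinct $q_j \in \D$, whose germs are precisely products of triangular germs (each $q_j(\nu_j)$ being the centre of a triangular chart), and one checks directly in \ES coordinates --- exploiting the fact that these coordinates ``come in halves'' (\autoref{rk:coordinates_come_in_halves}), which decouples Bottacin's bracket into blocks indexed by the $\nu_j$ --- that this product structure persists for the germ at $\Z_0$. \emph{This last verification is the main obstacle}: it is a concrete but delicate computation with Bottacin's bivector in the relevant \ES charts, entangled with the combinatorics of the torus weights on $\aff{k}$ (the ``game of dominoes'' of \autoref{sec:toric-degen}), and at non-smooth points of $\D$ one must additionally keep track of the quadratic --- rather than purely linear --- local models. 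Once the block decomposition at $\Z_0$ is in place, the chain of stable equivalences closes up and the lemma follows.
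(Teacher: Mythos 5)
There is a genuine gap, and it sits exactly where you flag it: Step 3. The claimed ``block decomposition'' at $\Z_0$ is not just unverified but false in the form you need it. The scheme $\Z_0=\Z_p^\mu$ is the \emph{limit} of the configurations $\bigsqcup_j q_j(\nu_j)$, and the symplectic leaf through the limit lies in the closure of (and has strictly larger codimension than) the leaf through the nearby configurations --- compare \autoref{lem:smooth-stablizer} and \autoref{cor:Hilb_y_sympl_leaves_order}. So the transverse germ at $\Z_0$ cannot agree with the product of the transverse germs at the centres $q_j(\nu_j)$ of triangular charts; no computation in \ES coordinates will rescue this. A second problem is that your Step 2 quietly assumes $\ps$ is homogeneous of a definite weight near $p$ (``linear at smooth points, quadratic at nodes''), which requires $\D$ to have at worst nodal singularities, whereas the lemma is stated for an arbitrary Poisson surface.

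The underlying misreading is that the lemma does not ask for germs at the \emph{centres} $p(k)$ of the triangular charts, only at arbitrary points of them, and this makes the statement almost trivial. The paper's argument: after localizing to a single support point $p$ (your Step 1, which is fine), choose local coordinates on an open set $\U\ni p$ and $k$ large enough that the monomials $x^jy^l$ with $j+l<k$ span $\coH[0]{\cO{\Z}}$; then pick $\tfrac{1}{2}k(k+1)-m$ reduced points $\Z'\subset\U\setminus\D$, in general position with respect to evaluation of those monomials, so that $\Z\sqcup\Z'$ lies in the chart $\Utri\subset\Hilb[k(k+1)/2]{\X}$. The germ at $\Z'$ is symplectic, so the germ at $\Z$ is stably equivalent to the germ at the point $\Z\sqcup\Z'$ of $\Utri$, and you are done --- no Weinstein splitting, no motion within the leaf, no normal forms, no degeneration, and no hypothesis on the singularities of $\D$. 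I would rebuild your proof around this ``add auxiliary points off $\D$'' device and discard Steps 2 and 3 entirely.
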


\begin{proof}
Any element $\Z \in \Hilb[m]{\X}$ has an analytic neighbourhood consisting of products of neighbourhoods of its connected components, so we may assume without loss of generality that $\Z$ is supported at a single point $p\in \D$.  If suffices to show that there exists an integer $k >0$ and a subscheme $\Z' \subset \X\setminus \D$ such that $\Z\sqcup \Z'$ lies in the triangular chart $\Utri\subset\Hilb[k(k+1)/2]{\X}$, since in this case the germ at $\Z'$ is symplectic and hence the germs at $\Z$ and at $\Z\sqcup \Z'$ are stably equivalent.

To see this choose local coordinates on an open set $\U$ centred at $p$ and choose $k$ large enough that $\coH[0]{\cO{\Z}}$ is spanned by the images of the monomials $A = \set{x^jy^l}{j+l<k} \subset\coH[0]{\cO{\U}}$.  Now choose a  collection of distinct points $p_1,p_2,\ldots \in \U\setminus(\U\cap \D)$ such that the evaluation maps $ev_{p_j} : \coH[0]{\cO{\U}}\to \CC$ form a basis for $\mathrm{span}(A)^\vee$.  After reordering, we can assume by linear independence that $ev_{p_1},\ldots,ev_{p_{k-m}}$ restrict to a basis of $\rbrac{\ker\rbrac{ \mathrm{span}(A) \to \coH[0]{\cO{\Z}}}}^\vee=\coker\rbrac{\coH[0]{\cO{\Z}}^\vee \to\mathrm{span}(A)^\vee}$, and then $\Z' = \{p_1,\ldots,p_{k-m}\}$ is the desired subscheme. 
\end{proof}

An explicit expression for the Poisson bracket can be obtained via the following algorithm:
\begin{enumerate}
\item First obtain a formula in the case of the Poisson structure $\ps_0 := \cvf{x}\wedge\cvf{y}$; we do so in \autoref{sec:Darboux} below.
\item Define the \defn{recursion operators}  $J_x, J_y$ to be the endomorphisms of the tangent bundle given by the action of $x,y$ on the tangent spaces under the identification
\[
\tb[\Z]{\Utri} \cong \Hom[\U]{\cI_\Z,\cO{\Z}}
\]
Note that these operators have weight one, which implies they are linear in \ES or Haiman coordinates.  Explicitly, they are given as follows (we postpone their derivation to  \autoref{subsec:recursion}):

\begin{align}
\begin{split}
J_x\cdot \cvf{C_i^j} &= \sum_{b=0}^k E_i^b \cvf{E_{j-1}^b} - \sum_{a=0}^{k-1} E_a^j \cvf{E_a^{i+1}},
\\ 
J_y \cdot \cvf{{C_i^j}} &= \sum_{b=0}^k E_i^b\cvf{E_j^b} - \sum_{a=0}^{k-1} E_a^j \cvf{E_a^i}.
\end{split}\label{eq:recusions-ops}
\end{align}

\item Now given an arbitrary Poisson structure
\[
\ps = f(x,y)\cvf{x}\wedge\cvf{y} = f(x,y)\ps_0
\]
on $\U$, it follows from Bottacin's formula that $\hilbps$ is the bivector corresponding to the
linear map $(\hilbps)^\sharp : \ctb{\Utri} \to \tb{\Utri}$
given by the formula
\[
(\hilbps)^\sharp = f(J_x,J_y)(\hilbps_0)^\sharp.
\]
Note that the pair $(\hilbps,\hilbps_0) $ is a nondegenerate bi-Hamiltonian structure and  $f(J_x,J_y)$ is its associated recursion operator in the sense of integrable systems; this is the source of our name for $J_x$ and $J_y$. 
\end{enumerate}

In the following sections we describe the local structure in the cases in which $\D$ has at worst nodal singularities; by the Darboux theorem and its generalizations, this corresponds to the case in which $\ps$ is homogeneous of nonpositive weight in a suitable chart $(x,y)$.

\subsection{Darboux coordinates}
\label{sec:Darboux}
Suppose that $p \in \X \setminus \D$.  Then $\ps$ is nondegenerate at $p$, so by the Darboux theorem there exists coordinates $(x,y)$ centred at $p$ such that 
\[
\ps = \cvf{x}\wedge\cvf{y}.
\]
In particular $\ps$ has weight $-2$, i.e.~is constant, and hence the corresponding Poisson structure $\hilbps$ is induced by a constant symplectic structure $\Hilb{\omega}$ in the chart $\Utri$.  Natural Darboux coordinates for $\Hilb{\omega}$ are then given by the following.
\begin{proposition}\label{prop:Darboux}
The \ES and Haiman coordinates on $\Utri$ are symplectically dual, in the sense that
\[
\{E_{i}^j,C_{i'}^{j'}\} = \begin{cases}
1 & i=i' \textrm{ and } j=j' \\
0 &\textrm{otherwise}
\end{cases}
\]
Hence $E_i^j, C_i^j$, $j\le i$, form Darboux coordinates, i.e. ~
\[
\omega^{[n]} = \sum_{0\le j\le i \le k-1} dC_i^j \wedge dE_i^j.
\]
\end{proposition}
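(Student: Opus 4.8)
The plan is to compute Bottacin's bivector $\hilbps_0$ explicitly in the \ES coordinates on $\Utri$ and then to read the pairing with the Haiman coordinates off the displayed linear change of variables between the two systems; since the statement is local we may take $\X = \CC^2$ with $(x,y)$ the standard coordinates. Two reductions make the computation finite. First, by \autoref{lem:homogeneity-of-coords} the coordinates $E_i^j$ and $C_i^j$ all have $\CCx$-weight one, whereas $\ps_0 = \cvf{x}\wedge\cvf{y}$ has weight $-2$; hence each of $\{E_i^j,C_{i'}^{j'}\}$, $\{E_i^j,E_{i'}^{j'}\}$, $\{C_i^j,C_{i'}^{j'}\}$ is homogeneous of weight $0$, i.e.\ a \emph{constant} function, and may be evaluated at the monomial point $p(k)$. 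Second, the construction is equivariant for the full torus $(\CCx)^2$ acting by $(s,t)\cdot(x,y) = (sx,ty)$; a one-line manipulation of the syzygy matrix \eqref{eq:ES-syzygy} shows that $E_i^j$ and $C_i^j$ have bidegrees $(1+i-j,\,j-i)$ and $(j-i,\,1+i-j)$ (swapped, reflecting the $x\leftrightarrow y$ symmetry of the triangular diagram), while $\omega^{[n]}$ has bidegree $(1,1)$. Matching bidegrees forces $\{E_i^j,C_{i'}^{j'}\} = 0$ unless $i-j = i'-j'$ and $\{E_i^j,E_{i'}^{j'}\} = 0$ unless $(i-j)+(i'-j') = -1$, so $\hilbps_0$ is already block-structured along these ``diagonal levels'' and only a few constants per level remain to be found.

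The heart of the argument is to pin those constants down from Bottacin's Ext-theoretic description of $\hilbps_0$ (\autoref{sec:pois}) at $\Z = p(k)$. There the tangent space is $\Hom[\U]{\cI_{p(k)},\cO{p(k)}}$, its Serre dual is $\Hom[\U]{\cI_{p(k)},\cO{p(k)}\otimes\can{\U}}$, and under the trivialization $\can{\U} = \cO{\U}\cdot(\dd x\wedge\dd y)$ determined by $\ps_0$ the anchor $(\hilbps_0)^\sharp$ is the inverse of the induced nondegenerate self-pairing on $\Hom[\U]{\cI_{p(k)},\cO{p(k)}}$. I would identify the coordinate vector $\cvf{E_i^j}$ with an explicit homomorphism $\cI_{p(k)}\to\cO{p(k)}$ by differentiating, at $t=0$, the maximal minors of the matrix $S_E(x,y)$ of \eqref{eq:ES-syzygy}, with $E$ set equal to $t$ times the matrix unit in position $(j,i)$: this sends the generator $x^\ell y^{k-\ell}$ of $\cI_{p(k)}$ to a signed monomial of $\cO{p(k)}$ determined by $i,j,\ell$ (and to $0$ when $\ell = j$). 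Feeding these homomorphisms into the Serre pairing and tracking signs should yield, in \ES coordinates, $(\hilbps_0)^\sharp(\dd E_i^j) = \cvf{E_{j-1}^i} - \cvf{E_j^{i+1}}$ — equivalently $\hilbps_0$ pairs $E_i^j$ nontrivially only with $E_{j-1}^i$ (value $+1$) and with $E_j^{i+1}$ (value $-1$), out-of-range indices being read as zero. Since this is merely a computation of constants, one could instead carry it out over the dense open locus of reduced subschemes, where $\omega^{[n]}$ restricts (under the Hilbert--Chow morphism) to the descent of $\sum_\alpha\dd a_\alpha\wedge\dd b_\alpha$ and the $E$- and $C$-coordinates are built from the interpolation and multiplication matrices of the points $(a_\alpha,b_\alpha)$ — trading the Ext bookkeeping for a Vandermonde-type identity.

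Granting this normal form for $\hilbps_0$, the remainder is formal. Applying $(\hilbps_0)^\sharp$ to the differential of the displayed expression for $C_i^j$ in terms of the $E$'s and using the formula just obtained, the resulting sum telescopes and collapses to $(\hilbps_0)^\sharp(\dd C_i^j) = -\cvf{E_i^j}$ for all $i,j$; pairing this with $\dd E_{i'}^{j'}$ and using skew-symmetry of $\hilbps_0$ gives $\{E_i^j,C_{i'}^{j'}\} = \delta_{ii'}\delta_{jj'}$ for all indices. Finally, restrict to the half $j\le i$: by \autoref{rk:coordinates_come_in_halves} the $2n$ functions $\{E_i^j : j\le i\}\cup\{C_i^j : j\le i\}$ form a coordinate system, and since each $C_{i'}^{j'}$ with $j'\le i'$ is a linear combination of the $E$-coordinates in the complementary half $j\ge i+1$, the formulae for $(\hilbps_0)^\sharp$ on the $E$- and $C$-covectors force $\{E_i^j,E_{i'}^{j'}\} = \{C_i^j,C_{i'}^{j'}\} = 0$ whenever $j\le i$ and $j'\le i'$; hence these are Darboux coordinates and $\omega^{[n]} = \sum_{0\le j\le i\le k-1}\dd C_i^j\wedge\dd E_i^j$. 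The step I expect to be the real obstacle is the middle one: evaluating the Serre/Ext pairing in the staircase basis (or the equivalent Vandermonde identity on the reduced locus) and, above all, tracking all the signs, so that the answer comes out as exactly $\cvf{E_{j-1}^i} - \cvf{E_j^{i+1}}$ — that is, precisely the inverse transpose of the $E\!\leftrightarrow\! C$ transformation matrix — rather than merely something equivalent to it.
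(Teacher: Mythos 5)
Your outline has the right skeleton, and the parts you do carry out are correct: the homogeneity and bigrading reductions are valid (the brackets of the weight-one coordinates under the weight-$(-2)$ bivector are indeed constants), the claimed normal form $(\hilbps_0)^\sharp(\dd E_i^j)=\cvf{E_{j-1}^i}-\cvf{E_j^{i+1}}$ is exactly $\cvf{C_i^j}$ written in the $E$-frame and hence equivalent to the statement, the telescoping giving $(\hilbps_0)^\sharp(\dd C_i^j)=-\cvf{E_i^j}$ checks out, and the final deduction of the Darboux property from \autoref{rk:coordinates_come_in_halves} is precisely the paper's argument. But the proposal does not prove the proposition: the entire content lies in the step you yourself flag as ``the real obstacle'' and leave at the level of ``should yield.'' Establishing that the Serre/Ext pairing produces exactly the inverse transpose of the $E\leftrightarrow C$ change of basis is the theorem; asserting the answer (which can be reverse-engineered from the statement) and deferring its verification is a genuine gap, not a proof.

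The missing idea is the device the paper uses to make that computation trivial. One works on the dense locus of reduced subschemes, where $\omega^{[n]}=\sum_\alpha \dd y_\alpha\wedge \dd x_\alpha=\mathrm{Tr}(\dd M_y\wedge \dd M_x)$ with $M_x,M_y$ the multiplication operators on $\coH[0]{\cO{\Z}}$ in the basis of indicator functions of the points; the key observation (\autoref{lm:tr(dAdB)}) is that $\mathrm{Tr}(\dd A\wedge \dd B)$ is invariant under simultaneous conjugation of commuting matrices of functions, so the same formula may be evaluated in the monomial basis $x^iy^j$, $i+j\le k-1$, where Haiman's relations \eqref{eq:Haiman-relation} give the entries of $M_x,M_y$ directly and one reads off $\omega^{[n]}=\sum \dd C_a^i\wedge \dd C_i^{a+1}$, hence $\iota_{\cvf{C_i^j}}\omega^{[n]}=\dd E_i^j$. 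Your ``alternative route'' on the reduced locus is essentially this argument, but without the conjugation-invariance lemma the basis change from indicator functions to monomials is a genuine Vandermonde computation with all the sign-tracking you were hoping to avoid; with the lemma it is a one-line substitution. Either supply that lemma (or an equivalent) and complete the reduced-locus computation, or actually carry out the Ext-pairing calculation at $p(k)$ — as written, neither has been done.
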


\begin{proof}
$\Utri$ has an open dense set consisting of reduced schemes $\Z$ lying in $\U$.  Near such an element we have Darboux coordinates $x_1,y_1,\ldots,x_n,y_n$ indicating the $x$ and $y$ coordinates of the support the subscheme $\Z'$ near $\Z$.  Thus
$$
\omega^{[n]} = \sum_{i=1}^n dy_i \wedge dx_i = {\rm Tr} ( dM_y \wedge dM_x),
$$
where $M_x$ and $M_y$ are the matrices of functions representing the operators of multiplication by $x$ and $y$ on $\coH[0]{\cO{\Z}}$ in the basis of indicator functions of the points of $\Z$.  By \autoref{lm:tr(dAdB)} below, the right-hand side may in fact by computed using the same formula with respect to \emph{any} basis for $\coH[0]{\cO{\Z}}$.  In particular, in $\Utri$ we may choose the basis of monomials $e_{ij} := x^i y^j$, $i+j\le k-1$, and compute the actions $x$ and $y$ using Haiman's defining relations \eqref{eq:Haiman-relation} for $\cO{\Z}$:
\begin{align*}
x \cdot e_{ij} &= \begin{cases}
e_{i+1,j} & i+j\le k-2,  \\
 \sum_{a+b\le k-1} c_{ab}^k e_{ab} & i+j=k-1
 \end{cases}\\
y \cdot  e_{ij}&= \begin{cases} e_{i,j+1} &\qquad i+j\le k-2 \\
 \sum_{a+b\le k-1} c_{ab}^k e_{ab} &\qquad i+j=k-1,
\end{cases}
\end{align*}
from which we deduce that
$$
\omega^{[n]} = {\rm Tr} ( dM_y \wedge dM_x) = \sum_{i+j,a+b\le k-1}  d\langle y \cdot e_{ij}, e_{ab}^* \rangle \wedge d \langle  x\cdot e_{ab}, e_{ij}^*\rangle = \sum_{i,a=0}^{k-1} dC_a^{i} \wedge d C_i^{a+1}.
$$
Therefore
\begin{equation}\label{eq:darboux_Hilb_1}
 \Hilb{\omega}(\cvf{C_i^j},-) =  dC_{j}^{i+1} - dC_{j-1}^{i} = dE_i^j, 
\end{equation}
so that the Hamiltonian vector field of $E_i^j$ is $C_i^j$, as claimed.

To prove the claim about the Darboux coordinates, note that due to \autoref{rk:coordinates_come_in_halves} we have $\{E_i^j, E_{i'}^{j'}\} = \{C_i^j, C_{i'}^{j'}\} = 0$ for all $j\le i$, $j'\le i'$.
\end{proof}

\begin{lemma}\label{lm:tr(dAdB)}
Let $A$, $B$ be a pair of commuting square matrices with functional entries. Then the two form ${\rm Tr}(dA \wedge dB)$ is invariant under simultaneous conjugation of $A$ and $B$ via an invertible matrix with functional entries.
\end{lemma}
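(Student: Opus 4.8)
The plan is to reduce the whole statement to a single naturality observation: that the scalar-valued $1$-form ${\rm Tr}(A\,dB)$ is \emph{itself} invariant under simultaneous conjugation when $A$ and $B$ commute, and then to differentiate. The commutativity hypothesis will enter exactly once, and the rest is formal manipulation of matrix-valued differential forms.

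First I would set $A' := GAG^{-1}$ and $B' := GBG^{-1}$ for an invertible matrix $G$ with functional entries, and introduce the matrix-valued $1$-form $\theta := G^{-1}\,dG$. Differentiating $B'$ by the Leibniz rule, together with $d(G^{-1}) = -G^{-1}\,dG\,G^{-1}$, gives
\[
A'\,dB' = GAG^{-1}\,dG\cdot BG^{-1} + GA\,dB\,G^{-1} - GAB\,G^{-1}\,dG\cdot G^{-1}.
\]
Now take the trace and use its cyclicity for products of matrix-valued forms; no signs arise, because the only factors being moved past the $1$-forms $dG$, $dB$ are the $0$-forms $G^{\pm 1}$, $A$, $B$. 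The three terms become ${\rm Tr}(A\theta B) = {\rm Tr}(BA\,\theta)$, ${\rm Tr}(A\,dB)$, and ${\rm Tr}(AB\,\theta)$, so that
\[
{\rm Tr}(A'\,dB') = {\rm Tr}(A\,dB) + {\rm Tr}\big((BA-AB)\,\theta\big).
\]
Since $A$ and $B$ commute, the correction term vanishes and ${\rm Tr}(A'\,dB') = {\rm Tr}(A\,dB)$.

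To conclude, I would use the elementary identity ${\rm Tr}(dA\wedge dB) = d\big({\rm Tr}(A\,dB)\big)$, valid for any square matrices with functional entries: indeed $d(A\,dB) = dA\wedge dB + A\wedge d(dB) = dA\wedge dB$ since $d^2 = 0$, and $d$ commutes with ${\rm Tr}$. Applying this identity to both $(A,B)$ and $(A',B')$ and invoking the previous paragraph yields
\[
{\rm Tr}(dA'\wedge dB') = d\,{\rm Tr}(A'\,dB') = d\,{\rm Tr}(A\,dB) = {\rm Tr}(dA\wedge dB),
\]
which is exactly the asserted invariance. There is essentially no obstacle here beyond bookkeeping: the only points requiring a moment's care are the (sign-free, in this situation) graded cyclicity of the trace of a product of matrix-valued forms, and the identity ${\rm Tr}(dA\wedge dB) = d\,{\rm Tr}(A\,dB)$. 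It is worth noting that commutativity of $A$ and $B$ is used precisely to kill the term ${\rm Tr}\big((BA-AB)\,\theta\big)$, and is genuinely necessary, since ${\rm Tr}(A\,dB)$ fails to be conjugation-invariant for non-commuting $A,B$.
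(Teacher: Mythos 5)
Your proof is correct and is essentially the computation the paper gives: the paper's one-line identity ${\rm Tr}(d\widetilde A\wedge d\widetilde B)={\rm Tr}(dA\wedge dB)-{\rm Tr}\,d\bigl([A,B]\,g^{-1}dg\bigr)$ is precisely the exterior derivative of your $1$-form identity ${\rm Tr}(A'\,dB')={\rm Tr}(A\,dB)+{\rm Tr}\bigl((BA-AB)\,\theta\bigr)$. Working at the level of the primitive ${\rm Tr}(A\,dB)$ is a mild (and slightly stronger) repackaging, but the mechanism — Leibniz rule, cyclicity of the trace, and commutativity killing the commutator term — is identical.
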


\begin{proof} 
We learned this from the preprint version of \cite[proof of Proposition C.1]{Thom}.  Since the published version does not contain the proof, we include it here for completeness.  Let $\widetilde{A}=gAg^{-1}$, $\widetilde{B}=gBg^{-1}$ for some invertible matrix of functions $g$. Then we have
$$
{\rm Tr} ( d\widetilde{A} \wedge d\widetilde{B}) = {\rm Tr} (dA \wedge dB)  - {\rm Tr}~ d \Big([A,B] ~g^{-1}  dg\Big) . 
$$
which gives the result.
\end{proof}

\subsection{Recursion operators}\label{subsec:recursion}

In this subsection, we derive the formulae for the recursion operators~\eqref{eq:recusions-ops} in the triangular chart of the Hilbert scheme of $\X=\mathbb{C}^2$.  Note that since $x$ and $y$ have weight one with respect to the natural $\CCx$-action, the operators $J_x$ and $J_y$ also have weight one, and in particular they send the basis vector fields $\cvf{C_i^j}$ in Haiman coordinates to linear vector fields.  
By applying the automorphism that interchanges $x$ and $y$ (which has the form $E^j_i \mapsto -E^{k-j}_{k-i-1}$ in \ES coordinates{}) the calculation of $J_x$ reduces to that of $J_y$, so we will just explain how to do the latter.

The tangent vector $\cvf{C_i^j}$, $0\le i \le k-1$, $0\le j \le k$, corresponds to an element $\varphi_i^j \in \Hom{\cI,\cO{\Z}}$ of the form
$$
\varphi_i^j(f_r) = \sum_{a+b\le k-1} \gamma_{abi}^{rj} ~x^a y^b ~~~{\rm mod~} \mathcal{I},
$$
where $f_r=f_{\Z,r}$, $r=0,1,...,k$, are the generators \eqref{eq:Haiman-relation} of $\mathcal{I}$. Note that each $f_r$ is a homogeneous polynomial of degree $k$ in variables $x$, $y$ and the Haiman coordinates $C_i^j$. Therefore, each $\gamma_{abi}^{rj}$ is a homogeneous polynomial of degree $k-1-a-b$ in Haiman coordinates. By definition of the Haiman coordinates,
we have

\begin{equation}\label{eq:tangent_fiber_via_Hom(I,O_Z)}
\varphi_i^j(f_r) = \delta_{jr} x^i y^{k-1-i} + \sum_{a=0}^{k-2} \theta_{ai}^{rj} ~x^a y^{k-2-a} \mod V_3
\end{equation}
where $\delta_{jr}$ is the Kronecker delta symbol, $\theta_{ai}^{rj}$ are linear functions in Haiman coordinates, and for $l \ge 0$ we denote by
\[
V_l := \cI \oplus \bigoplus\limits_{c+d\le k-l}\mathbb{C}\cdot x^cy^d \subset \CC[x,y]
\]
the linear subspace generated by $\cI$ and the monomials of degree at most $k-l$.

By using the equalities $\varphi_i^j\big(x f_r - y f_{r+1}\big)  = x\varphi_i^j(f_r) - y \varphi_i^j(f_{r+1}) $, $r=0,1,...,k-1$, and looking the terms modulo $V_2$, we obtain the following linear equations on $\gamma$: 
\begin{align*}
\theta_{ai}^{r+1,j} - \theta_{a-1,i}^{rj} &= \delta_{rj} C_{a}^{i+1}- \delta_{r+1,j}  C_{a}^{i} - \delta_{ai}\big(C_{j}^{r+1}-C_{j-1}^{r}\big), &  0\le i,r \le k-1, 0 \le j,a \le k.
\end{align*}
Here and below we adopt the convention that if an index of $C$ or $\theta$ falls outside the allowed range, then we declare the value of such $C$ or $\theta$ to be zero.

Finally, to compute  $J_y \cdot \cvf{C_i^j} $, we need to calculate $y\varphi_i^j$. By \eqref{eq:tangent_fiber_via_Hom(I,O_Z)}, one has
\begin{align*}
y\varphi_i^j(f_b) &= \delta_{jb} x^i y^{k-i} + \sum_{a=0}^{k-2} \theta_{ai}^{bj} ~x^a y^{k-1-a} \\
&= \delta_{bj} \sum_{a=0}^{k-1} C_{a}^{i} x^a y^{k-1-a}  + \sum_{a=0}^{k-2} \theta_{ai}^{bj} x^a y^{k-1-a} \\
&=\sum_{a=0}^{k-1} \Big(\delta_{bj}C_{a}^{i}  + \theta_{ai}^{bj}\Big) x^a y^{k-1-a} \mod V_2
\end{align*}
This implies that
$$
J_y \cdot \cvf{C_i^j}= \sum_{b=0}^k \sum_{a=0}^{k-1} \Big(\delta_{bj}C_{a}^{i}  + \theta_{ai}^{bj}\Big) \dfrac{\partial}{\partial C_a^b},
$$
and therefore
\begin{align*}
\abrac{ J_y \cdot \cvf{C_i^j}, dE_a^b } &= \abrac{\sum_{\beta=0}^k \sum_{\alpha=0}^{k-1} \Big(\delta_{\beta j}C_{\alpha}^{i}  + \theta_{\alpha i}^{\beta j}\Big) \dfrac{\partial}{\partial C_\alpha^\beta} , dC_{b}^{a+1} - dC_{b-1}^a }
  \\
&=\delta_{a+1,j} C_b^i - \delta_{aj} C_{b-1}^i + \theta_{b,i}^{a+1,j} - \theta_{b-1,i}^{a,j}\\
& = \delta_{a+1,j} C_b^i - \delta_{aj} C_{b-1}^i +
\delta_{aj} C_{b}^{i+1}- \delta_{a+1,j}  C_{b}^{i} - \delta_{bi}\big(C_{j}^{a+1}-C_{j-1}^{a}\big) \\
&= \delta_{aj} (C_b^{i+1} - C_{b-1}^i) - \delta_{bi}\big(C_{j}^{a+1}-C_{j-1}^{a}\big) \\
&= \delta_{aj} E_i^b - \delta_{bi}E_a^j,
\end{align*}
as desired.

\subsection{Smooth curves and the Lie algebra $\aff{k}$}

Now suppose that $p \in \D$ is a smooth point.  Then by the log Darboux theorem there exist coordinates $(x,y)$ centred at $p$ such that $\D$ is given by $y=0$ and
\[
\ps = y \cvf{x}\wedge \cvf{y}
\]
In particular $\ps$ is linear in these coordinates, and hence $\hilbps$ is linear in the corresponding Haiman/\ES coordinates, so that it corresponds dually to a Lie algebra structure on the space of $(k+1)\times k$ matrices.

We claim that this is precisely the Lie algebra $\aff{k} \cong \gln[k]{\CC}\ltimes \CC^k$ of the group  $\Aff{k} \cong \GL[k]{\CC} \ltimes \CC^k$ of affine transformations of $\mathbb{C}^k$.  To see this, we use the natural embedding
\[
\psi : \Aff{k} \hookrightarrow \GL[k+1]{\CC}
\]
given by
\[
\psi(g,v) = \begin{pmatrix}
g & v \\
0 & 1
\end{pmatrix}
\]
where $g \in \GL[k]{\CC}$ and $v \in \CC^k$, which identifies $\aff{k}$ with $\CC^{(k+1)\times k}$.  Identifying the duals via the trace pairing, the coadjoint action of $\Aff{k}$ on the vector space $\aff{k}^\vee \cong \CC^{(k+1)\times k}$ is given by the formula
\[
(g,v) \cdot E := \psi(g,v) E g^{-1}
\]
Now if $E \in \CC^{(k+1)\times k}$, the syzygy matrix of the scheme $\Z(E)$ is the matrix $S_E(x,y)$ from \eqref{eq:ES-syzygy}, and hence $i^*\cI_{\Z(E)}$ is presented as the cokernel of the matrix
\[
S_E(x,y)|_\D = S_E(x,0) = E - x\begin{pmatrix}
I_k \\ 0
\end{pmatrix}
\]
and direct computation shows that
\[
\psi(g,v) S_E(x,0)  g^{-1} = S_{(g,v)\cdot E}(x,0),
\]
so that $(g,v)$ induces a $\D$-equivalence from $\Z(E)$ to $\Z((g,v)\cdot E)$.  This defines a map of groupoids between $\Aff{k}\ltimes \aff{k}$ and $\GHilb{\X}{\D}$ over the open sets corresponding to $\Utri$, giving a local model for the symplectic groupoid $\GHilb{\X}{\D}$:
\begin{theorem}\label{thm:triang_chart_Hilb_y_affk}
The map $\E \to \Z(E)$ gives a Poisson isomorphism from an open set in $\aff{k}^\vee$ to an open set in $\Utri$, and the corresponding map of groupoids is a symplectomorphism onto its image.
\end{theorem}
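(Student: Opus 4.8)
By the construction of the \ES coordinates, the map $\alpha\colon E\mapsto\Z(E)$ restricts to an isomorphism of complex manifolds from a neighbourhood of the origin in $\CC^{(k+1)\times k}$ onto an open neighbourhood of $p(k)$ in $\Utri$, so what has to be shown is that $\alpha$ intertwines the two Poisson structures, and that the refining morphism of groupoids is a symplectomorphism. Since $\ps = y\,\cvf{x}\wedge\cvf{y}$ is linear in $(x,y)$, the recursion-operator description of $\hilbps$ from \autoref{sec:triangular-chart-Poisson} shows that $\hilbps$ is linear in the \ES coordinates, hence is the Lie--Poisson bivector of a unique Lie bracket on $\CC^{(k+1)\times k}$; under the identification $\CC^{(k+1)\times k}=\aff{k}^\vee$ coming from $\psi$, we must identify this bracket with that of $\aff{k}$.

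The plan is to compare the Hamiltonian vector fields of the linear coordinate functions. On one hand, writing $\ps = y\ps_0$ with $\ps_0=\cvf{x}\wedge\cvf{y}$, Bottacin's formula gives $(\hilbps)^\sharp = J_y\circ(\hilbps_0)^\sharp$; combining \eqref{eq:darboux_Hilb_1}, which identifies $(\hilbps_0)^\sharp(dE_i^j)$ with $\pm\,\cvf{C_i^j}$, with the explicit expression \eqref{eq:recusions-ops} for $J_y\cdot\cvf{C_i^j}$, we get
\[
(\hilbps)^\sharp(dE_i^j)=\pm\Bigl(\sum_{b} E_i^b\,\cvf{E_j^b}-\sum_{a} E_a^j\,\cvf{E_a^i}\Bigr),
\]
a linear vector field on $\CC^{(k+1)\times k}$. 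On the other hand, differentiating the coadjoint action $(g,v)\cdot E=\psi(g,v)\,E\,g^{-1}$ at the identity shows that the fundamental vector field on $\aff{k}^\vee$ of an element $(A,w)\in\aff{k}$ is $E\mapsto \psi_\ast(A,w)\,E-E\,A$. Evaluating this on the basis of $\aff{k}$ that is trace-dual to the coordinate functions $E_i^j$ — the matrix units $(E_{ji},0)$ for $j<k$ and the vectors $(0,e_i)$ for $j=k$ — one recovers exactly the same vector fields, up to one global sign. Since the differentials $dE_i^j$ span the cotangent space at every point, a Poisson bivector is determined by the Hamiltonian vector fields of the $E_i^j$, so $\hilbps$ agrees with $\pm$ the Lie--Poisson structure of $\aff{k}$; and since the Lie--Poisson structure of any Lie algebra is anti-isomorphic to itself via $-\mathrm{id}$, the sign is immaterial and $\alpha$ is a Poisson isomorphism onto its image.

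For the groupoid statement, the identity $\psi(g,v)\,S_E(x,0)\,g^{-1}=S_{(g,v)\cdot E}(x,0)$ exhibits $\psi(g,v)$ as a $\D$-equivalence from $\Z(E)$ to $\Z((g,v)\cdot E)$, compatibly with composition, so it defines a morphism of Lie groupoids $\Psi\colon \aff{k}^\vee\rtimes\Aff{k}\to\GHilb[n]{\X}{\D}$ over $\alpha$ (on the relevant open sets). Both source and target are symplectic groupoids — the canonical one of the Lie--Poisson structure on $\aff{k}^\vee$, respectively the one of $(\Hilb[n]{\X},\hilbps)$ from \autoref{thm:symplectic-groupoid} — and they have the same dimension $2\dim\aff{k}=2k(k+1)=4n$ by \autoref{prop:representable}. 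Since $\Psi$ covers the isomorphism $\alpha$ on objects and induces an isomorphism on isotropy Lie algebras (these being the conormal Lie algebras of the Poisson structures, identified via $\alpha$), it is étale along the identity section, hence, by homogeneity of the source fibres, a local isomorphism of groupoids. Finally, over the dense open set $\{E:\Z(E)\cap\D=\varnothing\}$ — which lies inside the open coadjoint orbit — both groupoids restrict to pair groupoids carrying the difference of the pullbacks of the base symplectic forms along source and target (see \autoref{rem:classical-construction}), and $\alpha$ matches these forms; so $\Psi^*$ of the symplectic form on $\GHilb[n]{\X}{\D}$ equals the canonical one on a dense open subset, hence everywhere by continuity. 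Thus $\Psi$ is a symplectomorphism onto an open subgroupoid.

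The main obstacle is the computation in the second paragraph: matching the formula \eqref{eq:recusions-ops} with the coadjoint formula $(g,v)\cdot E=\psi(g,v)Eg^{-1}$ requires setting up carefully the dictionary between the row/column indexing of the \ES coordinates, the block decomposition $\aff{k}=\gln[k]{\CC}\oplus\CC^k$, and the trace pairing; once this is fixed, the coincidence of the two families of linear vector fields is a direct check. The remaining ingredients — that $\alpha$ is already a biholomorphism, that a linear Poisson structure on a vector space is exactly the Lie--Poisson structure of the dual Lie algebra, and that a morphism of symplectic groupoids over a Poisson isomorphism of bases is automatically a symplectomorphism on a dense open set — are formal.
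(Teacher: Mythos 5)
Your treatment of the Poisson isomorphism is essentially the paper's argument: both compute $(\hilbps)^\sharp(dE_i^j)=J_y\cdot\cvf{C_i^j}$ from \autoref{prop:Darboux} and \eqref{eq:recusions-ops} and read off the linear bracket $\{E_i^j,E_a^b\}=\delta_{aj}E_i^b-\delta_{bi}E_a^j$, which is the $\aff{k}$-bracket in the trace-dual basis. One caveat: you leave a global sign undetermined and dismiss it on the grounds that $-\id$ is an automorphism of any Lie--Poisson structure. That argument shows the two Poisson structures are \emph{isomorphic}, but the theorem asserts that the \emph{specific} map $E\mapsto\Z(E)$ is Poisson, so the sign does need to be pinned down (it comes out correctly once the conventions in \eqref{eq:darboux_Hilb_1} are fixed); as written this is a small unresolved loose end rather than a fatal error.

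The genuine gap is in the groupoid statement. You establish that $\Psi$ is a local isomorphism of symplectic groupoids (étale, form-preserving, equal dimensions), but then conclude it is a ``symplectomorphism onto an open subgroupoid,'' which requires \emph{injectivity} of $\Psi$ — and an étale form-preserving map covering an injective map on objects can still fail to be injective on arrows, e.g.\ via a discrete kernel in the induced homomorphism of isotropy groups. Nothing in your argument rules this out. The paper addresses exactly this point with a separate step: an element $(g,v)\in\Aff{k}$ can be recovered from the $\D$-equivalence it induces by evaluating the corresponding endomorphism of $\cO{}^{\oplus(k+1)}$ at the point $p$ (this evaluation depends only on the homotopy class of the chain map), which shows the map on arrows is injective. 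You would need to supply this, or some substitute, to get the stated conclusion.
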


\begin{proof}
We first prove that the map $E \mapsto \Z(E)$ is Poisson.  For this we compute the Poisson bivector on $\Utri$ following the algorithm of \autoref{sec:triangular-chart-Poisson}.  From the formulae in \autoref{prop:Darboux} and \eqref{eq:recusions-ops} we have  $\{E_i^j,E_a^b\} = \abrac{ J_y \cdot \cvf{C_i^j}, dE_a^b } = \delta_{aj} E_i^b - \delta_{bi}E_a^j$, which is exactly the formula for the bracket on $\aff{k}^\vee$ in the given basis, as desired.  Hence the map $E \mapsto \Z(E)$ is Poisson.

This implies immediately that the map of groupoids is compatible with symplectic structures, and hence it is \'etale onto its image.  It remains to check that it is injective.  For this, we simply note that any element $(g,v)\in \Aff{k}$ can be recovered from the corresponding $\D$-equivalence by evaluating the corresponding bundle endomorphism of $\cO{}^{k+1}$ at $p$ (which is well-defined modulo $\D$-equivalence, i.e., depends only on the homotopy class of the map of cochain complexes).
\end{proof}

\begin{corollary}\label{cor:nilcone}
For $\X = \U = \CC^2$ and $\ps = y\cvf{x}\wedge\cvf{y}$, we have $\Utri \cong\aff{k}^\vee$ as Poisson manifolds. Under this identification:
\begin{enumerate}
\item The symplectic leaves in $\Utri$ are identified with the coadjoint orbits of $\aff{k}$.
\item\label{cor:nilcone:Ak} The linear subspace $\gln[k]{\CC}^\vee \subset \aff{k}^\vee$ corresponds to the set $\mathcal{A}_k$ of elements $\Z \in \Utri$ such that $\#(\Z\cap \D) = k$.  
\item The nilpotent cone in $\gln[k]{\CC}^\vee$ is identified with the set of elements $\Z \in \Utri$ such that $\cI_{\Z\cap \D} = (x^k,y)$. 
\end{enumerate}
\end{corollary}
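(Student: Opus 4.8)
The plan is to deduce all three statements from \autoref{thm:triang_chart_Hilb_y_affk} together with an elementary computation with Hilbert--Burch syzygy matrices. I would begin by recording the theorem in the global form needed here: for $\X=\U=\CC^2$ with its standard coordinates the triangular charts are defined on the whole matrix space, so $E\mapsto\Z(E)$ identifies $\CC^{(k+1)\times k}$ isomorphically with $\Utri$, and the computation carried out in the proof of \autoref{thm:triang_chart_Hilb_y_affk} (applying the algorithm of \autoref{sec:triangular-chart-Poisson}) identifies the induced bracket with the Lie--Poisson bracket on $\aff{k}^\vee=\CC^{(k+1)\times k}$. This is the unnumbered assertion $\Utri\cong\aff{k}^\vee$, and statement~(1) is then immediate: a Poisson isomorphism carries symplectic leaves to symplectic leaves, and by the Kirillov--Kostant--Souriau description the leaves of the Lie--Poisson structure on $\aff{k}^\vee$ are the coadjoint orbits of $\Aff{k}$, which are connected because $\Aff{k}$ is. (Alternatively one can invoke the isomorphism of symplectic groupoids in \autoref{thm:triang_chart_Hilb_y_affk} together with \autoref{cor:sympl_leaves_general}.)

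For~(2) and~(3) I would first make the subspace $\gln[k]{\CC}^\vee\subset\aff{k}^\vee$ explicit in terms of matrices. Write $E'\in\CC^{k\times k}$ for the top $k\times k$ block of a matrix $E\in\CC^{(k+1)\times k}$ and $r\in\CC^{1\times k}$ for its last row. The coadjoint action $(g,v)\cdot E=\psi(g,v)Eg^{-1}$ visibly preserves the linear subspace $\{r=0\}$ and restricts there to $E'\mapsto gE'g^{-1}$; hence $\{r=0\}$ is the annihilator of the translation ideal $\CC^k\subset\aff{k}$, that is $\gln[k]{\CC}^\vee$, with coadjoint action identified via the trace form ($E\mapsto E'$) with the adjoint action of $\GL[k]{\CC}$ on $\gln[k]{\CC}$. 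Next, by \eqref{eq:ES-syzygy} and \eqref{eq:min-res-D} the restricted matrix
\[
S_E(x,0)=\begin{pmatrix}E'-xI_k\\ r\end{pmatrix}
\]
is a Hilbert--Burch syzygy matrix for $i^*\cI_{\Z(E)}$ over $\cO{\D}=\CC[x]$, so that $\cI_{\Z(E)\cap\D}=i^*\cI_{\Z(E)}^{\tf}$ is the ideal of $\CC[x]$ generated by the maximal minors of $S_E(x,0)$ and $\#(\Z(E)\cap\D)=\dim_\CC\CC[x]/(\text{these minors})$.

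The computation is then short. The maximal minor obtained by deleting the last row of $S_E(x,0)$ is $\det(E'-xI_k)=(-1)^k\chi_{E'}(x)$, of degree exactly $k$ (here $\chi_{E'}$ is the characteristic polynomial); every other maximal minor is the determinant of a $k\times k$ matrix in which at most $k-1$ rows involve $x$, so has degree $\le k-1$, and extracting the coefficient of $x^{k-1}$ shows that the minor obtained by deleting row $j$ ($0\le j\le k-1$) has degree exactly $k-1$ whenever the $j$-th entry of $r$ is nonzero. Therefore: if $r=0$ the minor ideal is $(\chi_{E'}(x))$ and $\#(\Z(E)\cap\D)=\deg\chi_{E'}=k$, while if $r\neq0$ some maximal minor has degree exactly $k-1$, so the greatest common divisor of the minors has degree $<k$ and $\#(\Z(E)\cap\D)<k$. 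This shows $\mathcal{A}_k=\{r=0\}=\gln[k]{\CC}^\vee$, which is~(2). For~(3), on $\gln[k]{\CC}^\vee$ (that is, $r=0$) we have $\cO{\Z(E)\cap\D}\cong\CC[x]/(\chi_{E'}(x))$, and this equals $\CC[x]/(x^k)$ --- equivalently $\cI_{\Z(E)\cap\D}=(x^k,y)$ --- exactly when $\chi_{E'}(x)=x^k$, i.e.\ exactly when $E'$ is nilpotent; under the identification $E\mapsto E'$ this is precisely the nilpotent cone of $\gln[k]{\CC}\cong\gln[k]{\CC}^\vee$.

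The one step that is not purely formal is the degree bookkeeping for the maximal minors of $S_E(x,0)$ --- in particular verifying, via the leading coefficient, that $\#(\Z(E)\cap\D)$ drops strictly below $k$ as soon as the last row of $E$ is nonzero; everything else follows formally from \autoref{thm:triang_chart_Hilb_y_affk} and standard Lie theory.
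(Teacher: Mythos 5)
Your proposal is correct and follows essentially the same route as the paper: identify $\gln[k]{\CC}^\vee$ with the vanishing of the last row of $E$, note that the last maximal minor of $S_E(x,0)$ is the characteristic polynomial of the top $k\times k$ block (degree exactly $k$), and check via the coefficient of $x^{k-1}$ that a nonzero entry in the last row forces some other minor to have degree $k-1$, so that $\#(\Z\cap\D)$ drops below $k$. The paper's proof is the same degree bookkeeping, stated slightly more tersely.
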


\begin{proof}
It remains to establish points 2 and 3 above.  For this, note that the subspace $\gln[k]{\CC}^\vee$ is cut out by the $k$ equations $E_i^k=0$, $i=0,1,...,k-1$.  If $E_i^k\not=0$ for some $0\le i \le k-1$, then the $(k-i)$-th minor of $S_E(x,0)$ is a polynomial whose highest-degree term is $\pm E_i^k x^{k-1}$.  Since this function vanishes on $\Z \cap \D$, we deduce that $\# \Z \cap \D < k$.  Conversely, if $E_i^k=0$ for all $i$, then all maximal minors of $S_E(x,0)$ are zero, except the last one, which is equal to the characteristic polynomial of the square matrix $\tilde E := (E_i^j)_{i,j=0}^{k-1}$, which has degree exactly $k$.  Hence $\#(\Z\cap \D) = k$, which establishes statement 2.  Statement 3 follows immediately since $\tilde E$ lies in the nilpotent cone if and only if its characteristic polynomial is $x^k$.
\end{proof}

\begin{remark}\label{r:groth-spr}
  More generally, if $(\X,\ps)$ is a Poisson surface with smooth zero divisor $\D$, then for any $k\le n$ one can define a locally closed Poisson submanifold $\W \subset \X^{[n]}$ as the locus of elements $\Z \in \Hilb{\X}$ such that $\#(\Z\cap \D) = k$, and one has a natural projection $\W \to \sympow[k]{\D}$ whose fibres are Poisson subvarieties of $\Hilb{\X}$.  Away from the big diagonal in $\sympow[k]{\D}$, the fibres are smooth and symplectic, but in general they can be singular.   \autoref{cor:nilcone} implies that the  singularities 
  are products of slices of nilpotent orbits in $\sln{m}(\CC)$, $m\le k$.  These singularities are all symplectic.  Near a most singular fibre $(\Z \cap \D) = k \cdot p$ with $n \ge \tfrac{1}{2}k(k+1)$, the family restricts to the universal Poisson deformation of the nilpotent cone  (times a parameter giving the centre of mass of $\Z$, which makes sense in local coordinates).  We expect it to have a unique simultaneous resolution $\widetilde{\W} \to \W$, giving a global analogue of the Grothendieck--Springer resolution, which restricted to the fibre over $k \cdot p$ is a global analogue of the Springer resolution. It would be interesting to further study these objects.
  For example, are all Poisson deformations of $\W, \widetilde{\W}$ given by deformations of the Hilbert scheme? And are all Poisson deformations of the fibre over $k \cdot p \in D$ given by deforming the Hilbert scheme and moving the point in $\sympow[k]{\D}$ (at least infinitesimally)?
\end{remark}

Using this result, we obtain a complete characterization of the closure relation between symplectic leaves as a condition on the corresponding Young diagrams.  Note that the necessity of this condition was proven by Rains in \cite[Section 11.2]{Rains2016}, but our proof of sufficiency uses our local normal form in an essential way.

\begin{proposition}\label{cor:Hilb_y_sympl_leaves_order}
Let $(\X,\ps)$ be a Poisson surface with a smooth zero divisor $\D$, and let $\mu,\tmu$ be global orbit data as in \autoref{lem:smooth-orbit-data}.  Then the closure of  the symplectic leaf $\Hilb[n]{\X}_{\mu}$ contains $\Hilb[n]{\X}_{\widetilde{\mu}}$ if and only if for every $p \in \D$, we have the inclusion $\hc{{\mu}(p)}\subseteq \hc{\widetilde{\mu}(p)}$, or equivalently $\mu(p)\le\widetilde{\mu}(p)$ in the dominance order on Young diagrams.
\end{proposition}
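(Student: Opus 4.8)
The necessity of the condition is due to Rains~\cite[Section~11.2]{Rains2016}; the plan is to obtain the full equivalence by reducing it, via the local models of \autoref{thm:triang_chart_Hilb_y_affk}, to a statement about closures of coadjoint orbits of the groups $\Aff{k}$, which gives both directions at once.

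\emph{Step 1: reduce to a single point of $\D$.}  By \autoref{thm:sympl_leaves_Hilb_y}, the leaf $\Hilb[n]{\X}_{\tmu}$ contains a dense open subset of elements $\Z_0 = \Z'_0\sqcup\bigsqcup_p\Z_p^{\tmu(p)}$ with $\Z'_0\subset\X\setminus\D$, so it suffices to decide whether one such $\Z_0$ lies in $\overline{\Hilb[n]{\X}_\mu}$.  First I would split an analytic neighbourhood of $\Z_0$ as a product over its connected components: the factor coming from $\Z'_0$ is symplectic (so it meets $\Hilb[n]{\X}_\mu$ only in the open leaf), while the factor at each $p$ is a neighbourhood of $\Z_p^{\tmu(p)}$ in $\Hilb[|\hc{\tmu(p)}|]{\X}$, inside which $\Hilb[n]{\X}_\mu$ is the leaf through $\Z_p^{\mu(p)}$ together with extra free points, by \autoref{prop:contains-hor-convex}.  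This reduces the claim to the following: for a smooth point $p$ of $\D$ and Young diagrams $\alpha=\mu(p)$, $\beta=\tmu(p)$, the scheme $\Z_p^\beta$ lies in the closure of the symplectic leaf through $\Z_p^\alpha\sqcup\Z'$, with $\Z'\subset\X\setminus\D$ a set of $|\hc\beta|-|\hc\alpha|$ free points, inside $\Hilb[|\hc\beta|]{\X}$, if and only if $\hc\alpha\subseteq\hc\beta$.

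\emph{Step 2: pass to the local model.}  Enlarging by further free points as in the proof of \autoref{lm:Hilb_equiv_to_triang_chart}, I may assume $\X$ is a neighbourhood of $p$ in $\CC^2$ with log Darboux coordinates, so $\ps=y\,\cvf{x}\wedge\cvf{y}$, and that $\Z_p^\beta\sqcup\Z'$ lies in a triangular chart $\Utri\subset\Hilb[k(k+1)/2]{\X}$.  By \autoref{thm:triang_chart_Hilb_y_affk}, $\Utri\cong\aff{k}^\vee$ as Poisson manifolds, with symplectic leaves the coadjoint orbits of $\Aff{k}$.  Using \autoref{cor:nilcone} and the explicit restricted syzygy matrix $S_E(x,0)$, together with the formula $\dim\Hilb[n]{\X}_\nu=k(k+1)-2|\hc\nu|$ (from \autoref{lem:smooth-stablizer}) and the identity $\sum_i(\nu^T_i)^2=2|\hc\nu|-|\nu|$, I would identify the leaf through $\Z_p^\nu\sqcup\Z'$ with the coadjoint orbit $O_\nu$ consisting of those $E$ with $\#(\Z\cap\D)=|\nu|$ and a nilpotent Jordan type equal to $\nu$ on the resulting $|\nu|\times|\nu|$ block.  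The assertion then becomes: $O_\beta\subseteq\overline{O_\alpha}$ if and only if $\hc\alpha\subseteq\hc\beta$.

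\emph{Step 3: compute the orbit closure.}  The group $\Aff{k}$ preserves the closed subspace $\gln[k]{\CC}^\vee\subset\aff{k}^\vee$; normalising the last row of $E$ and using the translations in $\Aff{k}$ to clear the last column of its top $k\times k$ block exhibits the complement as an $\aff{k-1}^\vee$-bundle over $\CC^k\setminus 0$ with the coadjoint $\Aff{k-1}$-action, and iterating shows that a coadjoint orbit of $\Aff{k}$ is classified by an integer $0\le j\le k$ (geometrically $j=\#(\Z\cap\D)$) together with a conjugacy class in $\gln[j]{\CC}$.  The closure order is then controlled by the Gerstenhaber--Hesselink theorem (closure of nilpotent orbits inside a fixed $\gln[m]{\CC}$ is dominance of Jordan types) and by the geometric fact that letting free points off $\D$ collapse onto $p$ realises exactly the degenerations that raise $j$, i.e.\ that cross from the open stratum into $\gln[k]{\CC}^\vee$.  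Feeding in the dictionary between a Young diagram and its horizontal convexification---so that $\hc\alpha\subseteq\hc\beta$ becomes the dominance order of the statement, and every such containment factors into covering relations---reduces the problem to exhibiting each covering relation by a degeneration.  I would do the latter in \ES coordinates: write a one-parameter family $\Z_p^\alpha\sqcup\Z'_t$ with $\Z'_t\subset\X\setminus\D$ a set of free points collapsing onto $p$ along prescribed (tangential, or higher-order tangential) directions, and compute its flat limit by expanding the maximal minors of $S_{\Z'_t}(x,y)$, obtaining $\Z_p^{\alpha'}$ with $\hc{\alpha'}$ the prescribed enlargement of $\hc\alpha$.

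The hardest part will be the combinatorial bookkeeping in Step 3: showing that the closure order on these ``nilpotent-type'' coadjoint orbits of $\Aff{k}$---which a priori interleaves dominance of Jordan types with the jump in $j$---collapses to the single condition $\hc\alpha\subseteq\hc\beta$, pinning down the covering relations of that order, and checking that the required degenerations can all be performed inside a single triangular chart, so that the computation of the flat limit in \ES coordinates is legitimate.  The remaining steps are a routine, if lengthy, unwinding of the identifications set up above.
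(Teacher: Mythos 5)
Your proposal is correct and follows essentially the same route as the paper: reduce to a single point of $\D$ and the triangular chart, use the identification of $\mathcal{A}_k$ with $\gln[k]{\CC}^\vee$ together with the Gerstenhaber--Hesselink theorem for the degenerations that preserve $\#(\Z\cap\D)$, and realize the degenerations that increase $\#(\Z\cap\D)$ by colliding free points onto $p$ along curves tangent to $\D$ to high order. The paper resolves your self-identified ``hardest part'' by factoring any containment $\hc{\mu}\subseteq\hc{\tmu}$ through an intermediate horizontally convex diagram differing from $\hc{\mu}$ only in the first row (handled one box at a time by an explicit flat-limit computation with an auxiliary point at $(\varepsilon,\varepsilon^N)$) and from $\hc{\tmu}$ only with the first row fixed (handled all at once by Gerstenhaber--Hesselink), which sidesteps any analysis of covering relations or of the full closure order on coadjoint orbits of $\Aff{k}$.
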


\begin{proof}
  The second equivalence is a simple consequence of the definitions. Let us prove the first equivalence.

Let us start by remarking for any ideal $\Z \in \X^{[m]}$, $m < n$,  the set 
$$\big\{
\Z' \in \Hilb[n]{\X} : \Z'\supseteq \Z
\big\}
$$
is a closed subvariety. By \autoref{thm:sympl_leaves_Hilb_y}, every element $\Z\in \Hilb[n]{\X}_{\mu}$ satisfies 
$\Z \supset \bigsqcup_{p\in \D} \Z^{\mu(p)}_p$. Therefore, if $\Hilb[n]{\X}_{\widetilde{\mu}}$ lies in the closure of $\Hilb[n]{\X}_\mu$, then we have 
$\Z^{\widetilde{\mu}(p)}_p\supseteq \Z^{\mu(p)}_p$ for each $p\in \D$.

For the converse, suppose that that $\mu\subseteq \widetilde{\mu}$.  We wish to show $\Hilb[n]{\X}_{\tmu} \subset \overline{\Hilb[n]{\X}_\mu}$. It suffices to prove this result in the complex topology, since it is finer.

It is enough to consider the case when both $\mu$ and $\widetilde{\mu}$ consist of one non-trivial Young diagram each, concentrated at the same point $p \in \D$.  By abuse of notation, we denote these diagrams simply by  simply by $\mu$ and $\widetilde{\mu}$.  We can also assume without loss of generality that $\X=\mathbb{C}^2$ equipped with the Poisson bivector, $\ps = y~\partial_x \wedge \partial_y$, and $p=(0,0)$.  Furthermore, note that by \autoref{thm:sympl_leaves_Hilb_y} it suffices to prove the closure relation for the open dense sets consisting of subschemes of the form $\Z^{\mu(p)}_p \sqcup \Z'$ where $\Z' \subset \X \setminus \D$ is reduced.  Such an element has an analytic neighbourhood of the form $\Hilb[k]{\U} \times \Hilb[n-k]{\V}$ where $\U$ is an analytic neighbourhood of $p$ and $\V$ is an analytic neighbourhood of $\Z'$ such that $\overline{\V} \subset \X\setminus \D$ and $\overline{\U} \cap \overline{\V} = \varnothing$.  Hence by adding or removing points from $\Z'$, we see that it is enough to prove the statement for any single value of $n \ge |\hc{\tmu}|$.

Let $\lambda=\hc{\mu}$ and $\widetilde{\lambda} = \hc{\widetilde{\mu}}$.  We consider two special cases for the pair $(\lambda,\tlambda)$ below, and then explain how the general case follows from these. 

\emph{Case 1:} $\lambda_1 < \widetilde{\lambda}_1$, but $\lambda_i= \widetilde{\lambda}_i$, for $i> 1$.  In this case, by induction on the difference $\widetilde{\lambda}_1 - \lambda_1$, we may assume without loss of generality that $\lambda_1 + 1 = \widetilde{\lambda}_1$. In other words, the Young diagram $\widetilde{\lambda}$ is obtained from $\lambda$ by adding one box to the first row. For instance:
$$
\lambda ~~=~~~~
\begin{ytableau} \\
 & \\
 &  & &  \\
 &  & & & & & & \\
\end{ytableau}
\hspace{2cm}
\widetilde{\lambda} ~~=~~~~
\begin{ytableau} \\
 & \\
 &  & &  \\
 &  & & & & & & & *(white!50!blue)\\
\end{ytableau}
$$
We may assume without loss of generality that
$n=|\lambda|+1 = |\widetilde{\lambda}|$.
Then $\Hilb[n]{\X}_{\widetilde{\mu}}$ consists of only one point given by $\Z_p^{\widetilde{\mu}}$, whereas $\Hilb[n]{\X}_\mu$ is two-dimensional and its generic point is of the form $\Z_p^\mu \sqcup \{(x_1,y_1)\}$, $y_1\not=0$. The idea is to send the point $(x_1,y_1)$ to the origin along a curve that is tangent to the divisor $\{y=0\}$ to a high enough order, so that the limiting ideal will be $\Z_p^{\widetilde{\mu}}$. Here is the calculation that makes this heuristic precise.

Let us write
$$
\mathcal{I}_{\Z_p^\mu} = \Big(
x^{a_j} y^j, ~~ j = 0, 1, ..., \ell
\Big),
$$
where $a_j = \lambda_{j+1}$ for $j = 0, 1, ..., \ell$ and  $\ell = \lambda_1^T$. Let $x_1=\varepsilon$, and $y_1 = \varepsilon^{N}$, where $\varepsilon\in \mathbb{C}^*$, and $N$ is a large positive integer to be determined in a moment. Then
\begin{align*}
\mathcal{I}_{\Z_p^\mu \sqcup \{(x_1,y_1)\}} &= \Big(
x^{a_j} y^j , ~~ j = 0, 1, ..., \ell
\Big) \cap \Big(x-\varepsilon, y- \varepsilon^{N}\Big) \\
&= \Big(
x^{a_0 +1} - \varepsilon x^{a_0}, x^{a_j} y^j - \varepsilon^{b_j} x^{a_0}, j=1,2,..., \ell 
\Big),
\end{align*}
where $b_j = j N + a_j - a_0$ for $j=1,2,..., \ell$. Now choose $N$ so that $b_j > 0$ for all $j$.  Then $\Z_p^\mu \sqcup \{(x_1,y_1)\} \to \Z_p^{\widetilde{\mu}}$, as $\varepsilon\to 0$.

\emph{Case 2:} $\lambda_1 = \widetilde{\lambda}_1$.  In this case we cannot assume that $|\widetilde{\lambda}|=|\lambda|+1$, because we must maintain the condition that the Young diagrams involved are horizontally convex. For instance, in the following example there are no intermediate horizontally convex diagrams between $\lambda$ and $\widetilde{\lambda}$, even though $|\widetilde{\lambda}|=|\lambda|+3$:
$$
\lambda ~~=~~~~
\begin{ytableau}~ \\
& &\\
 & & & & \\
 &  & &  && & &  \\
\end{ytableau}
\hspace{2cm}
\widetilde{\lambda} ~~=~~~~
\begin{ytableau}~ & *(white!50!blue) \\
& &&*(white!50!blue)\\
 & & & & &*(white!50!blue)\\
 &  & &  && & & \\
\end{ytableau}
$$
We may assume without loss of generality that $n=\frac{1}{2}k(k+1)$, where $k$ is the common value of $\lambda_1$ and $\widetilde{\lambda}_1$. It is then enough to show that 
$\Hilb[n]{\X}_{\widetilde{\mu}} \cap  \mathcal{U}_{\SmallTriangularYoungDiagram}\subset \overline{\Hilb[n]{\X}_\mu} \cap  \mathcal{U}_{\SmallTriangularYoungDiagram}$, where $\mathcal{U}_{\SmallTriangularYoungDiagram}$ is the triangular chart in $(\mathbb{C}^2)^{[n]}$. But under the isomorphism $\mathcal{A}_k\cong \mathfrak{gl}_k(\mathbb{C})^\vee$, described in \autoref{cor:nilcone}, part \ref{cor:nilcone:Ak}, the symplectic leaves 
$\Hilb[n]{\X}_\mu\cap \mathcal{U}_{\SmallTriangularYoungDiagram}$ and $\Hilb[n]{\X}_{\widetilde{\mu}}\cap \mathcal{U}_{\SmallTriangularYoungDiagram}$ correspond to the conjugacy classes of nilpotent Jordan type $\mu$ and $\widetilde{\mu}$, respectively.  Recall that the condition $\lambda\subseteq \widetilde{\lambda}$ implies that $\mu\le\widetilde{\mu}$ in the dominance order.  Hence the result follows from the classical Gerstenhaber--Hesselink theorem (e.g. see \cite{O'Halloran1987}).

\emph{General case:} Note that for any pair $\lambda\subseteq \widetilde{\lambda}$, we can find an intermediate diagram $\widehat{\lambda}$ such that $\lambda\subseteq \widehat{\lambda}$ falls into Case 1 and $\widehat{\lambda}\subseteq \widetilde{\lambda}$ falls into Case 2.  This completes the proof.
\end{proof}

\subsection{Nodal points and toric degenerations}\label{sec:toric-degen}

Now suppose that $p \in \D$ is a nodal singularity.  Then by \cite{Arnold1987}, there exist coordinates $(x,y)$ such that, after rescaling $\ps$ by a nonzero constant, we have
\[
\ps =  xy \cvf{x}\wedge \cvf{y}
\] Applying the algorithm from \autoref{sec:triangular-chart-Poisson}  to compute the bracket $\hilbps = J_xJ_y\hilbps_0$, we find after a straightforward but tedious calculation that
\begin{align}\label{eq:Hilb_xy_triang}
\begin{split}
\Big\{ E_i^j, E_a^b \Big\} 
= \delta_{a\ge j} \sum_{p=0}^a E_i^{p+j-a} E_p^b - \sum_{p=i}^a E_{a+i-p}^j E_p^b -\delta_{a+1\le j} \sum_{p=a+1}^{k-1}  E_i^{p+j-a}  E_p^b + \sum_{p=a+1}^{i-1} E_{a+i-p}^j  E_p^b +
\\
+\sum_{q=0}^{\min(j,b-1 )}  E_i^{b+j-q}   E_a^q - \delta_{b\le i} \sum_{q=0}^{b-1} E_{q+i-b}^j E_a^q - \sum_{q=\max(j+1,b)}^{k}  E_i^{b+j-q}  E_a^q + \delta_{b-1\ge i} \sum_{q=b}^{k}  E_{q+i-b}^j  E_a^q .
\end{split}
\end{align}
A remarkable feature of this quadratic Poisson structure is that it admits a canonical toric degeneration, as we now explain.

\subsubsection{The torus-invariant part}
Consider the action of the torus $(\CCx)^{2n} = (\CCx)^{k(k+1)}$ on the germ of $\Utri$ by independent dilation of the \ES coordinates.    The induced action on the space of germs of bivectors preserves the space of quadratic bivectors, and hence we may project $\hilbps$ onto its torus-invariant part, which is also quadratic.
\begin{definition}
We denote by $\hilbpstor$ the torus-invariant part of $\hilbps$.
\end{definition}
Explicitly, we have 
\[
\hilbpstor = \sum_{i,j,a,b} \Pi_{ia}^{jb} ~E_i^j  E_a^b \cvf{E_i^j}\wedge\cvf{E_a^b}
\]
where the coefficients are given by
\begin{equation}\label{eq:toric_part_Hilb_xy}
  \Pi_{ia}^{jb} = \delta_{a\ge j} - \delta_{a\ge i}  - \delta_{bj}~\delta_{a\ge i+1} 
+ \delta_{b\ge j+1} ~\delta_{ai} - \delta_{b\ge j+1} + \delta_{b\ge i+1}.  
\end{equation}
This bivector is generically nondegenerate, and degenerates along the union of the coordinate hyperplanes.  The inverse log symplectic form is then given by
\[
\Hilb{\omega}_\Delta = \sum_{i,j,a,b}B_{ia}^{jb} \dlog{ E_i^j} \wedge \dlog{E_a^b}
\]
where
\begin{equation}\label{eq:Hilb_xy_bires_matrix}
B_{ia}^{jb} = -\delta_{a+b,i+j}~ \delta_{a\ge i+1} -  \delta_{a+b,i+j+1}~ \delta_{a\le i}  + \delta_{a+b,i+j} ~\delta_{a\le i-1}\ + \delta_{a+b,i+j-1} ~ \delta_{a\ge i}.
\end{equation}
is the ``biresidue'' of $\Hilb{\omega_\Delta}$ along the intersection of hyperplanes $E_{i}^j = E_{a}^b = 0$ in $\CC^{k(k+1)}$.  To understand more clearly the structure of the biresidues, it is helpful to  order the coordinates according to their position in the syzygy matrix as follows: first order by the sum of the indices, and then order by the column index to break ties, i.e. 
\begin{equation}\label{eq:ordering_Hilb_xy}
\begin{split}
    (0,0) \prec (1,0) \prec (0,1) \prec (2,0) \prec (1,1) \prec (0,2) \prec \cdots 
\end{split}
\end{equation}
For instance, the case $k=3$ is illustrated in \autoref{fig:smoothable_cycles_Hilb_xy_triang}.  
With this ordering the biresidues $B_{ia}^{jb}$ form  a skew-symmetric matrix $B$ of size $m:=k(k+1)$ with the following property, which will be useful in our study of holonomicity in \autoref{sec:holonomicity}:
\begin{definition}\label{def:cyclically_monotone}
 A skew-symmetric $m\times m$ matrix $(b_{\alpha,\beta})_{\alpha,\beta=1}^m$ is \defn{cyclically monotone} if, perhaps after multiplying all of its entries by the same nonzero constant, we have
 $$
 b_{\alpha,\alpha+1} \ge b_{\alpha,\alpha+2} \ge ... \ge b_{\alpha,m} \ge b_{\alpha,1} \ge b_{\alpha,2} \ge ... \ge b_{\alpha,\alpha-1}
 $$
 for each $1\leq \alpha\leq n$.
\end{definition}
For instance, for $k=2$, we have
$$
 B = \begin{pmatrix}
0 & 1 & 0 & 0 & 0 & 0 \\
-1 & 0 & 1 & 1 & 0 & 0 \\
0 & -1 & 0 & 1 & 1 & 0  \\
0 & -1 & -1 & 0 & 1 & 0 \\
0 & 0 & -1 & -1 & 0 & 1  \\
 0 & 0 & 0 & 0 & -1 & 0  \\
\end{pmatrix}
$$

\begin{figure}[t]
    \centering
    \includegraphics[scale=1]{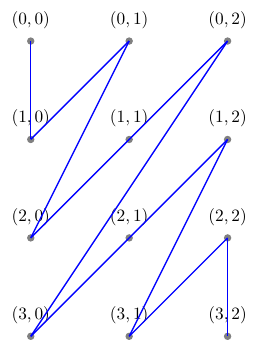}\\

\caption{Ordering the indices of a matrix}
    \label{fig:smoothable_cycles_Hilb_xy_triang}
\end{figure}

\subsubsection{Toric degeneration: a game of dominoes}

We will prove the following:
\begin{theorem}\label{thm:toric-degen}
There exists a rank-one subtorus $\G \cong \CCx \hookrightarrow (\CCx)^{(k+1)\times k}$ such that 
\[
\lim_{g \in \G, g\to 0} g \cdot \hilbps  = \hilbpstor
\]
\end{theorem}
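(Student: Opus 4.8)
The plan is to exhibit $\G$ as the image of a one-parameter subgroup $t\mapsto(t^{w_i^j})_{0\le i\le k-1,\,0\le j\le k}$ attached to an integer weight vector $w=(w_i^j)\in\ZZ^{(k+1)\times k}$, and to reduce the theorem to a purely combinatorial positivity property of $w$. Under this cocharacter the coordinate $E_i^j$ has $t$-weight $w_i^j$ and the dual vector field $\cvf{E_i^j}$ has $t$-weight $-w_i^j$, so a quadratic monomial bivector $E_p^q E_r^s\,\cvf{E_i^j}\wedge\cvf{E_a^b}$ is scaled by $t^{\,w_p^q+w_r^s-w_i^j-w_a^b}$. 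Expanding $\hilbps$ in \ES coordinates by means of \eqref{eq:Hilb_xy_triang}, the torus-invariant summand $\hilbpstor$ is the sum of those monomials whose index multiset $\{(p,q),(r,s)\}$ equals the multiset $\{(i,j),(a,b)\}$ of derivative indices (these, and only these, are invariant under the full torus $(\CCx)^{k(k+1)}$); each of them has $w$-weight exactly $0$ for \emph{every} choice of $w$, while every remaining monomial occurring in $\hilbps$ fails this equality. Consequently the family $g\cdot\hilbps$ extends regularly across $t=0$, with limiting value $\hilbpstor$, \emph{precisely when} every non-invariant monomial on the right-hand side of \eqref{eq:Hilb_xy_triang} has strictly positive $w$-weight. (Any $w\ne 0$ gives a rank-one subtorus, since the kernel of $t\mapsto(t^{w_i^j})$ is the finite group of $d$-th roots of unity, $d=\gcd_{i,j}w_i^j$.)

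It therefore suffices to construct an integer weight $w$ with the following property: whenever a monomial $E_p^q E_r^s$ appears in the bracket $\{E_i^j,E_a^b\}$ of \eqref{eq:Hilb_xy_triang} with $\{(p,q),(r,s)\}\ne\{(i,j),(a,b)\}$, one has $w_p^q+w_r^s>w_i^j+w_a^b$. This is where the ``game of dominoes'' enters: one arranges the entries of the syzygy matrix in the linear order \eqref{eq:ordering_Hilb_xy} (by the anti-diagonal $i+j$, with ties broken by the column index $j$), pictures each $E_i^j$ as a single domino in this configuration, and defines $w_i^j$ from its position in this order together with a strictly convex correction. The eight sums on the right of \eqref{eq:Hilb_xy_triang} correspond to eight elementary ``moves'' on the domino configuration --- for instance the replacement of the index pair $\{(i,j),(a,b)\}$ by $\{(i,p+j-a),(p,b)\}$ in the first sum, together with its seven analogues --- and $w$ is designed so that each such move strictly decreases the total weight of the pair, with equality forced only at the value of the summation index ($p=a$, or $q=b$, etc.) that yields the invariant monomial. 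That a single convex profile can defeat all of these affine relations among sums of indices simultaneously is precisely what the discreteness of the coordinates (one domino per coordinate) buys us.

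The verification that the chosen $w$ does make all eight families strictly contracting is the technical core of the argument, and the main obstacle. It is a finite but delicate case analysis: for each sum in \eqref{eq:Hilb_xy_triang} one substitutes the source indices --- explicit affine functions of the summation variable and of $(i,j),(a,b)$ --- into the formula for $w$, simplifies, and checks that the resulting expression is nonnegative and vanishes only on the invariant locus. The summation ranges and the Kronecker deltas $\delta_{a\ge j}$, $\delta_{a+1\le j}$, $\delta_{b\le i}$, $\delta_{b-1\ge i}$ must be tracked with care, since they are exactly what prevents the appearance of monomials of negative weight (which would make the limit diverge). The domino bookkeeping reduces this $k$-dependent verification to a bounded list of local checks. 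Granting the positivity, the theorem is immediate: as $t\to 0$ the bivector $g\cdot\hilbps$ retains exactly the weight-zero monomials of $\hilbps$, which are exactly those comprising $\hilbpstor$.
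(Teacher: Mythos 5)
Your reduction is correct and coincides with the paper's starting point: under a cocharacter $t\mapsto(t^{w_i^j})$ the monomial $E_p^qE_r^s\,\cvf{E_i^j}\wedge\cvf{E_a^b}$ scales by $t^{w_p^q+w_r^s-w_i^j-w_a^b}$, the torus-invariant monomials have weight zero for every $w$, and the theorem is equivalent to exhibiting an integer weight vector $w$ that is strictly positive on every non-invariant monomial occurring in \eqref{eq:Hilb_xy_triang}. The problem is that your proposal stops exactly there. You never write down $w$ (``position in the ordering \eqref{eq:ordering_Hilb_xy} together with a strictly convex correction'' is not a formula), and you explicitly defer the positivity verification, calling it ``the technical core of the argument, and the main obstacle.'' That verification \emph{is} the theorem; without it nothing has been proved, and it is not evident that any convex profile attached to the anti-diagonal ordering actually dominates all eight families of index substitutions in \eqref{eq:Hilb_xy_triang} simultaneously.

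The paper circumvents the need for an explicit $w$ by a structural argument that your proposal is missing. One first observes that every weight occurring in $\hilbps$ (recorded as a $(k+1)\times k$ integer matrix with $+1$'s at the numerator indices and $-1$'s at the derivative indices) is either a rectangular weight or the sum of an admissible pair of dominoes. One then distinguishes a finite set of ``smoothable weights'' (types I, IIa, IIb), proves they are linearly independent by identifying them with differences of consecutive rows of the invertible biresidue matrix $B$ (\autoref{lem:wts-indep}), and shows via the domino moves that every rectangular weight and every admissible pair is a \emph{non-negative integer} combination of smoothable weights (\autoref{lem:rect-smooth}, \autoref{prop:dom-smooth}). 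Linear independence then guarantees the existence of a coweight pairing positively with each smoothable weight, and positivity propagates through the non-negative combinations to all weights of $\hilbps-\hilbpstor$. To repair your argument you would need either to supply an explicit $w$ together with the completed eight-case check, or to import this decomposition into smoothable weights.
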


\begin{remark}
In light of this result, we can view $\hilbps$ as a one-parameter deformation of the toric log symplectic structure $\hilbpstor$.  As explained in \cite{Matviichuk2020}, such deformations are obtained by smoothing out the nodal singularities along pairwise intersections of hyperplanes.  The combinatorics of this process can be encoded in a ``smoothing diagram'' where we draw a vertex corresponding to each hyperplane, a coloured edge joining two vertices when the corresponding intersection is smoothed, and decorating triangle with angles that indicate the order to which the smoothing degenerates along triple intersections.  In the case at hand, they hyperplanes are given by the equations $E_i^j = 0$ and are in bijection with the positions in a $(k+1)\times k$ matrix; the corresponding smoothing diagram is shown in \autoref{fig:smoothing_diagram_Hilb_xy}.\end{remark}

\begin{figure}
    \centering
    \scalebox{-1}[1]{\includegraphics[scale=0.8, angle = 90]{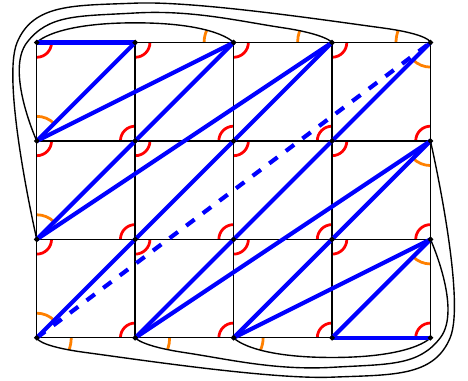}}
  \caption{Smoothing diagram encoding the deformation from $\hilbpstor$ to $\hilbps$ for a nodal curve in the triangular chart (with $k=4$).  The $k(k+1)$ hyperplanes on which $\hilbpstor$ degenerates merge into two irreducible components corresponding to the two components of the node $xy=0$; accordingly, the corresponding vertices are joined by two collections of solid blue lines.    The solid dashed line corresponds to an additional possible deformation, which is induced by smoothing $xy=0$ to $xy=\epsilon \neq 0$ in $\X$, so that the divisor becomes irreducible.}
    \label{fig:smoothing_diagram_Hilb_xy}
\end{figure}

We will prove \autoref{thm:toric-degen} by analyzing the decomposition of the space of quadratic bivectors into weight spaces for the torus action.  Namely, the theorem is equivalent to the statement that $\hilbps - \hilbpstor$ is a sum of weight vectors of the torus action, whose weights with respect to the subtorus $\G$ are strictly positive.  Note since the torus action on $\Utri$ is defined by rescaling matrix entries, it is natural to depict the weights of the action as matrices of the same size with integer coefficients.  With this convention, the weight of a monomial bivector of the form
\begin{align}
E_{i}^j E_{k}^l \cvf{E_a^b}\wedge\cvf{E_c^d} \label{eq:bivect-monomials}
\end{align}
is obtained by starting with the zero matrix, adding $1$ to the positions $(i,j)$ and $(k,l)$ in turn, and then subtracting $1$ from the positions $(a,b)$ and $(c,d)$ in turn.

In the Poisson bracket \eqref{eq:Hilb_xy_triang}, only certain weights can appear.  We will describe them in terms of the following objects.
\begin{definition}
A $(k+1)\times k$ matrix is a \defn{domino} if it has exactly two nonzero entries, one of which is $+1$ and the other of which is $-1$, and they lie either on the same row or on the same column.  We refer to the position of the entries $+1$ and $-1$ as the \defn{head} and \defn{tail} of the domino, respectively.  We say that a domino is oriented to the \defn{north} (respectively \defn{south}, \defn{east} or \defn{west}) if its head is above (resp.~below, right of, or left of) its tail.

This \defn{size} of a domino is the distance between its head and tail, and the \defn{valuation} is defined as follows:
\begin{itemize}
\item If the domino is oriented north or south, its valuation is the number of vertical translations required to move it so that its uppermost nonzero entry lies in the top column.
\item If the domino is oriented east or west, its valuation is the number of horizontal translations required to move it so that its leftmost nonzero entry lies in the leftmost column, plus $\tfrac{1}{2}$.
\end{itemize}
\end{definition}

\begin{figure}[h]
    \centering
    \includegraphics{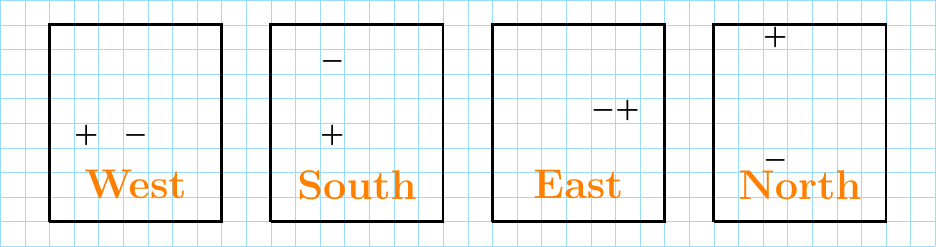}\\
    \caption{ Four examples of dominoes, one from each possible orientation.  Reading from left to right, the lengths are $2,3,1$ and $5$, and the valuations are $1+\tfrac{1}{2}$, $1$, $4+\tfrac{1}{2}$ and $0$, respectively.}
    \label{fig:domino_directions_Hilb_xy}
\end{figure}

\begin{definition}
A \defn{rectangular weight} is a $(k+1)\times k$ matrix given by a sum of two dominoes with opposite orientations, whose heads and tails form the vertices of a rectangle, with a head in the top left corner as in \autoref{fig:rect_wts_Hilb_xy}.
\end{definition}

\begin{figure}[h]
    \centering
    \includegraphics{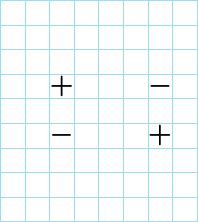}
    \caption{A rectangular weight}
    \label{fig:rect_wts_Hilb_xy}
\end{figure}

\begin{definition}
An ordered pair of dominoes is \defn{admissible} if both dominoes have the same length, the first domino is directed west or south, the second is directed east or north, and the valuation of the first is greater than the valuation of the second.
\end{definition}

By direct inspection of the formula \eqref{eq:Hilb_xy_triang}, we have the following:
\begin{lemma}
The weight of every monomial appearing in $\hilbps$ is either rectangular, or the sum of an admissible pair of dominoes.
\end{lemma}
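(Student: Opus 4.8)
The plan is to prove the lemma by a direct but somewhat intricate inspection of the explicit formula~\eqref{eq:Hilb_xy_triang}. First I would fix the bookkeeping: the weight of a monomial $c\,E_{i'}^{j'}E_{a'}^{b'}\cvf{E_i^j}\wedge\cvf{E_a^b}$ (for any nonzero scalar $c$) is the $(k+1)\times k$ integer matrix with entry $+1$ at the positions $(j',i')$ and $(b',a')$ and entry $-1$ at $(j,i)$ and $(b,a)$ --- rows indexed by the superscripts, columns by the subscripts --- where any coinciding positions cancel. The monomials whose two $+1$'s cancel the two $-1$'s have weight zero; these are precisely the ones that assemble into the torus-invariant part $\hilbpstor$, so they need no argument (one may regard the zero matrix as a degenerate rectangular weight, or simply exclude it). Thus the work is to treat the monomials of nonzero weight, organizing the surviving $\pm 1$ entries into a pair of dominoes and checking the defining conditions of a \emph{rectangular weight} and of an \emph{admissible pair}.

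The main step is then to run through the eight sums of~\eqref{eq:Hilb_xy_triang} one at a time. In each summand, one of the two factors ($E_p^b$ in the first four sums, $E_a^q$ in the last four) differs from one of the differentiated coordinates only by shifting a single index along the summation; pairing those two positions yields one domino, and pairing the remaining factor with the other differentiated coordinate yields a second domino. The key algebraic observation is that in every summand these two dominoes have \emph{equal length} --- namely $|a-p|$ in the first four sums and $|b-q|$ in the last four --- since the index that runs (such as $a+i-p$ or $p+j-a$) is an affine function of slope $\pm1$ that preserves the relevant difference up to sign; this is the common first requirement of the two target notions. One then reads off the orientations of the two dominoes, which are forced by the sign of $a-p$ (resp.\ $b-q$) together with the relevant $\delta$-function, and computes their valuations: these come out equal to the summation index, shifted by a constant depending on the case and by $+\tfrac12$ for the horizontal dominoes. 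The crucial point is that the strict inequality of valuations required for admissibility is exactly the strict inequality that cuts out the \emph{interior} of the summation range (together with the $\delta$-constraint), so all interior terms of each sum produce admissible pairs; this is precisely where the half-integer normalization of the horizontal valuation earns its keep, since it makes the inequality fall exactly on the boundary between the admissible and the rectangular cases.

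What remains is the analysis of the boundary terms of each sum --- the extreme values of the summation index, and the values for which a $\delta$-function becomes an equality. For each of these I would verify case by case that the weight is (i) rectangular, which happens when the two dominoes are parallel (both horizontal or both vertical) and close up into a rectangle with a head at its top-left corner; or (ii) zero, hence part of $\hilbpstor$; or (iii) cancelled by the matching boundary term of a partner sum (for example the initial term of the second sum cancels an endpoint term of the fifth). The labour can be roughly halved using the $x\leftrightarrow y$ involution $E_i^j\mapsto -E^{k-j}_{k-1-i}$, which interchanges the $p$-sums with the $q$-sums and acts on weights by a $180^\circ$ rotation, although one must track how it converts north/south dominoes into south/north and west/east into east/west, along with the corresponding change of valuations. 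The one genuinely delicate part --- and the place I expect to spend the most care --- is this boundary bookkeeping: correctly sorting the extremal terms into the rectangular, vanishing, and mutually-cancelling buckets, and checking the head-at-top-left condition whenever a rectangle appears. Everything else is a routine, if lengthy, expansion of~\eqref{eq:Hilb_xy_triang}; the toric degeneration of \autoref{thm:toric-degen} then follows by exhibiting a rank-one subtorus that pairs positively with all the rectangular weights and all the admissible-pair weights.
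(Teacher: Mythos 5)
Your proposal follows exactly the route the paper takes: the paper's entire proof of this lemma is the single phrase ``by direct inspection of the formula \eqref{eq:Hilb_xy_triang}'', and your plan is a correct, fully spelled-out version of that inspection, with the right structural observations (equal domino lengths forced by the slope-$\pm1$ index shifts, valuations read off from the summation index, the role of the $+\tfrac{1}{2}$ in the horizontal valuation matching the $\delta$-constraints, and the need to sort boundary terms into rectangular, vanishing, and mutually cancelling cases). Nothing further is needed.
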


We will reduce such weights to sums of the following elementary ones, which correspond to the weights of the first-order deformations of $\hilbpstor$ that smooth a given pairwise intersection of divisor components (i.e.~to the smoothable strata in the sense of \cite{Matviichuk2020}).

\begin{definition}
A \defn{smoothable weight} is a $(k+1)\times k$ matrix given by a sum of two dominoes of length one placed in one of the following configurations:
\begin{itemize}
\item Type I: the two dominoes are adjacent, forming a square-shaped rectangular weight
\item Type IIa: a south-east admissible pair of dominoes concentrated in the leftmost column and top row such that the tail of the first lies on the same diagonal as the head of the second.
\item  Type IIb: a west-north admissible pair of dominoes concentrated in the bottom row and rightmost column such that the head of the first lies on the same diagonal as the tail of the second.
\end{itemize}
\end{definition}
Such matrices are depicted in \autoref{fig:smooth_wts_Hilb_xy}. We remark that for each smoothable weight of type IIa or IIb, the valuation of the southwestern domino is only $\tfrac{1}{2}$ higher than the valuation of the northeastern domino, which is the minimum possible for the pair to be admissible.

\begin{figure}[h]
    \centering
    \includegraphics{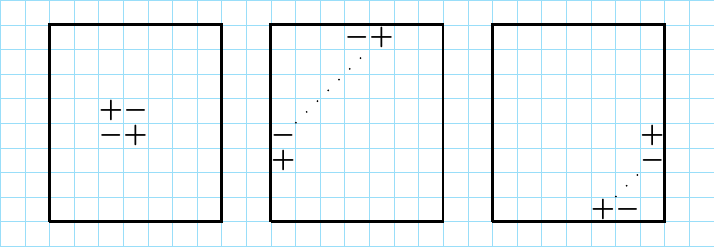}
    \caption{Smoothable weights; from left to right, the types are I, IIa and IIb}
    \label{fig:smooth_wts_Hilb_xy}
\end{figure}

\begin{lemma}\label{lem:wts-indep}
The set of smoothable weights is linearly independent.
\end{lemma}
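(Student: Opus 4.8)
The plan is to prove more, namely to identify the orthogonal complement of the span of the smoothable weights exactly, and then conclude by a dimension count. Work inside the space $V\cong\CC^{(k+1)\times k}$ of $(k+1)\times k$ matrices, equipped with the pairing $\langle M,N\rangle = \sum_{r,s}M_{rs}N_{rs}$. There are exactly $k(k-1)$ smoothable weights of type~I (one for each legal position of the $2\times 2$ block), $k-1$ of type~IIa, and $k-1$ of type~IIb, so $k^2+k-2$ in total, lying in a space of dimension $k(k+1)=k^2+k$. Hence it suffices to show that $M\in V$ satisfies $\langle M,W\rangle=0$ for every smoothable weight $W$ if and only if $M$ lies in a fixed $2$-dimensional subspace.

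First I would turn the pictures in \autoref{fig:smooth_wts_Hilb_xy} into explicit entry data. Labelling positions $(r,s)$ with $r\in\{0,\dots,k\}$ the row and $s\in\{0,\dots,k-1\}$ the column, ordered as in \eqref{eq:ordering_Hilb_xy}: the type~I weight with corner $(i,j)$ ($0\le i\le k-1$, $0\le j\le k-2$) has $+1$ at $(i,j)$ and $(i+1,j+1)$ and $-1$ at $(i,j+1)$ and $(i+1,j)$; the type~IIa weight with parameter $c$ ($0\le c\le k-2$) has $+1$ at $(0,c+1)$ and $(c+2,0)$ and $-1$ at $(0,c)$ and $(c+1,0)$; and the type~IIb weight with parameter $d$ ($0\le d\le k-2$) has $+1$ at $(k,d)$ and $(d,k-1)$ and $-1$ at $(k,d+1)$ and $(d+1,k-1)$. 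Getting these right---in particular, checking that the admissibility condition together with the ``same diagonal'' requirement forces the two dominoes of a type~IIa (resp.\ type~IIb) weight into this one-parameter family, and that the displayed parameter ranges are precisely those for which all four entries fit in the matrix---is the only genuinely fiddly point; everything else is short.

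Next, since each smoothable weight is a sum of two dominoes, $\langle M,W\rangle$ is a sum of two differences ``head entry minus tail entry'' of $M$. Orthogonality to all type~I weights therefore says that the mixed second difference $M_{i,j}-M_{i,j+1}-M_{i+1,j}+M_{i+1,j+1}$ vanishes for $0\le i\le k-1$, $0\le j\le k-2$; a standard telescoping argument (fix $i$; the difference $M_{i,s}-M_{i+1,s}$ is then independent of $s$) shows this is equivalent to $M_{rs}=f_s+g_r$ for some $f\in\CC^{k}$ and $g\in\CC^{k+1}$ (a $2k$-dimensional family, which already recovers the linear independence of the type~I weights by themselves). Substituting $M_{rs}=f_s+g_r$, orthogonality to the type~IIa and type~IIb weights becomes
\[
g_{c+2}-g_{c+1}=f_c-f_{c+1}\quad(0\le c\le k-2),\qquad f_d-f_{d+1}=g_{d+1}-g_d\quad(0\le d\le k-2).
\]
Setting $u_c:=f_c-f_{c+1}$ and $v_r:=g_{r+1}-g_r$, these read $v_{c+1}=u_c$ and $u_d=v_d$, which chain into the single string $v_0=u_0=v_1=u_1=\dots=u_{k-2}=v_{k-1}$. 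Hence $f$ and $g$ are arithmetic progressions with opposite common difference, so $M_{rs}=a+(r-s)w$ for scalars $a,w$; that is, the solution space is the $2$-dimensional span of the all-ones matrix and the matrix with entries $r-s$. Therefore the span of the smoothable weights has dimension $k^2+k-2$, so the $k^2+k-2$ of them are linearly independent. The main obstacle, as noted, is purely organizational: pinning down the exact entries and index ranges of the three families; once that is done the linear algebra is immediate.
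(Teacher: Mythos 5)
Your proof is correct, but it takes a genuinely different route from the paper's. The paper's argument is a two-line reduction to structures already set up in \autoref{sec:toric-degen}: with the ordering \eqref{eq:ordering_Hilb_xy}, each smoothable weight is the difference of a unique pair of \emph{consecutive} rows of the biresidue matrix $B$ of \eqref{eq:Hilb_xy_bires_matrix}, and since $B$ is invertible (equivalently, $\hilbpstor$ is generically nondegenerate), a telescoping argument shows these consecutive-row differences are linearly independent. You instead compute the orthogonal complement of the span of the smoothable weights under the trace pairing and conclude by a dimension count. I checked the details: your entry data for the three families agrees with the definitions and \autoref{fig:smooth_wts_Hilb_xy} (in particular, ``same diagonal'' is the anti-diagonal $r+s=\mathrm{const}$, which together with admissibility forces the stated one-parameter families with ranges $0\le c,d\le k-2$); the count $k(k-1)+2(k-1)=k^2+k-2$ is right; type~I orthogonality does reduce to $M_{rs}=f_s+g_r$; and the type~II conditions chain to $v_0=u_0=v_1=\cdots=v_{k-1}$, giving exactly the two-dimensional space $M_{rs}=a+(r-s)w$. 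As for what each approach buys: the paper's is shorter but leans on the invertibility of $B$ and on the (unverified in the text) identification of smoothable weights with consecutive row differences; yours is elementary and self-contained, and it pins down the orthogonal complement exactly. That complement has a pleasant interpretation which serves as a sanity check: the all-ones matrix and the matrix with entries $r-s$ span precisely the lattice of weights of the two-torus $(x,y)\mapsto(ux,vy)$ acting on the \ES coordinates (compare \autoref{lem:homogeneity-of-coords}); this torus preserves $\ps=xy\,\cvf{x}\wedge\cvf{y}$ and hence $\hilbps$, so every monomial weight occurring in the bracket---in particular every smoothable weight---must be orthogonal to those two vectors, which explains a priori why the complement is at least two-dimensional.
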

\begin{proof}
Consider the square matrix of size $k(k+1)$ formed from the biresidues $B_{ip}^{jq}$ \eqref{eq:Hilb_xy_bires_matrix} of the toric log symplectic form with indices ordered as in \eqref{eq:ordering_Hilb_xy}.  Then every smoothable weight can be expressed as the difference of a unique pair of consecutive rows of $B_{ip}^{jq}$.  But this matrix is invertible, and hence the differences of its consecutive rows are linearly independent.
\end{proof}

\begin{lemma}\label{lem:rect-smooth}
Any rectangular weight is a sum of smoothable weights of Type 1 with non-negative integer coefficients.
\end{lemma}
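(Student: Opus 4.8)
The plan is to tile the rectangle underlying the given weight by unit squares and to observe that the rectangular weight telescopes into the sum of the corresponding Type~I smoothable weights, each appearing with coefficient exactly $1$.

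First I would fix notation: view weights as $(k+1)\times k$ integer matrices with rows indexed $0,\dots,k$ and columns indexed $0,\dots,k-1$, and write $\mathbf{e}_{j,i}$ for the matrix unit supported at row $j$, column $i$. Unwinding the definition of a rectangular weight, its four marked cells are two heads and two tails sitting at the vertices of a rectangle with a head in the top-left corner; since the two cells of a domino are always collinear and the four vertices of a rectangle can only be split into two collinear pairs using either the two horizontal edges or the two vertical edges, the resulting matrix is independent of which presentation one uses, namely
\[
R(j_1,j_2;i_1,i_2) := \mathbf{e}_{j_1,i_1} - \mathbf{e}_{j_1,i_2} - \mathbf{e}_{j_2,i_1} + \mathbf{e}_{j_2,i_2},
\qquad 0\le j_1<j_2\le k,\quad 0\le i_1<i_2\le k-1,
\]
i.e.\ a $+1$ at the top-left and bottom-right corners and a $-1$ at the other two. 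The Type~I smoothable weights are exactly the unit squares $R(j,j+1;i,i+1)$ with $0\le j\le k-1$ and $0\le i\le k-2$, so the lemma is equivalent to the identity
\[
R(j_1,j_2;i_1,i_2) = \sum_{j=j_1}^{j_2-1}\ \sum_{i=i_1}^{i_2-1} R(j,j+1;i,i+1).
\]

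To establish this identity I would expand the right-hand side and read off the coefficient of each matrix unit $\mathbf{e}_{a,b}$: a lattice point interior to the rectangle occurs as the top-left, top-right, bottom-left and bottom-right corner, respectively, of four of the unit squares, so its coefficient is $+1-1-1+1=0$; a boundary point of the rectangle that is not a corner lies on exactly two of the unit squares and its two contributions cancel; each of the four corners lies on a single unit square and contributes exactly its sign in $R(j_1,j_2;i_1,i_2)$; and points outside the rectangle contribute nothing. Finally I would note that every unit square appearing in the sum has $j_1\le j\le k-1$ and $i_1\le i\le k-2$, so it is a genuine Type~I smoothable weight inside the $(k+1)\times k$ grid, and that all the coefficients in the sum equal $1$, which gives the asserted non-negativity and integrality. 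I do not expect a genuine obstacle here: the only steps needing care are the sign bookkeeping on the boundary of the rectangle in the telescoping, and the (immediate) check that the unit squares of the tiling all fit inside the matrix.
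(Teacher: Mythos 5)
Your proof is correct and is exactly the paper's argument: the paper also sums all Type~I smoothable weights contained in the rectangle and observes that the sum telescopes, leaving only the four vertices. You have merely written out the coefficient bookkeeping that the paper leaves implicit.
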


\begin{proof}
Consider the sum of all smoothable weights of Type I that are contained inside the rectangle; this sum telescopes so that only the vertices of the rectangle remain.
\end{proof}

\begin{proposition}\label{prop:dom-smooth}
The sum of any admissible pair of dominoes is a linear combination of smoothable quadratic weights with non-negative integer coefficients.
\end{proposition}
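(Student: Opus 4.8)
Let $D_1,D_2$ denote the first and second domino of the pair. The plan is to argue by induction, reducing an arbitrary admissible pair to the three types of smoothable weights through a sequence of elementary moves, each of which introduces only non-negative contributions. Two moves do all the work. \emph{Translation:} moving a single domino by one unit in the direction perpendicular to its axis changes the weight by exactly one rectangular weight, and the sign can be arranged to be positive by choosing the direction of translation appropriately; iterating, one slides $D_1$ into the leftmost column (if it is oriented south) or the bottom row (if west), and $D_2$ into the top row (if east) or the rightmost column (if north), at the cost of a non-negative sum of rectangular weights. By \autoref{lem:rect-smooth} these are non-negative integer combinations of Type~I weights, and --- crucially --- the translations change neither the lengths nor the valuation gap $g$, while the orientation hypotheses built into the definition of admissibility are exactly what guarantee that each domino can be pushed into the boundary location occupied by the corresponding Type~IIa or Type~IIb smoothable weight. \emph{Splitting:} a domino of length $\ell\geq 2$ is the sum of a unit-length domino at its head and a length-$(\ell-1)$ domino; performing this on both $D_1$ and $D_2$ writes $D_1+D_2$ as a sum of two admissible pairs, each still in normalized position and with the same gap $g$, one of length $\ell-1$ and one of length $1$.

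With these moves in hand, I would induct on the pair $(\ell,g)$ in lexicographic order. The splitting move handles the passage from length $\ell$ to length $\ell-1$. For length-$1$ pairs in normalized position one subtracts the (unique) smoothable weight --- Type~IIa in the south--east case, Type~IIb in the west--north case --- whose unit jump along the boundary coincides with that of $D_1$: when $g$ is minimal the residual vanishes, and when $g$ is larger the residual is again an admissible pair of length $1$ but strictly smaller gap, to which the inductive hypothesis applies. The integer-gap pairs (west--east and south--north, which behave like degenerate limits of the previous two families) are treated by the same scheme, now normalizing $D_1$ and $D_2$ into adjacent rows or columns, with Type~I weights playing the role of Type~IIa/IIb. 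In every step the smoothable weights that are peeled off appear with explicit non-negative integer multiplicities, so the induction produces the claimed decomposition.

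The point requiring care --- and the one the ``game of dominoes'' bookkeeping behind \autoref{fig:smoothing_diagram_Hilb_xy} is designed to manage --- is the positivity of the coefficients in the gap-reduction step: subtracting a single smoothable weight does not in general localize the residual near the boundary, so one is forced to subtract a whole staircase of Type~I, IIa and IIb weights at once and to verify that all multiplicities are non-negative. This reduces to showing that the residual lies in the cone spanned with non-negative coefficients by the differences of consecutive rows of the biresidue matrix $B$ of \eqref{eq:Hilb_xy_bires_matrix} --- which, by the proof of \autoref{lem:wts-indep}, are precisely the smoothable weights --- and here the hypothesis that $D_1$ has larger valuation than $D_2$ is exactly what forces the staircase to run in the direction along which the cyclic monotonicity of $B$ (\autoref{def:cyclically_monotone}) makes these coefficients non-negative. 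I expect this positivity bookkeeping, together with the edge cases in which a domino abuts the edge of the matrix, to be the only genuine difficulty; the translation, splitting and minimal-gap steps are routine verifications.
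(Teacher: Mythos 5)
Your normalization step (sliding each domino into its boundary row or column at the cost of rectangular weights, hence of Type I weights by \autoref{lem:rect-smooth}) is exactly the paper's first move, and your splitting move is a valid observation the paper does not use: cutting both dominoes at their heads does yield two admissible pairs with the same valuation gap, so reducing to unit-length dominoes is legitimate. The genuine gap is the step that reduces the valuation gap itself. After normalization a unit south--east pair has its southern domino somewhere in the leftmost column and its eastern domino somewhere in the top row, with gap possibly much larger than the minimal value $\tfrac{1}{2}$ realized by the Type IIa weights. Subtracting the unique IIa weight whose southern part equals $D_1$ cancels $D_1$ exactly and leaves a residual supported entirely in the top row (an eastern and a western unit domino in the \emph{same} row, possibly overlapping); this is not an admissible pair in your normalized form, and disposing of it ``with Type I weights'' would require moving a domino vertically across the whole matrix and then resolving a same-row west--east configuration --- none of which you carry out, and which is where all the difficulty lives. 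Your fallback, that the residual lies in the non-negative cone on the smoothable weights because of the cyclic monotonicity of $B$, is not an argument: \autoref{lem:wts-indep} gives linear independence, hence uniqueness of the coefficients, but says nothing about their signs, and cyclic monotonicity enters the paper only in the holonomicity step (\autoref{lm:cyclically_monotone_holonomic}), not here.

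What is missing is precisely the paper's engine for gap reduction: a move that shifts a southern domino in the leftmost column one step to the north at the cost of a non-negative integer combination of smoothable weights. The paper achieves this by a three-step rotation around the boundary of the matrix --- subtract IIa weights to turn the southern domino into a western one in the top row, subtract a rectangular weight to slide it to the bottom row, subtract IIb weights to turn it back into a southern one --- and the net displacement is one step north exactly because the matrix has $k+1$ rows but only $k$ columns. Iterating until the gap is minimal produces a configuration that telescopes into Type IIa (or IIb) weights, with all coefficients manifestly equal to $0$ or $1$. Without this move, or a verified substitute, your induction on $(\ell,g)$ does not close.
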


\begin{proof}
The proof is a sort of game, in which we translate dominoes in the plane by certain admissible operations, which correspond mathematically to subtracting collections of smoothable weights.  We will show that by repeated application of such moves, every admissible pair may be reduced to zero.

Firstly, by subtracting a rectangular weight, we can push any southern domino all the way to the leftmost column, as illustrated in \autoref{fig:pushing_east_domino_up}. 
\begin{figure}[t]
    \centering
    \includegraphics{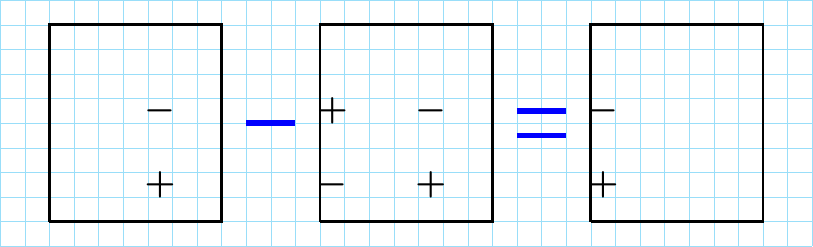}
    \caption{Pushing a southern domino to the leftmost column}
    \label{fig:pushing_east_domino_up}
\end{figure}
Therefore, whenever we have an admissible pair of dominoes, one of which is southern, we can assume without loss of generality that the southern domino is located in the leftmost column. Likewise, we can assume that every northern domino in an admissible pair is located in the rightmost column, every eastern domino in the top row, and every western domino in the bottom row (just as the dominoes in the smoothable weights of type II).

Secondly, we claim that by subtracting several rectangles and smoothable weights of types IIa and IIb, we can move any southern domino in the leftmost column one step to the north.  For this, we proceed in three steps.  In the first step, we subtract some smoothable weights of type IIa to turn the domino to the west, as in \autoref{fig:pushing_east_domino_left_step1}.  In the second step, we subtract a rectangular weight to push the obtained western domino to the bottom row as in \autoref{fig:pushing_east_domino_left_step2}.  Finally, in the third step, we subtract some smoothable weights of type IIb to turn the western domino back to the south, as in \autoref{fig:pushing_east_domino_left_step3}.  Because the matrix has one more row than it has columns, the domino is now one step northwards from where it started, as desired.

\begin{figure}[t]
    \centering
    \includegraphics{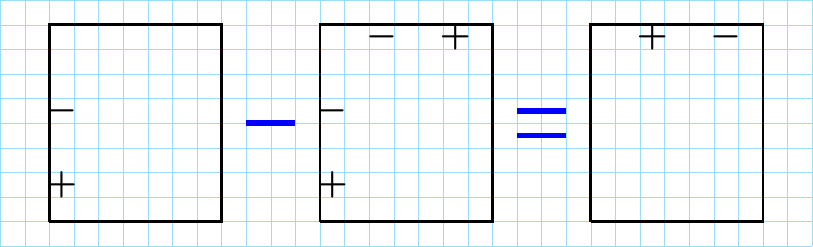}
    \caption{Pushing a southern domino to the north, step 1.}
    \label{fig:pushing_east_domino_left_step1}
\end{figure}

\begin{figure}[t]
    \centering
    \includegraphics{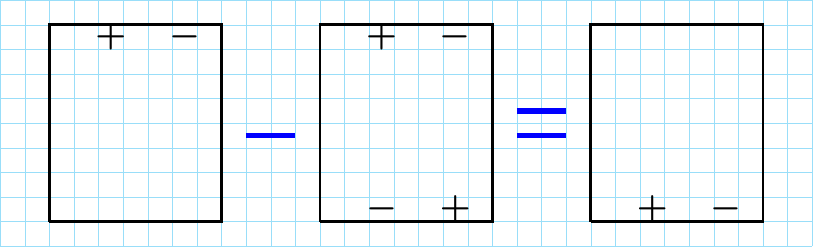}\\
    \caption{Pushing a southern domino to the north, step 2.}
    \label{fig:pushing_east_domino_left_step2}
\end{figure}
 
\begin{figure}[t]
    \centering
    \includegraphics{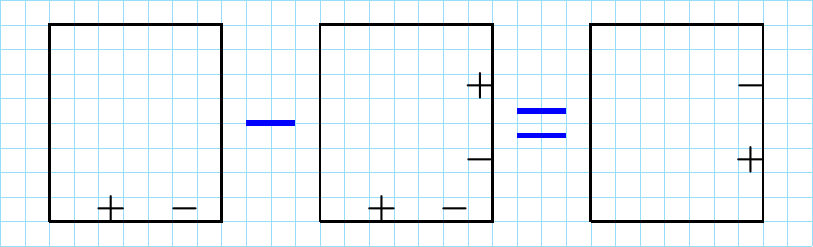}
    \caption{Pushing an eastern domino to the west, step 3.}
    \label{fig:pushing_east_domino_left_step3}
\end{figure}
Using these operations, we are free to push any southern domino in an admissible pair to the north, without changing the second domino in the pair, as long as the pair stays admissible. Similarly, we may push any western domino to the east, any northern domino to the south and any eastern domino to the west.

Now by definition, there are four types of admissible pairs of dominoes: south-east, south-north, west-north, and west-east.  Note that by subtracting smoothable weights of type II as in \autoref{fig:pushing_east_domino_left_step1}, we can convert any admissible south-north pair into an admissible west-north pair, and similarly we may convert any  admissible west-east pair into an admissible south-east pair, so it suffices to treat the south-east and west-north cases.

To this end, consider a south-east pair and assume without loss of generality that the southern domino is in the left column, while the eastern one is in the top row. Pushing the southern one as far as possible to the north, we can reduce its valuation to the minimum value for which the pair remains admissible.  In this way we obtain an admissible pair that is evidently a telescoping sum of smoothable weights of Type IIa.  Similarly, pushing a west-north pair to the bottom row and right column, and moving the northern domino south to maximize its valuation, we obtain a sum of smoothable weights of type IIb.
\end{proof}

We can now complete the construction of the toric degeneration:
\begin{proof}[Proof of \autoref{thm:toric-degen}]
By \autoref{lem:wts-indep}  there exists  a coweight $w$ of the torus $(\mathbb{C}^*)^{(k+1)k}$  that pairs with each smoothable weight  to give a positive integer.  By construction, the corresponding one-parameter subgroup $\G\cong \CCx$ then acts with positive weight on every smoothable weight space, and hence by \autoref{lem:rect-smooth} and \autoref{prop:dom-smooth} acts with positive weight on the weights spaces corresponding to rectangular weights or admissible pairs of dominoes.
\end{proof}

\subsection{Holonomicity}
\label{sec:holonomicity}

Recall from \cite{Pym2018} that every holomorphic Poisson manifold $(\W,\ps)$ has an associated \defn{characteristic variety}
\[
\charps \subset \ctb{\W}
\]
defined as the singular support of the complex of $\sD{\W}$-modules associated to the deformation complex of $(\W,\ps)$.  We say that $(\W,\ps)$ is \defn{holonomic} if its characteristic variety is Lagrangian.  This is a local property that depends only on the stable equivalence classes of the germs of $\ps$.
It was shown in \emph{op.~cit.} that if $(\W,\ps)$ is holonomic then every point has a neighbourhood with only finitely many characteristic symplectic leaves and that the converse holds when $\dim \W = 2$.  In \cite{Matviichuk2020}, we conjectured that the converse holds in all dimensions, and we proved this when $(\W,\ps)$ is generically symplectic and degenerates along a normal crossing divisor.  In this section, we give further evidence for the conjecture.

Considering the invariance under stable equivalence and the results in \autoref{sec:char-leaves}, there are three cases one must consider in order to verify the conjecture for all Hilbert schemes: either (a)
 $\D$ has at worst nodal singularities and $n$ is arbitrary; (b) $\D$ has only double points and $n=2$; or (c) $\D$ has only $A_2$ singularities and $n=5$.   We shall treat the cases (a) and (b); we do not know whether the conjecture holds in case (c) but have no reason to doubt it.  Case (a) is the subject of the following theorem, whose proof will occupy most of this section.  Note that it implies that $\Hilb{\X}$ is holonomic if $\D$ is smooth; a more direct proof is possible in this case, but we omit it.  The smooth case allows us to immediately treat case (b) as a corollary.
 
\begin{theorem}\label{thm:nodal-holonomic}
For a Poisson surface $\X$ whose vanishing locus is the anticanonical divisor $\D\subset \X$, the following statements are equivalent:
\begin{enumerate}
\item For every $n\ge0$, the induced Poisson structure on the Hilbert scheme $\Hilb[n]{\X}$ is holonomic.
\item  For every $n\ge0$, the germ of  $\Hilb[n]{\X}$ at any point has only finitely many characteristic symplectic leaves.
\item The only singularities of $\D$ are nodes.
\end{enumerate}
\end{theorem}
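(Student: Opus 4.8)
The plan is to prove the cyclic chain of implications $1 \Rightarrow 2 \Rightarrow 3 \Rightarrow 1$. The implication $1 \Rightarrow 2$ is immediate from the general fact, recalled above from \cite{Pym2018}, that a holonomic Poisson manifold has only finitely many characteristic symplectic leaves in the germ at each point. For $2 \Rightarrow 3$ I would argue the contrapositive: suppose $\D$ has a singularity that is not a node. Then the final proposition of \autoref{sec:char-leaves} exhibits, for every $n \ge n(\D)$, a $\CC$-family $\{\cI_a\}_{a\in\CC}$ (respectively $\{\mathcal J_a\}_{a\in\CC}$) of pairwise distinct characteristic symplectic leaves of $\Hilb[n]{\X}$, each of which is a single point. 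These points accumulate — for instance at the member of the family with $a = 0$, which is the common limit of the $\cI_a$ as $a \to 0$ — so every analytic neighbourhood of that point meets infinitely many of them. Hence the germ of $\Hilb[n]{\X}$ at that point has infinitely many characteristic symplectic leaves, contradicting statement $2$.

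The substance of the theorem is the implication $3 \Rightarrow 1$, for which I would proceed as follows. Holonomicity is a stable-equivalence invariant and is preserved under taking products \cite{Pym2018}, so by \autoref{lm:Hilb_equiv_to_triang_chart} it suffices to prove that the germ of $\Hilb[k(k+1)/2]{\X}$ at the point $p(k)$ in the triangular chart is holonomic, for every $k \ge 1$ and every $p \in \X$. We distinguish the three cases of \autoref{thm:local}. If $p \notin \D$, the germ is symplectic by \autoref{prop:Darboux}, hence holonomic. If $p$ is a node of $\D$, the germ carries the quadratic Poisson structure $\hilbps$ of \eqref{eq:Hilb_xy_triang}, which by \autoref{thm:toric-degen} admits an isotrivial $\CCx$-degeneration to the toric log symplectic structure $\hilbpstor$. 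The dimension of the characteristic variety is upper semicontinuous along such a degeneration — the special fibre $\hilbpstor$ can only have a characteristic variety whose dimension is at least that of the general fibre $\hilbps$ — while the characteristic variety of any Poisson structure has dimension at least $\dim \Hilb[n]{\X}$ by involutivity; it therefore suffices to prove that $\hilbpstor$ is holonomic. But its biresidue matrix $B$, given by \eqref{eq:Hilb_xy_bires_matrix} with respect to the ordering \eqref{eq:ordering_Hilb_xy}, is cyclically monotone in the sense of \autoref{def:cyclically_monotone} — this is a direct check on the closed formula — and the results of \cite{Matviichuk2020} on log symplectic structures with normal crossing degeneracy divisor show that a toric log symplectic structure with cyclically monotone biresidue matrix is holonomic. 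Finally, if $p$ is a smooth point of $\D$, the germ is the Lie--Poisson structure on $\aff{k}^\vee$ by \autoref{thm:triang_chart_Hilb_y_affk}; I would handle this case by the same strategy, degenerating $\aff{k}^\vee$ to a toric log symplectic structure whose biresidue matrix is again cyclically monotone (the accompanying domino combinatorics being a simpler variant of that in \autoref{sec:toric-degen}), or alternatively by the more direct argument alluded to above.

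I expect the main obstacle to lie entirely within $3 \Rightarrow 1$, and to have two components. The first is making the degeneration-plus-semicontinuity step rigorous: one must either check that the complex of $\sD$-modules attached to the deformation complex of $(\Hilb[n]{\X}, g\cdot\hilbps)$ forms a genuinely flat family over the degeneration parameter, so that its characteristic variety specializes upper-semicontinuously, or else arrange an argument that sidesteps flatness. The second is the combinatorial core: verifying cyclic monotonicity of the biresidue matrices of $\hilbpstor$ and of the smooth-point model for all $k$, and extracting from \cite{Matviichuk2020} the precise statement that these toric log symplectic structures — and hence, after degeneration, $\hilbps$ and the Lie--Poisson structure on $\aff{k}^\vee$ — are holonomic. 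The node and smooth-point analyses run in parallel but are genuinely distinct calculations; in particular, the smooth case cannot simply be absorbed into the nodal one, since its local model is the \emph{linear} structure on $\aff{k}^\vee$ rather than the \emph{quadratic} structure $\hilbps$, even though a smooth curve is vacuously a curve with only nodal singularities.
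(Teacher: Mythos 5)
Your skeleton matches the paper's: $1\Rightarrow 2$ from \cite{Pym2018}, $2\Rightarrow 3$ by contrapositive via the accumulating point-leaves $\cI_a$, $\mathcal{J}_a$ of \autoref{sec:char-leaves}, and $3\Rightarrow 1$ by reduction to triangular charts, toric degeneration, and the biresidue criterion of \cite{Matviichuk2020}; the semicontinuity worry you raise is resolved exactly as you suggest (\autoref{lm:holonomicity_openness}: quadraticity makes the characteristic varieties conical in base and fibre, so one projectivizes and uses semicontinuity of dimension for proper maps). But two steps have genuine problems. The statement you propose to ``extract from \cite{Matviichuk2020}'' --- that a toric log symplectic structure with cyclically monotone biresidue matrix is holonomic --- is not in that reference. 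What \cite{Matviichuk2020} provides is a criterion (condition 4 of Theorem 1.5) phrased in terms of the constant vector and row spans of odd-size submatrices of the biresidue matrix; showing that cyclic monotonicity of a $0/1$ matrix implies this criterion is \autoref{lm:cyclically_monotone_holonomic}, which is new, is the combinatorial heart of the whole argument, and is proved by encoding the matrix as the overlap pattern of a tuple of unit intervals and inducting via an interval exchange transformation. Your proposal leaves this step unaddressed and mislocates it in the literature.

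The second problem is the smooth-point case. A linear Poisson structure has zero invariant part under the full coordinate torus (the toric log symplectic structures of \cite{Matviichuk2020} are quadratic), and the degeneracy divisor of the Lie--Poisson structure on $\aff{k}^\vee$ is not normal crossings, so the ``simpler variant of the domino combinatorics'' you posit does not exist: there is no isotrivial degeneration of $\aff{k}^\vee$ to a cyclically monotone toric log symplectic structure of the required kind. Moreover, your assertion that the smooth case ``cannot simply be absorbed into the nodal one'' is exactly backwards. The germ of $\aff{k}^\vee$ at the origin is stably equivalent to the germ of the nodal quadratic chart at a \emph{non-central} point: take $\Z$ to be the $k$-th order neighbourhood of a smooth point of $\{xy=0\}$ and pad with reduced points off $\D$ as in the proof of \autoref{lm:Hilb_equiv_to_triang_chart}. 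This is also why your reduction ``to the germ at $p(k)$'' is too weak --- that lemma produces germs at arbitrary points of the chart, not just its centre --- and why holonomicity of the quadratic structure on the whole chart (which your degeneration argument does deliver, since holonomicity is a condition on the full characteristic variety) already covers the Lie--Poisson case. Indeed the paper deduces holonomicity of $\aff{k}^\vee$ as a corollary of the theorem rather than proving it as an input.
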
 
\begin{corollary}\label{cor:hilb2-holonomic}
Let $\X$ be a Poisson surface with reduced anticanonical divisor $\D\subset \X$. Then $\Hilb[2]{\X}$ is holonomic if and only if every singular point of $\D$ is a double point.
\end{corollary}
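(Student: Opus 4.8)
The plan is to derive the corollary from the smooth-divisor case of \autoref{thm:nodal-holonomic} together with the analysis of characteristic leaves in \autoref{sec:char-leaves}, by a local inspection of $\Hilb[2]{\X}$. For the ``only if'' direction, suppose $\Hilb[2]{\X}$ is holonomic; by \cite{Pym2018} every point then has a neighbourhood containing only finitely many characteristic symplectic leaves. If $\D$ had a singular point $p$ that is not a double point, then---$\D$ being reduced---its local defining equation would vanish to order at least three at $p$, so, exactly as in the proof of the proposition in \autoref{sec:char-leaves} on characteristic leaves of $\Hilb[2]{\X}$, the linearization of the modular vector field at $p$ would vanish, the modular flow would act trivially on $\tb[p]{\X}$, and every length-two subscheme $(p,\bL)\subset\D$ (for a choice of line $\bL\subset\tb[p]{\X}$) would be a zero-dimensional characteristic leaf; as these accumulate at any fixed $(p,\bL_0)$, no neighbourhood of $(p,\bL_0)$ has finitely many characteristic leaves, a contradiction. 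Hence every singular point of $\D$ is a double point.

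Conversely, assume every singular point of $\D$ is a double point, i.e.\ an $A_k$-singularity for some $k\ge1$, so $\D_\sing$ is finite. Since holonomicity is local and stable-equivalence invariant \cite{Pym2018}, it suffices to show the germ of $(\Hilb[2]{\X},\hilbps)$ at each $\Z$ is holonomic, and there are three cases. If $\mathrm{supp}(\Z)\cap\D_\sing=\varnothing$, choose a small open $\U\subset\X$ around $\mathrm{supp}(\Z)$ with $\D\cap\U$ smooth; then $\Z$ lies in the open subvariety $\Hilb[2]{\U}\subset\Hilb[2]{\X}$, which is holonomic by the smooth case of \autoref{thm:nodal-holonomic}. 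If $\Z=\{p,q\}$ with $p\in\D_\sing$ and $q\neq p$, the germ at $\Z$ is the product of the germs of the Poisson surface $\X$ at $p$ and at $q$; each of these surface germs has only finitely many characteristic leaves (its open leaf, and by \autoref{cor:singular-characteristic} at most the point itself), hence is holonomic by the two-dimensional case of \cite{Pym2018}, and a product of holonomic germs is holonomic.

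The remaining case, $\Z=(p,\bL)$ a non-reduced length-two scheme supported at a double point $p$, is the crux; when $p$ is a node it is already covered by \autoref{thm:nodal-holonomic}, so assume $p$ is an $A_k$-singularity with $k\ge2$. One first checks that $(p,\bL)$ is contained in $\D$ (so $\hilbps$ vanishes there) and that the Poisson germ of $(\X,\ps)$ at $p$ is isomorphic to the standard model $\rbrac{\CC^2,\,(y^2-x^{k+1})\,\cvf{x}\wedge\cvf{y}}$ at the origin, a unit in the coefficient being absorbable by a coordinate change. Reducing via \autoref{lm:Hilb_equiv_to_triang_chart} to a point of a triangular chart $\Utri\subset\Hilb[k(k+1)/2]{(\CC^2)}$ for a small value such as $k=2$, the germ in question becomes one fixed, explicit, finite-dimensional Poisson germ whose bracket is computed by the recursion-operator algorithm of \autoref{sec:triangular-chart-Poisson} applied to $f=y^2-x^{k+1}$; by the proposition in \autoref{sec:char-leaves} it has only finitely many characteristic leaves. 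The plan is to upgrade this to holonomicity exactly as in \autoref{thm:nodal-holonomic}: produce a rank-one torus isotrivially degenerating the germ to its torus-invariant part $\hilbpstor$, a log-symplectic germ with normal-crossing degeneracy divisor, so that the germ sits among the smoothing deformations of $\hilbpstor$ for which \cite{Matviichuk2020} verifies \autoref{conj:holonomicity}, hence holonomicity. The degeneration is again assembled from a coweight that is positive on all the smoothing weights; the one new feature is that $\hilbps$ now also carries terms of degree $k{+}1$ in the matrix entries, but these have strictly positive total weight and can be accommodated by adding a large multiple of the diagonal coweight. The hardest part will be checking that $\hilbpstor$ is genuinely log-symplectic in this model and that the ``game of dominoes'' combinatorics of \autoref{sec:toric-degen} adapts to the present quadratic bivector; since everything occurs in a single small Hilbert scheme, this is a finite verification.
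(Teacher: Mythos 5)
Your ``only if'' direction and the first two cases of the ``if'' direction are correct and unproblematic.  The gap is in your third case, the non-reduced $\Z=(p,\bL)$ supported at an $A_k$ singularity with $k\ge 2$, and it is fatal to the proposed strategy rather than a detail to be checked.  If you run the recursion-operator algorithm with $f=y^2-x^{k+1}$, the bivector is a sum of a quadratic piece coming from $J_y^2$ and a degree-$(k+1)$ piece coming from $J_x^{k+1}$; any isotrivial degeneration onto the torus-invariant quadratic part necessarily discards the $x^{k+1}$ term entirely.  What survives is (the invariant part of) $J_y^2\hilbps_0$, whose degeneracy divisor is cut out by $\det(J_y)^2$ and is therefore non-reduced, so the limit is \emph{not} a log symplectic structure with normal crossings and the results of \cite{Matviichuk2020} do not apply to it.  Worse, the limit is genuinely non-holonomic: already for $n=1$ the degeneration sends $(y^2-x^{k+1})\,\cvf{x}\wedge\cvf{y}$ to $y^2\,\cvf{x}\wedge\cvf{y}$, whose modular vector field $-2y\cvf{x}$ vanishes along all of $\{y=0\}$, producing a curve of characteristic zero-dimensional leaves.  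Since \autoref{lm:holonomicity_openness} only transfers holonomicity \emph{from} the special fibre \emph{to} nearby fibres, a non-holonomic limit yields no information.  The term $x^{k+1}$ is exactly what forces the modular vector field to have isolated zeros, and no weighting scheme can keep it while remaining quadratic and torus-invariant; so the ``finite verification'' you defer cannot succeed.

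The paper avoids local models entirely on this locus.  Let $\V\subset\Hilb[2]{\X}$ be the union of the $\PP^1$'s of length-two subschemes set-theoretically supported at a single singular point of $\D$; away from $\V$ holonomicity follows as in your first two cases.  Over $\V$ one bounds the characteristic variety directly: by \cite[Theorem 3.4, part 1]{Pym2018} it is contained in the zero locus of (the symbol of) the modular vector field, and since every singular point is a double point, \autoref{prop:char-leaves} shows the modular vector field has only isolated zeros on $\V$, hence defines a function on the five-dimensional variety $\ctb{\Hilb[2]{\X}}|_{\V}$ that is nonzero on every component.  Its vanishing locus therefore has dimension four, forcing $\charhilbps[2]\cap\ctb{\Hilb[2]{\X}}|_{\V}$ to be Lagrangian.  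If you want to salvage a local-model approach for the cuspidal case, you would need a technique that does not pass through a toric limit; as the paper notes, the analogous question for $n=5$ and $A_2$ singularities remains open precisely because no such technique is currently available.
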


\begin{proof}[Proof of \autoref{cor:hilb2-holonomic}]
Let $\V \subset \Hilb[2]{\X}$ be the set of elements set-theoretically supported at a single singular point of $\D$.  Then $\V$ is the disjoint union of several copies of $\PP^1$, one for each singular point, and $\Hilb[2]{\X}\setminus \V$ is holonomic by \autoref{thm:nodal-holonomic}.  It therefore suffices to show that $\charhilbps[2] \cap (\ctb{\Hilb[2]{\X}})|_{\V}$ has pure dimension four. But since all singular points are double points, \autoref{prop:char-leaves} implies that the zeros of the modular vector field in $\V$ are isolated, so it defines a function on the smooth five-dimension variety $\ctb{\Hilb[2]{\X}}|_\V$ that is nonzero on every connected component.  Its vanishing locus contains $\charhilbps[2] \cap \ctb{\Hilb[2]{\X}}|_{\V}$ by \cite[Theorem 3.4, part 1]{Pym2018}, and hence the latter has dimension four, as desired.
\end{proof}

We now proceed with the proof of \autoref{thm:nodal-holonomic}. The equivalence of statements 2 and 3 was proven in \autoref{sec:char-leaves}, so in light of the above it remains only to prove that if $\D$ is nodal, then $\Hilb{\X}$ is holonomic for all $n$.  The problem is local and invariant under stable equivalence and taking products, so by \autoref{lm:Hilb_equiv_to_triang_chart}, it suffices to prove these statements for the triangular chart.  Moreover, in light of \autoref{lm:holonomicity_openness} below and \autoref{thm:toric-degen}, the problem reduces to proving the statement for the toric degeneration $\hilbpstor$:

\begin{lemma}\label{lm:holonomicity_openness}
Let $\pi_t$, $t\in \mathbb{C}$, be a polynomial family of quadratic Poisson tensors on $\mathbb{C}^{2n}$ such that $\pi_0$ is holonomic. Then $\pi_t$ is holonomic for all but finitely many $t \neq 0$.
\end{lemma}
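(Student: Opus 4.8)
The plan is to reduce to the statement that the characteristic variety depends ``upper-semicontinuously'' on $t$, so that the generic member is no larger than the special one, and then invoke the fact that holonomicity is equivalent to the characteristic variety being Lagrangian, i.e.~having dimension exactly $2n$. First I would recall from \cite{Pym2018} that the characteristic variety $\mathrm{Char}(\ps)\subset \ctb{\CC^{2n}}$ is the support (with reduced structure) of the cohomology of the $\sD{}$-module complex attached to the Lichnerowicz--Poisson differential; concretely, it is cut out by the ideal generated by the principal symbols of the operators appearing in a finite presentation of this complex. Since the $\pi_t$ form a polynomial family, one obtains a single complex of $\sD{}$-modules over the base $\CC_t$, and hence a coherent sheaf $\mathcal{C}$ on $\CC_t \times \ctb{\CC^{2n}}$ (cut out by symbols whose coefficients are polynomial in $t$) whose fibre over $t$ contains $\mathrm{Char}(\pi_t)$; by generic flatness and upper semicontinuity of fibre dimension, there is a Zariski-dense open $t$ for which $\dim \mathrm{Char}(\pi_t) \le \dim \mathcal{C}_0 = \dim \mathrm{Char}(\pi_0) = 2n$, using holonomicity of $\pi_0$.

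Next I would supply the reverse inequality $\dim \mathrm{Char}(\pi_t)\ge 2n$, which holds for \emph{every} Poisson tensor: the characteristic variety always contains the conormal variety of the symplectic foliation (equivalently, it contains the zero section, since the Poisson complex in degree zero is $\cO{}$ mapping to vector fields), and more to the point the general lower bound $\dim \mathrm{Char}(\ps)\ge \dim \W$ is part of the basic theory in \cite{Pym2018} (the characteristic variety is coisotropic, hence of dimension at least $n$ in a $2n$-dimensional cotangent bundle — in fact $\ge \dim\W$). Combining the two inequalities, $\mathrm{Char}(\pi_t)$ is pure of dimension $2n$ for all but finitely many $t$; being coisotropic of dimension $2n=\tfrac12\dim\ctb{\CC^{2n}}$ forces it to be Lagrangian, so $\pi_t$ is holonomic for all but finitely many $t$.

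The main obstacle is making the semicontinuity step rigorous: one must check that the $\sD{}$-module complex can genuinely be organized into a complex flat (or at least generically flat) over $\CC_t$, so that the symbol ideal specializes correctly and the fibrewise characteristic variety is the reduction of the fibre of a fixed coherent sheaf. For a \emph{polynomial} (indeed quadratic) family this is essentially automatic — the Poisson differential and its higher syzygies are given by matrices of differential operators with polynomial-in-$t$ coefficients, so one works over the polynomial ring $\CC[t]$, takes the associated graded with respect to the order filtration to pass to symbols, and applies generic flatness over $\CC[t]$ — but it requires a little care to ensure that passing to $\mathrm{gr}$ commutes with specialization on a dense open locus in $t$. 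Everything else is then formal.
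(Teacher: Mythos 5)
Your overall strategy -- bound $\dim\mathrm{Char}(\pi_t)$ above by semicontinuity in the family and below by involutivity/coisotropicity -- is the same as the paper's, and the lower-bound half is fine. But the upper-bound half has a genuine gap at exactly the step you flag as ``essentially automatic.'' The dimension of the fibres of a closed subvariety $Z\subset \CC_t\times\CC^{4n}$ over the base $\CC_t$ is \emph{not} upper semicontinuous in $t$ unless the projection to $\CC_t$ is proper (or at least closed): an irreducible component of $Z$ can escape to infinity as $t\to 0$, as for $V(tx_1-1)$, whose fibre is empty at $t=0$ but of dimension $4n-1$ for $t\neq 0$. Chevalley's theorem only gives semicontinuity of $x\mapsto\dim_x Z_{f(x)}$ on the \emph{total space}, and generic flatness does not help: the flat locus need not contain $t=0$, and even flatness over a punctured neighbourhood of $0$ is compatible with fibre components running off to infinity. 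So the inequality $\dim\mathcal{C}_t\le\dim\mathcal{C}_0$ for generic $t$ does not follow from what you have written, and the subsidiary issue you identify (compatibility of $\mathrm{gr}$ with specialization) is not the main obstruction.

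The hypothesis that the $\pi_t$ are \emph{quadratic} -- which you mention only parenthetically and never use -- is precisely what repairs this. The characteristic variety $\V_t$ is always conic in the fibre directions of the cotangent bundle; quadraticity of $\pi_t$ makes it invariant under dilation of the base $\CC^{2n}$ as well, hence invariant under the uniform $\CCx$-dilation of all of $\CC^{4n}$. One can therefore projectivize to get a family of closed subvarieties $\PP(\V_t)\subset\PP^{4n-1}$, which \emph{is} proper over $\CC_t$, and for proper maps upper semicontinuity of fibre dimension over the base does hold: $\dim\PP(\V_0)=2n-1$ forces $\dim\PP(\V_t)\le 2n-1$ for $t$ in a Zariski neighbourhood of $0$. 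This is the paper's argument, and without the conicity (or some substitute controlling behaviour at infinity) your semicontinuity step cannot be made to work as stated.
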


\begin{proof}
Let $\V_t\subset \tb{\CC^{2n}} \cong \CC^{4n}$ be the characteristic variety of the complex of $\mathcal{D}$-modules $\mathcal{M}^\bullet_{\pi_t}$, which is thus invariant under the action of $\CCx$ by dilation of the fibres.   Additionally, since all Poisson structures $\pi_t$ are quadratic, each $\V_t$ is also invariant under the dilation on the base $\mathbb{C}^{2n}$. Combining these two symmetries, we get that all $\V_t$ are invariant under the uniform dilation of all directions $\mathbb{C}^{4n}$. In other words, the projectivization $\PP(\V_t)\subset \PP^{4n-1}$ is well-defined for each $t$. Since $\pi_t$ is assumed holonomic, we have $\dim_\CC \PP(\V_t) = 2n-1$. Then by semicontinuity of dimension for proper maps, we have that $\dim_\CC \PP(\V_t)\le 2n-1$ for $t$ in a Zariski neighbourhood of the origin, as desired.
\end{proof}

The toric degeneration $\hilbpstor$ is a log symplectic manifold with normal crossings boundary, so the results of our earlier work~\cite{Matviichuk2020} apply.  In particular,  by condition 4 of \cite[Theorem 1.5]{Matviichuk2020}, and by the cyclic monotonicity property of the biresidues of $\hilbpstor$ (\autoref{def:cyclically_monotone}), it suffices to prove the following linear-algebraic statement:

\begin{lemma} \label{lm:cyclically_monotone_holonomic}
Let $B$ be a cyclically monotone matrix of odd size, with only zeros and ones above the diagonal.
Then the row span of $B$ does not contain the constant vector $(1,1,...,1)\in \mathbb{C}^m$.
\end{lemma}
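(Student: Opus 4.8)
The plan is to induct on the odd integer $m$, reducing to the case in which the \emph{support graph} of $B$ --- the graph on $\{1,\dots,m\}$ with an edge between $\alpha$ and $\beta$ whenever $b_{\alpha\beta}\neq 0$ --- is connected. The base case $m=1$ is immediate, since then $B=(0)$ and $\operatorname{rowspan}(B)=\{0\}$. For the inductive step, suppose the support graph has connected components $V_1,\dots,V_t$ with $t\geq 2$. As $B$ is skew--symmetric, reordering the indices makes it block diagonal with one block $B|_{V_j}$ per component; each such block is again skew--symmetric with $0/1$ entries above its diagonal, and it remains cyclically monotone for the cyclic order induced on $V_j$, because reading a row of $B|_{V_j}$ cyclically just extracts a subsequence of the corresponding non--increasing cyclic row of $B$, and a subsequence of a non--increasing sequence is non--increasing. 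Since $m$ is odd, at least one $V_j$ has odd size, necessarily $<m$; the inductive hypothesis gives $(1,\dots,1)\notin\operatorname{rowspan}(B|_{V_j})$, and as $\operatorname{rowspan}(B)=\bigoplus_j\operatorname{rowspan}(B|_{V_j})$ we conclude $(1,\dots,1)\notin\operatorname{rowspan}(B)$. (A zero row of $B$ is an isolated vertex, so this also disposes of that case.)

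It remains to treat a cyclically monotone $B$ of odd size $m$, with $0/1$ entries above the diagonal, whose support graph is connected. I claim it then suffices to show that the bordered skew--symmetric matrix $\widetilde B := \left(\begin{smallmatrix} 0 & \mathbf 1^{\top} \\ -\mathbf 1 & B\end{smallmatrix}\right)$, of even size $m+1$, is nondegenerate (here $\mathbf 1 = (1,\dots,1)$). Indeed, for any skew--symmetric $B$ one has $\operatorname{rank}\widetilde B=\operatorname{rank} B$ whenever $\mathbf 1\in\operatorname{rowspan}(B)$ --- if $\mathbf 1 = Bc$ then $\mathbf 1^{\top}c = -c^{\top}Bc = 0$, so the bordered column and row lie in the span of the remaining ones --- and therefore $\det\widetilde B\neq 0$ forces $\operatorname{rank}\widetilde B=m+1>\operatorname{rank}B$, hence $\mathbf 1\notin\operatorname{rowspan}(B)$. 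Expanding the Pfaffian of $\widetilde B$ along its bordered row, $\operatorname{Pf}(\widetilde B)=\pm\sum_{k=1}^{m}(-1)^{k}\operatorname{Pf}(B_{\widehat k})=\pm\sum_k u_k$, where $B_{\widehat k}$ deletes the $k$--th row and column and $u=\bigl((-1)^k\operatorname{Pf}(B_{\widehat k})\bigr)_k\in\ker B$; so, concretely, the goal is to produce in the connected case some $u$ with $Bu=0$ and $\sum_k u_k\neq 0$.

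This last step is where the combinatorics of cyclic monotonicity enters essentially, and is the part I expect to be the real obstacle. The structure to exploit is the rigid ``staircase'' shape of $B$: in row $\alpha$ the entries $+1$ fill the cyclic interval $\{\alpha+1,\dots,\alpha+r_\alpha\}$ just after $\alpha$, the entries $-1$ fill $\{\alpha-s_\alpha,\dots,\alpha-1\}$ just before it, and skew--symmetry forces $b_{\alpha\gamma}=1\Rightarrow b_{\alpha'\gamma}=1$ for every $\alpha'$ on the short arc from $\alpha$ to $\gamma$; connectedness already gives $r_\alpha+s_\alpha\geq 1$ for all $\alpha$, and more refined consequences (e.g.\ a run of three consecutive equal columns would isolate the middle index) cut the possibilities down further. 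I would encode $u$ through its cumulative sums $U_n=\sum_{i\leq n}u_i$, a well--defined function on $\ZZ/m$ precisely when $\sum_i u_i=0$, which turns $Bu=0$ and $Bu=-\mathbf 1$ into the relations $U_\alpha+U_{\alpha-1}=U_{\alpha+r_\alpha}+U_{\alpha-s_\alpha-1}$ and $U_\alpha+U_{\alpha-1}=U_{\alpha+r_\alpha}+U_{\alpha-s_\alpha-1}+1$, in which the index pair $(\alpha-s_\alpha-1,\,\alpha+r_\alpha)$ always strictly surrounds $(\alpha-1,\alpha)$ on the circle (using $r_\alpha+s_\alpha\geq 1$). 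One promising route is then to derive a contradiction from the second relation --- i.e.\ from $\mathbf 1\in\operatorname{rowspan}(B)$ --- by an extremal/propagation analysis of $U$ around the cycle, the oddness of $m$ being exactly what prevents the relations from closing up consistently; an alternative would be a direct matching--theoretic proof that $\operatorname{Pf}(\widetilde B)\neq 0$, or an explicit description of $\ker B$ in terms of the staircase data. Completing either of these finishes the proof.
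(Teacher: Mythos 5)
There is a genuine gap: the combinatorial heart of the lemma is not proved. Your two reductions are sound --- restricting to a connected component of the support graph preserves cyclic monotonicity and the $0/1$ condition, and the bordering argument correctly shows that $\det\widetilde B\neq 0$ would force $\mathbf 1\notin\operatorname{rowspan}(B)$ --- but the remaining claim, that for a connected cyclically monotone staircase matrix of odd size the constant vector avoids the row span, is exactly where all the difficulty lives, and you explicitly leave it as one of two unexecuted ``promising routes.'' Moreover, the Pfaffian route as stated cannot be completed in general: $\operatorname{Pf}(\widetilde B)=\pm\,\mathbf 1^{\top}u$ with $u$ the Pfaffian-adjugate vector, which vanishes identically whenever $B$ has corank $\ge 3$, so nondegeneracy of $\widetilde B$ is sufficient but not necessary for the conclusion; you would first have to rule out higher corank (or argue separately in that case), and nothing in the proposal does so. The propagation analysis of the cumulative sums $U_n$ is likewise only a plan, with no argument for why oddness of $m$ obstructs a consistent solution of $Bu=-\mathbf 1$.

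For comparison, the paper resolves this by a change of model rather than a direct attack on the staircase: it first shows that every such $B$ is the overlap matrix $B(J)$ of a tuple $J$ of $m$ unit intervals on the real line ($B(J)_{\alpha\beta}=\pm 1$ according to whether $J_\alpha$ overlaps $J_\beta$ on the left or right), and then proves the stronger statement that the length vector $\ell(J)$ never lies in the row span of $B(J)$ for odd $m$. The induction step picks an interval $J_\gamma$ overlapping $J_m$, performs an explicit change of basis that splits off the summand $\te_\gamma\wedge\te_m$ from the bivector $B(J)$, and identifies the remaining $(m-2)\times(m-2)$ block as $B(\tJ)$ for a new interval configuration obtained by an interval exchange transformation. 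This geometric realization is what makes the induction close up; some substitute for it (or a genuinely new idea for the connected staircase case) is needed to complete your argument.
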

Although the statement is elementary, we did not manage to find an easy direct proof.  We will instead prove a more general statement, that is amenable to an inductive argument.  For a tuple $J = (J_1, \ldots, J_m)$ of non-empty open intervals $J_1=(c_1,d_1),\ldots,J_m=(c_m,d_m) \subset \RR$, such that all the endpoints of $J_i$ are distinct, let 
\[
\ell(J) := (d_1-c_1,d_2-c_2,\ldots,d_m-c_m)
\]
be the vector of their lengths.  Let us say that \defn{$J_\alpha$ overlaps   $J_\beta$ on the left} (respectively \defn{right}) if $c_\alpha < c_\beta < d_\alpha < d_\beta$ (resp.~$c_\beta < c_\alpha < d_\beta < d_\alpha$).  Define an $m\times m$ skew-symmetric matrix $B(J)$ by the formula
\begin{align}
B(J)_{\alpha\beta} = \begin{cases}
1 & \textrm{if }J_\alpha \textrm{ overlaps }J_\beta \textrm{ on the left} \\
-1 & \textrm{if }J_\alpha \textrm{ overlaps }J_\beta \textrm{ on the right} \\
0 & \textrm{otherwise}.
\end{cases}
\end{align}

\begin{lemma}
Every matrix $B$ as in \autoref{lm:cyclically_monotone_holonomic} is of the form $B = B(J)$ for some collection $J$ of intervals of lengths $\ell(J) = (1,\ldots,1)$.
\end{lemma}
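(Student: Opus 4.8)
The plan is to produce the family $J$ by an explicit greedy construction, with feasibility at every stage governed by a single monotonicity inequality distilled from cyclic monotonicity. First I would reduce to a convenient normalization. By \autoref{def:cyclically_monotone} there is a nonzero constant $c$ for which $cB$ has weakly decreasing cyclic chains $b_{\alpha,\alpha+1}\ge\cdots\ge b_{\alpha,m}\ge b_{\alpha,1}\ge\cdots\ge b_{\alpha,\alpha-1}$. If $c<0$, the rows of $B$ are instead cyclically weakly \emph{increasing}; comparing, for any $\alpha$ with $2\le\alpha\le m-1$, the entry $b_{\alpha,m}$ (which is above the diagonal, hence in $\{0,1\}$) with the later entry $b_{\alpha,1}$ (below the diagonal, hence in $\{0,-1\}$) in the increasing chain forces $b_{\alpha,m}=b_{\alpha,1}=0$, and monotonicity then forces all of row $\alpha$ to vanish; by skew-symmetry columns $2,\dots,m-1$ vanish as well, so the only entry of $B$ that can be nonzero above the diagonal is $b_{1m}$, and such a matrix equals $B(J)$ where $J_1$ overlaps $J_m$ on the left and all other intervals are pairwise far apart. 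So I may assume $c>0$; after rescaling, $B$ itself has weakly decreasing cyclic chains.

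Next I would extract the combinatorial shape of $B$. For each $\alpha$ the cyclic chain is weakly decreasing with values in $\{-1,0,1\}$, its entries in columns $>\alpha$ lying in $\{0,1\}$ and those in columns $<\alpha$ in $\{0,-1\}$; hence there are integers $p_\alpha,q_\alpha\ge0$ with $b_{\alpha\beta}=1$ exactly for $\alpha<\beta\le\alpha+p_\alpha$ and $b_{\alpha\beta}=-1$ exactly for $\alpha-q_\alpha\le\beta<\alpha$. Writing $P_\alpha:=\alpha+p_\alpha$, skew-symmetry identifies $q_\alpha$ with $\#\{\beta:\beta<\alpha\le P_\beta\}$; since any such $\beta$ either equals $\alpha-1$ or satisfies $\beta<\alpha-1\le P_\beta$, I obtain the key inequality $q_\alpha\le q_{\alpha-1}+1$ for all $2\le\alpha\le m$.

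Then I would build the realizing family. I will choose reals $c_1<c_2<\cdots<c_m$ one index at a time and set $J_\alpha:=(c_\alpha,c_\alpha+1)$, requiring that, once $c_1<\cdots<c_{\alpha-1}$ have been picked, the next value satisfy $c_\beta<c_\alpha<c_\beta+1$ for $\alpha-q_\alpha\le\beta<\alpha$ and $c_\alpha>c_\beta+1$ for $1\le\beta<\alpha-q_\alpha$. The admissible $c_\alpha$ then form a (possibly unbounded) open interval with lower endpoint $\max\bigl(c_{\alpha-1},\,c_{\alpha-q_\alpha-1}+1\bigr)$ and upper endpoint $c_{\alpha-q_\alpha}+1$, and the heart of the argument — the step I expect to be the main obstacle — is to check this interval is nonempty at every stage. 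The bound $c_{\alpha-q_\alpha-1}+1<c_{\alpha-q_\alpha}+1$ is automatic from monotonicity of the $c$'s, and $c_{\alpha-1}<c_{\alpha-q_\alpha}+1$ is immediate when $q_\alpha\le1$; when $q_\alpha\ge2$ one has $q_{\alpha-1}\ge1$, so the stage-$(\alpha-1)$ upper constraint reads $c_{\alpha-1}<c_{(\alpha-1)-q_{\alpha-1}}+1$, and $q_\alpha\le q_{\alpha-1}+1$ gives $\alpha-q_\alpha\ge(\alpha-1)-q_{\alpha-1}$, whence $c_{\alpha-q_\alpha}\ge c_{(\alpha-1)-q_{\alpha-1}}$ and the required bound follows. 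Inductively, $J_1,\dots,J_\alpha$ realize the restriction of $B$ to the first $\alpha$ indices.

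Finally I would record the verification. All inequalities imposed in the construction are strict, which makes the $2m$ numbers $c_\alpha$ and $c_\alpha+1$ pairwise distinct (for instance $c_\alpha=c_\beta+1$ is excluded by the strict constraint relating $c_{\max(\alpha,\beta)}$ to $c_{\min(\alpha,\beta)}$, and $c_\alpha=c_\beta$ by strict monotonicity). For $\alpha<\beta$: if $b_{\alpha\beta}=1$ the construction places $c_\beta\in(c_\alpha,c_\alpha+1)$, so $J_\alpha$ overlaps $J_\beta$ on the left and $B(J)_{\alpha\beta}=1$; if $b_{\alpha\beta}=0$ it places $c_\beta>c_\alpha+1$, so $J_\alpha$ and $J_\beta$ are disjoint and $B(J)_{\alpha\beta}=0$. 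Skew-symmetry disposes of $\alpha>\beta$, and $\ell(J)=(1,\dots,1)$ by construction, so $B=B(J)$, as claimed.
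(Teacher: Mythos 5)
Your proof is correct and follows essentially the same greedy construction as the paper: place unit intervals left to right, choosing each left endpoint $c_\alpha$ in the interval $(\max\{c_{\alpha-1},\,c_{k_\alpha-1}+1\},\,c_{k_\alpha}+1)$, where $k_\alpha=\alpha-q_\alpha$ is the uppermost $1$ in column $\alpha$. You additionally supply the verifications the paper leaves implicit --- the contiguity of the $1$'s in each column, the inequality $q_\alpha\le q_{\alpha-1}+1$ guaranteeing that the choice interval is nonempty at every stage, and the degenerate case of a negative normalizing constant --- which is a worthwhile completion of the same argument.
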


\begin{proof}
For $1 \le \beta \le m$, let $k_\beta$ be the row index  of the uppermost $1$ in the $\beta$th column of $B$, with the convention that if there are no ones in the $\beta$th column, we define $k_\beta=\beta$. Set $J_1 = (0,1)$, and choose intervals  $J_2,\ldots, J_m$ of length one inductively by choosing the left endpoint  $c_{\beta} > c_{\beta-1} + 1$ if $k_\beta = \beta$ and $c_\beta \in (\max\{c_{\beta-1},c_{k_\beta -1}+ 1\}, c_{k_\beta} + 1)$ if $k_\beta < \beta$.  Then $\ell(J) = (1,\ldots,1)$ and $B(J) = B$, as desired.
\end{proof}

\begin{proposition}
Let $J =(J_1,\ldots,J_m)$ is a tuple of intervals, where $m$ is odd and the endpoints of $J_i$ are distinct.  Then the vector $\ell(J)  \in \RR^m$ of their lengths does not lie in the row span of $B(J)$.
\end{proposition}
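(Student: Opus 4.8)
The plan is to prove the contrapositive-free statement by induction on $m$. The base case $m=1$ is immediate: $B(J)$ is the $1\times 1$ zero matrix, whose row span is $\{0\}$, while $\ell(J) = (d_1 - c_1) \neq 0$. For the inductive step, I will exploit the combinatorial structure of overlapping intervals by choosing a suitable interval to remove — concretely, an interval $J_\alpha$ that is \emph{innermost} in some sense, e.g.\ one whose left endpoint $c_\alpha$ is maximal among all left endpoints, or dually one of length minimal among those involved in any overlap. After relabelling, suppose $J_m$ is this chosen interval and consider $J' = (J_1,\ldots,J_{m-1})$, which has odd length minus one, i.e.\ even length; so I actually need to set up the induction to go down by \emph{two} at a time, removing a carefully chosen \emph{pair} of intervals, in order to keep the parity (oddness) intact. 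The cleanest version: find two intervals $J_\alpha, J_\beta$ whose removal changes $B(J)$ in a controlled way (the submatrix on the remaining indices is exactly $B(J'')$ for the restricted tuple $J''$), and such that $\ell(J'')$ lies in the row span of $B(J'')$ whenever $\ell(J)$ lies in that of $B(J)$.

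**Key steps.** First I would record the elementary geometric facts about the overlap matrix: $B(J)_{\alpha\beta}\neq 0$ iff the intervals $J_\alpha, J_\beta$ overlap but neither contains the other, and in that case the sign records which is ``to the left.'' Next, I would identify a distinguished interval: order the intervals by left endpoint and take $J_m$ to have the \emph{largest} left endpoint $c_m$. Then $J_m$ overlaps $J_\beta$ on the left for no $\beta$ (since that would need $c_m < c_\beta$), so the $m$th row of $B(J)$ has entries in $\{0,-1\}$ only; in fact $B(J)_{m\beta} = -1$ precisely when $c_\beta < c_m < d_\beta < d_m$. The third step is the heart: suppose for contradiction that $\ell(J) = v^{\mathsf T} B(J)$ for some row vector $v = (v_1,\ldots,v_m)$. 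Looking at the $m$th coordinate of this equation gives a relation expressing $d_m - c_m$ as a $\pm 1$-combination of the $v_\beta$ over those $\beta$ with $c_\beta < c_m < d_\beta$. I would then try to run a sign/counting argument — perhaps summing the defining equations against a cleverly chosen test vector (e.g.\ the all-ones vector, or an indicator of a ``level set'' of a sweeping point moving along $\RR$) — to derive a contradiction directly, rather than reducing $m$. The natural test is: integrate, i.e.\ for a real parameter $t$, let $\chi_\alpha(t)$ be the indicator that $t \in J_\alpha$; then $\sum_\alpha \int \chi_\alpha(t)\,dt = \sum_\alpha \ell(J)_\alpha$, and the quantity $\sum_{\alpha,\beta} B(J)_{\alpha\beta} f(t)$ for suitable $f$ can be rewritten as a sum over pairs of endpoints, which should telescope.

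**The main obstacle.** The crux — and the step I expect to be genuinely delicate — is showing that the parity ($m$ odd) is actually used, i.e.\ producing the contradiction in a way that genuinely fails for even $m$ (as it must, since the statement is false without the oddness hypothesis: e.g.\ two intervals overlapping on the left of equal length give $B = \begin{psmallmatrix} 0 & 1 \\ -1 & 0\end{psmallmatrix}$ whose rows do span $(1,1)$). So the argument cannot be purely a monotone/telescoping inequality; it must detect parity. I suspect the right mechanism is: suppose $\ell(J) = v^{\mathsf T} B(J)$, consider the associated skew-symmetric \emph{form} and use that $B(J)$ has even rank, so if $\ell(J)$ were in its row span one could pair $\ell(J)$ with a vector in the kernel of $B(J)$ and get $0$; the parity obstruction then comes from exhibiting a specific kernel vector $u$ (an alternating $\pm 1$ vector adapted to a ``nesting forest'' structure of the intervals) with $\langle \ell(J), u\rangle \neq 0$, where the nonvanishing of this pairing forces $m$ to be odd. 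Making the combinatorics of this kernel vector precise — identifying the forest/chain structure that the overlap relation induces, and checking $u B(J) = 0$ — is where the real work lies; I would handle it by induction on the number of maximal intervals, peeling off an interval that overlaps nothing (necessarily contributing a zero row and column, hence reducing to smaller odd $m$) or, if every interval overlaps something, analyzing a ``rightmost'' chain of mutually overlapping intervals and using an explicit alternating assignment there.
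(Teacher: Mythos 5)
There is a genuine gap. You correctly handle the base case, identify the parity obstruction and the consequent need for an induction that drops $m$ by two, and find the right normalization (sort the intervals by left endpoint, so that $B(J)_{\alpha\beta}\in\{0,1\}$ for $\alpha<\beta$); all of this matches the paper. You also correctly dispose of the easy case where some interval overlaps nothing. But neither of your two proposed mechanisms for the inductive step works as described. First, removing a pair of intervals and passing to the principal submatrix does \emph{not} preserve the hypothesis: if $\ell(J)=vB(J)$, then restricting to the surviving coordinates leaves residual contributions $v_\gamma B(J)_{\gamma,\cdot}$ and $v_m B(J)_{m,\cdot}$ from the deleted rows, so $\ell(J')$ need not lie in the row span of the submatrix $B(J')$. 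The paper's actual device is a change of basis (a symplectic reduction): choosing $\gamma$ maximal with $J_\gamma$ overlapping $J_m$ on the left, one replaces $e_\gamma,e_m$ by corrected vectors $\widetilde e_\gamma,\widetilde e_m$ so that $B(J)=\widetilde e_\gamma\wedge\widetilde e_m + B(\widetilde J)$ with the second summand supported on the other $m-2$ coordinates; crucially, $\widetilde J$ is not the original collection with two intervals deleted but a new collection obtained by applying an ``interval exchange transformation'' to the surviving intervals, which is what makes the reduced bivector again an overlap matrix and lets the induction close.

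Second, your kernel-vector reformulation is equivalent to the statement rather than a route to it: since $B(J)$ is skew, $\ell(J)\in\operatorname{rowspan}B(J)$ iff $\ell(J)\perp\ker B(J)$, so producing $u\in\ker B(J)$ with $\langle\ell(J),u\rangle\neq 0$ \emph{is} the proposition, and the proposed explicit form of $u$ (an alternating $\pm1$ vector along a chain) is wrong in general. For three unit intervals forming a chain in which $J_1$ and $J_3$ do not overlap, one gets
\[
B(J)=\begin{pmatrix}0&1&0\\-1&0&1\\0&-1&0\end{pmatrix},
\]
whose kernel is spanned by $(1,0,1)$, not by $(1,-1,1)$; the kernel depends on the overlap graph in a way that does not reduce to a nesting forest with an alternating sign assignment. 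So the heart of the argument --- identifying the correct reduction of the interval configuration --- is missing, as you in effect acknowledge.
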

\begin{proof}
We  proceed by induction on the odd integer $m$.  The base case $m=1$, is obvious, so assume the proposition holds for some odd integer $m-2$; we will prove that it holds for  $m$.  

Let view $B(J)$ as a bivector $B(J) \in \wedge^2 \mathbb{R}^m$ so that we have
$$
B(J) := \sum_{1\le \alpha<\beta\le m} B(J)_{\alpha,\beta}~ e_\alpha \wedge e_\beta \qquad \ell(J) = \sum_{\alpha=1}^m \ell(J)_\alpha\,e_\alpha
$$
where $e_1,e_2,\ldots$ is the standard basis of $\mathbb{R}^m$.  

By permuting the indices, we may assume without loss of generality that the sequence of left endpoints of the intervals is strictly increasing. Then if $\alpha < \beta$, the interval $J_\alpha$ can never overlap $J_\beta$ on the right, and hence $B(J)_{\alpha,\beta} \in \{0,1\}$ for $\alpha<\beta$.  

If no interval $J_\gamma$ with $\gamma < m$ overlaps $J_m$, then $B(J)_{\alpha,m}=0$ for all $\alpha$, and since $\ell(J)_m > 0$ we conclude that $\ell(J)$ is not in the row span of $B(J)$, as desired.  Hence we may assume without loss of generality that there exists a maximal index $\gamma < m$ such that $J_\gamma$ overlaps $J_m$ on the left. Define a new basis $ \te_1,\ldots, \te_m$ for $\RR^m$ by the formula
\[
\te_\alpha := \begin{cases}
e_{\gamma} - \sum_{\beta\not=\gamma,m} B(J)_{m,\beta}\cdot e_\beta & \alpha = \gamma \\
e_m + \sum_{\beta\not=\gamma,m} B(J)_{\gamma,\beta}\cdot e_\beta  & \alpha = m \\
e_\alpha & \textrm{otherwise}.
\end{cases}
\]
It is tedious but straightforward to check that in this new basis, we have
\[
B(J) = \te_\gamma \wedge \te_m  + \sum_{\substack{\alpha,\beta,\gamma, m \textrm{ distinct} \\ \alpha<\beta} } B(\tJ)_{\alpha,\beta} \te_\alpha \wedge \te_\beta  \qquad \ell = a_1 \te_\gamma + a_2 \te_m + \sum_{\alpha \neq \gamma,m} \ell(\tJ)_\alpha \te_\alpha
\]   
where $a_1,a_2 \in \ZZ$, and $\tJ = (T \cdot J_1,\ldots,\widehat{T \cdot J_\gamma},\ldots, T \cdot J_{m-1})$ is the collection of $m-2$ intervals defined as follows.  First, define the ``interval exchange transformation'' to be the bijection $T : \RR \to \RR$ which translates the intervals between the endpoints of $J_\gamma=(c_\gamma,d_\gamma)$ and $J_m=(c_m,d_m)$ as shown in \autoref{fig:interval_exch}, and acts as the identity on the rest of the real line.  Then $T \cdot J_\alpha = (T(c_\alpha),T(d_\alpha))$ is the interval spanned by the $T$-images of the endpoints of $J_\alpha$. By induction, $\ell(\tJ)$ is not in the image of $B(\tJ)$, and hence $\ell(J)$ is not in the image of $B(J)$, as desired.
\end{proof}

\begin{figure}[t]
    \centering
    \includegraphics{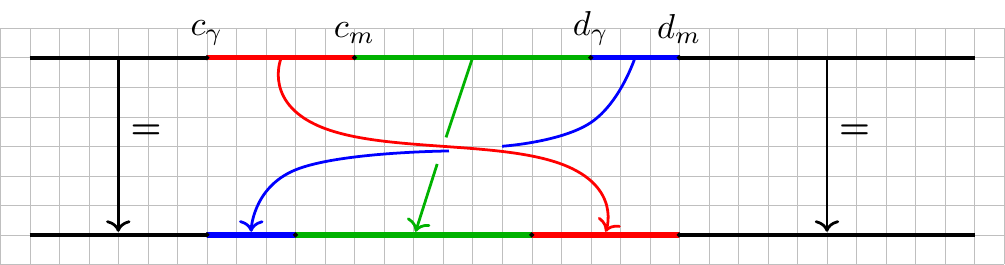}
    \caption{The interval exchange transformation $T$}
    \label{fig:interval_exch}
\end{figure}

\section{Deformation theory}
\label{sec:def-thy}

In this section we address some aspects of the deformation theory of Bottacin's Poisson structures.  To this end, recall that if $(\W,\ps)$ is a holomorphic Poisson manifold, then the sheaf $\der{\W}=\wedge^\bullet{\cT{\W}}$ of polyvector fields comes equipped with a differential $\dps := [\ps,-]$.  The hypercohomology $\Hps{\W}$ of this complex (called the Poisson cohomology) controls deformations of $\W$ as a holomorphic Poisson manifold, or more generally, as a generalized complex manifold.

 We shall compute the Poisson cohomology of $\Hilb{\X}$ in low degrees, and in particular determine the deformation space, under the assumption that $\D$ is reduced, and has only quasi-homogeneous singularities (i.e.~is locally the zero set of a quasi-homogeneous polynomial).  As remarked in \cite[Section 4.4]{Pym2018}, the quasi-homogeneity is automatic if $\X$ is projective, thanks to the classification in \cite[Section 7]{Ingalls1998}, or by analysis of the minimal models and their blowups.

\subsection{Surface case}
Let us briefly recall the description of the Poisson cohomology in the case $n=1$ from \cite[Section 4.4]{Pym2018} and \cite{Goto2015}.
Let us denote by $\U := \X\setminus \D$ the open symplectic leaf, and by
\[
j : \U \hookrightarrow \X
\]
its open embedding.  Since $\ps$ is symplectic over $\U$, we have a canonical isomorphism
\[
j^*(\der{\X},\dps) \cong (\forms{\U},\dd) \cong \CC_\U
\]
in the constructible derived category of $\U$, which gives, by adjunction, a natural map
\begin{align}
(\der{\X},\dps) \to Rj_*\CC_\U \label{eq:surface-open-restrict}
\end{align}
in the constructible derived category of $\X$.

On the other hand, let $\D_\sing \subset \D$ denote the scheme-theoretic singular locus of $\D$, cut out locally by the vanishing of a defining equation for $\D$ and its partial derivatives, and let
\[
\xymatrix{
i : \D_\sing \hookrightarrow \X
}
\]
be the corresponding closed embedding.  Its image is the union of all codimension-two characteristic symplectic leaves in $\X$.  One easily checks that the restriction of bivectors on $\X$ to $\D_\sing$ annihilates the image of $\dps$, and hence there is a canonical map
\begin{align}
(\der{\X},\dps) \to i_*i^*\acan{\X}[-2], \label{eq:surface-sing-restrict}
\end{align}
where the right hand side is a skyscraper sheaf supported on $\D_\sing$, viewed as a complex concentrated in degree two with trivial differential.  Its stalk at a point $p \in \D_\sing$ is canonically identified with the tangent cohomology $\coH[1]{\tc[\D,p]}$, i.e.~the space of smoothings of the singularities of $\D$ at $p$. 

The maps \eqref{eq:surface-open-restrict} and \eqref{eq:surface-sing-restrict} determine the Poisson cohomology as follows:
\begin{theorem}[{\cite[Theorem 4.7]{Pym2018}}]\label{thm:surface-Hps}
If $\D$ is reduced with only quasi-homogeneous singularities, then the canonical map
\[
(\der{\X},\dps) \to Rj_*\CC_\U \oplus i_*i^*\acan{\X}[-2]
\]
is a quasi-isomorphism.  In particular,
\[
\Hps{\X} \cong \coH{\U;\CC} \oplus \coH[0]{\D_\sing, i^*\acan{\X}}[-2].
\]
as graded vector spaces.
\end{theorem}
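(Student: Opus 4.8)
This is \cite[Theorem 4.7]{Pym2018}; we recall the main steps of the argument. The statement is local: the map is canonical, and all three objects lie in the constructible derived category $D^b_c(\X)$ — since $\ps$ is a nonzero section of $\acan{\X}$, the cohomology sheaves of $(\der{\X},\dps)$ are coherent and supported on $\D$ in positive degrees, and the local computations below will exhibit them as locally constant along the strata $\U$, $\D\setminus\D_\sing$ and $\D_\sing$. It therefore suffices to show that the induced map on the stalk of every cohomology sheaf is an isomorphism, and there are three kinds of points to treat: points of $\U$, smooth points of $\D$, and the (quasi-homogeneous) singular points of $\D$.

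Over $\U$ the bivector is nondegenerate, so the inverse symplectic form identifies $(\der{\X},\dps)|_\U$ with the holomorphic de Rham complex $(\forms{\U},\dd)$, which is quasi-isomorphic to $\CC_\U$ by the Poincar\'e lemma; under this identification the map is the identity and the $i_*i^*\acan{\X}$ summand vanishes. At a smooth point $p$ of $\D$, the log Darboux theorem supplies coordinates with $\D=\{y=0\}$ and $\ps=y\,\cvf{x}\wedge\cvf{y}$, and an elementary computation with the three-term complex $\cO{\X}\to\cT{\X}\to\acan{\X}$ shows that its stalk cohomology is $\CC$ in degree $0$, one-dimensional in degree $1$ with generator the class of $\ps^\sharp(\dlog{y})=-\cvf{x}$, and zero in degree $2$; this agrees with the stalk of $Rj_*\CC_\U$, whose degree-one part is spanned by $[\dlog{y}]$, and $\D_\sing=\varnothing$ near $p$, so one checks the map is the evident isomorphism.

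The heart of the proof is at a singular point $p$. Fix weighted-homogeneous coordinates $(x,y)$ at $p$ so that $\D=\{f=0\}$ with $f$ reduced and quasi-homogeneous of positive weights $w_1,w_2$ and weighted degree $d$, and $\ps=f\,\cvf{x}\wedge\cvf{y}$. The degree-$2$ cohomology is dealt with directly: the image of $\dps\colon\cT{\X,p}\to\acan{\X,p}$ equals $(f,f_x,f_y)\,\acan{\X,p}=(f_x,f_y)\,\acan{\X,p}$, the second equality being the Euler identity $w_1xf_x+w_2yf_y=df$, so the degree-$2$ stalk cohomology is the Tjurina algebra of $(\D,p)$, which the map of the theorem identifies with $(i^*\acan{\X})_p$. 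For degrees $0$ and $1$ the plan is to exploit quasi-homogeneity: the Euler vector field $E=\tfrac1d(w_1x\,\cvf{x}+w_2y\,\cvf{y})$ acts on the stalk complex by Lie derivative with $[\mathcal{L}_E,\dps]$ a scalar multiple of $\dps$, so the complex splits into finite-dimensional weight spaces, and a homotopy constructed from $E$ (whose pole along $\D$ is cancelled precisely by the quasi-homogeneity) kills all but finitely many of these, reducing the problem to a finite-dimensional computation that one matches with the stalk of $Rj_*\CC_\U$, i.e.~with $\coH[\leq 1]{B_p\setminus\D;\CC}$ for a small ball $B_p$. Equivalently — this is the route of \cite{Pym2018} — one identifies $(\der{\X},\dps)$, via contraction with $\ps$, with the logarithmic de Rham complex $(\logforms{\X}{\D},\dd)$ up to terms supported on $\D_\sing$, and invokes the logarithmic comparison theorem, which for plane curves holds exactly when the singularities are quasi-homogeneous. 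We expect the main obstacle to be this singular-point analysis: producing the homotopy (equivalently, verifying the log comparison theorem here) and correctly pinning down the degree-one cohomology, whose rank is governed by the finite-order monodromy of the quasi-homogeneous singularity.

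Granting the quasi-isomorphism, the ``in particular'' is immediate by taking hypercohomology: $\coH{\X;Rj_*\CC_\U}=\coH{\U;\CC}$ by definition of $Rj_*$, and since $\D_\sing$ is zero-dimensional, $\coH{\X;i_*i^*\acan{\X}[-2]}=\coH[0]{\D_\sing, i^*\acan{\X}}[-2]$; combining the two summands yields the stated description of $\Hps{\X}$.
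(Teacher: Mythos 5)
The paper gives no proof of this statement---it is quoted directly from \cite[Theorem 4.7]{Pym2018}---so the only thing to compare against is the citation itself. Your outline accurately reconstructs the argument of that reference: stalkwise reduction over the three strata, the symplectic and log-Darboux computations, the Euler-identity computation identifying the degree-two stalk cohomology at a singular point with the Tjurina algebra, and the logarithmic comparison theorem for locally quasi-homogeneous plane curves (the route via the triangle relating $(\der{\X},\dps)$, $(\logforms{\X}{\D},\dd)$ and $Rj_*\CC_\U$) as the decisive input in degrees $\le 1$.
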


\subsection{Higher-dimensional case}

We now use the results in the previous section to treat the case $n > 1$, as follows. We have the natural open embedding
\[
\Hilb{j} : \Hilb{\U} \hookrightarrow \Hilb{\X},
\]
identifying $\Hilb{\U}$ with the open symplectic leaf in $\Hilb{\X}$.  This gives a natural map
\begin{align}
(\der{\Hilb{\X}},\dhilbps) \to Rj_*\CC_{\Hilb{\U}} \label{eq:hilb-open-restrict}
\end{align}
Meanwhile we have a Poisson rational map
\[
\Hilb[n-1]{\X}\times \X \dashrightarrow \Hilb{\X}
\]
given by $(\Z,p) \mapsto \Z \sqcup \{p\}$ whenever $p \notin \Z$.  Since $\U$ and $\D_\sing$ are disjoint by definition, this map restricts to an embedding
\[
\Hilb{i} : \Hilb[n-1]{\U}\times \D_{\sing} \hookrightarrow \Hilb{\X}
\]
with trivial normal bundle.  By \autoref{cor:codim2_char_leaves}, the image of this embedding is the union of the codimension-two characteristic symplectic leaves of $\Hilb{\X}$.  Since $\Hilb[n-1]{\U}$ is symplectic, we have by K\"unneth decomposition and \autoref{thm:surface-Hps} that
\begin{align*}
(\Hilb{i})^*(\der{\Hilb{\X}},\dhilbps) &\cong \CC_{\Hilb[n-1]{\U}} \boxtimes i^*(Rj_*\CC_\U \oplus \acan{\X}[-2]) \\
&\cong (\Hilb{i})^* R\Hilb{j}_* \CC_{\Hilb{\U}} \oplus (\CC_{\Hilb[n-1]{\U}} \boxtimes i^*\acan{\X}[-2])
\end{align*}
where $\boxtimes$ denotes the external tensor product of sheaves on the product $\Hilb[n-1]{\U}\times \D_\sing$.  Projecting onto the second summand, we obtain a natural map
\begin{align}
(\der{\Hilb{\X}},\dhilbps) \to \Hilb{i}_* \rbrac{\CC_{\Hilb[n-1]{\U}} \boxtimes i^*\acan{\X}}[-2] \label{eq:hilb-sing-restrict}
\end{align}

Combining \eqref{eq:hilb-open-restrict} and \eqref{eq:hilb-sing-restrict}, we obtain a canonical map
\begin{align}
\phi : (\der{\Hilb{\X}},\dhilbps) \to  Rj_*\CC_{\Hilb{\U}} \oplus \Hilb{i}_* \rbrac{\CC_{\Hilb[n-1]{\U}} \boxtimes i^*\acan{\X}}[-2] \label{eq:Hps-hilb-decomp}
\end{align}
\begin{theorem}\label{thm:hilb-Hps}
The induced map
\[
\coH[j]{\phi} : \Hhilbps[j]{\X} \to \coH[j]{\Hilb{\U};\CC} \oplus \rbrac{\coH[j-2]{\Hilb[n-1]{\U};\CC}\otimes \coH[0]{i^*\acan{\X}}}
\]
is an isomorphism for $j \le 2$.
\end{theorem}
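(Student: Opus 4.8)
The plan is to show that the cone $C := \mathrm{cone}(\phi)$ of the map \eqref{eq:Hps-hilb-decomp} satisfies $\mathbb{H}^j(\Hilb{\X}, C) = 0$ for $j \le 2$; by the long exact sequence in hypercohomology associated with $\phi$ this is equivalent to $\coH[j]{\phi}$ being an isomorphism for $j \le 2$. Since $C$ is bounded below, the hypercohomology spectral sequence $E_2^{p,q} = \coH[p]{\Hilb{\X};\scoH[q]{C}} \Rightarrow \mathbb{H}^{p+q}(C)$, in which $p\ge 0$, reduces this to the vanishing of the cohomology \emph{sheaves}
\[
\scoH[q]{C} = 0 \qquad\text{for } q \le 2 .
\]
As the stalks of $\scoH[q]{C}$ are the degree-$q$ Poisson cohomologies of the germs of $(\Hilb{\X},\hilbps)$, equipped with the germs of the maps \eqref{eq:hilb-open-restrict} and \eqref{eq:hilb-sing-restrict}, this is a local statement and may be checked near an arbitrary $\Z \in \Hilb{\X}$.

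By \autoref{lm:Hilb_equiv_to_triang_chart} the germ of $(\Hilb{\X},\hilbps)$ at $\Z$ is, up to stable equivalence, a product of symplectic germs with germs of triangular charts $\Utri \subset \Hilb[k(k+1)/2]{\X}$ at the distinguished points $p(k)$, one factor for each point $p$ in the support of $\Z$. A symplectic factor contributes the constant sheaf $\CC$, so by the K\"unneth formula for Poisson cohomology together with \autoref{thm:surface-Hps} the problem localizes at the points $p(k)$. If $p\notin\D$ that factor is symplectic and contributes nothing. If $\Z$ meets $\D$ only in \emph{reduced} points, then every non-symplectic factor is the germ of $\X$ at a point $q\in\D$, and \autoref{thm:surface-Hps} identifies the Poisson complex of that factor with $Rj_*\CC\oplus i_*i^*\acan{\X}[-2]$; by the construction of $\phi$ — with $\phi_2$ defined by pullback along $\Hilb{i}$ and \autoref{thm:surface-Hps}, and \autoref{cor:codim2_char_leaves} describing its image — the map $\phi$ reproduces the resulting K\"unneth decomposition, so that any cohomology of $C$ surviving there, coming only from simultaneously smoothing two or more singular points of $\D$ (which $\phi_2$ does not detect), sits in degree $\ge 4$. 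Hence $\scoH[q]{C}$ with $q\le 2$ can be nonzero only where $\Z$ is \emph{non-reduced} at some point of $\D$, i.e.\ where one of the factors is the germ at the origin of $\aff{k}^\vee$ with its Lie--Poisson structure (if $\D$ is smooth there, by \autoref{thm:triang_chart_Hilb_y_affk}) or of one of the triangular-chart models of \autoref{thm:local} and its higher-$A_m$ analogues.

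It remains to establish, for these local models $\W$, the sheaf-level comparison $\scoH[q]{\mathrm{cone}(\psi_\W)} = 0$ for $q\le 2$, where $\psi_\W$ is the analogue of $\phi$ for $\W$. Using \autoref{lm:Hilb_equiv_to_triang_chart} once more, the germ of $\W$ at any of its points is stably equivalent to a product of symplectic germs with germs at origins of triangular-chart models; since adding a symplectic factor does not affect the comparison and the ``reduced on $\D$'' factors were dealt with above, it suffices to treat the germ at the origin of each such model, for every $k$. When $\D$ is smooth this is the germ of $\aff{k}^\vee$ at the origin; here one computes its Poisson cohomology in degrees $\le 2$ — exploiting that $\aff{k}^\vee$ is holonomic (\autoref{thm:nodal-holonomic} applied to a surface with smooth $\D$; cf.\ the corollary of \autoref{thm:holonomic} and the general results of \cite{Pym2018,Matviichuk2020}), so that the answer is finite-dimensional and supported near the characteristic leaves — and verifies that it agrees with $\coH[\le 2]{(\aff{k}^\vee)^{\reg};\CC}$, recovering and extending Ran's smooth-divisor computation \cite{Ran2016}. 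When $\D$ has a node at the relevant point one instead invokes the canonical isotrivial toric degeneration of \autoref{thm:toric-degen}: a specialization argument in the spirit of \autoref{lm:holonomicity_openness} reduces the required comparison for $\hilbps$ to the same statement for the normal-crossing toric model $\hilbpstor$, which is accessible through \cite{Matviichuk2020}; the remaining quasi-homogeneous singularities (cusps and higher $A_m$), where $\Hilb{\X}$ need not be holonomic, are handled by direct analysis of the corresponding models. The principal obstacle is precisely this last block of computations — above all, pinning down $\scoH[1]$ and $\scoH[2]$ of $\mathrm{cone}(\psi_\W)$ for the germ of $\aff{k}^\vee$ at the origin \emph{uniformly in $k$}, and keeping the degree-versus-codimension bookkeeping tight enough that none of the genuinely nonzero higher-degree cohomology of $C$ on the deep strata contributes in total degree $\le 2$.
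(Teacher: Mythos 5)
Your overall strategy---reducing the vanishing of $\mathbb{H}^{j}(\mathrm{cone}(\phi))$ for $j\le 2$ to the vanishing of the cohomology \emph{sheaves} $\scoH[q]{\mathrm{cone}(\phi)}$ for $q\le 2$, and then checking the latter stalkwise via the local models---does not work, because the sheaf-level claim is false. Take $\Z\in\Hilb{\X}$ containing a singular point $p$ of $\D$ together with at least one further (even reduced, smooth) point $q$ of $\D$. The germ of $(\Hilb{\X},\hilbps)$ at $\Z$ is a product of the surface germs at $p$ and $q$ with a symplectic germ, so by \autoref{thm:surface-Hps} and K\"unneth its local Poisson cohomology contains the summand $\acan{\X}|_{p}$ in degree two. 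But such a $\Z$ lies neither in $\bS_1$ nor in the image of $\Hilb{i}$ (which requires the remaining $n-1$ points to lie in $\U$), so the stalk of the target of $\phi$ at $\Z$ is just $(Rj_*\CC_{\Hilb{\U}})_{\Z}$, which kills this class. Hence $\scoH[2]{\mathrm{source}}\to\scoH[2]{\mathrm{target}}$ has nontrivial kernel at $\Z$, i.e.\ $\scoH[1]{\mathrm{cone}(\phi)}\neq 0$ there---contradicting your assertion that the cone can have nonzero cohomology in degrees $\le 2$ only where $\Z$ is non-reduced along $\D$. Once $\scoH[1]$ and $\scoH[2]$ of the cone are nonzero constructible sheaves supported on codimension-$\ge 3$ strata, the terms $E_2^{0,1}$ and $E_2^{0,2}$ of your hypercohomology spectral sequence need not vanish (constructible-sheaf cohomology, unlike coherent local cohomology, gains nothing in degree zero from high codimension of the support), and the proposal offers no mechanism to cancel them.

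This is precisely why the paper does \emph{not} attempt a stalkwise quasi-isomorphism. It proves that $\phi$ is a quasi-isomorphism only over the complement of the codimension-three subvariety $\W$ of \autoref{lem:codim-3-stratum}, and then exploits that $\der{\Hilb{\X}}$ is a complex of \emph{locally free coherent} sheaves: a Hartogs-type local-cohomology argument shows that restriction to $\Hilb{\X}\setminus\W$ is an isomorphism on hypercohomology in degrees $0,1$ and injective in degree $2$, so only surjectivity of $\coH[2]{\phi}$ remains. That last step is genuinely global: it uses the logarithmic de Rham complex $\hilblogforms{\X}{\D}$, the identification $(r_*\hilblogforms{\X}{\D},\dd)\cong R\sympow{j}_*\CC_{\sympow{\U}}$ (where quasi-homogeneity enters via \cite{Castro-Jimenez1996}), the class of the exceptional divisor, and the map $\Hps[2]{\X}\to\Hhilbps[2]{\X}$ induced by deforming $\X$ itself. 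In particular, none of the hard local computations you defer to the end (Poisson cohomology of the germ of $\aff{k}^\vee$ at the origin uniformly in $k$, the nodal toric models, the non-holonomic quasi-homogeneous cases) are needed in the paper's argument---and your proof remains incomplete without them even at the points where your reduction is valid.
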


Before giving the proof, which will occupy the rest of this section, let us note that the cohomology $\coH{\Hilb[n]{\U};\CC}$ can be computed from $\coH{\U;\CC}$ using the G\"ottsche--Soergel formula~\cite{Goettsche1993}.  In particular, applying the theorem to the degree two hypercohomology, we obtain the following
\begin{corollary}
If $\D$ is reduced and has only quasi-homogeneous singularities, then the space of first-order deformations of $(\Hilb{\X},\hilbps)$ is given, for all $n \ge 2$ by
\begin{align*}
\Hhilbps[2]{\X} &\cong  \coH[2]{\Hilb{\U};\CC} \oplus \coH[0]{i^*\acan{\X}} \\
&\cong\rbrac{\coH[2]{\U;\CC} \oplus \wedge^2 \coH[1]{\U;\CC} \oplus  \CC\cdot[\E]} \oplus \coH[0]{i^*\acan{\X}} \\
&\cong \Hps[2]{\X} \oplus \wedge^2 \coH[1]{\U;\CC} \oplus \CC\cdot[\E]
\end{align*}
where $[\E] \in \coH[2]{\Hilb{\U};\CC}$ is the first Chern class of the exceptional divisor of the Hilbert--Chow morphism.
\end{corollary}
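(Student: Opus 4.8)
The plan is to obtain the Corollary as a more or less formal consequence of \autoref{thm:hilb-Hps}, once it is combined with the G\"ottsche--Soergel description of the low-degree cohomology of $\Hilb{\U}$ and with the surface computation \autoref{thm:surface-Hps}. All of the genuine work lies in \autoref{thm:hilb-Hps} itself; what remains is bookkeeping, together with two small points of care noted at the end.

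First I would specialize the isomorphism $\coH[j]{\phi}$ of \autoref{thm:hilb-Hps} to $j=2$, which gives a canonical identification
\[
\Hhilbps[2]{\X} \cong \coH[2]{\Hilb{\U};\CC} \oplus \rbrac{\coH[0]{\Hilb[n-1]{\U};\CC}\otimes\coH[0]{i^*\acan{\X}}}.
\]
Since $\X$ is connected and $\D\subsetneq\X$ is a divisor, $\U=\X\setminus\D$ is a non-empty connected smooth surface, so $\Hilb[n-1]{\U}$ is connected and $\coH[0]{\Hilb[n-1]{\U};\CC}\cong\CC$. The second summand therefore collapses to $\coH[0]{i^*\acan{\X}}$, yielding the first displayed line of the Corollary.

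Next I would insert the structure of $\coH{\Hilb{\U};\CC}$ in degrees $\le 2$. By the G\"ottsche--Soergel formula \cite{Goettsche1993} --- which holds for an arbitrary smooth surface, with no properness hypothesis needed --- one has, for $n\ge 2$,
\[
\coH[1]{\Hilb{\U};\CC}\cong\coH[1]{\U;\CC},\qquad \coH[2]{\Hilb{\U};\CC}\cong\coH[2]{\U;\CC}\oplus\Sym^2\coH[1]{\U;\CC}\oplus\CC\cdot[\E],
\]
where $[\E]$ is the class of the exceptional divisor of the Hilbert--Chow morphism, and where the graded-symmetric square $\Sym^2\coH[1]{\U;\CC}$ is the ordinary exterior square $\wedge^2\coH[1]{\U;\CC}$ of the underlying vector space, since $\coH[1]{\U;\CC}$ sits in odd cohomological degree. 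Substituting gives the second displayed line. Finally, \autoref{thm:surface-Hps} (applicable since $\D$ is reduced with quasi-homogeneous singularities) identifies $\Hps[2]{\X}\cong\coH[2]{\U;\CC}\oplus\coH[0]{\D_\sing,i^*\acan{\X}}=\coH[2]{\U;\CC}\oplus\coH[0]{i^*\acan{\X}}$; regrouping the $\coH[2]{\U;\CC}$ summand of $\coH[2]{\Hilb{\U};\CC}$ together with the $\coH[0]{i^*\acan{\X}}$ summand reconstitutes $\Hps[2]{\X}$, while $\wedge^2\coH[1]{\U;\CC}\oplus\CC\cdot[\E]$ is carried over unchanged. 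This produces the third displayed line.

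The main obstacle is not in the Corollary at all but upstream, in the proof of \autoref{thm:hilb-Hps}: the constructible-sheaf analysis of $\phi$, the decomposition theorem for the Hilbert--Chow morphism restricted to the relevant strata, and the vanishing needed to see that $\coH[j]{\phi}$ is an isomorphism precisely in the range $j\le 2$. Granting that, the two things in the present argument that deserve a citation rather than a wave of the hand are (i) the parity observation turning $\Sym^2\coH[1]{\U;\CC}$ into $\wedge^2\coH[1]{\U;\CC}$, and (ii) the fact that the G\"ottsche--Soergel description of $\coH[\le 2]{\Hilb{\U};\CC}$ does not require $\U$ to be compact (one can, if preferred, reduce to the projective case by choosing a smooth compactification and tracking which classes survive, or argue directly via additivity of Hilbert--Chow strata). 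One may also remark in passing that the class $[\E]$ produced here is the topological Chern class of the exceptional divisor, and that identifying it with the ``Poisson Chern class'' of the statement amounts to checking that it lifts canonically into $\Hhilbps[2]{\X}$ --- which is exactly what the map $\phi$ provides.
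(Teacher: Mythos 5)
Your proposal is correct and follows exactly the route the paper intends: specialize $\coH[2]{\phi}$ from \autoref{thm:hilb-Hps}, collapse $\coH[0]{\Hilb[n-1]{\U};\CC}\cong\CC$ by connectedness, insert the G\"ottsche--Soergel description of $\coH[\le 2]{\Hilb{\U};\CC}$ (with the odd-degree parity turning the graded symmetric square into $\wedge^2\coH[1]{\U;\CC}$), and regroup via \autoref{thm:surface-Hps}. The paper leaves these bookkeeping steps implicit, so your write-up is if anything more careful on the two points you flag.
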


Our proof of \autoref{thm:hilb-Hps} follows the strategy used in \cite[Section 4]{Pym2018} and \cite{Matviichuk2020}.  Namely, let 
\[
\Hilb{\D} := \Hilb{\X} \setminus \Hilb{\U} \cong \set{\Z \in \Hilb{\X}}{\Z \cap \D \neq \varnothing}
\]
be the degeneracy divisor of the Poisson structure on $\Hilb{\X}$.  (Note that despite the notation, it is \emph{not} isomorphic to the Hilbert scheme of $\D$.)  Let $\hilblogforms{\X}{\D}$ be the sheaf of logarithmic differential forms in the sense of K.~Saito~\cite{Saito1980}.  Then, as for any log symplectic manifold, the natural map $\ps^\sharp : \forms[1]{\X} \to \cT{\X}$ induces a commutative triangle
\begin{align}
\xymatrix{
& (\hilblogforms{\X}{\D},\dd) \ar[ld] \ar[rd] \\
 (\der{\X},\dps)  \ar[rr] & &Rj_*\CC_\U 
} \label{eq:log-triangle}
\end{align}
where all maps are quasi-isomorphisms away from the singular locus $\Hilb{\D}_\sing$  of $\Hilb{\D}$; see~\cite[p.~695]{Pym2018}.  

For our purposes, it suffices to understand these complexes along the generic part of the singular locus.  To this end, define locally closed subvarieties $\bS_1,\bS_2 \subset \Hilb{\D}_\sing$ by
\begin{align*}
\bS_1 &:= \set{ \Z \in \Hilb{\X} }{ \Z\cap \D \textrm{\ consists of two reduced points} } \\
\bS_2 &:= \Hilb{i}(\Hilb[n-1]{\U}\times \D_\sing).
\end{align*}
Evidently $\bS_1$ and $\bS_2$ are disjoint.   Moreover, we have the following.
\begin{lemma}\label{lem:codim-3-stratum}
The subset
\[
\W := \Hilb{\X} \setminus \rbrac{\Hilb{\U} \sqcup \Hilb{\D}_\reg \sqcup \bS_1\sqcup \bS_2}
\]
is a closed subvariety of codimension three. 
\end{lemma}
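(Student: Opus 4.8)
Recall that $\Hilb{\D} := \Hilb{\X}\setminus\Hilb{\U}$ is the degeneracy divisor of $\hilbps$, i.e.\ the zero locus of the section $\wedge^n\hilbps$ of $\acan{\Hilb{\X}}$; in particular it is an effective Cartier divisor, so $\Hilb{\D}_\reg := \Hilb{\D}\setminus\Hilb{\D}_\sing$ is its smooth locus, an open dense subset. Since $\Hilb{\U}\sqcup\Hilb{\D}_\reg = \Hilb{\X}\setminus\Hilb{\D}_\sing$ is open, it suffices to prove that $\W = \Hilb{\D}_\sing\setminus(\bS_1\sqcup\bS_2)$ is closed. The tool is the function $m\colon\Hilb{\X}\to\ZZ_{\ge 0}$, $m(\Z) = \#(\Z\cap\D)$ of \eqref{eq:num-points-def}, which is upper semicontinuous: the structure sheaf of the universal intersection is the fibrewise cokernel of the morphism $\cO{\X}(-\D)|_{\Zu}\to\cO{\Zu}$ of rank-$n$ vector bundles over $\Hilb{\X}$, so $m = n-\operatorname{rank}$. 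The key local fact is that $\Hilb{\D}$ is smooth at $\Z$ if and only if $m(\Z) = 1$ and the unique point $q$ of $\Z\cap\D$ lies on $\D_\reg$. Indeed, $m(\Z)=1$ forces $\cO{\Z}/y\,\cO{\Z}=\CC$ at $q$ (with $y$ a local equation of $\D$), so $\Z$ is curvilinear and transverse to $\D$ at $q$; working in the chart of schemes with $\cO{}$-basis $\{1,y,\dots,y^{\ell-1}\}$ at $q$ times a symplectic factor $\Hilb[n-\ell]{\U}$, one computes that $\Hilb{\D}$ is cut out by the single coordinate $a_0$ recording the intersection point with $\{y=0\}$, hence is smooth iff $q\in\D_\reg$; conversely if $m(\Z)\ge 2$, or if the intersection point lies on $\D_\sing$, the analogous local product description exhibits $\Hilb{\D}$ as a singular plane-curve-type locus times a smooth factor.

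Consequently $\{m=1\} = \Hilb{\D}_\reg\sqcup\bS_2$ and $\Hilb{\D}_\sing = \{m\ge 2\}\sqcup\bS_2$, so that $\W = \{m\ge 2\}\setminus\bS_1$, using that $\bS_1\subset\{m\ge 2\}$ while $\bS_2\cap\{m\ge 2\}=\varnothing$. Now $\{m\ge 2\}$ is closed by semicontinuity, and $\bS_1$ is open in $\{m\ge 2\}$: near $\Z = \{q_1\}\sqcup\{q_2\}\sqcup\Z'\in\bS_1$ one has $\Hilb{\X}\cong\X\times\X\times\Hilb[n-2]{\U}$ locally, under which $\{m\ge 2\}$ is $\D\times\D\times\Hilb[n-2]{\U}$ and $\bS_1$ is its open subset $\D_\reg\times\D_\reg\times\Hilb[n-2]{\U}$, which coincides with it near $\Z$ because $q_1\ne q_2$ lie on $\D_\reg$. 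Hence $\W$ is closed in $\Hilb{\X}$.

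For the codimension, note first that $\bS_1$ and $\bS_2$ are smooth of dimension $2n-2$ (by the local product descriptions above together with $\dim\Hilb[m]{\U}=2m$), so it is enough to show that every irreducible component of $\{m\ge 2\}$ not contained in $\overline{\bS_1}$ has dimension $\le 2n-3$. I would stratify $\{m\ge 2\}$ according to the set $T$ of points of $\operatorname{Supp}(\Z)$ lying on $\D$, recording for each $q\in T$ whether $q\in\D_\reg$ or $q\in\D_\sing$, the length $\ell_q$ of $\Z$ at $q$, and (for $q\in\D_\reg$) the orbit datum $[\Z]_q$. Near a point of such a stratum, $\Hilb{\X}$ is a product of the punctual Hilbert schemes $\Hilb[\ell_q]{\X}_q$ over $q\in T$ with a symplectic factor $\Hilb[n-\sum\ell_q]{\U}$, so the stratum has dimension at most $\sum_{q\in T}(\epsilon_q+d_q)+2(n-\sum_{q\in T}\ell_q)$, where $\epsilon_q=1$ if $q\in\D_\reg$ and $\epsilon_q=0$ if $q\in\D_\sing$, and $d_q$ is the dimension of the relevant punctual stratum. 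For $q\in\D_\reg$ the $\D$-equivalence classes with orbit datum $[\Z]_q$ have codimension $2|\hc{[\Z]_q}|$ in $\Hilb{\X}$ by \autoref{lem:smooth-stablizer} and \autoref{thm:sympl_leaves_Hilb_y}, and $\ell_q=|\hc{[\Z]_q}|$, so $q$ contributes codimension at least $2|\hc{[\Z]_q}|-1$ to the stratum, which is $1$ if $[\Z]_q=(1)$ and is at least $3$ for any other nontrivial Young diagram, the smallest value of $|\hc{\mu}|$ for $\mu\notin\{\varnothing,(1)\}$ being $2$. For $q\in\D_\sing$ one uses only $\dim\Hilb[\ell_q]{\X}_q=\ell_q-1$, so $q$ contributes codimension at least $2\ell_q-(\ell_q-1)=\ell_q+1$, which is $2$ if $\ell_q=1$ and at least $3$ otherwise. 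Summing, the only strata of codimension $\le 2$ are $\Hilb{\U}$ (codimension $0$, $T=\varnothing$), $\Hilb{\D}_\reg$ (codimension $1$, one $q\in\D_\reg$ with $[\Z]_q=(1)$), $\bS_1$ (codimension $2$, two such points) and $\bS_2$ (codimension $2$, one $q\in\D_\sing$ with $\ell_q=1$), and every other stratum has codimension at least $3$. Hence every component of $\{m\ge 2\}$ not contained in $\overline{\bS_1}$ has dimension at most $2n-3$, completing the proof.

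The main obstacle is the dimension count, and specifically the bookkeeping that organizes the (infinitely many, with continuous moduli at nodes) symplectic leaves into the finitely many \emph{families} obtained by letting the supporting points of $\D$ move; once one reduces, via the local product structure of $\Hilb{\X}$ near a point, to a product of punctual Hilbert schemes with $\Hilb{\U}$, each dimension estimate is routine.
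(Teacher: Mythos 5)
Your proof is correct in substance but takes a genuinely different route from the paper's. The paper's argument is a two-sentence sketch: any $\Z\in\W$ must contain one of three explicit configurations (three distinct points of $\D$; a singular and a smooth point of $\D$; a nonreduced point tangent to $\D$), and each configuration sweeps out a locus that is generically finite over a codimension-three subset of $\sympow{\X}$ under the Hilbert--Chow morphism. You instead (i) characterize $\Hilb{\D}_\reg$ by the local criterion ``$\#(\Z\cap\D)=1$ with the point on $\D_\reg$'', which yields the identity $\W=\{\#(\Z\cap\D)\ge 2\}\setminus\bS_1$ and hence closedness from upper semicontinuity --- a point the paper's proof does not address at all, even though closedness is part of the statement and is needed for the local-cohomology argument that follows; and (ii) run the dimension count through the orbit-datum stratification and the codimension formula of \autoref{lem:smooth-stablizer}. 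This is longer but more systematic, and it actually delivers the full statement. Two small imprecisions, neither fatal: first, $\ell_q=|\hc{[\Z]_q}|$ is false in general (\autoref{prop:contains-hor-convex} only gives $\ell_q\ge|\hc{[\Z]_q}|$), but your bound $2|\hc{[\Z]_q}|-1$ on the codimension contribution survives because the codimension of the $\D$-equivalence class is $2|\hc{[\Z]_q}|$ independently of $\ell_q$; second, as literally defined $\bS_1$ also contains schemes whose two reduced intersection points lie on $\D_\sing$, or which are nonreduced but transverse at a point of $\D_\reg$, so it is not the smooth locus $\D_\reg\times\D_\reg\times\Hilb[n-2]{\U}$ you describe --- but your openness argument extends, since the local intersection count near each of the two support points is upper semicontinuous with value one at $\Z$, forcing every nearby element of $\{\#(\cdot\cap\D)\ge2\}$ to again meet $\D$ in two reduced points.
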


\begin{proof}
If $\Z \in \W$ then $\Z$ contains at least one of the following configurations: three distinct points of $\D$; a singular point and a smooth point; or a nonreduced point tangent to $\D$. Each of these conditions defines a locally closed subvariety of $\Hilb{\X}$ that maps birationally onto its image in $\sympow{\X}$, which clearly has codimension three.
\end{proof}

\begin{lemma}
The morphism $\phi$ from \eqref{eq:Hps-hilb-decomp} restricts to a quasi-isomorphism over $\Hilb{\X}\setminus \W$.
\end{lemma}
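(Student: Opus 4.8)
The statement is local on $\Hilb{\X}$, so it suffices to check that $\phi$ induces an isomorphism on the stalk of every cohomology sheaf at each point of $\Hilb{\X}\setminus\W$. By \autoref{lem:codim-3-stratum} this open set is the union $\Hilb{\U}\sqcup \Hilb{\D}_\reg\sqcup \bS_1\sqcup \bS_2$, so the plan is to run over these four strata. The uniform tool is the local product structure of the Hilbert scheme used already in the proof of \autoref{lm:Hilb_equiv_to_triang_chart}: if $\Z$ is supported at several points of $\X$, then the germ of $(\Hilb{\X},\hilbps)$ at $\Z$ is the product of the germs of Bottacin's Poisson structures on the Hilbert schemes of the connected components of $\Z$, and one combines this with the $n=1$ computation of \autoref{thm:surface-Hps} and the K\"unneth formula. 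One also records, at the outset, that all of $\overline{\bS_2}$ lies in $\Hilb{\U}\sqcup\Hilb{\D}_\reg\sqcup\bS_1\sqcup\bS_2$ only through $\bS_2$ itself — indeed a configuration containing a singular point of $\D$ together with another point of $\D$ is placed in $\W$ as in \autoref{lem:codim-3-stratum} — so that the second summand $\Hilb{i}_*(\CC_{\Hilb[n-1]{\U}}\boxtimes i^*\acan{\X})[-2]$ of the target of $\phi$ has vanishing stalk off the stratum $\bS_2$.

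On $\Hilb{\U}$ the structure $\hilbps$ is symplectic, so $(\der{\Hilb{\X}},\dhilbps)|_{\Hilb{\U}}\cong (\forms{\Hilb{\U}},\dd)\cong \CC_{\Hilb{\U}}$; the first component of $\phi$, namely the adjunction map \eqref{eq:hilb-open-restrict}, is the identity onto $Rj_*\CC_{\Hilb{\U}}|_{\Hilb{\U}}=\CC_{\Hilb{\U}}$, and the second component vanishes. On $\Hilb{\D}_\reg$, a generic point of the degeneracy divisor, the commutative triangle \eqref{eq:log-triangle} together with the fact that its two edges out of the log-forms complex are quasi-isomorphisms away from $\Hilb{\D}_\sing$ (see \cite[p.~695]{Pym2018}) shows that \eqref{eq:hilb-open-restrict} is itself a quasi-isomorphism there; again the second component of $\phi$ has zero stalk. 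On $\bS_1$, where $\Z$ meets the smooth part of $\D$ in two distinct reduced points $p_1,p_2$, the germ of $(\Hilb{\X},\hilbps)$ at $\Z$ is a product of two log-symplectic surface germs with \emph{smooth} polar divisor and a symplectic germ; applying \autoref{thm:surface-Hps} in the elementary smooth case (where the $i_*i^*\acan{\X}$-term is absent) and K\"unneth, $(\der{\Hilb{\X}},\dhilbps)_\Z\cong (Rj_*\CC_{\Hilb{\U}})_\Z$ compatibly with \eqref{eq:hilb-open-restrict}, while the $\Hilb{i}_*$-term again contributes nothing.

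The essential case is $\bS_2$. Here $\Z=\Z'\sqcup\{p\}$ with $\Z'\in\Hilb[n-1]{\U}$ and $p\in\D_\sing$, so the germ of $(\Hilb{\X},\hilbps)$ at $\Z$ is the product of the germ of $\Hilb[n-1]{\X}$ at $\Z'$, which is symplectic, with the germ of $(\X,\ps)$ at $p$; hence $(\der{\Hilb{\X}},\dhilbps)_\Z\cong (\der{\X},\dps)_p$ after tensoring with a symplectic factor. Since $p$ is a quasi-homogeneous singular point of the reduced curve $\D$, \autoref{thm:surface-Hps} gives the decomposition $(\der{\X},\dps)_p\cong (Rj_*\CC_\U)_p\oplus (i^*\acan{\X})_p[-2]$; the first summand matches the stalk $(Rj_*\CC_{\Hilb{\U}})_\Z$ of the open symplectic locus near $\Z$, and the second matches the stalk of $\Hilb{i}_*(\CC_{\Hilb[n-1]{\U}}\boxtimes i^*\acan{\X})[-2]$ at $\Z\in\bS_2$ (near $\Z$ the map $\Hilb{i}$ is a codimension-two closed embedding with trivial normal bundle, as in \autoref{cor:codim2_char_leaves}). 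Because \eqref{eq:hilb-open-restrict} and \eqref{eq:hilb-sing-restrict} were built from \eqref{eq:surface-open-restrict} and \eqref{eq:surface-sing-restrict} by exactly this product procedure, the two components of $\phi$ restrict on these stalks to the two summand inclusions, so $\phi$ is a stalkwise quasi-isomorphism at $\Z$.

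I expect the main obstacle to be the bookkeeping in the $\bS_2$ case: one must verify that the local product splitting of $\hilbps$ near $\Z$ — as the exterior product of the symplectic structure on $\Hilb[n-1]{\X}$ near $\Z'$ with Bottacin's structure on $\X$ near $p$ — is genuinely compatible with the moduli/Hilbert--Chow descriptions entering the definitions of \eqref{eq:surface-open-restrict} and \eqref{eq:surface-sing-restrict}, so that the two components of $\phi$ decompose \emph{along} the K\"unneth factors rather than merely being abstractly isomorphic to such a decomposition. A secondary point requiring care is the precise description of the strata $\Hilb{\D}_\reg$ and $\bS_1$ and the verification that $\overline{\bS_2}$ meets $\Hilb{\X}\setminus\W$ only in $\bS_2$, which is exactly what makes the $\Hilb{i}_*$-term drop out over the first three strata.
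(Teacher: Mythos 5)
Your proposal is correct, and its overall architecture (stratify $\Hilb{\X}\setminus\W$ into $\Hilb{\U}\sqcup\Hilb{\D}_\reg\sqcup\bS_1\sqcup\bS_2$ and check $\phi$ stalkwise on each piece) is the same as the paper's; the treatments of $\Hilb{\U}$, $\Hilb{\D}_\reg$ (via the triangle \eqref{eq:log-triangle}) and $\bS_2$ (via the K\"unneth computation of $(\Hilb{i})^*(\der{\Hilb{\X}},\dhilbps)$ already carried out before the lemma) coincide with what the paper does. The one place you genuinely diverge is the stratum $\bS_1$: the paper observes that near $\bS_1$ the divisor $\Hilb{\D}$ has normal crossings and the only characteristic leaf is the open one, and then cites the general normal-crossings Poisson cohomology computation of \cite{Matviichuk2020} (see also \cite{Ran2017}); you instead split the germ at $\Z\in\bS_1$ as a product of a symplectic germ with two factors coming from the points $p_1,p_2\in\D$ and apply \autoref{thm:surface-Hps} in the smooth-divisor case together with the K\"unneth isomorphism $R(j_1\times j_2)_*(\CC\boxtimes\CC)\cong Rj_{1*}\CC\boxtimes Rj_{2*}\CC$. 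Your route is more self-contained (it avoids importing the normal-crossings machinery), at the cost of one small imprecision worth fixing: the components of $\Z$ at $p_1$ and $p_2$ may have length greater than one, so those factors are not literally log symplectic \emph{surface} germs with smooth divisor; they are $2k$-dimensional germs that are stably equivalent to such (Weinstein splitting plus the codimension-two leaf computation of \autoref{lem:smooth-stablizer} with $\mu=(1)$, whose transverse germ linearizes to $\aff{1}^\vee$). Since the adjunction map to $Rj_*\CC$ is canonical and Poisson isomorphisms preserve the open symplectic locus, this repair is routine, and your side remarks (that $\overline{\bS_2}\setminus\bS_2\subset\W$, so the $\Hilb{i}_*$ summand contributes no stalks off $\bS_2$ within the open set under consideration) are exactly the right bookkeeping.
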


\begin{proof}
It is already a quasi-isomorphism away from $\Hilb{\D}_\sing$, and is furthermore a quasi-isomorphism over the image of $\Hilb{i}$, so  it suffices to show that it is an isomorphism over $\bS_1$.  Since $\bS_1$  is disjoint from the support of $\Hilb{i}_*(\CC_{\Hilb[n-1]{\U}} \boxtimes i^*\acan{\X})$, this is equivalent to showing that the map \eqref{eq:hilb-open-restrict} is a quasi-isomorphism there.  But in a neighbourhood of this locus, $\Hilb{\D}$ has only normal crossings singularities, and the only characteristic leaf is the open one.   Hence the result follows from our computation of the Poisson cohomology in the normal crossings case~\cite{Matviichuk2020}; see also \cite{Ran2017}.
\end{proof}

Since $\W$ has codimension three and $\der{\X}$ is locally free, the restriction map from Poisson cohomology of $\Hilb{\X}$ to that of $\Hilb{\X\setminus \W}$  is an isomorphism in degrees 0 and 1, and injective in degree 2, by a standard local cohomology argument (a generalization of Hartogs' principle).  Since $\Hilb[n]{\U}$ and $\Hilb{i}(\Hilb[n-1]{\U}\times \D_\sing)$ are disjoint from $\W$ we have the following.
\begin{corollary}
The map $\coH[j]{\phi}$ on hypercohomology is an isomorphism in degree 0 and 1, and injective in degree 2.
\end{corollary}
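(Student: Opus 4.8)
The plan is to deduce the statement by restricting the morphism $\phi$ to the open set $V := \Hilb{\X}\setminus\W$, where $\W$ is the codimension-three subvariety of \autoref{lem:codim-3-stratum}, and then comparing hypercohomology over $\Hilb{\X}$ and over $V$ on both the source and the target of $\phi$. The key input, already established above, is that $\phi$ restricts to a quasi-isomorphism over $V$, so the induced map $\mathbb{H}^j(V,\phi)$ is an isomorphism in every degree. Writing the evident commutative square relating $\coH[j]{\phi}$ to $\mathbb{H}^j(V,\phi)$ through the two restriction maps, the whole statement reduces to controlling these restriction maps separately on the source (Poisson cohomology) and on the target.

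For the source, I would run the standard local-cohomology (Hartogs-type) argument. Since $\der{\Hilb{\X}}=\wedge^\bullet\cT{\Hilb{\X}}$ is a bounded complex of locally free sheaves and $\codim_{\Hilb{\X}}\W\ge 3$, the stupid filtration together with the vanishing of local cohomology $\mathsf{H}^q_\W$ of a locally free sheaf for $q<\codim\W$ gives $\mathsf{H}^j_\W(\der{\Hilb{\X}},\dhilbps)=0$ for all $j\le 2$. Feeding this into the local-cohomology long exact sequence
\[
\cdots\to \mathsf{H}^j_\W(\der{\Hilb{\X}})\to \Hhilbps[j]{\X}\to \mathbb{H}^j\big(V,(\der{\Hilb{\X}},\dhilbps)|_V\big)\to \mathsf{H}^{j+1}_\W(\der{\Hilb{\X}})\to\cdots
\]
shows that the source restriction map is an isomorphism for $j\le 1$ and injective for $j=2$.

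For the target, I would show the corresponding restriction map is an isomorphism in every degree; this is where the disjointness of $\Hilb{\U}$ and $\bS_2=\Hilb{i}(\Hilb[n-1]{\U}\times\D_\sing)$ from $\W$ enters. The summand $\Hilb{i}_*\rbrac{\CC_{\Hilb[n-1]{\U}}\boxtimes i^*\acan{\X}}[-2]$ is pushed forward from the closed set $\bS_2\subset V$, so its hypercohomology is unchanged under restriction to $V$. For the summand $Rj_*\CC_{\Hilb{\U}}$, open base change along the open immersion $V\hookrightarrow\Hilb{\X}$ identifies its restriction to $V$ with $Rj_*\CC_{\Hilb{\U}\cap V}$, and since $\Hilb{\U}\subset V$ this has hypercohomology $\coH{\Hilb{\U};\CC}$ both over $\Hilb{\X}$ and over $V$. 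Hence the target restriction map is an isomorphism in all degrees.

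Combining the three facts through the commutative square yields the claim: in degrees $0$ and $1$, $\coH[j]{\phi}$ is a composite of isomorphisms, and in degree $2$ it is a composite of an injection with isomorphisms, hence injective. I expect the only genuinely delicate point to be the codimension bookkeeping in the source argument --- verifying cleanly that the hypercohomology-with-supports of the polyvector complex vanishes through degree $2$ --- but this is routine given local freeness and \autoref{lem:codim-3-stratum}; everything else is a formal diagram chase built on the already-established quasi-isomorphism of $\phi$ over $V$.
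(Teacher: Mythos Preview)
Your proposal is correct and follows essentially the same approach as the paper: the paper invokes the standard local cohomology (Hartogs-type) argument using that $\W$ has codimension three and $\der{\Hilb{\X}}$ is locally free to control the source, and notes that $\Hilb{\U}$ and the image of $\Hilb{i}$ are disjoint from $\W$ to control the target. Your write-up simply fleshes out the details the paper leaves implicit.
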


Hence to prove \autoref{thm:hilb-Hps}, it remains to show that the map $\coH[2]{\phi}$ is surjective. To this end, we require two lemmas:
\begin{lemma}
If $\D$ has only quasi-homogeneous singularities, then the natural map
\[
\coH[j]{\hilblogforms{\X}{\D}} \to \coH[j]{\Hilb{\U};\CC}
\]
is surjective for $j \le 2$.
\end{lemma}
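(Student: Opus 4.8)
The plan is to exhibit explicit classes in $\coH[j]{\hilblogforms{\X}{\D}}$ mapping onto a spanning set of $\coH[j]{\Hilb{\U};\CC}$ for $j\le 2$. First consider the case $n=1$, where $\Hilb[1]{\X}=\X$ and $\Hilb[1]{\D}=\D$: the assertion is then that $\logforms[\bullet]{\X}{\D}\to Rj_*\CC_{\U}$ is onto in degrees $\le 2$, and since $\D$ is reduced with quasi-homogeneous singularities this map is in fact a quasi-isomorphism --- this is the logarithmic comparison theorem for quasi-homogeneous plane curves, and can also be extracted from \autoref{thm:surface-Hps} together with the triangle \eqref{eq:log-triangle}, once one checks that the quotient complex $(\der[\bullet]{\X}/\der[\bullet]{\X}(-\log \D),\dps)$ is quasi-isomorphic to $i_*i^*\acan{\X}[-1]$ for such $\D$. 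In particular the natural map $\coH[\bullet]{\logforms[\bullet]{\X}{\D}}\xrightarrow{\sim}\coH[\bullet]{\U;\CC}$ is an isomorphism in all degrees.

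For $n\ge 2$, I would build a tautological lift. Given a logarithmic form $\tilde\omega$ on $(\X,\D)$, pull it back along the $n$ projections $p_i\colon\X^n\to\X$, form $\sum_i p_i^*\tilde\omega$ --- logarithmic along $\sum_i p_i^{-1}\D$, since each $p_i$ is flat --- descend the resulting $\bS_n$-invariant log form to $\sympow{\X}$ (as a reflexive, hence Saito-logarithmic, form), and pull back along the Hilbert--Chow morphism $\pi\colon\Hilb[n]{\X}\to\sympow{\X}$. The essential claim, addressed below, is that this pullback has no pole along the exceptional divisor $\E$ of $\pi$, so that it lies in $\hilblogforms{\X}{\D}$ (its poles lie along $\pi^{-1}$ of the image of $\sum_i p_i^{-1}\D$, which is $\Hilb{\D}$). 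This produces a map $\coH[\bullet]{\logforms[\bullet]{\X}{\D}}\to\coH[\bullet]{\hilblogforms{\X}{\D}}$, and a bilinear variant via $\sum_{i\ne j}p_i^*\tilde\alpha\wedge p_j^*\tilde\beta$; restricting the output to $\Hilb{\U}$ recovers, by construction, the usual tautological maps $\coH[\bullet]{\U;\CC}\to\coH[\bullet]{\Hilb{\U};\CC}$ and $\wedge^2\coH[1]{\U;\CC}\to\coH[2]{\Hilb{\U};\CC}$.

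To conclude, recall from the G\"ottsche--Soergel formula that for $n\ge 2$ and $j\le 2$ the group $\coH[j]{\Hilb{\U};\CC}$ is spanned by the images of these tautological maps from $\coH[\le 2]{\U;\CC}$ and $\wedge^2\coH[1]{\U;\CC}$, together with the class $[\E]$ of the exceptional divisor of the Hilbert--Chow morphism. The first two families lie in the image of $\coH[\le 2]{\hilblogforms{\X}{\D}}$ by the previous paragraph and the surface isomorphism; and $[\E]=c_1(\cO{\Hilb{\X}}(\E))$ lies in the image of $\coH[2]{\forms[\bullet]{\Hilb{\X}}}=\coH[2]{\Hilb{\X};\CC}$ (represented by $\dlog$ of its transition functions), hence in the image of $\coH[2]{\hilblogforms{\X}{\D}}$, since $\forms[\bullet]{\Hilb{\X}}\subset\hilblogforms[\bullet]{\X}{\D}$. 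This gives the surjectivity.

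I expect the main obstacle to be the claim that the symmetrized pullbacks acquire no pole along $\E$. Being a codimension-one statement, it can be checked at the generic point of $\E$, where only two points collide; there $(\Hilb{\X},\Hilb{\D})$ is locally a product of $(\Hilb[2]{\X},\Hilb[2]{\D})$ with a smooth factor, and $\Hilb[2]{\X}=\mathrm{Bl}_\Delta(\X\times\X)/\bS_2$. One then verifies: (a) the pullback of $p_1^*\tilde\omega+p_2^*\tilde\omega$ along the blowup $b\colon\mathrm{Bl}_\Delta(\X\times\X)\to\X\times\X$ is holomorphic along the blowup's exceptional divisor, with logarithmic poles only along the strict transforms of $\D\times\X$ and $\X\times\D$, because these meet $\Delta$ properly; and (b) a $\bS_2$-invariant form that is holomorphic along that exceptional divisor descends without poles across the ramification locus of $\mathrm{Bl}_\Delta(\X\times\X)\to\Hilb[2]{\X}$ (the local model $(u,v)\mapsto(-u,v)$ with quotient coordinates $u^2,v$). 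Conceptually this is a logarithmic refinement of the crepancy of the Hilbert--Chow morphism; alternatively it can be done by hand in the \ES coordinates of \autoref{sec:local}. A secondary point is keeping track of the compatibility of the tautological construction with the G\"ottsche--Soergel decomposition, and noting that the hypothesis that $\D$ is reduced is used both for the surface comparison and to ensure the divisor $\Hilb{\D}$ is reduced.
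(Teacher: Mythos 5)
Your proposal is correct, and its skeleton coincides with the paper's: both arguments reduce, via the G\"ottsche--Soergel decomposition, to lifting the codimension-zero (tautological) part of $\coH[\le 2]{\Hilb{\U};\CC}$ together with the class $[\E]$; both invoke the logarithmic comparison theorem for the reduced quasi-homogeneous curve $\D\subset\X$ to handle the surface case; and both dispose of $[\E]$ by noting it lies in the image of $\coH[2]{\forms[\bullet]{\Hilb{\X}}}\subset\coH[2]{\hilblogforms{\X}{\D}}$. Where you genuinely diverge is in the central extension step. Writing $r:\Hilb{\X}\to\sympow{\X}$ for the Hilbert--Chow morphism and $q:\X^n\to\sympow{\X}$ for the quotient, the paper works at the level of complexes of sheaves on $\sympow{\X}$: it first identifies $r_*\forms[\bullet]{\Hilb{\X}}$ with $(q_*\forms[\bullet]{\X^n})^{\bS_n}$ using the extension theorem for differential forms on rational/klt singularities (Steenbrink, Greb--Kebekus--Kov\'acs--Peternell), upgrades this to the logarithmic sheaves by the same normality-in-codimension-one observation you make about $\E\not\subset\Hilb{\D}$, and concludes $(r_*\hilblogforms{\X}{\D},\dd)\cong R\sympow{j}_*\CC_{\sympow{\U}}$, after which surjectivity for $j\le 2$ is read off the G\"ottsche--Soergel diagram. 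You instead build explicit symmetrized lifts and check the absence of poles along $\E$ by hand at its generic point using $\Hilb[2]{\X}\cong\mathrm{Bl}_\Delta(\X\times\X)/\bS_2$; that computation is sound (the centre $\Delta$ is not contained in $\D\times\X\cup\X\times\D$, so the pulled-back log form acquires no pole on the blowup's exceptional divisor, and an invariant holomorphic form descends across the pseudo-reflection quotient $(u,v)\mapsto(u^2,v)$), and it buys you independence from the klt extension machinery at the price of producing only the specific classes you need rather than the full sheaf-level identification.

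Two points to tighten. First, classes in the hypercohomology $\coH[j]{\logforms[\bullet]{\X}{\D}}$ need not be represented by globally defined closed log forms, so ``lift a form'' should be implemented either with \v{C}ech--de Rham representatives or, more cleanly, by defining the symmetrization as a morphism of complexes of sheaves on $\sympow{\X}$ --- at which point your argument essentially becomes the paper's. Second, in your parenthetical alternative for $n=1$: the quotient of $(\der[\bullet]{\X},\dps)$ by the image of the log forms is concentrated in degree two, so it is $i_*i^*\acan{\X}[-2]$ rather than $[-1]$; this is harmless, since your primary route through the logarithmic comparison theorem is the correct one.
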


\begin{proof}
The argument is a de Rham analogue of the approach of G\"ottsche--Soergel~\cite{Goettsche1993}.  We have the natural maps
\[
\xymatrix{
\Hilb{\U} \ar[d]^{\Hilb{j}} \ar[r]^-{} & \sympow{\U}\ar[d]^-{\sympow{j}} & \ar[l]_{} \U^n \ar[d]^-{j^n} \\
\Hilb{\X} \ar[r]^-{r} & \sympow{\X} & \ar[l]_{q} \X^n 
}
\]
and corresponding divisors $\Hilb{\D}, \sympow{\D}$ and $\D^n$; for instance $\D^n$ is the set of $n$-tuples $(p_1,\ldots,p_n) \in \X^n$ such that $p_i \in \D$ for some $i$.

Since  $\sympow{\X} = \X^n/\bS_n$ is a finite quotient, it has only rational klt singularities, and hence by \cite[Lemma 1.8]{Steenbrink1977} and \cite[Theorem 1.4]{Greb2011} we have 
\[
r_*\forms{\Hilb{\X}} = (q_*\forms{\X^n})^{\bS_n}
\]
where on the right-hand side we have taken invariants with respect to the symmetric group action on $\X^n$.  An identical statement holds replacing $\X$ everywhere with $\U$.  Note that since the exceptional divisor $\E\subset \Hilb{\X}$ is not contained in $\Hilb{\D}$, a form $\omega \in \forms{\Hilb{\U}}$ extends to a logarithmic form on $\Hilb{\X}$ if and only if it extends to a logarithmic form on $\Hilb{\X} \setminus \E$ that has no poles on $\E$.  Since the maps $r$ and $q$ are \'etale away from $r(\E)$ and relate the corresponding divisors, we conclude that
\[
(r_*\hilblogforms{\X}{\D},\dd) \cong (q_*\logforms{\X^n}{\D^n})^{\bS_n},\dd) \cong (q_*R(j^n_*) \CC_{\U^n})^{\bS_n} \cong R\sympow{j}_*\CC_{\sympow{\U}}
\]
where the second isomorphism is a consequence of the K\"unneth decomposition and the results of~\cite{Castro-Jimenez1996} (using that $\D$ is quasi-homogeneous). 
 
Passing to hypercohomology, we obtain a commutative diagram
\[
\xymatrix{
\coH{r_*\hilblogforms{\X}{\D},\dd}  \ar[r] \ar[d]_-{\sim} & \coH{\hilblogforms{\X}{\D},\dd}  \ar[d]  & \coH{\forms{\Hilb{\X}}} \ar[l]_-{} \ar[d]^-{\sim} \\
\coH{\U^n;\CC}^{\bS_n} \cong \coH{\sympow{\U};\CC} \ar[r] & \coH{\Hilb{\U};\CC} & \ar[l]^-{(\Hilb{j})^*} \coH{\Hilb{\X};\CC}
}
\]
where the bottom left arrow arrow is the inclusion given by the codimension-zero contribution to the G\"ottsche--Soergel formula.  Said map is an isomorphism in degrees zero and one, and in degree two it is an injection with complement spanned by the class of the exceptional divisor $\E \subset \Hilb{\U}$.  Since the latter extends canonically to $\X$, it follows that the map $\coH{\hilblogforms{\X}{\D},\dd} \to \coH{\Hilb{\U};\CC}$ is surjective in degree two.
\end{proof}

Now let us observe that by applying Bottacin's construction in families, any Poisson deformation of $(\X,\ps)$ induces a Poisson deformation of $(\Hilb{\X},\ps)$.  Under this correspondence, deformations of the singularities of $\D\subset \X$ induce identical deformations of the singularities of $\Hilb{\D}\subset\Hilb{\X}$ transverse to the codimension-two characteristic leaves.  Applied to first-order deformations, this means that we have a canonical morphism $\Hps[2]{\X} \to \Hps[2]{\Hilb{\X}}$ such that the composition
\[
\xymatrix{
\coH[0]{i^*\acan{\X}} \ar@{^(->}[r] & \Hhilbps[2]{\X} \ar[r]^-{\phi} & \coH[2]{\Hilb{\U};\CC} \oplus \coH[0]{i^*\acan{\X}} \ar[r] &  \coH[0]{i^*\acan{\X}}
}
\]
is the identity map.  Furthermore, the image of $\coH[0]{i^*\acan{\X}}$ in $\Hhilbps[2]{\X}$ is complementary to the image of $\coH[2]{\hilblogforms{\X}{\D}}$, because the latter can only produce deformations in which the divisor $\Hilb{\D}$ deforms locally trivially.  It follows that $\coH[2]{\phi}$ is surjective, as desired.  This completes the proof of \autoref{thm:hilb-Hps}.

\bibliographystyle{hyperamsplain}
\bibliography{hol-hilb}

\end{document}